\newcommand{\bbL}{\mathbb{L}}
\newcommand{\bbM}{\mathbb{M}}
\newcommand{\bbE}{\mathbb{E}}
\newcommand{\bbP}{\mathbb{P}}
\newcommand{\bbR}{\mathbb{R}}
\newcommand{\eps}{\epsilon}
\newcommand{\wh}{\widehat}
\newcommand{\wc}{\widecheck}
\newcommand{\cA}{\mathcal A}
\newcommand{\cC}{\mathcal C}
\newcommand{\cX}{\mathcal X}
\newcommand{\cB}{\mathcal B}
\newcommand{\cW}{\mathcal W}
\newcommand{\cE}{\mathcal E}
\newcommand{\cN}{\mathcal N}
\newcommand{\cF}{\mathcal F}
\newcommand{\cR}{\mathcal R}
\newcommand{\cQ}{\mathcal Q}
\newcommand{\rmc}{{\rm c}}
\newcommand{\rmd}{{\rm d}}
\newcommand{\rme}{{\rm e}}
\newcommand{\rmB}{{\rm B}}
\newcommand{\rmE}{{\rm E}}
\newcommand{\wt}{\widetilde}
\newtheorem{theorem}{Theorem}[section]
\newtheorem{cor}[theorem]{Corollary}
\newtheorem{lem}[theorem]{Lemma}
\newtheorem{prop}[theorem]{Proposition}
\newtheorem{thm}[theorem]{Theorem}
\newtheorem*{rem*}{Remark}
\newtheorem{rem}[theorem]{Remark}
\numberwithin{equation}{section}
\newif\ifusebibtex
\newif\ifonlystable
\title
{A Probabilistic Proof for Stable Fluctuations in the Extremal Process of Branching Brownian Motion}
\title
{On the Fluctuations of the Extremal and Cluster Level Sets of Branching Brownian Motion}
\date{}
\author
{Lisa Hartung \thanks{lhartung@uni-mainz.de,  oren.louidor@gmail.com, wtq0000@gmail.com} \\ 
Universit\"at Mainz\and Oren Louidor\footnotemark[1]\\Technion, Israel\and 
Tianqi Wu\footnotemark[1]\\Technion, Israel }
\begin{document}
\maketitle
\begin{abstract}
\ifonlystable
We give a probabilistic proof for the emergence of the Stable-$1$ Law for the random fluctuations of the mass of the extremal process of branching Brownian Motion away from its tip. This result was already 
shown by Mytnik et al.~\cite{MRR22} albeit using PDE techniques. As a consequence, we demystify the origin of these fluctuations and the meaning of the deterministic centering function required.
\else
In this work, we resolve three open questions which were raised recently concerning the limiting extremal and cluster point processes of branching Brownian Motion. The former process records the heights of all extreme values of the motion, while the latter records the relative heights of extreme values in a genealogical neighborhood of order unity around a local maximum thereof. For the extremal point process, we show that first order asymptotics for its mass away from zero holds almost-surely, not just in-probability as was previously shown by Mytnik et al.~\cite{MRR22} and Cortines et al~\cite{CHL19}. We also provide a probabilistic proof for the emergence of the Stable-$1$ Law for the random fluctuations of the mass in that limit, thereby demystifying the source of these fluctuations, which was not apparent in the previous analytic derivation of the same result, by Mytnik et al.~\cite{MRR22}. Lastly, we also derive the explicit limiting law of the random fluctuations of the mass of the cluster point process, making rigorous and precise the derivation in the physics literature by Mueller et al.~\cite{Munier1} and Le et al.~\cite{Munier2}.
\fi
\end{abstract} 

\setcounter{tocdepth}{2}
\tableofcontents

\section{Introduction and results}
\subsection{Setup and background}
\ifonlystable
This work concerns fluctuations in the extremal process of branching Brownian motion (BBM) away from the tip.
\else
This work resolves several open questions that have recently been raised concerning the extreme values of the branching Brownian motion (BBM). 
\fi
Let us begin by recalling the definition of BBM and some of the theory concerning its extreme value statistics.
 Let $L_t$ be the set of particles alive at time $t \geq 0$ in a continuous-time Galton-Watson process with binary branching at rate $1$. The entire genealogy can be recorded via the metric space $(T,\rmd)$, consisting of the elements $T := \bigcup_{t \geq 0} L_t$ and equipped with the {\em genealogical distance} 
\begin{equation}
\rmd (x,x') := \tfrac12 \Big(\big(|x|-|x \wedge x'|\big) + \big(|x'|-|x \wedge x'|\big)\Big)
\, , \qquad x,x' \in T \,.
\end{equation}
In the above, $|x|$ stands for the generation of $x$, namely $t$ such that $x \in L_t$ and $x \wedge x'$ is the most recent common ancestor of $x$ and $x'$. We shall also write $x_s$ for the ancestor of $x$ at generation $0 \leq s \leq |x|$.

Conditional on $(T,\rmd)$, let $h = (h(x) :\: x \in T)$ be a mean-zero continuous Gaussian process with covariance function given by 
\begin{equation}
	\bbE h(x) h(x') = |x \wedge x'| \, , \qquad x,x' \in T \,.
\end{equation}
The triplet $(h, T,\rmd)$ (or just $h$ for short) forms a standard BBM, and $h(x)$ for $x \in L_t$ is interpreted as the height of particle $x$ at time $t$. As such, this process models a system in which particles diffuse as standard Brownian motions (BMs) and undergo binary splitting at $\text{Exp}(1)$-distributed random times. The restriction of $T$ to all particles born up to time $t$ will be denoted by $T_t := \bigcup_{s \leq t} L_s$, with $\rmd_t$ and $h_t$ the corresponding restrictions of $\rmd$ and $h$, respectively. The natural filtration of the process $(\cF_t :\: t \geq 0)$ can then be defined via $\cF_t = \sigma(h_t, T_t, \rmd_t)$ for all $t \geq 0$.

The study of extreme values of $h$ dates back to the works of Ikeda et al.~\cite{Ikeda1,Ikeda2,Ikeda3}, McKean~\cite{McKean}, Bramson \cite{B_M, B_C} and Lalley and Sellke~\cite{LS} who derived asymptotics for the law of the
maximal height $h^*_t = \max_{x \in L_t} h_t(x)$. Introducing the centering function
\begin{equation}
\label{eq_m_t}
m_t := \sqrt{2}t - \frac{3}{2\sqrt{2}} \log^+ t \,,
\quad  \text{where } \qquad
\log^+ t := \log (t \vee 1) \,,
\end{equation}
and writing
$\wh{h}_t$ for the centered process $h_t - m_t$ and $\wh{h}^*_t : = h_t^\ast - m_t$ for the centered maximum, these works show 
\begin{equation}
\label{e:101.4}
\wh{h}^*_t \,\underset{t \to \infty}\Longrightarrow \,
\wh{h}^*_\infty := G + \tfrac{1}{\sqrt{2}} \log Z  \,,
\end{equation}
where $G$ is a Gumbel distributed random variable with rate $\sqrt{2}$ and $Z$, which is independent of $G$, is the almost sure limit as $t \to \infty$ of (a multiple of) the so-called {\em derivative martingale}:
\begin{equation}
\label{e:303}
\textstyle
Z_t := C_\diamond \sum_{x \in L_t} \big( \sqrt{2} t - h_t(x) \big) \rme^{\sqrt{2} (h_t(x) - \sqrt{2}t)} \,,
\end{equation}
for some properly chosen $C_\diamond > 0$. 

Other extreme values of $h$ can be studied simultaneously by considering its {\em extremal process}. To describe the latter, given $t \geq 0$, $x \in L_t$ and $r > 0$, we let $\cC_{t,r}(x)$ denote the {\em cluster} of {\em relative heights} of particles in $L_t$, which are at genealogical distance at most $r$ from $x$. This is defined formally as the point measure
\begin{equation}
\label{e:5B}
\textstyle
\cC_{t,r}(x) := \sum_{y \in \rmB_{r}(x)} \delta_{h_t(y) - h_t(x)},\  \text{ where} \quad
\rmB_{r}(x) := \{y \in L_t :\: \rmd (x,y) < r\} \,.
\end{equation}

Fixing any positive function $[0,\infty) \ni t \mapsto r_t$ such that both $r_t$ and $t-r_t$ tend to $\infty$ as $t \to \infty$ and letting
\begin{equation}
	L_t^* = \big\{x \in L_t :\: h_t(x) \geq h_t(y) \,, \forall y \in \rmB_{r_t} (x)\big\} \,,
\end{equation} 
the {\em structured extremal process} is then given as
\begin{equation}
\label{e:N6}
\textstyle
\wh{\cE}_t := \sum_{x \in L_t^*} \delta_{h_t(x) - m_t} \otimes \delta_{\cC_{t,r_t}(x)} \,.
\end{equation}
That is, $\wh{\cE_t}$ is a point process on $\bbR \times \bbM_p((-\infty,0])$, where $\bbM_p((-\infty,0])$ denote the space of  Radon point measures on $(-\infty, 0]$, which records the {\em centered-heights-of} and the {\em clusters-around} all $r_t$-local maxima of $h|_{L_t}$. Then, it was (essentially) shown in~\cite{ABBS2013, ABK_E} that
\begin{equation}
\label{e:N7}
\big(\wh{\cE}_t,\,\wh{h}_t^*, Z_t \big) \underset{t \to \infty}{\Longrightarrow} \big(\wh{\cE},\, \wh{h}_\infty^*,\, Z\big)\,.
\end{equation}
Above $Z$ and $\wh{h}^*_\infty$ are as in~\eqref{e:101.4}, and the law of $\wh{\cE}$ conditional on $Z$ is 
\begin{equation}
\label{e:A1.10}
\wh{\cE}\, |\, Z \sim {\rm PPP}(Z\rme^{-\sqrt{2}u} \rmd u \otimes \nu) 
\overset{\rmd}={\rm PPP}(\rme^{-\sqrt{2}(u - \frac{1}{\sqrt{2}} \log Z)} \rmd u \otimes \nu) \,,
\,,
\end{equation}
with, importantly, the same $Z$ used to realize all the objects on the right hand side of~\eqref{e:N7}.
The measure $\nu$ in~\eqref{e:A1.10} is a (nonrandom) probability distribution over the space $\bbM_p((-\infty,0])$, which we call the {\em cluster distribution}. 

As two consequences, one gets the convergence of the {\em extremal process of local maxima}:
\begin{equation}
\cE^*_t \underset{t \to \infty}\Longrightarrow \cE^*  \,,
\end{equation}
where
\begin{equation}
\label{e:5.5}
\cE_t^* := \sum_{x \in L^*_t} \delta_{h_t(x) - m_t} 
\quad , \qquad 
\cE^* := \sum_{(u, \cC) \in \wh{\cE}} \, \delta_u 
\,\sim\, {\rm PPP}(Z\rme^{-\sqrt{2}u} \rmd u) \,,
\end{equation}
and the convergence of the {\em usual} extremal process of $h$:
\begin{equation}
\label{e:A1.13}
\cE_t \underset{t \to \infty}\Longrightarrow \cE  \,,
\end{equation}
where
\begin{equation}
\label{e:O.2}
\cE_t := \sum_{x \in L_t} \delta_{h_t(x) - m_t} 
\, , \qquad 
\cE
:= \sum_{k \geq 1} \, \cC^k(\cdot - u^k) \,,
\end{equation}
in which $u^1 > u^2 > \dots$ enumerate the atoms of $\cE^*$ and $(\cC^k)_{k \geq 1}$ are i.i.d.~chosen according to $\nu$ and independent of $\cE^*$. 

Since, conditional on $Z$, the intensity measure of $\cE^*$ is finite on $[-v, \infty)$ for every $v \in \bbR$ and tends to $\infty$ as $v \to \infty$, it is a standard fact that $\bbP(-|Z)$-a.s. and therefore also $\bbP$-a.s.,
\begin{equation}
\label{e:1.3}
	\frac{\cE^*([-v, \infty))}{\bbE \big(\cE^*([-v, \infty))\,\big|\,Z\big)} 
	= \frac{\cE^*([-v, \infty))}{\tfrac{1}{\sqrt{2}} Z \rme^{\sqrt{2}v}} 
	\underset{v \to \infty}{\longrightarrow} 1 \,.
\end{equation}

However, the asymptotics for $\cE([-v, \infty))$, arguably a more interesting quantity, are not a straightforward consequence of~\eqref{e:5.5} and~\eqref{e:O.2}. This is because the limiting process $\cE$ is now a superposition of i.i.d.~clusters $\cC$, and the law $\nu$ of the latter will determine the number of points inside any given set in the overall process. To address this question, a study of the moments of the level sets of a cluster $\cC \sim \nu$ was carried out in~\cite{CHL19}. In particular, it was shown that  
as $v \to \infty$,
\begin{equation}
\label{e:29}
\bbE \cC([-v, 0]) = C_\star \rme^{\sqrt{2} v} (1+o(1)) \,.
\end{equation}
for some $C_\star > 0$. This was then combined with~\eqref{e:5.5} and~\eqref{e:O.2} to derive 
\begin{equation}
\label{e:1.10}
\frac{\cE([-v, \infty))}{C_\star Z v \rme^{\sqrt{2} v}} \overset{\bbP}{\underset{v \to \infty}{\longrightarrow}} 1 \,.
\end{equation}
Notice that compared to the asymptotics for the level sets of $\cE^*$, there is an additional linear prefactor (in $v$) in that of $\cE$, which is due to the contribution of the clusters.

Sharper asymptotics for $\cE([-v,\infty))$ were later derived (among other results) in~\cite{MRR22} using PDE techniques. Theorem~1.4 in that work, applied to the function $\phi_0 = 1_{(-\infty,0]}$ gives (with constants adjusted),
\begin{equation}
\label{e:2.1}
	\frac{1}{\rme^{\sqrt{2} u}} \Big[\cE\big([-u,\infty)\big)- C_\star Z \rme^{\sqrt{2}u} \big( u  + \sqrt{2} \log u \big)\Big] + CZ
	\underset{u \to \infty} \Longrightarrow \tfrac{C_\star}{\sqrt{2}} \cR_Z\,,
\end{equation}
where $(\cR_t)_{t \geq 0}$ is a spectrally positive stable-$1$ process, with Laplace Transform 
\begin{equation}
\label{e:A1.25}
\rmE \rme^{-\lambda \cR_t} = \exp{\big(t  \lambda \log \lambda \big)} \,,
\end{equation}
which is independent of $Z$, and $C \in \bbR$ is some explicit constant. 
The analogous result for the \emph{unshifted} process $\cE_\circ$, defined as $\cE$  in~\eqref{e:O.2} only with 
$u^1 > u^2 > \dots$ now enumerating the atoms of 
\begin{equation}
\label{e:A101.1}
\cE^*_\circ \sim {\rm PPP}(\rme^{-\sqrt{2}u} \rmd u) \,,
\end{equation}
(without the Z prefactor) gives
\begin{equation}
\label{e:2.1a}
	\frac{1}{\rme^{\sqrt{2} u}} \Big[\cE_\circ \big([-u,\infty)\big)- C_\star \rme^{\sqrt{2}u} \big( u  + \sqrt{2} \log u \big)\Big] + C
	\underset{u \to \infty} \Longrightarrow \tfrac{C_\star}{\sqrt{2}} \cR_1\,.
\end{equation}
We note that~\cite{MRR22} also shows that $C_\star C_\diamond  = 1/\sqrt{\pi}$, so that remarkably one can obtain explicit constants in~\eqref{e:2.1} and~\eqref{e:2.1a}.

\subsection{Contribution}
\ifonlystable
\else
Let us now describe the results in this manuscript. The first result concerns the optimality of the nature of convergence in~\eqref{e:1.10}. Following the derivation of this result, it was asked (\cite{asconv_arguin, asconv_mytnik}) whether the limit in~\eqref{e:1.10} can be strengthened to hold
almost surely, as is the case in~\eqref{e:1.3}. Our first result answers this question affirmatively.
\begin{thm}\label{1st_result} 
With $C_\star$ as in~\eqref{e:1.10}, almost surely,
	\begin{equation}\label{eq:1st_result}
	\frac{\cE([-v, \infty))}{C_\star Z v e^{\sqrt{2} v}} \underset{v \to \infty}\longrightarrow 1 \,.
	\end{equation}
\end{thm}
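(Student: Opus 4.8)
The plan is to work with the cluster decomposition~\eqref{e:O.2}, condition on the Poisson backbone $\cE^*$, and split $\cE([-v,\infty))$ into its $\cE^*$-conditional mean — shown to be monotone in $v$ and almost surely equivalent to $C_\star Z v\rme^{\sqrt2 v}$ — plus a conditionally centered fluctuation, shown to be $o(v\rme^{\sqrt2 v})$ a.s.; a monotone-interpolation argument then upgrades convergence along a subsequence to the full limit.

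Since every cluster $\cC^k$ is supported on $(-\infty,0]$ and carries an atom at $0$, \eqref{e:O.2} gives $\cE([-v,\infty))=\sum_{k:u^k>-v}\cC^k([-(v+u^k),0])$, which conditionally on $\cE^*$ is a sum of independent terms, the $k$-th distributed as $\cC([-(v+u^k),0])$ for $\cC\sim\nu$. Writing $F(w):=\bbE\,\cC([-w,0])$, which by~\eqref{e:29} equals $C_\star\rme^{\sqrt2 w}(1+o(1))$, one has $\bbE(\cE([-v,\infty))\mid\cE^*)=\sum_{k:u^k>-v}F(v+u^k)$. I would first show that almost surely $\sum_{k:u^k>-v}F(v+u^k)\sim C_\star\rme^{\sqrt2 v}\sum_{k:u^k>-v}\rme^{\sqrt2 u^k}\sim C_\star Z v\rme^{\sqrt2 v}$ as $v\to\infty$: the first equivalence by replacing $F$ with $C_\star\rme^{\sqrt2\cdot}$ and estimating the error through the $o(1)$ in~\eqref{e:29}, using that the atoms with $v+u^k$ bounded contribute only $O(1)$ to $\sum\rme^{\sqrt2 u^k}$; the second by the Abel identity $\sum_{k:u^k\ge-v}\rme^{\sqrt2 u^k}=\rme^{-\sqrt2 v}\cE^*([-v,\infty))+\sqrt2\int_{-v}^\infty\rme^{\sqrt2 s}\cE^*((s,\infty))\,\rmd s$, into which one feeds the almost-sure asymptotics $\cE^*([-v,\infty))\sim\tfrac1{\sqrt2}Z\rme^{\sqrt2 v}$ from~\eqref{e:1.3} and finishes with an elementary averaging estimate on the remaining error.

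Next I would bound the fluctuation $D(v):=\cE([-v,\infty))-\bbE(\cE([-v,\infty))\mid\cE^*)$. Its $\cE^*$-conditional variance is $\sum_{k:u^k>-v}\Var\,\cC([-(v+u^k),0])$, which, using the bound $\bbE\,\cC([-w,0])^2=O(\rme^{2\sqrt2 w})$ on cluster level sets (the one additional input needed; it should follow from a first/second-moment computation on the cluster/decoration process, cf.~\cite{CHL19}), is at most $C\rme^{2\sqrt2 v}\,\Xi$ with $\Xi:=\sum_{k\ge1}\rme^{2\sqrt2 u^k}$, which is finite a.s.\ (the deep atoms contribute a summable amount, by~\eqref{e:1.3}). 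The conditional Chebyshev inequality then gives $\bbP(|D(v)|>\delta v\rme^{\sqrt2 v}\mid\cE^*)\le C\Xi/(\delta^2 v^2)$; fixing $\eta>0$ and summing along $v_m=m\eta$ makes the right-hand side almost surely summable, so by conditional Borel--Cantelli (and $\delta\downarrow0$ along a sequence) $D(v_m)=o(v_m\rme^{\sqrt2 v_m})$ a.s.\ along $v_m=m\eta$. Together with the previous paragraph this yields $\cE([-v_m,\infty))\sim C_\star Z v_m\rme^{\sqrt2 v_m}$ a.s.\ along $v_m=m\eta$, and since $v\mapsto\cE([-v,\infty))$ and, for $v\ge0$, $v\mapsto v\rme^{\sqrt2 v}$ are nondecreasing, sandwiching $\cE([-v,\infty))$ for $v\in[v_m,v_{m+1}]$ between $\cE([-v_m,\infty))$ and $\cE([-v_{m+1},\infty))$ and using $v_{m+1}\rme^{\sqrt2 v_{m+1}}/(v_m\rme^{\sqrt2 v_m})\to\rme^{\sqrt2\eta}$ gives $\rme^{-\sqrt2\eta}\le\liminf\le\limsup\le\rme^{\sqrt2\eta}$ a.s.; intersecting over $\eta\in\{1/j:j\ge1\}$ yields~\eqref{eq:1st_result}.

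The main obstacle is securing the second-moment bound $\bbE\,\cC([-w,0])^2=O(\rme^{2\sqrt2 w})$ — or even just $o(w\rme^{2\sqrt2 w})$, which is all the Borel--Cantelli step needs. Conceptually the point is that the per-cluster fluctuation must stay of order $\rme^{\sqrt2 w}$, so that, summed against the backbone, it is only $O(\rme^{\sqrt2 v}\sqrt\Xi)=o(v\rme^{\sqrt2 v})$, with the extra factor $v$ and the Stable-$1$ heavy tail of~\eqref{e:2.1} all residing in the \emph{conditional mean} $C_\star\rme^{\sqrt2 v}\sum_k\rme^{\sqrt2 u^k}$ — a monotone function of $v$ that, precisely because of monotonicity, converges after division by $Zv$ on the strength of~\eqref{e:1.3} alone, its heavy-tailed marginals notwithstanding. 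If only a bound with a genuine polynomial prefactor in $w$ were available, the Borel--Cantelli step would need refinement, e.g.\ by treating the finitely many top atoms' clusters separately, or by truncating each cluster at height $K\rme^{\sqrt2 w}$ and sending $K\to\infty$ after $v\to\infty$.
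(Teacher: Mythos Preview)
Your conditional-mean analysis is correct and elegant: the Abel identity combined with~\eqref{e:1.3} does give $\sum_{u^k>-v}\rme^{\sqrt2 u^k}\sim Zv$ a.s., and hence $\bbE(\cE([-v,\infty))\mid\cE^*)\sim C_\star Zv\rme^{\sqrt2 v}$. The gap is in the fluctuation step, and it is exactly the one you flag. The second moment of the cluster level set is \emph{not} $O(\rme^{2\sqrt2 w})$: from~\cite[Proposition~1.5]{CHL19} (restated here as Lemma~\ref{l:A5.2}) one has $w\,\bbE X(w)^2\le C$, i.e.\ $\bbE\,\cC([-w,0])^2=O(w\,\rme^{2\sqrt2 w})$, and this is sharp---the extra factor of $w$ is precisely the heavy tail that drives the Stable-$1$ fluctuations in Theorem~\ref{stable_key_lemma}. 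With this bound your conditional variance becomes
\[
\mathrm{Var}(D(v)\mid\cE^*)\;\lesssim\;\rme^{2\sqrt2 v}\sum_{u^k>-v}(v+u^k)\rme^{2\sqrt2 u^k}\;\asymp\; v\,\rme^{2\sqrt2 v}\,\Xi,
\]
since the mass of $\Xi=\sum_k\rme^{2\sqrt2 u^k}$ sits at the top atoms where $v+u^k\sim v$. Chebyshev then gives a bound of order $1/v$, which is not summable along $v_m=m\eta$, so the Borel--Cantelli step fails. Your proposed fixes do not close the gap either: removing finitely many top atoms only shrinks the constant in front of $v$, and truncating each cluster at $K\rme^{\sqrt2 w}$ leaves a remainder whose conditional mean is of order $K^{-1}\rme^{\sqrt2 v}\sum_{u^k>-v}(v+u^k)\rme^{\sqrt2 u^k}\sim K^{-1}Zv^2\rme^{\sqrt2 v}/2$, which for fixed $K$ already swamps the target $Zv\rme^{\sqrt2 v}$.

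The paper's proof takes a genuinely different route that sidesteps this borderline. It splits at a \emph{$v$-dependent} height $-\delta\log v$ (any $0<\delta<1/\sqrt2$): for tips $u^k\le-\delta\log v$ the same second-moment calculation acquires an extra factor $\rme^{2\sqrt2 u^k}\le v^{-2\sqrt2\delta}$, and the variance becomes $O(v^{1-\sqrt2\delta}\rme^{2\sqrt2 v})$, now giving a Chebyshev bound $O(v^{-1-\sqrt2\delta})$ that is summable. The polynomially many shallow tips ($u^k>-\delta\log v$) cannot be handled by moments at all---this is the piece where the Stable-$1$ tail lives---and the paper controls them via a \emph{uniform} estimate on a single cluster: with probability $1-O(u_0^{-1}(\log u_0)^{5/2})$ one has $\cC([-u,0])=o(\rme^{\sqrt2 u})$ simultaneously for all $u\ge u_0$. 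This is the new ingredient (Proposition~3.2 in the full version), proved through the weak cluster representation, and a union bound over the $O(v^{\sqrt2\delta})$ shallow clusters then shows their total contribution is $o(v^{-K}\rme^{\sqrt2 v})$ almost surely. So your decomposition into conditional mean plus fluctuation is the right first move, but the fluctuation term must itself be split by tip height, and the shallow part needs an input beyond moments.
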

Another question that was raised concerns the typical asymptotic growth of the size of the cluster level sets $\cC([-v, 0])$. As was pointed out in~\cite{CHL20}, the first moment asymptotics in~\eqref{e:29} {\em do not} capture the asymptotic typical size of level set $[-v, 0]$, as the mean is the result of an unusually large level set size, occurring with a small (vanishing with $v$) probability. Nevertheless, no study of this question beyond moment analysis was carried out in that work.

This problem was addressed in the physics literature by~\cite{Munier2, Munier1}, where based on simulations and heuristic arguments the authors predicted that 
\begin{equation}
	\cC([-v,0]) \approx \rme^{\sqrt{2}v - \Theta(v^{2/3})} \,,
\end{equation}
where $\Theta(v^{2/3})$ is a random quantity of order $v^{2/3}$ whose law (scaled by $v^{2/3}$) was roughly conjectured, and $\approx$ means that the logs of both sides are asymptotically equivalent as $v \to \infty$. Our second result proves this conjecture and identifies the law of this random quantity.
\begin{thm}\label{2nd_result} Let $\cC \sim \nu$. There exists an almost surely positive and finite random variable $\zeta$ such that
	\begin{equation}\label{eq:2nd_result}
		\frac{\log \cC([-v, 0]) -\sqrt{2} v }{v^{\frac{2}{3}}} \underset{v \to \infty} \Longrightarrow -\zeta \,.
	\end{equation}
Moreover, the limit $\zeta$ may be realized as
\begin{equation}\label{chi_def}
	\zeta := \inf_{s > 0} \Big(\sqrt{2}\, Y_s + \frac{1}{2s}\Big) \,,
\end{equation}
where $(Y_s)_{s \geq 0}$ is a Bessel-3 process starting from $0$.
\end{thm}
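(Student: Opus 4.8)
\emph{Proof proposal.}
The plan is to work with a concrete representation of a cluster $\cC\sim\nu$. Recalling that $\nu$ is the limit, as $r\to\infty$, of the law of a BBM of duration $r$ conditioned to have a near‑critical maximum and viewed from its (asymptotically unique) top particle, the associated spinal decomposition (see e.g.\ \cite{ABBS2013,ABK_E}) shows that $\cC$ may be built as follows. There is a \emph{spine}; reading it backward from the top particle (placed at relative height $0$), the drop of the spine a backward time $s$ into the past equals $\sqrt2\,s+Y_s$ up to corrections of size $O(\log s)$, where $(Y_s)_{s\ge0}$ is a three–dimensional Bessel process issued from $0$ — this is the entropic repulsion near the tip. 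Conditionally on the spine, independent BBM's are grafted onto it at the points of a unit–rate Poisson process on $(0,\infty)$; the one grafted a backward time $s$ into the past is run for time $s$ and is started from the spine height $-(\sqrt2 s+Y_s)$ (subject to a conditioning that it not exceed level $0$, which will be asymptotically immaterial below). Since the spine particles themselves contribute only $O(v)$ atoms to $[-v,0]$, we get $\cC([-v,0])=\sum_i N^{(v)}_{s_i}+O(v)$, where the $s_i$ are the graft times and $N^{(v)}_s$ counts the particles of the BBM grafted at time $s$ lying at relative height $\ge-v$. Everything then rests on the many–to–one identity: conditionally on the spine, $\mathbb E\big[N^{(v)}_s\big]=e^{s}\,\mathbb P\!\big(B_s\ge \sqrt2 s+Y_s-v\big)$, which for $\sqrt s\ll v-Y_s\ll s$ is, up to a $\mathrm{poly}(v)$ factor, $\exp\!\big(\sqrt2(v-Y_s)-\tfrac{(v-Y_s)^2}{2s}\big)$.

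For the upper bound I would condition on the spine and the graft times and apply Markov's inequality to $\cC([-v,0])$. By the above,
\[
\mathbb E\big[\cC([-v,0])\,\big|\,\text{spine}\big]
\ \asymp\ \mathrm{poly}(v)\cdot e^{\sqrt2 v}\int_0^\infty e^{-h(s)}\,\mathrm ds,
\qquad
h(s):=\sqrt2\,Y_s+\tfrac{(v-Y_s)^2}{2s}.
\]
Now $h$ blows up as $s\to0$ (because $(v-Y_s)^2/2s\to\infty$) and as $s\to\infty$ (because $\sqrt2\,Y_s\to\infty$, by transience of $Y$), so its infimum is attained, at a backward time $s^\star\asymp v^{4/3}$; moreover Brownian scaling, $\big(Y_{v^{4/3}t}\big)_{t\ge0}\overset{d}{=}\big(v^{2/3}Y_t\big)_{t\ge0}$, gives $h(v^{4/3}t)=v^{2/3}\big(\sqrt2\,Y_t+\tfrac1{2t}+O(v^{-1/3})\big)$ uniformly on compacts of $(0,\infty)$, whence $\inf_{s>0}h(s)=v^{2/3}(\zeta+o(1))$ with $\zeta:=\inf_{t>0}\big(\sqrt2\,Y_t+\tfrac1{2t}\big)$. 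A Laplace estimate yields $\mathbb E[\cC([-v,0])\mid\text{spine}]=\mathrm{poly}(v)\,e^{\sqrt2 v-v^{2/3}\zeta}$, and conditional Markov gives $\log\cC([-v,0])\le \sqrt2 v-v^{2/3}\zeta+O(\log v)$ with probability tending to $1$.

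For the matching lower bound I would, conditionally on the spine, single out $s^\star\asymp v^{4/3}$ realizing the infimum and keep only the grafts whose times lie in a window of width of order $v^{2/3}$ around $s^\star$ — there $h$, hence the conditional mean of $N^{(v)}_s$, varies by only a bounded factor, while the window contains of order $v^{2/3}\to\infty$ graft points. Conditionally on the spine these grafted BBM's are independent, so it suffices to bound the second moment of one $N^{(v)}_s$: a standard truncated–second–moment computation for the level sets of BBM (restricting to trajectories staying below $0$ and not plunging too low) gives $\mathbb E\big[(N^{(v)}_s)^2\mid\text{spine}\big]\le C\,\big(\mathbb E[N^{(v)}_s\mid\text{spine}]\big)^2$ uniformly in the relevant range. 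Summing over the window and using independence, the sum concentrates around its conditional mean, which is $\ge e^{\sqrt2 v-v^{2/3}\zeta}$; hence $\log\cC([-v,0])\ge \sqrt2 v-v^{2/3}\zeta-o(v^{2/3})$ with probability tending to $1$.

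Combining the two bounds, $\big(\log\cC([-v,0])-\sqrt2 v\big)/v^{2/3}+\zeta_v\to0$ in probability, where $\zeta_v:=v^{-2/3}\inf_{s>0}h(s)$; and the exact Brownian scaling of $Y$ shows $\zeta_v\overset{d}{=}\inf_{t>0}\big(\sqrt2 Y_t+\tfrac1{2t}\big)=\zeta$ for \emph{every} $v$, so by Slutsky $\big(\log\cC([-v,0])-\sqrt2 v\big)/v^{2/3}\Rightarrow-\zeta$, with $0<\zeta<\infty$ a.s.\ since $\sqrt2 Y_s+\tfrac1{2s}\to\infty$ at both ends of $(0,\infty)$ yet is $\le\sqrt2 Y_1+\tfrac12<\infty$ at $s=1$. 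The hard part, I expect, will be making the first paragraph rigorous: pinning down the exact spinal representation of $\nu$ (with the Bessel‑$3$ spine) and transferring level‑set asymptotics from the $r$‑truncated conditioned BBM to the limit object, together with making the first‑ and second‑moment estimates for $N^{(v)}_s$ genuinely uniform over the wide range $s\in[\varepsilon v^{4/3},\,v^{4/3}\,\mathrm{polyloglog}\,v]$ and all relevant values of $Y_s$, including the logarithmic corrections and the harmless conditioning on the grafts not exceeding $0$.
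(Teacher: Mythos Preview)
Your approach mirrors the paper's: both use the strong spinal representation of $\nu$ with a Bessel-3 backbone (the paper packages this as Theorem~\ref{p:1}), truncated first/second moment estimates for the grafted BBM level sets giving $\cE_s([-v,\infty))=\exp\big(\sqrt2 v-\tfrac{v^2}{2s}+O(\log s)\big)$, a Laplace-type optimization identifying $s^\star\asymp v^{4/3}$, and Brownian scaling of $Y$ to extract $\zeta$. One small slip: after scaling, $\zeta_v\overset{d}{=}\inf_{t>0}\big(\sqrt2\,Y_t+\tfrac{(1-v^{-1/3}Y_t)^2}{2t}\big)$, which is not exactly $\zeta$ for finite $v$ --- you only get $\zeta_v\to\zeta$ (as your own $O(v^{-1/3})$ expansion two lines earlier already shows), but this is all Slutsky needs, so the conclusion is unaffected.
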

We remark that the law of $\zeta$ differs from its conjectured form in~\cite{Munier2}, and arises from an intrinsic fine scale optimization problem which was not visible in the latter work. As an immediate consequence of \eqref{eq:2nd_result},
\begin{equation}
	\frac{\log \cC([-v, 0])}{\sqrt{2} v} \overset{\bbP}{\underset{v\to\infty}{\longrightarrow}}
	1 \,.
\end{equation}
In fact, a straightforward modification of the proof of Theorem~\ref{2nd_result} strengthens this to
\begin{prop}
\label{p:2}
Let $\cC \sim \nu$. Then, almost surely, 
\begin{equation}
	\frac{\log \cC([-v, 0])}{\sqrt{2} v} {\underset{v\to\infty}{\longrightarrow}}
	1 \,.
\end{equation}
\end{prop}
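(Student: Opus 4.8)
\noindent\textbf{Proof proposal for Proposition~\ref{p:2}.}
The plan is to combine a Borel--Cantelli argument along the integers with the monotonicity of $v\mapsto\cC([-v,0])$. This map is nondecreasing, a.s.\ finite for each $v$ by~\eqref{e:29}, and bounded below by $1$ (the local maximum of the cluster contributes the atom at $0$), so for $v\in[n,n+1]$ we have
\begin{equation*}
\frac{\log\cC([-n,0])}{\sqrt2\,(n+1)}\;\le\;\frac{\log\cC([-v,0])}{\sqrt2\,v}\;\le\;\frac{\log\cC([-(n+1),0])}{\sqrt2\,n}\,.
\end{equation*}
Since $(n+1)/n\to1$, it suffices to prove $\log\cC([-n,0])/(\sqrt2\,n)\to1$ a.s., and for that it is enough that, for every $\eps>0$, the series $\sum_{n\ge1}\bbP\big(|\log\cC([-n,0])-\sqrt2\,n|>\eps\,n\big)$ converges (then take $\eps=1/k$, $k\in\bbN$). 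The upper half of this requires nothing beyond the first moment~\eqref{e:29}: by Markov's inequality,
\begin{equation*}
\bbP\Big(\log\cC([-n,0])>(\sqrt2+\eps)n\Big)\le\rme^{-(\sqrt2+\eps)n}\,\bbE\,\cC([-n,0])\le C\,\rme^{-\eps n}\,,
\end{equation*}
which is summable and already yields $\limsup_{v\to\infty}\log\cC([-v,0])/(\sqrt2\,v)\le1$ almost surely.

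For the lower half we reuse the machinery behind Theorem~\ref{2nd_result}. That proof controls $\log\cC([-v,0])$ through a discretized, finite-horizon version $\zeta_v$ of the optimization in~\eqref{chi_def} (an infimum over a discrete range of times $s$ of a random-walk path plus the deterministic curve $s\mapsto\tfrac1{2s}$), showing that $v^{-2/3}\big(\sqrt2\,v-\log\cC([-v,0])\big)$ is, up to negligible errors, $\zeta_v$, and that $\zeta_v\Rightarrow\zeta$. To upgrade the resulting in-probability statement to an a.s.\ one, the idea is to extract from the same truncated first- and second-moment estimates a genuine tail bound
\begin{equation*}
\bbP\Big(\log\cC([-v,0])<\sqrt2\,v-a\,v^{2/3}\Big)=\bbP\big(\zeta_v>a\big)\le C\,\rme^{-c\,a^{\gamma}}
\end{equation*}
for some $\gamma>0$, valid uniformly over $v\ge1$ and $0<a\le v^{1/3}$; this is the finite-$v$ counterpart of the (stretched-)exponential decay of $\bbP(\zeta>a)$, which quantifies the cost of forcing the Bessel-$3$ path $(Y_s)$ to stay above $s\mapsto\tfrac1{\sqrt2}\big(a-\tfrac1{2s}\big)$ for all $s>0$. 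Taking $a=\eps\,v^{1/3}$ bounds $\bbP\big(\log\cC([-v,0])<(\sqrt2-\eps)v\big)$ by $C\exp(-c\,\eps^{\gamma}v^{\gamma/3})$, which is summable over $v=n\in\bbN$; hence $\liminf_{v\to\infty}\log\cC([-v,0])/(\sqrt2\,v)\ge1$ a.s., and together with the upper bound this completes the proof.

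The main obstacle is the $v$-uniformity of this lower-tail estimate. The proof of Theorem~\ref{2nd_result} is organized so as to produce convergence in distribution, hence only tightness of $v^{-2/3}\big(\sqrt2\,v-\log\cC([-v,0])\big)$, whereas the Borel--Cantelli step needs a decay rate in $a$ that does not deteriorate as $v\to\infty$. Concretely, one must check that the ballot-/barrier-type estimate bounding the probability that the relevant random-walk path stays above the curve $a\,v^{2/3}-\tfrac{v^{4/3}}{2s}$ (the discrete analogue of the defining event of $\zeta$ in~\eqref{chi_def}) degrades at a fixed rate in $a$ with constants independent of the time horizon $v$; this is routine but must be tracked through the moment computations that drive the proof of Theorem~\ref{2nd_result}. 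All other steps — the monotonicity squeeze and the Markov bound for the upper tail — are immediate.
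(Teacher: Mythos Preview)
Your approach is sound, and the upper bound via Markov's inequality on the first moment~\eqref{e:29} is clean and complete. For the lower bound, your strategy of extracting a uniform-in-$v$ tail estimate for $\zeta_v$ and then applying Borel--Cantelli would work, and you correctly flag the uniformity in $a$ up to $a=\eps v^{1/3}$ as the technical crux.

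The paper does not spell out a proof but says it is a ``straightforward modification'' of the proof of Theorem~\ref{2nd_result}. Since that proof proceeds via the strong representation (Theorem~\ref{p:1}), in which $\cC$ is realised \emph{almost surely} as an integral against a Bessel-3-like backbone $\wc W$, the intended modification is arguably more direct than your Borel--Cantelli-via-tail-bound route. One can use the almost sure Bessel-3 envelope of Lemma~\ref{D-E} (namely $Y_s\le 3(s\log\log s)^{1/2}$ eventually) together with~\eqref{e:2.3} to bound, for a.e.\ fixed $\omega$ and all large $v$, the optimisation $\min_s\big(\sqrt2\,Y_s+v^2/(2s)\big)$ appearing in the heuristic~\eqref{e:101.30} by $O\big(v^{2/3}(\log\log v)^{1/3}\big)$. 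This yields the a.s.\ lower bound $\log\cC([-v,0])\ge\sqrt2\,v-o(v)$ without first compressing everything into a scalar tail estimate. The remaining step---ensuring that at least one decoration $\wc{\cE}_{s,\wc W_s}$ near the optimal $s$ is not anomalously small---does still involve a Borel--Cantelli argument, but now over the \emph{independent} decorations at Poisson time-stamps~\eqref{e:101.14}, which is routine.

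In short: both routes arrive at the same place. Your tail-bound route keeps the Borel--Cantelli structure explicit but forces you to chase constants through the moment computations in the regime $a\asymp v^{1/3}$; the strong-representation route exploits that $\cC$ is already built pathwise and reads off the $o(v)$ correction directly from almost-sure path properties of Bessel-3.
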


The third question concerns the fluctuations in the asymptotic mass of the limiting extremal process $\cE$. 
\fi
As pointed out in~\cite{MRR22} (see discussion below Theorem 1.4 in that work), the purely analytic methods used to derive~\eqref{e:2.1} and~\eqref{e:2.1a} provide no explanation for the emergence of the $1$-stable law governing the fluctuations in the limit, nor for the logarithmic order of the second order term. We aim to elucidate the reasons for these two results, by an alternative, probabilistic derivation for the asymptotic growth of $\cE$. 

Toward this end, following~\cite{CHL19}, we define a {\em restricted} version of the limiting extremal process $\cE$ from~\eqref{e:A1.13} and its unshifted analog $\cE_\circ$ just defined. Fixing a subset $B \subseteq \bbR$ and letting $(u^k)_{k \geq 1}$, $(\cC^k)_{k \geq 1}$ be as in~\eqref{e:O.2}, we set
\begin{equation}
\cE (\; \cdot \;; B) := \sum_{k} \cC^k ( \cdot - u^k) 1_B(u^k)
\end{equation}
This process is the restriction of $\cE$ where only contributions form clusters whose maximum is in $B$ are included. The restricted version in the unshifted case is defined in the same way, only with $(u^k)_{k \geq 1}$ now enumerating the atoms of $\cE^*_\circ$.

Our main result is
\begin{thm}
\label{t:A1.5}
Let $x^+, x^-: \bbR_+ \to \bbR$ be functions which tend sub-linearly but arbitrarily slow to $\pm \infty$ respectively and abbreviate $u_* \equiv -\frac{1}{\sqrt{2}} \log u$. Then 
\begin{equation}
\label{e:A1.26}
\frac{1}{e^{\sqrt{2} u}} \bigg[ \cE_\circ \Big([-u, \infty)\;;\; u_* + [x^-_u,\, x^+_u]\Big) - \bbE\, \cE_\circ \Big([-u, \infty)\;;\; u_* + [x^-_u,\, 0]\Big) \bigg] - C_\circ \underset{u \to \infty}\Longrightarrow \tfrac{C_\star}{\sqrt{2}} \cR_1 \,,
\end{equation}
and
\begin{equation}
\label{e:A1.27}
\frac{1}{e^{\sqrt{2} u}} \bigg[ \cE_\circ \Big([-u, \infty)\;;\; u_* + [x^-_u,\, x^+_u]^\rmc\Big) - \bbE\,\cE_\circ \Big([-u, \infty)\;;\; u_* + (-\infty,\, x^-_u) \Big) \bigg] \underset{u \to \infty}{\overset{\bbP}{\longrightarrow}} 0 \,,
\end{equation}
where $C_\star$ is as in~\eqref{e:29} and $C_\circ \in \bbR$ is a universal constant.
\end{thm}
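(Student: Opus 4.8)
# Proof Proposal for Theorem~\ref{t:A1.5}

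The plan is to decompose the restricted extremal process $\cE_\circ([-u,\infty)\,;\,u_*+I)$, for an interval $I$, as a sum over the atoms $(u^k, \cC^k)$ of $\cE^*_\circ \otimes \nu$ of the cluster contributions $\cC^k([-u-u^k,0])$, weighted by $1_{u_*+I}(u^k)$. Conditionally on the Poisson process $\cE^*_\circ$, this is a sum of independent (but not identically distributed) random variables; the key structural input is the first-moment asymptotics~\eqref{e:29}, $\bbE\,\cC([-v,0]) = C_\star e^{\sqrt 2 v}(1+o(1))$, together with finer control on the upper tail of $\cC([-v,0])/e^{\sqrt 2 v}$. The point of the window $u_* + [x^-_u, x^+_u]$ around $u_* = -\tfrac{1}{\sqrt 2}\log u$ is precisely that clusters with maximum near $u_*$ contribute a single point of order $e^{\sqrt 2 u}$ to the level set $[-u,\infty)$ on average (since $e^{\sqrt 2(u + u_*)} = e^{\sqrt 2 u}/u$ times the Poisson intensity $e^{-\sqrt 2 u^k}\rmd u^k \sim u\, e^{\sqrt 2|u_*|}$ balances out), so the contributions from that window are the ones responsible for the stable fluctuations, while the complement contributes only to the deterministic centering — this is the content of~\eqref{e:A1.27}.

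The main steps I would carry out are as follows. \emph{Step 1: Reduce to a triangular array.} Fix $u$ and condition on $\cE^*_\circ$. Write $N_u := \cE_\circ([-u,\infty)\,;\,u_*+[x^-_u,x^+_u])/e^{\sqrt 2 u}$ as $\sum_{k:\, u^k \in u_*+[x^-_u,x^+_u]} W^k_u$ where $W^k_u := e^{-\sqrt 2 u}\cC^k([-u-u^k,0])$. Using~\eqref{e:29} and a uniform (in the relevant range of $v$) tail bound of the form $\bbP(\cC([-v,0]) > t\, e^{\sqrt 2 v}) \asymp 1/t$ for large $t$ — which should follow from the moment estimates in~\cite{CHL19} or can be extracted from the known heavy-tailed behaviour of cluster sizes — one shows that the array $\{W^k_u\}$ has individual terms that are asymptotically negligible and whose truncated sums converge. \emph{Step 2: Identify the limit via the Lévy–Khintchine/stable CLT for triangular arrays.} By the classical criterion for convergence of sums of independent infinitesimal random variables to an infinitely divisible law, it suffices to verify that (i) the number of atoms $u^k$ in the window, which is Poisson with mean $\sim \int_{u_*+[x^-_u,x^+_u]} e^{-\sqrt 2 r}\rmd r$, grows at the right rate; (ii) the centered truncated first moments converge to a finite constant $C_\circ$; (iii) the truncated second moments vanish (this is where the $1$-stable rather than a Gaussian limit appears: the tail $1/t$ makes the variance diverge logarithmically, forcing the $\lambda\log\lambda$ exponent); and (iv) the tail measure $\sum_k \bbP(W^k_u > t)$ converges to the Lévy measure of $\tfrac{C_\star}{\sqrt 2}\cR_1$, namely $\tfrac{C_\star}{\sqrt 2}\, t^{-2}\rmd t$ on $(0,\infty)$. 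The Laplace transform~\eqref{e:A1.25} of $\cR_1$ is consistent with this Lévy measure, so matching it pins down the limit. \emph{Step 3: Pass from the conditional to the unconditional statement}, using that the conditional law given $\cE^*_\circ$ converges in probability (in the appropriate metric on characteristic functions) to the deterministic limit law, hence so does the unconditional law.

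For~\eqref{e:A1.27}, I would split the complement into $(-\infty, u_*+x^-_u)$ and $(u_*+x^+_u,\infty)$. For the left part, clusters have maxima well below $u_*$, each contributing a typically large but comparatively thin-tailed amount; here the centering by the conditional mean $\bbE\,\cE_\circ([-u,\infty)\,;\,u_*+(-\infty,x^-_u))$ should suffice to make the fluctuations $o(e^{\sqrt 2 u})$ in probability — a second-moment (Chebyshev) estimate, using that $\sum_k \Var(\cdots)$ over this range is $o(e^{2\sqrt 2 u})$ because the per-cluster contributions are heavily damped by the Poisson intensity $e^{-\sqrt 2 u^k}$ with $u^k \ll u_*$. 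For the right part, clusters have maxima above $u_*+x^+_u$; since $x^+_u \to \infty$, the expected number of such atoms with nonempty contribution to $[-u,\infty)$ is $o(1)$ after accounting for the cluster tail, so this part is negligible even without centering. I expect the main obstacle to be \textbf{Step 1}: obtaining the sharp two-sided tail estimate $\bbP(\cC([-v,0]) > t e^{\sqrt 2 v}) = (c + o(1))/t$ \emph{uniformly} for $t$ in a $v$-dependent range wide enough to cover the truncation levels needed in the triangular-array analysis. The moment bounds of~\cite{CHL19} give the right order but one needs the precise constant and uniformity; I anticipate this requires revisiting the recursive/spine decomposition of a cluster under $\nu$ (the cluster being, roughly, a critical BBM conditioned to have its maximum at $0$, decorated along the spine) and running a careful Laplace-transform asymptotic analysis, which is the probabilistic heart of why $C_\star$ and the $1$-stable law arise. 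The exact alignment of constants — in particular recovering the $\sqrt 2\log u$ second-order term in~\eqref{e:2.1a} as the logarithmic divergence of the truncated mean in Step 2(ii) — will also need care, but is bookkeeping once the tail estimate is in hand.
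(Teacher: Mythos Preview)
Your overall architecture is sound and matches the paper's in spirit: decompose the restricted level set as a marked PPP, identify a heavy-tailed contribution from each cluster, and deduce the spectrally positive $1$-stable limit; your plan for~\eqref{e:A1.27} (Chebyshev on the left tail, negligibility on the right) is also essentially what the paper does. However, there is a genuine gap in what you single out as the ``main obstacle''.

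The tail estimate you aim for, $\bbP(\cC([-v,0]) > t\,e^{\sqrt 2 v}) = (c+o(1))/t$, is \emph{false}. The correct scaling is at level $y\cdot v\,e^{\sqrt 2 v}$ rather than $t\,e^{\sqrt 2 v}$: writing $X(v) := \cC([-v,0])/(v e^{\sqrt 2 v})$, what actually holds (and is the paper's Theorem~\ref{stable_key_lemma}) is
\[
v\,\bbP\big(X(v) > y\big) \underset{v\to\infty}{\longrightarrow} \Gamma\big((y,\infty)\big)
\]
for a nontrivial infinite measure $\Gamma$ on $(0,\infty)$ with $\int y\,\Gamma(\rmd y)=C_\star$ and $\Gamma((y,\infty))\lesssim y^{-2}\wedge(\log y^{-1})^{5/2}$. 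In particular, for fixed $t$ one has $\bbP(\cC([-v,0])>t e^{\sqrt 2 v})=\bbP(X(v)>t/v)$, which is of order $(\log v)^{5/2}/v$, not $c/t$. The $t^{-2}$ L\'evy measure does \emph{not} come from a power-law tail of a single cluster; it emerges only \emph{after} integrating the general measure $\Gamma$ against the exponential PPP intensity. Concretely, after the change of variables $\tau=e^{\sqrt 2 x}$ (so $\cE^*_u$ becomes PPP$(\tfrac{1}{\sqrt 2}\tau^{-2}\rmd\tau)$), the product law of $\tau$ and the cluster mark $\chi\sim\Gamma$ has tail $\bbP(\tau\chi>a)=\int\Gamma(\rmd y)\int 1_{\tau>a/y}\,\tau^{-2}\rmd\tau=a^{-1}\int y\,\Gamma(\rmd y)=C_\star/a$; this Fubini step is where the stable tail appears. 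So your Step~1 would send you after a statement that does not hold, and your Step~2(iv) would only go through once the correct object $\Gamma$ is in hand.

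Methodologically, the paper does not condition on $\cE^*_\circ$ and invoke triangular-array criteria; since the underlying point process is Poisson, the Laplace transform of the (compensated) sum is explicit, and the whole proof reduces to showing that the Laplace exponent $\phi_u(\lambda)$ converges to $\tfrac{1}{\sqrt 2}\int_0^\infty(1-e^{-\lambda z}-\lambda z 1_{z\le\rho})\,C_\star z^{-2}\rmd z$. This sidesteps your Step~3 entirely and isolates the real difficulty as proving the vague convergence of $v\,\bbP(X(v)\in\cdot)$ to $\Gamma$, together with moment/tail bounds (Lemmas~\ref{l:A5.2}--\ref{improved_estimates_X}) uniform enough to dominate the integrals. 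Your instinct that the hard part is a sharp tail asymptotic for the cluster is right, but the target is Theorem~\ref{stable_key_lemma}, not the $c/t$ estimate you propose.
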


Combining them together, we immediately get
\begin{cor}
\label{c:A1.6}With $\cR_1$ as in~\eqref{e:A1.25} and $C_\circ \in \bbR$ is as in Theorem~\ref{t:A1.5}.
\begin{equation}
\label{e:A1.29}
\frac{1}{e^{\sqrt{2} u}} \bigg[
\cE_\circ \big([-u, \infty)\big) - \bbE\,\cE_\circ  \Big([-u, \infty)\;;\; \big[-u, -\tfrac{1}{\sqrt{2}} \log u\big)\Big) \bigg] - C_\circ\underset{u \to \infty}\Longrightarrow \tfrac{C_\star}{\sqrt{2}} \cR_1 \,,
\end{equation}
\end{cor}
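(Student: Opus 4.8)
The plan is to derive Corollary~\ref{c:A1.6} by simply adding together the two limit statements of Theorem~\ref{t:A1.5}, after checking that the event-sets and the centering constants match up. First I would note the elementary decomposition
\begin{equation}
\cE_\circ\big([-u,\infty)\big) = \cE_\circ\Big([-u,\infty)\;;\; u_* + [x^-_u, x^+_u]\Big) + \cE_\circ\Big([-u,\infty)\;;\; u_* + [x^-_u, x^+_u]^{\rmc}\Big)\,,
\end{equation}
valid because the two index sets $\{k : u^k \in u_* + [x^-_u,x^+_u]\}$ and its complement partition all clusters. Dividing by $e^{\sqrt 2 u}$ and subtracting the obvious centering, the left-hand side of \eqref{e:A1.29} is the sum of the bracketed quantity in \eqref{e:A1.26} and the bracketed quantity in \eqref{e:A1.27}, provided the two expectation terms add up to $\bbE\,\cE_\circ\big([-u,\infty)\;;\;[-u,-\tfrac{1}{\sqrt2}\log u)\big)$. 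This is where a small bookkeeping step is needed: $\bbE\,\cE_\circ([-u,\infty)\;;\;u_*+[x^-_u,0]) + \bbE\,\cE_\circ([-u,\infty)\;;\;u_*+(-\infty,x^-_u))$ equals $\bbE\,\cE_\circ([-u,\infty)\;;\;u_*+(-\infty,0])$, which — since $u_* = -\tfrac{1}{\sqrt2}\log u$ and the left endpoint is effectively $-u$ because clusters supported on $[-u,\infty)$ with maximum below $-u$ contribute zero — is exactly the centering in \eqref{e:A1.29}. So the two centerings are consistent and the constant $C_\circ$ carries over verbatim.

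Next I would invoke a Slutsky-type argument: \eqref{e:A1.26} gives weak convergence of the first summand to $\tfrac{C_\star}{\sqrt2}\cR_1$, while \eqref{e:A1.27} gives convergence of the second summand to $0$ in probability. Since convergence in probability to a constant combined with weak convergence of another sequence implies joint weak convergence of the pair (the limit vector being $(\tfrac{C_\star}{\sqrt2}\cR_1, 0)$), the continuous mapping theorem applied to addition yields that the sum converges weakly to $\tfrac{C_\star}{\sqrt2}\cR_1$. That is precisely \eqref{e:A1.29}.

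There is essentially no real obstacle here — the corollary is a formal consequence of the theorem — but the one place to be careful is the identity relating the two expectation centerings, since the intervals in \eqref{e:A1.26} and \eqref{e:A1.27} are $[x^-_u,0]$ and $(-\infty,x^-_u)$ respectively (not $[x^-_u,x^+_u]$ and its complement), and one must confirm that $\bbE\,\cE_\circ([-u,\infty)\;;\;u_*+[0,x^+_u])$ and $\bbE\,\cE_\circ([-u,\infty)\;;\;u_*+(x^+_u,\infty))$ do not contribute. The former is absorbed (it is the discrepancy between the centering and the actual truncated mass over $[x^-_u,x^+_u]$, already accounted for inside the bracket of \eqref{e:A1.26}), and the latter is handled inside \eqref{e:A1.27}; so in fact the decomposition is self-consistent by design and no separate estimate is required. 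I would therefore present the proof as: (i) partition the clusters, (ii) match the centerings via additivity of the expectation over the partition of $\bbR$ into $(-\infty,0]$ and $(0,\infty)$ shifted by $u_*$, (iii) apply Slutsky. The whole argument is a few lines.
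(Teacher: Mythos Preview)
Your proposal is correct and follows exactly the approach the paper takes: the paper's proof of Corollary~\ref{c:A1.6} is literally the one-line remark ``Immediate from the two statements of Theorem~\ref{t:A1.5},'' which is precisely your partition-plus-Slutsky argument. Your bookkeeping on the centerings (that the two expectation terms sum to the mean over $u_*+(-\infty,0]$, which coincides with $[-u,u_*)$ since clusters with tip below $-u$ contribute zero and the PPP has no atom at $u_*$) is correct, though the digression about $u_*+[0,x^+_u]$ and $u_*+(x^+_u,\infty)$ is unnecessary --- those intervals never enter the centering, so there is nothing to check there.
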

A version of this corollary for the original process $\cE$ is
\begin{cor}
\label{c:A1.7}
With $\cR_t$ as in~\eqref{e:A1.25} and $C_\circ$ as in Theorem~\ref{t:A1.5},
\begin{equation}
\label{e:A1.30}
\frac{1}{e^{\sqrt{2} u}} \bigg[
\cE \big([-u, \infty)\big) - \bbE \Big[\cE\Big([-u, \infty)\;;\; \big[-u, -\tfrac{1}{\sqrt{2}} \log u\big)\Big) \, \Big|\, Z \Big]\bigg] - C_\circ Z \underset{u \to \infty}\Longrightarrow \tfrac{C_\star}{\sqrt{2}} \cR_Z \,.
\end{equation}
\end{cor}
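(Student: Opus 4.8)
The plan is to deduce Corollary~\ref{c:A1.7} from Corollary~\ref{c:A1.6} by conditioning on $Z$ and using the ``same $Z$'' shift relation in~\eqref{e:A1.10}. First I would realize $\cE$ jointly with $Z$ as follows: take $\cE^*_\circ = \sum_k \delta_{u^k} \sim {\rm PPP}(\rme^{-\sqrt{2}u}\,\rmd u)$ and an i.i.d.\ sequence $(\cC^k)_{k\ge1}$ of clusters with law $\nu$, both independent of $Z$, and set $\cE_\circ := \sum_k \cC^k(\,\cdot - u^k)$ and $\cE := \sum_k \cC^k(\,\cdot - u^k - \tfrac{1}{\sqrt{2}}\log Z)$. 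By~\eqref{e:O.2} and~\eqref{e:A101.1}, $\cE_\circ$ is then the unshifted limiting extremal process, it is independent of $Z$, and for all Borel $A,B\subseteq\bbR$,
\begin{equation}
\label{e:plan-coup}
\cE(A;B) = \cE_\circ\big(A-\tfrac{1}{\sqrt{2}}\log Z\,;\ B-\tfrac{1}{\sqrt{2}}\log Z\big)\,.
\end{equation}
In particular $\cE([-u,\infty)) = \cE_\circ([-w,\infty))$ with $w := u+\tfrac{1}{\sqrt{2}}\log Z$, and, taking expectations in~\eqref{e:plan-coup}, $\bbE\big[\cE(A;B)\,\big|\,Z=z\big] = \bbE\,\cE_\circ\big(A-\tfrac{1}{\sqrt{2}}\log z\,;\ B-\tfrac{1}{\sqrt{2}}\log z\big)$ for every $z>0$.

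Next I would fix $z>0$, put $w = w(u) = u+\tfrac{1}{\sqrt{2}}\log z$, and apply Corollary~\ref{c:A1.6} with $u$ replaced by $w$; this is legitimate since, conditionally on $Z=z$, the process $\cE_\circ$ keeps its unconditional law while $w\to\infty$ and differs from $u$ by the fixed constant $\tfrac{1}{\sqrt{2}}\log z$. Using $\rme^{\sqrt{2}w} = z\,\rme^{\sqrt{2}u}$ and~\eqref{e:plan-coup}, and then multiplying by $z$, this gives, conditionally on $Z=z$,
\begin{equation}
\label{e:plan-step1}
\frac{1}{\rme^{\sqrt{2}u}}\Big[\cE\big([-u,\infty)\big) - \bbE\,\cE_\circ\big([-w,\infty)\,;\ [-w,\,-\tfrac{1}{\sqrt{2}}\log w)\big)\Big] - C_\circ z \ \underset{u\to\infty}\Longrightarrow\ \tfrac{C_\star}{\sqrt{2}}\,z\,\cR_1\,.
\end{equation}

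It remains to replace the centering in~\eqref{e:plan-step1} by $\bbE\big[\cE([-u,\infty);[-u,-\tfrac{1}{\sqrt{2}}\log u))\,\big|\,Z=z\big]$, which by~\eqref{e:plan-coup} equals $\bbE\,\cE_\circ\big([-w,\infty);[-w,-\tfrac{1}{\sqrt{2}}\log(uz))\big)$. The two centerings differ only in the right endpoint of the restriction window, which is displaced by $\tfrac{1}{\sqrt{2}}\big(\log(uz) - \log w\big) = \tfrac{1}{\sqrt{2}}\log z + o(1)$. Writing the difference as $\bbE\,\cE_\circ$ of the process restricted to this short window, the Campbell formula together with~\eqref{e:29} gives
\[
\bbE\,\cE_\circ\big([-w,\infty);[a,b)\big) = \int_a^b \rme^{-\sqrt{2}s}\,\bbE\,\cC\big([-w-s,0]\big)\,\rmd s = C_\star\,\rme^{\sqrt{2}w}(b-a)(1+o(1)),
\]
which is valid here because $w+s\to\infty$ uniformly on the window (where $s$ is of order $-\tfrac{1}{\sqrt{2}}\log u$); hence the first centering exceeds the second by $\tfrac{C_\star}{\sqrt{2}}\,z\,\rme^{\sqrt{2}u}\log z\,(1+o(1))$. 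Feeding this into~\eqref{e:plan-step1} and using the scaling identity $z\,\cR_1 + z\log z \laweq \cR_z$, immediate from the Laplace transform~\eqref{e:A1.25}, yields for every $z>0$ the conditional convergence $\rme^{-\sqrt{2}u}\big[\cE([-u,\infty)) - \bbE[\cE([-u,\infty);[-u,-\tfrac{1}{\sqrt{2}}\log u))\,|\,Z=z]\big] - C_\circ z \Longrightarrow \tfrac{C_\star}{\sqrt{2}}\cR_z$ as $u\to\infty$. Since $(\cR_t)_{t\ge0}$ is independent of $Z$, un-conditioning on $Z$---test against bounded continuous functions and apply dominated convergence, the conditional limit holding for every $z$---then yields~\eqref{e:A1.30}.

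The main obstacle is the bookkeeping in this last step. The deterministic shift by $\tfrac{1}{\sqrt{2}}\log Z$ relating $\cE$ to $\cE_\circ$ is absorbed \emph{partly} into the (probabilistic) centering---via the displaced endpoint of the restriction window it contributes the drift $\tfrac{C_\star}{\sqrt{2}}Z\log Z$, using the precise first-order asymptotics~\eqref{e:29}---and \emph{partly} into the non-trivial $c\,\cR_1 + c\log c \laweq \cR_c$ scaling of the spectrally positive $1$-stable process. One has to verify that these two contributions combine into exactly $\tfrac{C_\star}{\sqrt{2}}\cR_Z$ with no residual deterministic term, which is precisely what the forms~\eqref{e:29} and~\eqref{e:A1.25} conspire to give. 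Everything else---the Campbell-formula estimate with uniform $o(1)$, and the measure-theoretic un-conditioning---is routine.
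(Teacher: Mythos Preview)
Your proof is correct and follows essentially the same approach as the paper: both rewrite $\cE$ as the shift of $\cE_\circ$ by $\tfrac{1}{\sqrt{2}}\log Z$, apply Corollary~\ref{c:A1.6} with $w=u+\tfrac{1}{\sqrt{2}}\log Z$ in place of $u$, estimate the difference of the two centerings via the Campbell formula and~\eqref{e:29}, and then invoke the stable scaling identity $z\cR_1+z\log z\overset{\rmd}{=}\cR_z$. Your version is in fact slightly more careful---you condition explicitly on $Z=z$, verify the scaling identity from~\eqref{e:A1.25}, and spell out the dominated-convergence step to un-condition---whereas the paper argues more tersely (and appears to carry a sign typo, writing $Z\cR_1-Z\log Z$ where your $+Z\log Z$ is the correct one).
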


Corollary~\ref{c:A1.6} and Corollary~\ref{c:A1.7} are the analogues of~\eqref{e:2.1a} and~\eqref{e:2.1}. The centering functions in the scaling limit are now meaningful, but not explicit as in~\cite{MRR22}. On the other hand, by comparing e.g.~\eqref{e:A1.29} with~\eqref{e:2.1a}, we obtain that for some $C \in \bbR$,
\begin{equation}
\frac{1}{C_\star \rme^{\sqrt{2}u}} \bbE\,\cE_\circ  \Big([-u, \infty)\;;\; \big[-u, -\tfrac{1}{\sqrt{2}} \log u\big)\Big)  = u  + \sqrt{2} \log u + C + o(1) \,,
\end{equation}
with $o(1) \to 0$ as $u \to \infty$.

\begin{rem} Unfortunately, available estimates on $\bbE\, \cC([-v,0])$, namely~\eqref{e:29}, can only be used to show that the means in~\eqref{e:A1.29} and~\eqref{e:A1.30} are $C_\star u\rme^{\sqrt{2}u}(1+o(1))$ and $C_\star Zu\rme^{\sqrt{2}u}(1+o(1))$, respectively, which give coarser results compared to those in~\cite{MRR22}. Obtaining better estimates via the probabilistic approach would require better control over various sources of errors in the analysis of $\bbE\, \cC([-v,0])$ and falls outside the scope of this work.
\end{rem}

Going back to Theorem \ref{t:A1.5}, statements~\eqref{e:A1.26} and~\eqref{e:A1.27} together show that the random fluctuations in $\cE_\circ([-u, \infty))$ come from the contribution of those clusters whose maximum is within $\Theta(1)$ around $u_* := -\frac{1}{\sqrt{2}} \log u$. Let us now explain how the 1-stable law emerges from these fluctuations, as can be seen from the proof of this theorem. 

\subsubsection{Heuristic for 1-stable fluctuations in $\cE_\circ([-u, \infty))$}\label{s:stable_heuristic} 

At $u_x := u_* + x$ the intensity of $\cE_\circ^\star$ is $u\rme^{-\sqrt{2}x}$, so that the recentered version of $\cE_\circ^*$ has law
\begin{equation}
\label{e:A1.33a}
	\cE^*_u := \cE^*_\circ(\cdot - u_*) \sim {\rm PPP}\big(u \rme^{-\sqrt{2}x} \rmd x\big) \,.
\end{equation}
At the same time, clusters whose maximum is at height $u_x$  contribute a quantity according to the law of $\cC([-v_x , 0])$ to $\cE_\circ([-u, \infty))$, where $v_x := u+u_x = u - (1/\sqrt{2}) \log u + x$.

Now, while the mean of $\cC([-v,0])$ is $(C_\star + o(1)) \rme^{\sqrt{2}v}$ as stated in~\eqref{e:29}, it was shown in~\cite[Proposition~1.1]{CHL20} that this mean is the result of a $\Theta(v^{-1})$-probability event under which $\cC([-v,0]$ is atypically, $\Theta(v \rme^{\sqrt{2}v})$-large. A more precise version of this statement is given in Theorem~\ref{stable_key_lemma} of the present work 
and constitutes the main effort in the proof. The theorem shows that there exists
an infinite measure $\Gamma$ on $\bbR_+$, which is finite away from zero, such that
	\begin{equation}
	\label{e:A101.35}
		v \bbP\bigg(\frac{\cC([-v,0])}{v \rme^{\sqrt{2}v}} \in \cdot \bigg)
		\underset{v \to \infty}\longrightarrow \Gamma \,,
	\end{equation}
in a suitable sense. We call $\Gamma$ the {\em intensity measure of unusually large clusters}s.

For simplicity of presentation, let us first pretend that $\Gamma$ is a finite measure, so that it can be normalized into a probability measure $\wh{\Gamma}$.
Then, specifying~\eqref{e:A101.35} to $v=v_x$, we see that a cluster at $u_x$ 
contributes $v_x \rme^{\sqrt{2}v_x} \chi  = \rme^{\sqrt{2}u} \rme^{\sqrt{2}x} \chi (1+o(1))$, ``with probability'' $|\Gamma| v_x^{-1} = |\Gamma| u^{-1}(1+o(1))$, 
where $\chi$ is chosen according to $\wh{\Gamma}$.
Combined with~\eqref{e:A1.33a} we thus have
\begin{equation}
\label{e:A1.36}
\begin{split}
\frac{1}{e^{\sqrt{2} u}} \bigg[ \cE_\circ \Big([-u, \infty)\;;\; u_* + [x^-_u,\, x^+_u]\Big)\bigg] & = (1+o(1)) \int_{x_u^-}^{x_u^+} \rme^{\sqrt{2} x} \,\frac{\cC_x([-v_x,\,0])}{v_x \rme^{\sqrt{2}v_x}}\,\cE_\circ^*\big(u_* + \rmd x\big) \\
& \cong C \int_{t_u^-}^{t_u^+} t\, \chi_t \cN(\rmd t) \,,
\end{split}
\end{equation}
where $\cN \sim {\rm PPP}\big((1/\sqrt{2}) t^{-2} \rmd t\big)$, 
$t^\pm_u = \rme^{\sqrt{2} x_u^\pm}$, $(\chi_t)_{t \geq 0}$ are i.i.d. ``copies'' of $\chi$, and we have employed the change of variables $t = \rme^{\sqrt{2}x}$.

As $t^+_u$ and $t^-_u$ tend to $\infty$ and zero respectively as $u \to \infty$, a standard Laplace Transform computation shows that after subtracting a compensating term, for example, 
\begin{equation}
\label{e:A1.37}
	\bbE \int_{t_u^-}^{0} t\, \chi_t \cN(\rmd t) 
	\approx \frac{1}{e^{\sqrt{2} u}} \bbE\, \cE_\circ \Big([-u, \infty)\;;\; u_* + [x^-_u,\, 0]\Big) \,,
\end{equation}
the last integral in~\eqref{e:A1.36} tends weakly to a spectrally positive stable-$1$ random variable.
While $\Gamma$ is, in reality, infinite, we show that it obeys $\Gamma ((y, \infty)) \lesssim y^{-2} \wedge  (\log y^{-1})^{5/2}$, which provides sufficient control on its growth and decay near zero and infinity, respectively. This allows for a similar derivation as in the finite case to go through.

Lastly, we draw attention to the fact that the mean of $\cE_\circ \big([-u, \infty)\;;\; u_* + (0, x_u^+]\big)$ is absent from the centering function in~\eqref{e:A1.26}. While one can replace $0$ with any other constant in~\eqref{e:A1.26} (and then adjust $C_\circ$ accordingly), replacing $0$ by $x_u^+$ (or any other function $x_u^{+\prime}$ that grows with $u$) will necessitate a growing function in $u$ in place of the constant $C_\circ$. As in the case of~\eqref{e:A1.37}, the mean $\cE_\circ \big([-u, \infty)\;;\; u_* + (x_u^{+\prime}, x_u^+]\big)$ does not capture the typical value of this quantity, which tends to $0$ in probability with $u$.

\subsubsection{Other log-correlated fields and universality}
\ifonlystable
Let us briefly remark that another advantage of the probabilistic-constructive argument for Theorem~\ref{t:A1.5} is that, unlike the PDE approach, it applies to other log-correlated fields. Indeed, many such models are shown or conjectured to exhibit similar extreme order statistics. In particular, they should all have the same form for the limiting extremal process and a similar description for the cluster law. We thus expect that an analog of Theorem~\ref{t:A1.5} will hold for such models as well, i.e. that the Stable-$1$ fluctuations are also a universal feature of the extreme values of log-correlated fields. 
\else
Let us briefly remark that we expect the arguments for all results in this manuscript to hold for many other log-correlated fields. Indeed, all such fields are either shown or conjectured to have the same extreme order statistics. In particular, they should all have the same form for the limiting extremal process (as a randomly shifted decorated PPP) and a similar description for the cluster law (as a decorated backbone random walk/BM). These two features are the key reason for the results in this manuscript to hold. We thus expect that all results in this manuscript will hold for all other log-correlated fields, namely that they are universal extreme value features of log-correlated fields.
\fi

This ``prediction'' should be rather easy to verify in the cases of the discrete Gaussian free field (DGFF) on planar domains and the branching random walk (BRW) where the theory of extreme order statistics is fully developed (see, e.g.,~\cite{BiskupNotes} for the DGFF case and~\cite{Shi} for the BRW case) and the technology is sufficiently advanced (e.g., the concentric decomposition from~\cite{BisLou18} and the Ballot Theorems in~\cite{Ballot20}, for the case of the DGFF).
\ifonlystable
\else
This demonstrates another advantage of the probabilistic-constructive proof of Theorem~\ref{t:A1.5} here, over the PDE approach in~\cite{MRR22}. The latter relies on the connection between BBM and the F-KPP equation, which is not available for other log-correlated models, and as such cannot be used to handle other models.
\fi

\ifonlystable
\else
\subsection{Proof Outline}
\label{s:1.1}
Let us briefly discuss the ideas behind the proofs for the first two questions (Theorem~\ref{1st_result} and Theorem~\ref{2nd_result}). The proof for the third  (Theorem~\ref{t:A1.5}) was already discussed above.

\subsubsection{Theorem~\ref{1st_result}}
Starting with the first, recall that $\cE$ is obtained as a superposition of i.i.d.~clusters whose law is $\nu$ and whose tips follow the atoms of $\cE^*$, where $\cE^*$ is a PPP with an exponential density that is randomly shifted by $\frac{1}{\sqrt{2}} \log Z$. The proof of Theorem~\ref{1st_result} goes by separately controlling the contribution to $\cE([-v, \infty))$ from clusters whose tip is above and below $u = -\delta \log v$. Here $\delta$ can be any positive real number smaller than $1/\sqrt{2}$. 

For the contribution coming from tips below $-\delta \log v$, denoted $\cE ([-v, \infty) ;\; [-v, -\delta \log v])$, we can simply improve the moment analysis from~\cite{CHL19}, to yield a.s.~convergence via Borel-Cantelli,
\begin{equation}
\lim_{v\rightarrow \infty} \frac{\cE\big([-v, \infty) ; \; [-v, -\delta \log v] \big)}{v e^{\sqrt{2} v}} = C_\star Z \,.
\end{equation}
(See Lemma~\ref{l:3.1} and the proof of Theorem \ref{1st_result}.)

On the other hand, for $\cE ([-v, \infty) ;\; (-\delta \log v, \infty))$, i.e.~the contribution from clusters whose tip is above $-\delta \log v$, we show that almost surely
	\begin{equation}
	\label{e:101.30a}
		\lim_{v\rightarrow \infty} \frac{\cE\big([-v, \infty) ; \; [-\delta \log v, \infty) \big)}{v^{-K} \rme^{\sqrt{2} v}} = 0\,,
	\end{equation}
for any $K > 0$. Here the analysis is more delicate and cannot be done directly via control of its moments, as this quantity is not concentrated enough. 

Indeed, the typical contribution of clusters whose tip is $w \in (-\delta \log v, 0]$ is 
\begin{equation}
\label{e:101.31}
\cC([-u, 0]) = o(\rme^{\sqrt{2}u}) = o(\rme^{\sqrt{2}v}) 
\quad,\qquad  u = v+w \,.
\end{equation}
This can be seen, e.g.~from Theorem~\ref{2nd_result}. However, as was shown in~\cite{CHL20}, for a given $u$ the probability that this typical event occurs is ``only'' as high as $1-\Theta(u^{-1})$ and with the complement probability, the contribution is atypically as high as
\begin{equation}
\cC([-u, 0]) = \Theta(u\rme^{\sqrt{2}u}) \,.
\end{equation}
This is not enough to guarantee~\eqref{e:101.30a}, even with $K=-1$. Fortunately, with only slightly less probability ($1-O(u^{-1}(\log u)^{5/2})$), one can ensure that~\eqref{e:101.31} holds simultaneously for all $u \geq u_0$ (Proposition~\ref{higher_leaders}). This is shown by appealing to the weak cluster law representation from~\cite{CHL19,CHL20} as precise quantitative estimates are needed. Thanks to the controlled exponential growth of the cluster tips in $\cE^*$, this in turn implies~\eqref{e:101.30a}.

\subsubsection{Theorem~\ref{2nd_result}}
Turning to the proof of Theorem~\ref{2nd_result}, the starting point here is a convenient representation of the cluster as (Theorem~\ref{p:1})
\begin{equation}\label{eq:A_C_limit_r_to_infty}
	\cC = \int_0^\infty \wc{\cE}_{s, \wc{W}_s} \, \wc{\cN}(\rmd s) \,,
\end{equation}
where:
\begin{itemize}
	\item $\wc{\cN} = (\wc{\cN}_s)_{s \geq 0}$ is a well-controlled and regular Poisson-like point process of time stamps,
	\item $\wc{\cE} = (\wc{\cE}_{s,y})_{s \geq 0, y \in \bbR}$, where
\begin{equation}
\label{e:A101.12}
\wc{\cE}_{s,y} \overset{\rmd}= \cE_s(-y + \cdot) \,\big|\, \big\{\cE_s((-y, \infty)) = 0\big\} \,,
\end{equation}
\item $\wc{\cW} = (\wc{W}_s)_{s \geq 0}$ is a Bessel-3-like ``backbone'' process, such that
\begin{equation}
\label{e:A2.3}
	\Big\|\bbP \big(\wc{W}_{[r, \infty)} \in \cdot) - \bbP \big(\wc{Y}_{[r, \infty)} \in \cdot\,\big|\, \wc{Y}_0 = 1 \big) \Big\|_{\rm TV} \underset{r \to \infty} \longrightarrow 0 \,,
\end{equation}
where $(Y_s)_{s \geq 0}$ a Bessel-3 process and $\wc{Y}_s := -Y_s - \frac{3}{2\sqrt{2}} \log^+s$.
\end{itemize}

In this representation of the cluster law $\nu$, the cluster $\cC$ is obtained as the \emph{almost sure} limit (in the sense of the indefinite integral~\eqref{eq:A_C_limit_r_to_infty}) of certain point processes, all defined on the same space. This is in contrast to the representation in~\cite{CHL19,CHL20} (as well as that in~\cite{ABK_E}), in which the law $\nu$ is obtained by taking a \emph{weak} limit of objects, which are defined on different underlying spaces, and with respect to a conditional measure, where the conditioning becomes singular in the limit.  We shall henceforth refer to this representation (which is essentially equivalent to the one in~\cite{ABBS2013}; see also Remark~\ref{r:2.16}) as {\em strong}, and to the one in~\cite{CHL19,CHL20} as {\em weak}, noting that this designation refers to the type of limit taken.

Using (by now standard) truncated first and second moment methods, one can fairly easily show that with high probability as $s \to \infty$,
\begin{equation}
\cE_s([-v, \infty)) =  e^{\sqrt{2} v - \frac{v^2}{2s} + O(\log s)} \,,
\end{equation}
uniformly in  $v \leq s^{1-o(1)}$ (Lemma~\ref{1st_moment} and Lemma~\ref{lower_bound_2}). While this was mostly shown for $v = O(1)$ in the past, the proofs carry over to these {\em deep} extreme level sets (in the precision required) with a properly modified truncation.

Now, thanks to~\eqref{e:A2.3}, with high probability the backbone $\wc{W}$ behaves like the negative of a Bessel-3 process $Y$ plus a logarithmic curve after some large time. Because of the diffusive scaling of Bessel-3, this shows that $-\wc{W}_s = Y_s + O(\log s) = \Theta(\sqrt{s})$ for large $s$, and thus the conditioning in~\eqref{e:A101.12} becomes superfluous. Together with the fact that  the timestamp process $\wc{\cN}$ is well behaved, we thus get from~\eqref{eq:A_C_limit_r_to_infty},
\begin{equation}
\label{e:101.30}
\begin{split}
\wc{\cC} & = \int_0^\infty \rme^{\sqrt{2} (v + \wc{W}_s) - \frac{(v+\wc{W}_s)^2}{2s} + O(\log s)}
\wc{\cN}(\rmd s)
\approx \rme^{\sqrt{2} v}
\int_0^\infty \rme^{ -\sqrt{2} Y_s - \frac{v^2}{2s} + \frac{v}{s} Y_s +O(\log s)} \rmd s \\
& \approx 
\rme^{\sqrt{2} v} v^{O(1)} \exp \Big(-\min_s \Big(\sqrt{2} Y_s + \frac{v^2}{2s} - O\big(\tfrac{v}{\sqrt{s}} \big)\Big)\Big)
\approx
\rme^{\sqrt{2} v} v^{O(1)} \exp \Big(-\min_s \Big(\Theta(\sqrt{s}) + \frac{v^2}{2s}\Big)\Big) \,.
\end{split}
\end{equation}
The last minimum is attained at $s_* = \Theta(v^{4/3})$, at which its value is $\Theta(v^{2/3})$. This gives the $\Theta(v^{2/3})$ fluctuations in the exponent for the typical cluster level set size. 

To obtain convergence in law, we focus on the scale $s = \Theta(v^{4/3})$ by 
employing the rescaling
\begin{equation}
	\hat{s} := v^{-\frac{4}{3}} s \,,
 \qquad			Y^{(v)}_{\hat s} := v^{-\frac{2}{3}} Y_{s} \,.
\end{equation}
Then the second to last minimization (restricted to $s = \Theta(v^{4/3}$)) is (up to errors of $O(v^{1/3})$)
\begin{equation}
v^{\frac{2}{3}} \min_{\hat{s}} \Big(\sqrt{2} Y^{(v)}_{\hat{s}}  + \frac{1}{2\hat{s}}\Big)  \,.
\end{equation}
The minimum here identifies in law with $\zeta$ in~\eqref{chi_def}, again thanks to the Brownian scaling invariance of Bessel-3.
\fi

\paragraph{Organization of the paper}
\ifonlystable
The remainder of the paper is organized as follows. Section~\ref{s:2} includes preliminary results which are needed for the proofs to follow. These include an account of the weak representation of the cluster law from~\cite{CHL19, CHL20}, Ballot Estimates and the joint convergence of the critical extreme values and the maximum (Proposition~\ref{prop:6.5}). This section also lays down any general notation used throughout. Proofs of the main theorem and corollaries are given in Section~\ref{s:B3}, assuming estimates for the intensity measure of unusually large clusters and its limit (Theorem~\ref{stable_key_lemma}). The latter are proved in the following Section~\ref{s:A4}. Section~\ref{s:A5} includes proofs of various technical lemmas which were deferred so as not to hide the wood for the trees. The appendix includes proof of the preliminary statements.
\else
The remainder of the paper is organized as follows. Section~\ref{s:2} includes preliminary results which are needed for the proofs to follow. These include standard facts about Bessel process and Ballot Theorems for Brownian motion, as well as an account of the weak representation of the cluster law from~\cite{CHL19, CHL20}. New results here include the strong representation of the cluster law (Theorem~\ref{p:1}) and the joint convergence of the critical extreme values and the maximum (Proposition~\ref{prop:6.5}). This section also lays down any general notation used throughout. The proofs of Theorem~\ref{1st_result}, Theorem~\ref{2nd_result},
Theorem~\ref{t:A1.5} and their accompanying results 
are given in Sections~\ref{s:3},~\ref{s:5} and~\ref{s:6} respectively. The appendix includes all proofs of preliminary statements which were not short enough to be included in that section.
\fi

\section{Preliminaries}
\label{s:2}
In this section we include preliminary statements and notation which will be used throughout this manuscript. 
All of the results here either appear in the literature, or require  proofs which are fairly standard by now. References are provided for the first kind of results. Proofs for results of the second kind, unless very short, are relegated to the appendix, so that attention can be given to the main and novel parts of the arguments in this work. 

\subsection{Additional notation}
If $f$ is a function on some space $\cX$, we shall write $f_A$ for its restriction to $A \subset \cX$. If $\mu$ is a measure on $(\cX, \Sigma)$ and $f$ is non-negative and $\Sigma$-measurable, then we write $\mu(\rmd x) f(x)$ to denote a measure on $(\cX, \Sigma)$ whose Radon-Nikodym derivative w.r.t.~$\mu$ is $f$. If $\cX$ is also metric (or topological), then we write $C(\cX)$ for the space of continuous functions on $\cX$, which we always equip with the topology of uniform convergence on compact sets. 

Throughout this manuscript, $W = (W_s)_{s \geq 0}$ will always denote a standard Brownian motion. For this process or any other Markovian process $Z = (Z_t)_{t \geq 0}$, we shall formally, but conventionally, write $\bbP(-|Z_0=x)$ to denote the case when $Z$ is defined to have $Z_0 = x$. When there is no ambiguity concerning the underlying process, we shall abbreviate:
\begin{equation}
\label{e:102.1}
	\bbP(-) \equiv \bbP(-\,|\,Z_0 = 0\big)
	\ , \quad 
	\bbP_{r,x}(-) \equiv \bbP(-\,|\,Z_{r} = x\big) 
	\  , \quad
	\bbP_{r,x}^{s,y}(-) \equiv \bbP(-\,|\,Z_{r} = x, Z_{s} = y) \,.
\end{equation}

Finally, we shall write $a \lesssim b$ if there exists $C \in (0,\infty)$ such that $a \leq C b$ and $a \asymp b$ if both $a \lesssim b$ and $b \lesssim a$. To stress that $C$ depends on some parameter $\alpha$ we shall add a subscript to the relation symbols. As usual $C$, $c$, $C'$, etc. denote positive and finite constants whose value may change from one use to another.

\ifonlystable
\else
\subsection{Bessel-3 and Ballot Estimates}
In this subsection we include known statements about Brownian motion conditioned to stay positive and the closely related Bessel-3 process. 

\subsubsection{The Bessel-3 Process}
Recall that a 3-dimension Bessel Process $Y \in (Y_s)_{s\geq 0}$ starting from $Y_0 = y_0$ can be defined as the norm of a 3 dimensional Brownian motion $\vec{W} = (W^{(1)},W^{(2)},W^{(3)})$ starting from $\|\vec{W}_0\| = y_0$, so that
\begin{equation}
\label{e:202.2}
\bbP \big( (Y_s)_{s \geq 0} \in \cdot \,\big|\, Y_0 = y_0 \big) = 
	\bbP \big((\|\vec{W}_s\|)_{s \geq 0} \in \cdot \,\big|\, \|\vec{W}_0\| = y_0 \big) \,.
\end{equation}
It follows that $Y$ inherits the scaling invariance of BM, namely for any $a > 0$ and $y_0 \geq 0$,
\begin{equation}
\label{e:202.4}
	\bbP \big((a^{-1/2} Y_{as})_{s \geq 0} \in \cdot\,\big|\, a^{-1/2} Y_0 = y_0\big) 
	 = \bbP \big(Y \in \cdot\,\big|\, Y_0 = y_0\big) \,.
\end{equation}

For $y_0 > 0$, an alternative definition for $Y$ can be given in terms of Doob's $h$-transform of a standard BM via the function $h(x) = x$. More precisely, $Y$ is a $C([0,\infty))$-valued process which satisfies for all $r > 0$,
\begin{equation}
\label{e:202.3}
	\bbP \big(Y_{[0,r]} \in \rmd y\,\big|\, Y_0 = y_0 \big) = \bbP\big(W_{[0,r]} \in \rmd y\,\big|\,W_0 = y_0\big) \frac{y_r}{y_0} 1_{\{\min y_{[0,r]} > 0\}}
	\,.
\end{equation}
See, for example,~\cite{McKean}. In particular, the distribution of $Y_{[0,r]}$ is absolutely continuous w.r.t. to the (sub-probability) distribution of $W_{[0,r]}$ restricted to $C([0,r], \bbR_+)$, both with the same initial conditions $y_0 > 0$.

The next lemma shows that a Bessel process forgets its initial (space/time) conditions. While this is a standard result, the following is a strong ``infinite-horizon'' version which uses the Total Variation norm.
\begin{lem}
\label{l:Bessel_TV}
Let $x, y\geq 0$ and $s\geq 0$. Then
\begin{equation}
	\Big\|\bbP(Y_{[r, \infty)} \in \cdot \,\big|\, Y_0 = x \big) - 
		\bbP(Y_{[s+r, \infty)} \in \cdot \,\big|\, Y_0 = y \big) \Big\|_{\rm TV}
		\underset{r \to \infty}\longrightarrow 0 \,.
\end{equation}
Moreover, any fixed $R > 0$ and $s \geq 0$, the rate of convergence is uniform for $0 \leq x, y \leq R$.
\end{lem}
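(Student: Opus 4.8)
\textbf{Proof plan for Lemma~\ref{l:Bessel_TV}.}

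The plan is to reduce the infinite-horizon total variation statement to a coupling after a single (random but almost-surely finite) time, and then control the tail of that coupling time uniformly. First I would dispose of the time shift: since $(Y_{s+\cdot} \mid Y_0 = y)$ is itself a Bessel-3 process started from $Y_s$, and $Y_s$ has a density that is bounded and supported on $\bbR_+$ for each fixed $s > 0$ (and we may absorb $s=0$ into the argument trivially), it suffices by the triangle inequality for the TV norm and a conditioning-on-$Y_s$ argument to prove the case $s = 0$, i.e. that
\begin{equation}
\label{e:bessel_plan_1}
	\Big\| \bbP\big(Y_{[r,\infty)} \in \cdot \mid Y_0 = x\big) - \bbP\big(Y_{[r,\infty)} \in \cdot \mid Y_0 = y\big) \Big\|_{\rm TV} \underset{r \to \infty}{\longrightarrow} 0 \,,
\end{equation}
with the rate uniform over $x, y \in [0,R]$. (The uniformity for the shifted version then follows because the laws of $Y_s$ started from $[0,R]$ form a tight, indeed uniformly absolutely continuous, family.)

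For \eqref{e:bessel_plan_1} the natural tool is a Markovian coupling: run two Bessel-3 processes $Y, Y'$ from $x, y$ respectively, coupled so that once they meet they coincide forever (this is legitimate since Bessel-3 is a strong Markov Feller process on $\bbR_+$). Writing $\tau := \inf\{s \geq 0 : Y_s = Y'_s\}$ for the coupling time, the coupling inequality gives that the left-hand side of \eqref{e:bessel_plan_1} is at most $\bbP(\tau > r)$, evaluated under this coupling started from $(x,y)$. So the whole lemma comes down to: \emph{two Bessel-3 processes started from points in $[0,R]$ can be coupled so that the coupling time has a tail $\sup_{x,y \in [0,R]} \bbP_{x,y}(\tau > r) \to 0$ as $r \to \infty$.} The cleanest way to see this is via the $h$-transform description~\eqref{e:202.3}: take two standard Brownian motions, couple them by reflection until they meet, and after they meet let them move together; since Bessel-3 started from $[0,R]$ is absolutely continuous w.r.t. Brownian motion on $C([0,r],\bbR_+)$ with a Radon–Nikodym density $y_r / y_0 \leq y_r / \,(\text{something}>0)$ — here one must be a little careful near $y_0 = 0$ — one transfers a coupling of the BMs to a coupling of the Bessel processes, at the cost of the density factor. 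Alternatively, and perhaps more robustly, one can couple the Bessel-3 processes directly using the fact that $Y$ and $Y'$ driven by the \emph{same} Brownian motion satisfy $Y_s - Y'_s \to 0$ a.s. (the SDE $\mathrm{d}Y = \mathrm{d}B + \frac{1}{Y}\mathrm{d}s$ is such that the difference of two solutions with the same driving noise is monotone and decreasing in absolute value), so a synchronous-then-sticky coupling already has $\tau < \infty$ a.s., and one upgrades a.s. finiteness to a uniform tail bound using scaling.

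Here the uniformity in $x, y \in [0,R]$ and the decay rate are what require the most care, and I expect \textbf{the endpoint $y_0 = 0$ to be the main obstacle}, since the $h$-transform formula \eqref{e:202.3} degenerates there and the reflection-coupling transfer acquires an unbounded density. I would handle this by a two-stage argument: first, by scaling invariance~\eqref{e:202.4}, a Bessel-3 process from any point in $[0,R]$ enters, say, the interval $[1, 2R]$ within a random time whose law depends only on $R$ and has exponential tails (using that Bessel-3 is transient and hits any level in finite time with good tail bounds — a one-dimensional computation, or comparison with a drifted BM); on the event that both processes have entered such an interval, the $h$-transform density is now bounded above and below, the reflection-coupling transfer is quantitative, and a standard estimate gives $\bbP(\tau > s) \lesssim_R s^{-1/2}$ (the coupling time of reflected BMs started $O(R)$ apart). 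Concatenating the two stages and taking $r \to \infty$ yields \eqref{e:bessel_plan_1} with the claimed uniformity, and then the time-shift reduction from the first paragraph closes the proof. Throughout, the only genuinely model-specific input is the hitting-time / scaling estimate for Bessel-3, which is classical and can be quoted or derived in a line from~\eqref{e:202.2}.
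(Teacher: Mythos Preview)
Your overall architecture---reduce to $s=0$ by conditioning on $Y_s$, then bound the TV distance by the tail of a coupling time---is exactly the paper's. The difference is in how the coupling time is controlled. The paper runs two \emph{independent} Bessel-3 processes until they meet, invokes the pathwise comparison $T_{x,y} \leq_{\rm st} T_{0,R}$ for $x,y \in [0,R]$ (which handles the endpoint $y_0=0$ and the uniformity simultaneously, with no two-stage argument), and then shows $T_{0,R} < \infty$ a.s.\ by a one-line scaling trick: by~\eqref{e:202.4} the probability $p := \bbP(T_{0,R}=\infty)$ does not depend on $R$, and by the strong Markov property $p \leq qp$ with $q = \bbP(T_{0,R}>1) < 1$, forcing $p=0$. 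This sidesteps entirely the $h$-transform density blow-up you were worried about. Your reflection-coupling route via~\eqref{e:202.3} with a preliminary stage to escape a neighborhood of $0$ would also work, but is considerably more laborious.

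One genuine gap: your ``alternative'' synchronous coupling does \emph{not} give $\tau < \infty$. With the same driving noise, the difference $Y-Y'$ solves $\tfrac{d}{ds}(Y_s - Y'_s) = -\tfrac{Y_s - Y'_s}{Y_s Y'_s}$, which is strictly sign-preserving; the two paths approach each other but never meet, so $\tau = \infty$ almost surely and the coupling inequality yields nothing. Total-variation closeness on path space requires exact coalescence, not merely $|Y_s - Y'_s| \to 0$. Drop that alternative and either follow your two-stage reflection plan or, more simply, adopt the paper's independent-coupling-plus-domination argument.
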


It is well-known that the sample paths of Bessel-3 are almost surely $\big(\frac{1}{2}-\epsilon\big)$-H\"older continuous, which directly follows from (via~\eqref{e:202.3}) the same property for BM.
\begin{lem}
\label{uniform_continuity} For any $y_0 \geq 0$ and $\eps, K > 0$, there exists $\Lambda(\eps, K, y_0) > 0$ such that with probability at least $1 - \eps$, under $\bbP(-|Y_0=y_0)$ the process 
$Y$ is $(\frac{1}{2}-\eps)$-H\"older on the interval $[0, K]$ with H\"older constant at most $\Lambda(\eps, K, y_0)$.
\end{lem}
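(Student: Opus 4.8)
The plan is to reduce the statement to the well-known $(\tfrac12-\eps)$-H\"older continuity of standard Brownian motion, using the representation~\eqref{e:202.2} of Bessel-3 as the norm of a three-dimensional Brownian motion. Concretely, I would realize $Y = (\|\vec W_s\|)_{s \ge 0}$, where $\vec W = (W^{(1)}, W^{(2)}, W^{(3)})$ is a three-dimensional Brownian motion with $\|\vec W_0\| = y_0$. By the reverse triangle inequality, for any $0 \le s \le t \le K$,
\begin{equation*}
|Y_t - Y_s| = \big|\,\|\vec W_t\| - \|\vec W_s\|\,\big| \le \|\vec W_t - \vec W_s\| \le \sum_{i=1}^{3} \big|W^{(i)}_t - W^{(i)}_s\big| \,.
\end{equation*}
Since the increments $\vec W_t - \vec W_s$ do not depend on the (random) starting point $\vec W_0$, each coordinate $W^{(i)}$ is, up to an additive constant, a standard Brownian motion, so the right-hand side is controlled by the H\"older seminorms of three standard Brownian motions on $[0,K]$.

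Next I would invoke the classical fact (Kolmogorov--Chentsov, or L\'evy's modulus of continuity theorem) that a standard Brownian motion is almost surely $(\tfrac12-\eps)$-H\"older on $[0,K]$; equivalently, the seminorm $\|W\|_{(\frac12-\eps),[0,K]} := \sup_{0 \le s < t \le K} |W_t - W_s| / |t-s|^{\frac12-\eps}$ is an almost surely finite random variable. Hence there exists $\Lambda_0 = \Lambda_0(\eps, K)$ with $\bbP\big(\|W\|_{(\frac12-\eps),[0,K]} > \Lambda_0\big) < \eps/3$. Applying this to each coordinate $W^{(i)}$ and taking a union bound, with probability at least $1 - \eps$ all three coordinate seminorms are at most $\Lambda_0$; the display above then shows that $Y$ is $(\tfrac12-\eps)$-H\"older on $[0,K]$ with H\"older constant at most $\Lambda(\eps, K, y_0) := 3 \Lambda_0(\eps, K)$ (which in fact can be taken independent of $y_0$).

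The argument is essentially routine; the only point worth flagging is the degenerate case $y_0 = 0$, for which the $h$-transform representation~\eqref{e:202.3} is unavailable (the Radon--Nikodym density $y_r/y_0$ is singular there). The norm representation~\eqref{e:202.2} is used precisely to treat all $y_0 \ge 0$ uniformly and sidestep this issue. For $y_0 > 0$ one could alternatively transfer the H\"older event to Brownian motion conditioned to stay positive via~\eqref{e:202.3}, controlling the unbounded density $y_r/y_0$ on that event by Cauchy--Schwarz against the small probability that $W$ fails to be H\"older; but this is strictly more cumbersome and is not needed.
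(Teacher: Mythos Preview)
Your argument is correct. The paper does not give a detailed proof of this lemma; it simply remarks, in the sentence preceding the statement, that the almost-sure $(\tfrac12-\eps)$-H\"older continuity of Bessel-3 follows from the same property for Brownian motion via the $h$-transform representation~\eqref{e:202.3}. You instead use the norm representation~\eqref{e:202.2} and the reverse triangle inequality, which is an equally short reduction to the Brownian case.

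The difference is minor but your choice is the cleaner one: as you correctly flag, the $h$-transform density $y_r/y_0$ is singular at $y_0=0$, so the paper's one-line justification strictly speaking covers only $y_0>0$ and would need a separate limiting argument (or an appeal to~\eqref{e:202.2} anyway) for $y_0=0$. Your route handles all $y_0\ge 0$ at once and even shows that $\Lambda$ may be taken independent of $y_0$, a slight strengthening of the stated lemma.
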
  

For the typical envelope of sample paths of $Y$ we have,
\begin{lem}
\label{D-E} For any $y_0 \geq 0$ and $\eps > 0$, there exists $K(\eps,y_0)$ such that with $\bbP(-|Y_0=y_0)$
 probability at least $1 - \eps$,
	\begin{equation}
	s^{\frac{1}{2}-\eps} \leq Y_s \leq 3(s \log \log s)^{\frac{1}{2}} \,,
	\end{equation}
	for all $s \geq K(\eps, y_0)$. 
\end{lem}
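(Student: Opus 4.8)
The statement is the standard two-sided "typical envelope" for a Bessel-3 process on $[K,\infty)$: a lower bound slightly below $\sqrt{s}$ (the $s^{1/2-\eps}$ barrier) and an upper bound slightly above $\sqrt{s}$ (the law of iterated logarithm envelope $3(s\log\log s)^{1/2}$), both holding simultaneously for all large $s$ with probability $\geq 1-\eps$. Since neither bound is sharp — we have room to lose polylog/polynomial factors — the plan is to prove each bound separately with failure probability $\leq \eps/2$ and then union bound; the free constant $K(\eps,y_0)$ absorbs whatever threshold we need, and a small $\bbP(-\,|\,Y_0=y_0)$-probability event at the start (say, controlling $Y_s$ for $s$ in a compact initial segment via Lemma~\ref{uniform_continuity} or simply by continuity/finiteness of paths) lets us reduce to the regime $s\geq 1$.

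For the \emph{upper bound}, I would invoke the representation $Y_s=\|\vec W_s\|$ from~\eqref{e:202.2}, so that $Y_s=3(s\log\log s)^{1/2}$ is violated only if $|W^{(i)}_s|\geq (s\log\log s)^{1/2}$ for some coordinate $i\in\{1,2,3\}$. By the ordinary law of the iterated logarithm for each one-dimensional BM $W^{(i)}$ (started from the $i$-th coordinate of $\vec W_0$, with $\|\vec W_0\|=y_0$), almost surely $\limsup_{s\to\infty}|W^{(i)}_s|/(2s\log\log s)^{1/2}=1<3/\sqrt{2}$, so almost surely there is a (random) $S_i$ past which $|W^{(i)}_s|<(s\log\log s)^{1/2}$; taking $K(\eps,y_0)$ large enough that $\bbP(\max_i S_i>K)\leq\eps/4$ handles this half. (Alternatively one can get an explicit tail via a maximal/reflection inequality along a dyadic sequence $s\in[2^n,2^{n+1}]$ and Borel–Cantelli, but the LIL route is cleanest.)

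For the \emph{lower bound} $Y_s\geq s^{1/2-\eps}$, I would again fix a dyadic scaffold $s\in[2^n,2^{n+1}]$ and use the scaling invariance~\eqref{e:202.4}: on each block, $2^{-n/2}Y_{2^n t}$ for $t\in[1,2]$ is a Bessel-3 process started from $2^{-n/2}Y_{2^n}$, and the event $\{\inf_{s\in[2^n,2^{n+1}]}Y_s<2^{(n+1)(1/2-\eps)}\}$ pulls back to $\{\inf_{t\in[1,2]}\widetilde Y_t< c\,2^{-n\eps}\}$ for a Bessel-3 process $\widetilde Y$ with whatever (nonnegative) starting point. Using the absolute-continuity formula~\eqref{e:202.3} — or directly the explicit fact that $\inf_{t\geq 1}Y_t>0$ a.s.\ and a small-ball estimate $\bbP(\inf_{t\in[1,2]}Y_t<\delta\,|\,Y_1=a)\lesssim \delta/a$ type bound, valid uniformly for the relevant range of starting points after conditioning on the (high-probability, by the upper bound already proven) event that $Y_{2^n}$ is of order $2^{n/2}$ — the per-block failure probability is $O(2^{-n\eps})$, which is summable in $n$; Borel–Cantelli then gives a finite random last exceptional block, and enlarging $K(\eps,y_0)$ makes the probability that this block exceeds $K$ at most $\eps/4$.

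The main obstacle is the lower bound: one must handle the random starting point of each dyadic block and the fact that the small-ball probability $\bbP(\inf_{t\in[1,2]}Y_t<\delta\,|\,Y_1=a)$ degrades as $a\downarrow 0$. The clean fix is to run the two bounds \emph{in sequence}: first establish the upper envelope, then work on the event (of probability $\geq 1-3\eps/4$) that $Y_{2^n}\leq 3(2^n\log\log 2^n)^{1/2}$ for all large $n$, and simultaneously — by a symmetric, easier lower estimate at the single times $2^n$, e.g.\ $\bbP(Y_{2^n}<2^{n(1/2-\eps/2)})\lesssim 2^{-n\eps/2\cdot 3}$ via the explicit density of $Y_{2^n}$ — that $Y_{2^n}\geq 2^{n(1/2-\eps/2)}$ for all large $n$; then the within-block small-ball estimate only ever needs to be applied with $a\geq 2^{n(1/2-\eps/2)}$, i.e.\ $\delta/a\lesssim 2^{-n\eps/2}$, which is again summable. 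Combining the upper envelope, the values at dyadic times, and the within-block control, and choosing $K(\eps,y_0)$ to dominate all finitely many exceptional blocks, yields the claim. Everything else is routine: the scaling~\eqref{e:202.4}, the $h$-transform formula~\eqref{e:202.3}, and standard BM estimates.
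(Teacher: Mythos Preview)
Your proposal is correct; the overall strategy and the upper-bound argument match the paper's. For the upper bound you (like the paper) invoke the LIL on each coordinate of $\vec W$ via~\eqref{e:202.2}; do note a small arithmetic slip: the LIL gives $\limsup |W^{(i)}_s|/\sqrt{s\log\log s}=\sqrt{2}$, so it is \emph{not} true that eventually $|W^{(i)}_s|<(s\log\log s)^{1/2}$. What you actually need (and what follows immediately) is $|W^{(i)}_s|\leq (1+\delta)\sqrt{2}\,(s\log\log s)^{1/2}$ for each $i$, whence $Y_s\leq (1+\delta)\sqrt{6}\,(s\log\log s)^{1/2}<3(s\log\log s)^{1/2}$; the relevant comparison is $\sqrt{6}<3$, not $1<3/\sqrt{2}$.

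For the lower bound the two arguments diverge. The paper simply cites the Dvoretzky--Erd\H{o}s test for $3$-dimensional Brownian motion: with $h(t)=t^{1/2-\eps}$ the integral $\int^\infty h(t)\,t^{-3/2}\,\rmd t<\infty$, so $\liminf_{t\to\infty}\|\vec W_t\|/t^{1/2-\eps}=\infty$ almost surely, which is exactly the bound. Your dyadic-block argument (lower bound at the times $2^n$ via the explicit $\chi_3$-type density, then a within-block small-ball estimate $\bbP(\inf_{t\geq 0}Y_t<\delta\mid Y_0=a)=\delta/a$ for Bessel-3, then Borel--Cantelli) is a correct, self-contained reproof of this special case. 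The trade-off is transparency versus brevity: your route avoids an external reference and exposes the mechanism, but the paper's one-line citation is cleaner and in fact yields the sharper integral criterion, not just the polynomial envelope. The step you flagged as the main obstacle---controlling the random starting point $Y_{2^n}$---is handled exactly as you describe, and you may drop the detour through the upper envelope there: only the lower estimate $\bbP(Y_{2^n}<2^{n(1/2-\eps/2)})\lesssim 2^{-3n\eps/2}$ at single times is needed.
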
 
\begin{proof}
Employing the representation of Bessel-3 in~\eqref{e:202.2}, the lower bound follows immediately from the Dvoretzky-Erd\"os Test (c.f.~\cite{PeresMorters}) and the upper bound by the Law of Iterated Logarithms applied to each component of $\vec{W}$ separately and the Union Bound (or, more sharply, to $\|\vec{W}\|$, c.f.~\cite{motoo1959proof}).

\end{proof}

\subsubsection{Ballot Estimates for Brownian Motion}
It is also well known that the Bessel-3 process arises when one (formally) conditions a one-dimensional BM to stay positive forever. Let us first address the finite-time version of this positivity restriction. The following is well known and also easy to prove by the Reflection Principle.
\begin{lem}
\label{l:102.1}
For all $x \geq 0$, $y \geq 0$ and $t > 0$,
\begin{equation}
	\bbP\Big(\min_{s \in [0,t]} W_s \geq 0 \,\big|\, W_0 = x,\, W_t = y\Big) \leq 
	\frac{2xy}{t}\,.
\end{equation}
Moreover,
\begin{equation}
	\bbP\Big(\min_{s \in [0,t]} W_s \geq 0 \,\big|\, W_0 = x,\, W_t = y\Big) = 
	\frac{2xy}{t}(1+o(1))
\end{equation}
where $o(1) \to 0$ as $t \to \infty$, uniformly in $x,y \geq 0$ such that $xy \leq t^{1-\epsilon}$ for any fixed $\epsilon > 0$.
\end{lem}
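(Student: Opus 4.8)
\textbf{Proof proposal for Lemma~\ref{l:102.1}.}

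The plan is to reduce the bridge statement to the unconditioned Reflection Principle. First I would recall the explicit Gaussian bridge density: under $\bbP(-\,|\,W_0=x,\,W_t=y)$ the path is a Brownian bridge, and the event $\{\min_{s\in[0,t]} W_s \geq 0\}$ has a classical closed form. Concretely, by the Reflection Principle applied to Brownian motion started at $x$, the joint density that $W$ stays nonnegative on $[0,t]$ and ends near $y$ is
\begin{equation}
\label{e:refl}
p_t(x,y) - p_t(x,-y)
= \frac{1}{\sqrt{2\pi t}}\Big(\texte^{-(y-x)^2/(2t)} - \texte^{-(y+x)^2/(2t)}\Big),
\end{equation}
where $p_t$ is the heat kernel. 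Dividing by the bridge normalization $p_t(x,y)$ gives the exact formula
\begin{equation}
\label{e:exact}
\bbP\Big(\min_{s\in[0,t]} W_s \geq 0 \,\Big|\, W_0=x,\,W_t=y\Big)
= 1 - \texte^{-2xy/t}.
\end{equation}
From here both claims are elementary: the upper bound follows from $1-\texte^{-a}\leq a$ with $a=2xy/t$, and the asymptotic equivalence follows from $1-\texte^{-a} = a(1+o(1))$ as $a\to 0$, noting that $xy\leq t^{1-\epsilon}$ forces $a = 2xy/t \leq 2t^{-\epsilon}\to 0$ uniformly over all such $x,y$.

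The only step requiring a little care is the boundary case $x=0$ or $y=0$, where \eqref{e:exact} still holds (both sides vanish) but the bridge density normalization degenerates; this is handled by a continuity/limiting argument, or simply by observing that if $x=0$ the event forces the bridge to have a zero-minimum at time $0$, an event of probability one only in the trivial sense, so the inequality $0 \leq 0$ is vacuous. A second minor point is justifying \eqref{e:refl}: the Reflection Principle gives it directly for the sub-probability kernel of Brownian motion killed at $0$, and this is standard (see, e.g.,~\cite{PeresMorters}).

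I do not expect any real obstacle here: the entire lemma is a one-line consequence of the exact identity \eqref{e:exact}, which itself is textbook material. The statement is included only as a convenient packaging of the Reflection Principle in the precise quantitative form (with the uniform $o(1)$) that is needed later, so the ``proof'' is essentially the derivation of \eqref{e:exact} followed by the two elementary estimates on $1-\texte^{-a}$.
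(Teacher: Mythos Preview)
Your proposal is correct and matches the paper's approach: the paper simply states that the lemma ``is well known and also easy to prove by the Reflection Principle'' without giving further details, and your derivation via the exact identity $1-\texte^{-2xy/t}$ followed by the elementary bounds $1-\texte^{-a}\leq a$ and $1-\texte^{-a}=a(1+o(1))$ is exactly what is intended.
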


Turning to the infinite time version, the next lemma shows convergence to Bessel-3 in a strong sense, namely under the Total Variation norm.
\begin{lem}
\label{l:2.2}
	For any $x,y > 0$ and $r > 0$, and any $(d_t)_{t\geq 0}$ satisfying $d_t \to 0$ as $t\to \infty$,  
	\begin{equation}\label{eq:Bessel_approx_TV}
		\Big\|(W_s + d_t s)_{s\in [0,r]} \in \cdot \,\big|\, W_0 = x, W_t = y, \min_{s \in [0,t]} W_s \geq 0 \big) - 
		\bbP(Y_{[0,r]} \in \cdot \,\big|\, Y_0 = x \big) \Big\|_{\rm TV}
		\underset{t \to \infty}\longrightarrow 0 \,.
	\end{equation}
\end{lem}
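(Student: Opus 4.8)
\textbf{Proof plan for Lemma~\ref{l:2.2}.} The plan is to reduce the statement to a direct comparison of explicit finite-dimensional densities, using that both processes on $[0,r]$ are absolutely continuous (with explicit Radon--Nikodym derivatives) with respect to a Brownian bridge on $[0,r]$ with the common left endpoint $x$. First I would fix $r>0$ and note that it suffices to compare the two laws on $C([0,r],\bbR)$; the drift term $(d_t s)_{s\in[0,r]}$ is a deterministic shift, so after the change of variables $\tilde W_s := W_s + d_t s$ the conditioning becomes $\min_{s\in[0,t]}(\tilde W_s - d_t s)\geq 0$ and $\tilde W_t = y + d_t t$, and for $s\in[0,r]$ with $r$ fixed the shift $d_t s$ is at most $r|d_t|\to 0$ uniformly. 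So the real content is: the bridge from $x$ to $y$ over $[0,t]$, conditioned to stay nonnegative on all of $[0,t]$, restricted to $[0,r]$, converges in total variation to $Y_{[0,r]}$ started from $x$, and the small vanishing drift does not spoil this.

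The key computation is to write, for a bounded measurable $F$ on $C([0,r])$,
\begin{equation}
\label{e:plan1}
\bbE\Big[F(W_{[0,r]})\,\Big|\, W_0 = x,\, W_t = y,\, \min_{[0,t]} W \geq 0\Big]
= \bbE\Big[F(\widetilde W_{[0,r]})\, \frac{g_{t-r}(\widetilde W_r, y)}{g_t(x,y)}\, \psi_t(\widetilde W_r)\Big] \,,
\end{equation}
where under the expectation $\widetilde W_{[0,r]}$ is a Brownian bridge from $x$ to some free endpoint --- more precisely I would condition only on $W_0=x$ and $\min_{[0,r]}W\geq 0$, absorb the event $\min_{[r,t]}W\geq 0$ into the factor $\psi_t(\widetilde W_r) := \bbP(\min_{[0,t-r]}W\geq 0 \mid W_0 = \widetilde W_r, W_{t-r}=y)$, write the bridge-density ratio $g_{t-r}(\widetilde W_r,y)/g_t(x,y)$ for the marginal of $W_r$, with $g$ the Gaussian transition density, and handle the drift $d_t$ by a Girsanov/Cameron--Martin factor which is $1+o(1)$ uniformly on the relevant event since $d_t\to 0$. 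By Lemma~\ref{l:102.1} (the Ballot estimate), $\psi_t(z) = \frac{2zy}{t-r}(1+o(1))$ uniformly for $z$ in compacts, and $g_{t-r}(z,y)/g_t(x,y) \to $ a constant (both Gaussian densities are evaluated over times $\asymp t\to\infty$ at fixed spatial arguments, so their ratio converges to $1$ times a Jacobian-type constant, $t/(t-r)\to 1$). Multiplying, the $t$-dependent factors collapse and the remaining weight is exactly $\propto \widetilde W_r/x\cdot 1_{\{\min_{[0,r]}W\geq 0\}}$, which by~\eqref{e:202.3} is precisely the density of $Y_{[0,r]}$ started from $x$ with respect to the nonnegative-bridge law of $W_{[0,r]}$. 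This identifies the limit; to upgrade pointwise convergence of $\bbE[F(\cdot)]$ to convergence in total variation, I would use Scheffé's lemma, i.e.\ show the finite-dimensional Radon--Nikodym densities (w.r.t.\ the reference measure, Brownian motion from $x$ restricted to nonnegative paths on $[0,r]$) converge pointwise and note both are probability densities, which forces $L^1$ and hence total-variation convergence; since $r$ was arbitrary and the processes are continuous, this gives TV convergence of the whole path on $[0,r]$.

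The main obstacle I expect is getting the error terms in the Ballot estimate and in the Gaussian density ratio to be \emph{uniform} enough --- specifically, uniform over the bridge endpoint value $\widetilde W_r$, which is random and ranges over all of $[0,\infty)$, not just a fixed compact. The clean way around this is a truncation argument: on the event $\{\widetilde W_r \leq M\}$ apply the uniform estimates of Lemma~\ref{l:102.1} directly, and on $\{\widetilde W_r > M\}$ bound the contribution crudely --- the weight $\psi_t(z) g_{t-r}(z,y)/g_t(x,y)$ is $\lesssim z e^{-cz^2/t}\cdot(\text{something bounded})$, integrated against the bridge marginal of $W_r$ which has a Gaussian tail in $z$, so the $\{\widetilde W_r > M\}$ contribution is $O(e^{-cM^2})$ uniformly in $t$ large, and likewise for the $Y$-side (whose marginal has a Gaussian tail by~\eqref{e:202.3}); then send $M\to\infty$ after $t\to\infty$. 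The drift $d_t$ is a comparatively minor nuisance: the Cameron--Martin density for adding the linear drift $d_t s$ on $[0,r]$ is $\exp(d_t W_r - \tfrac12 d_t^2 r)$ (bridge version adjusted accordingly), which tends to $1$ in $L^1$ as $d_t\to 0$ by dominated convergence, and this perturbation commutes with the truncation above. Assembling these pieces yields~\eqref{eq:Bessel_approx_TV}.
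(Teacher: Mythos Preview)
Your approach is essentially the same as the paper's: both express the conditioned-bridge law on $[0,r]$ as Brownian motion from $x$ times an explicit Radon--Nikodym factor (ballot-probability ratio $\times$ Gaussian transition-density ratio $\times$ Cameron--Martin drift correction), show this factor converges pointwise to $1_{\{\min w_{[0,r]}\geq 0\}}\, w_r/x$, identify that as the Bessel-3 density~\eqref{e:202.3}, and then upgrade pointwise to $L^1$ convergence. The only substantive difference is in this last step: the paper notes that the density is dominated, uniformly in $t$, by an integrable exponential function of $w_r$ (the ballot ratio is $\lesssim w_r/x$, the Gaussian ratio is bounded, and the Cameron--Martin factor is $\leq e^{|w_r|}$ once $|d_t|\leq 1$) and applies Dominated Convergence directly, whereas you invoke Scheff\'e's lemma. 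Either works, and Scheff\'e is arguably cleaner since it spares you the dominating function altogether. In particular your truncation argument is unnecessary: for Scheff\'e you only need \emph{pointwise} (in the path $w$) convergence of the densities, not uniformity in $w_r$, and Lemma~\ref{l:102.1} already gives the asymptotic for each fixed value of $w_r$.
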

\fi

\subsection{Weak representation of the cluster law}
\label{s:2.1}
In this subsection we include a subset of the theory that was developed in~\cite{CHL19} in order to get a handle on the law of the clusters. The analysis of the cluster distribution is done in three steps, which correspond to the next three sub-sections. In the first step one expresses the law of the clusters as the weak limit as $t \to \infty$ of the relative heights of the genealogical neighbors of a distinguished particle, called the ``spine'', which is conditioned to be the global maximum and at a prescribed height. Here one appeals to the so-called spine decomposition method (or spine method, for short). 

The second step is to recast events involving the spine being a global maximum, as events involving a certain ``decorated random-walk-like process'' which is required to stay below a given curve. We refer to such events as ``Ballot'' events, on account of the analogy with a simple random walk which is restricted to be positive. In the third step one estimates the probability of such Ballot events. This is achieved by appealing to Ballot estimates for standard Brownian motion.

\subsubsection{Spinal decomposition}
The {\em $1$-spine branching Brownian motion} (henceforth sBBM)  evolves like BBM, except that at any given time, one of the living particles is designated as the ``spine''. What distinguishes this particle from the others, is that the spine branches at Poissonian rate $2$ and not $1$ as the remaining living particles. There is no difference in the motion, which is still that of a BM. When the spine branches, one of its two children is chosen uniformly at random to become the new spine. 

We shall keep the same notation $(h_t, T_t, \rmd_t)_{t \geq 0}$ for the genealogical 
and positional processes associated with this model. The additional information, namely the identity of the spine at any given time, will be represented by the process $(X_t)_{t \geq 0}$, where $X_t \in L_t$ for all $t \geq 0$. We shall also use the same notation as in the case of BBM to denote objects associated with the motion $h$, e.g. $\wh{h}_t$ will still denote $h_t - m_t$. To make the distinction from regular BBM explicit, we will denote the underlying probability measure by $\wt{\bbP}$ (and $\wt{\bbE}$), in accordance with the notation in~\cite{CHL19}. 

A useful representation for this process can be obtained by ``taking the point of view of the spine''. More precisely, one can think of sBBM as a process driven by a backbone Brownian motion: $(h_t(X_t) :\: t \geq 0)$, representing the trajectory of the spine, from which (non-spine) particles branch out at Poissonian rate $2$ and then evolve independently according to the regular BBM law.

The connection between sBBM and regular BBM is given by the following ``Many-To-One'' lemma. This lemma is the ``workhorse'' of the spine method, being a useful tool in moment computations for the number of particles in a branching process satisfying a prescribed condition. Recall that for $t \geq 0$, we let $\cF_t$ denote the sigma-algebra generated by $(h_s, T_s, \rmd_s)_{s \leq t}$.
\begin{lem}[Many-To-One]
\label{l:4.1}
Let $F = (F(x) :\: x \in L_t)$ be a bounded $\cF_t$-measurable real-valued random function on $L_t$. Then, 
\begin{equation}
\bbE \Big( \sum_{x \in L_t} F(x) \Big)
= \rme^{t} \,\wt{\bbE} F(X_t) \,.
\end{equation}
\end{lem}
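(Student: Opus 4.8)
The plan is to prove the identity by induction on the branching structure, comparing the sBBM with the ordinary BBM via a change of measure on the branching times. Since both processes share the same motion law (all particles diffuse as Brownian motions), the only discrepancy lies in the branching rates — rate $1$ per particle for BBM versus rate $2$ for the spine in sBBM — together with the uniform choice of which child inherits the spine. The key observation is that the extra branching rate of the spine, integrated over $[0,t]$, is exactly compensated by the factor $\rme^t$.

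First I would set up the two processes on a common probability space by conditioning on the \emph{spine trajectory}, i.e. on the path $(h_s(X_s))_{s \le t}$ and the Poissonian times at which particles branch off the spine. Using the ``point of view of the spine'' representation described just above the statement: under $\wt\bbP$, the spine performs a standard BM, emits offspring subtrees at rate $2$, and each such subtree evolves as an independent ordinary BBM from its birthpoint. I would then compute $\wt\bbE F(X_t)$ by conditioning first on the spine path and birth times (the subtrees hanging off the spine are irrelevant since $F$ is evaluated only at $X_t$): this reduces to an expectation over a single Brownian path weighted by the law of a rate-$2$ Poisson number of splits, each with a uniform $1/2$ choice. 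The factors of $2$ from the branching rate and the factors of $1/2$ from the uniform choice cancel exactly along the spine, leaving only the Brownian motion law for $(h_s(X_s))_{s\le t}$.

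On the BBM side, I would expand $\bbE\sum_{x\in L_t}F(x)$ by summing over the genealogical lineage leading to a generic particle $x\in L_t$. Conditioning on the number $N$ of branch points along that lineage and on the branch times $0 < \tau_1 < \cdots < \tau_N < t$, the lineage itself is a Brownian path, and the contribution of all the \emph{other} subtrees branching off this lineage has total expectation $1$ (each subtree is a critical binary BBM, whose expected population size is preserved — this is the standard fact that the BBM population has expectation $\rme^t$, localized along one lineage it contributes a factor accounting for the branching). Carrying this out, the expected number of lineages with exactly $N$ splits in $[0,t]$ is $\rme^t \bbP(\mathrm{Poi}(t) = N)\cdot$(ordering factor), and after summing over $N$ one recovers $\rme^t$ times a single Brownian-path expectation — precisely $\rme^t\,\wt\bbE F(X_t)$ by the computation in the previous paragraph. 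Matching the two sides term by term (in $N$ and in the branch times) completes the proof.

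The main obstacle, and the step requiring the most care, is the bookkeeping of the combinatorial/measure-theoretic factors: on the BBM side one must correctly account for the fact that at each branch point along the distinguished lineage there are two children (a factor $2$ in the sum over lineages that matches the rate-$2$ spine branching), and one must verify that the subtrees pruned away from the lineage contribute expectation exactly $1$ rather than some other constant. This is cleanest to handle by a first-step/renewal decomposition: write $\bbE\sum_{x\in L_t}F(x)$ by conditioning on the first branch time $\tau_1 \wedge t$ of the ancestral particle, at time $\tau_1$ the particle splits into two, each initiating an independent BBM; applying the induction hypothesis to each and using $2\cdot\tfrac12 = 1$ to reconcile the doubling of subtrees with the uniform spine choice, one gets a renewal equation whose solution is manifestly $\rme^t\,\wt\bbE F(X_t)$. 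Boundedness and $\cF_t$-measurability of $F$ ensure all expectations are finite and the interchanges of sum and expectation are justified by Tonelli/dominated convergence.
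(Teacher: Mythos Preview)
The paper does not prove this lemma; it is stated as a standard preliminary attributed to the spine-decomposition literature, so there is no proof in the paper to compare against.

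Your approach has a genuine gap. You write that ``the subtrees hanging off the spine are irrelevant since $F$ is evaluated only at $X_t$'', but this is false for the general $\cF_t$-measurable $F$ in the statement: $F(X_t)$ may depend on the entire configuration $(h_t, T_t, \rmd_t)$, including every subtree branching off the spine --- for instance $F(x) = 1_{\{\wh h_t^* \le 0\}}$ is constant in $x$ yet depends on all particles. Your BBM-side claim that the pruned subtrees ``contribute expectation exactly $1$'' has the same issue: they contribute $1$ only if $F$ ignores them. The lineage-matching strategy can be rescued, but the correct reason is different: \emph{conditionally} on the spine/lineage data (path and branch times), the off-spine subtrees have the same law under $\wt\bbP$ as under $\bbP$ (independent ordinary BBMs rooted at the branch points), so the conditional expectation of $F$ given this data coincides on both sides; only then does it reduce to matching the marginal laws of the lineage data, which is where your rate-$2$-versus-rate-$1$ Poisson and $2\cdot\tfrac12=1$ bookkeeping applies.

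For reference, the standard proof bypasses all of this via two facts: (i) $\rmd\wt\bbP/\rmd\bbP\big|_{\cF_t} = |L_t|\,\rme^{-t}$ (size-biasing of the branching process), and (ii) under $\wt\bbP$, conditionally on $\cF_t$, the spine $X_t$ is uniform on $L_t$. Then
\[
\wt\bbE\, F(X_t) = \wt\bbE\Big[\tfrac{1}{|L_t|}\sum_{x\in L_t}F(x)\Big] = \bbE\Big[\tfrac{|L_t|}{\rme^{t}}\cdot\tfrac{1}{|L_t|}\sum_{x\in L_t}F(x)\Big] = \rme^{-t}\,\bbE\sum_{x\in L_t}F(x)\,.
\]
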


A direct application of the above lemma, together with the statistical structure of extreme values in the limit, gives the following expression for the law of the clusters. 
\begin{lem}[Lemma~5.1 in~\cite{CHL19}]
\label{l:7.0}
For all $r > 0$, there exists a probability measure $\nu_r$ on the space $\bbM_p((-\infty, 0])$ such that for all $u \in \bbR$, 
\begin{equation}
\label{e:387}
\wt{\bbP} \big( \cC_{t, r} (X_t) \in \cdot \, \big|\, \wh{h}_t(X_t) = \wh{h}_t^* = u\big) 
\underset{t \to \infty}\Longrightarrow \nu_r
\end{equation}
and
\begin{equation}
	\nu_r \underset{r \to \infty}\Longrightarrow \nu \,,
\end{equation}
where $\cC_{t,r}$ is as in~\eqref{e:5B} and $\nu$ is as in~\eqref{e:N7}.
Both limits hold in the sense of weak convergence on $\bbM((-\infty,0])$ equipped with the vague topology. Lastly~\eqref{e:387} still holds with $\nu_r$ replaced by $\nu$, if $r$ is taken to $\infty$ together with $t$ as long as $t-r \to \infty$ as well.
\end{lem}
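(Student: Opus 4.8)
My plan is to follow the three-step spine/Ballot scheme described above. First, the Many-To-One Lemma~\ref{l:4.1} is used to rewrite the conditional law of $\cC_{t,r}(X_t)$ under $\wt\bbP$ as a question about a single Brownian trajectory — the spine $(h_s(X_s))_{s\le t}$ — from which independent BBM subtrees branch off at Poissonian rate $2$, all of them constrained, on the event that $X_t$ is the global maximum, to lie below $h_t(X_t)$ at time $t$. Second, this global-maximum event is recast, in the precision required, as a Ballot-type event: a decorated backbone process must stay below a curve of the form $\sqrt2 s - \tfrac{3}{2\sqrt2}\log s + O(1)$. Third, Brownian Ballot estimates (cf.~\cite{CHL19}) are invoked to pass to the limit. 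The measure $\nu$ itself is \emph{defined} as the $r\to\infty$ limit of the $\nu_r$, consistent with the construction in~\cite{ABBS2013, ABK_E} underlying~\eqref{e:N7}.

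\textbf{The fixed-$r$ limit.} I would first give meaning to the conditioning on $\{\wh h_t(X_t)=\wh h_t^\ast=u\}$ by working with the slab $\{\wh h_t(X_t)\in[u,u+\eps]\}\cap\{\wh h_t(X_t)=\wh h_t^\ast\}$, forming the relevant ratio of $\wt\bbE$-expectations, and letting $\eps\downarrow 0$ (the limiting density exists by a routine argument). Fix a bounded continuous $f$ on $\bbM_p((-\infty,0])$. Conditioning on the spine path and on its branching times, the subtrees hanging off the spine are independent BBMs; the global-maximum event factorizes over them and, once integrated out, reweights the spine path. Since a time-$t$ particle lies at genealogical distance $t-s$ from $X_t$ exactly when its most recent common ancestor with the spine sits at time $s$, the measure $\cC_{t,r}(X_t)$ is assembled from $X_t$ together with the time-$t$ descendants of the subtrees branching off during $(t-r,t]$. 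Reversing time via $\tau=t-s$ and subtracting the $m_t$-increments (which equal $\sqrt2\tau+o(1)$ uniformly for $\tau\le r$ as $t\to\infty$), the Ballot reweighting makes the reversed, recentered spine $\big(h_t(X_t)-h_{t-\tau}(X_{t-\tau})-\sqrt2\tau\big)_{\tau\le r}$ converge — the right statement being total-variation convergence on the compact interval $[0,r]$ — to a limiting Bessel-$3$-like backbone that does \emph{not} remember the terminal value $u$ (a Brownian bridge to $u+m_t$ conditioned to stay below a curve forgets its endpoint at scale $O(1)$); this is exactly the source of the $u$-independence of $\nu_r$. Along this backbone, each near subtree, conditioned to lie below, converges to a conditioned-BBM decoration, and the superposition of the recorded relative heights converges to the probability measure $\nu_r$.

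\textbf{From $r$ to $\infty$.} As a point measure $\cC_{t,r}(X_t)$ is nondecreasing in $r$, hence $\nu_r$ is stochastically nondecreasing in $r$ and converges to a limit $\nu_\infty$; tightness on $\bbM_p((-\infty,0])$ — equivalently, tightness of $\cC([-v,0])$ for each fixed $v$ — follows from uniform-in-$r$ first-moment bounds on $\cC_{t,r}(X_t)([-v,0])$, obtained the same way via Many-To-One and Ballot estimates. That $\nu_\infty$ coincides with the cluster distribution in~\eqref{e:N7} is part of the construction in~\cite{ABBS2013, ABK_E}: the cluster around the global maximum of $h_t$ is, in the joint limit $t\to\infty$ with $r_t\to\infty$, precisely the decoration attached to the leading atom of $\wh\cE$. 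Finally, the last assertion of the lemma — that~\eqref{e:387} persists with $\nu$ in place of $\nu_r$ when $r\to\infty$ together with $t$ (as long as $t-r\to\infty$) — follows by combining the fixed-$r$ convergence, which one checks is uniform in $u$ over compacts and comes with explicit rates in $t$ and $r$, with the $r\to\infty$ tail control just described.

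\textbf{Main obstacle.} I expect the hard part to be the uniform control of the Ballot/global-maximum conditioning: one must show that the $\Theta(t)$ subtrees born well before the tip, conditioned jointly so that none overshoots $h_t(X_t)$, contribute only an asymptotically constant multiplicative weight — which cancels between numerator and denominator in the conditional expectation — and do not distort the local picture near the tip formed by the reversed spine and the near subtrees. This is the familiar but delicate truncated-second-moment-with-upper-barrier machinery of Bramson-type analyses (cf.~\cite{B_M, CHL19}), here carried out uniformly in $u$ on compacts and with errors that vanish as $t\to\infty$ and, for the joint statement, as $r\to\infty$ as well. By contrast, the remaining technical points — conditioning rigorously on the density event $\{\wh h_t(X_t)=u\}$, and the $r\to\infty$ tightness — are routine.
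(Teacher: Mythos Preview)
The paper does not actually prove this lemma: it is quoted verbatim as Lemma~5.1 of~\cite{CHL19}, with the sole remark that in~\cite{CHL19} the limits in $r$ and $t$ are taken simultaneously, whereas the sequential version follows because that proof rests on~\cite[Theorem~2.3]{ABBS2013}, where the limit is already sequential. So there is no argument here to compare against beyond the pointer to~\cite{CHL19} and~\cite{ABBS2013}.

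Your sketch is broadly on the right track and captures the mechanism behind the result, but it differs in presentation from the route actually taken in~\cite{CHL19}. There, one does not reverse time and argue heuristically about a ``Bessel-3-like backbone seen from the tip''; instead one uses the exact DRW identity (Lemma~\ref{l:5.2} here, i.e.\ \cite[Lemma~3.2]{CHL19}) to rewrite the conditional law of $\cC_{t,r}(X_t)$ as the law of $\int_0^r \cE_s^s(\cdot - \wh W_{t,s})\,\cN(\rmd s)$ under the forward Brownian bridge conditioned on the Ballot event $\cA_t$. The convergence as $t\to\infty$ is then read off from the Ballot asymptotics (Lemma~\ref{lem:15}, Lemma~\ref{l:2.13}) and the stability of the decorations, which is cleaner than the entropic-repulsion/TV-convergence story you outline. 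Your ``view from the tip'' is closer in spirit to~\cite{ABBS2013}; both routes work, but the DRW identity gives an equality rather than an approximation, so the passage to the limit is more transparent and the $u$-independence is immediate (one may take $u=0$ by a shift, cf.~\eqref{e:29.2}--\eqref{e:26.6}).

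One point in your outline is shakier than you suggest: the claim that the far subtrees contribute ``an asymptotically constant multiplicative weight which cancels between numerator and denominator'' is exactly the content of the Ballot asymptotics~\eqref{e:53}, and making this precise uniformly in the endpoint requires the machinery of~\cite{CHL17_supplement}; it is not a routine truncated-second-moment argument. If you were to write a self-contained proof, this is where the real work lies, and the DRW formulation is what makes it tractable.
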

We note that, in the proof in~\cite{CHL19}, the limits in $r$ and $t$ were taken at the same time, but the proof carries over to case where the limits are taken sequentially (in fact, the proof in~\cite{CHL19} relies on Theorem~2.3 in~\cite{ABBS2013}, in which the limit is taken sequentially).

\subsubsection{Decorated random walk representation}
\label{s:2.1.2}
Recall that $W=(W_s :\: s \geq 0)$ denotes a standard Brownian motion. For $0 \leq s \leq t$, set
\begin{equation}
\label{e:20.5}
\gamma_{t,s} := \tfrac{3}{2\sqrt{2}} \big(\log^+ s - \tfrac{s}{t}\log^+ t \big) 
\end{equation}
and let
\begin{equation}
\wh{W}_{t,s} := W_s - \gamma_{t,s} \,.
\end{equation}
We also let $H := \big(h^s := (h^s_t)_{t \geq 0} :\: s \geq 0\big)$ be a collection of independent copies of a regular BBM process $h$, that we will assume to be independent of $W$ as well. Finally, let $\cN$ be a Poisson point process with intensity $2 \rmd x$ on $\bbR_+$, independent of $H$ and $W$ and denote by $\sigma_1 < \sigma_2 < \dots$ its ordered atoms. The triplet 
\begin{equation}
\label{e:102.27}
(\wh{W}, \cN, H)	
\end{equation}
will be referred to as a {\em decorated random-walk-like process} (DRW). 

In what follows, we shall add a superscript $s$ to any notation used for objects which were previously defined in terms of $h$, to denote objects which are defined in the same way, but with $h^s$ in place of $h$. In this way, $\wh{h}^s_t$, $\wh{h}^{s*}_t$ and $\cE_t^s$ are defined as in the introduction only with respect to $h^s$ in place of $h$.
Using this notation, for $0 \leq r < R \leq t$ and $u \in \bbR$ we define
\begin{equation}
\label{e:2.5}
\cA_t(u; r,R) := \Big \{
\max\limits_{k : \, \sigma_k \in [r,R]} \big(\wh{W}_{t,{\sigma_k}} + \wh{h}^{\sigma_k*}_{\sigma_k}\big) \leq u \Big\} \,,
\end{equation}
with the abbreviations
\begin{equation}
\label{e:102.7}
\cA_t(r,R) := \cA_t(0; r,R) 
\quad , \qquad 
	\cA_t \equiv \cA_t(0,t)\,.
\end{equation}
This is the event that the Brownian motion $W$ plus a certain functional of its decorations $H$, stay below the curve $\gamma_{t,\cdot}$ at random times which correspond to the atoms of $\cN$. We shall refer to such event as a DRW {\em Ballot} event.

The connection between probabilities of the form~\eqref{e:387} and DRW Ballot probabilities are given by the following two lemmas which were proved in~\cite{CHL19}. 
\begin{lem}[Lemma~3.1 in~\cite{CHL19}]
\label{l:2.3}
For all $0 \leq r \leq t$ and $u, w \in \bbR$,
\begin{equation}
\label{e:29.2}
\wt{\bbP} \Big( \wh{h}_t^*\big(\rmB^\rmc_{r}(X_t)\big)
 \leq u \, \Big| \, \wh{h}_t(X_t) = w \Big) 
= \bbP \big( \cA_t(r,t) \, \big|\,
	\wh{W}_{t,r} = w - u,\, \wh{W}_{t,t} = -u \big) \,.
\end{equation}
In particular for all $t \geq 0$ and $v,w \in \bbR$,
\begin{equation}
\label{e:29.1}
\wt{\bbP} \big( \wh{h}^*_t \leq u \, \big| \, \wh{h}_t(X_t) = w \big) 
= \bbP \big( \cA_t \, \big|\, 
	\wh{W}_{t,0} = w - u,\, \wh{W}_{t,t} = -u \big) \,.
\end{equation}
\end{lem}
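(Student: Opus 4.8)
The plan is to derive both identities from the spine decomposition of sBBM together with the Gaussian/Poissonian structure of the decorations. I would start with~\eqref{e:29.1}, since~\eqref{e:29.2} is the same argument localized to the sub-tree born after time $r$ along the spine. Recall that under $\wt\bbP$, sBBM can be realized ``from the point of view of the spine'': the spine trajectory $s \mapsto h_s(X_s)$ is a standard Brownian motion (run up to time $t$), and at the atoms $\sigma_1 < \sigma_2 < \cdots$ of an independent $\mathrm{PPP}(2\,\rmd x)$ on $[0,t]$, an independent ordinary BBM $h^{\sigma_k}$ is grafted on, evolving for the remaining time $t - \sigma_k$. The event $\{\wh h_t^* \le u\}$, i.e.\ that \emph{every} particle alive at time $t$ has height at most $m_t + u$, decomposes as the intersection, over all grafting times $\sigma_k$, of the event that the maximum at time $t$ of the grafted sub-BBM stays below $m_t + u$, together with the event that the spine particle itself is below $m_t + u$ at time $t$ (this last one being automatically handled once we condition on the spine's endpoint).

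Next I would perform the centering bookkeeping that turns the deterministic barrier $m_t + u$ into the curve $\gamma_{t,\cdot}$ appearing in $\cA_t$. The sub-BBM grafted at time $\sigma_k$ contributes, at time $t$, a maximum equal in law to $h_{\sigma_k}(X_{\sigma_k}) + \max$-of-a-time-$(t-\sigma_k)$-BBM. Writing $m_t - m_{t-\sigma_k}$ and using the definition $m_t = \sqrt2\, t - \tfrac{3}{2\sqrt2}\log^+ t$, a direct computation gives $m_t - m_{t-s} = \sqrt2\, s + \tfrac{3}{2\sqrt2}\big(\log^+(t-s) - \log^+ t\big)$; after subtracting the linear drift $\sqrt 2 s$ from the spine (which is exactly what the conditioning $\wh W_{t,0} = w-u$, $\wh W_{t,t} = -u$ encodes, since $\wh W$ is BM minus a drift term that is built from $m_t$) one is left precisely with the correction $\gamma_{t,s}$ from~\eqref{e:20.5}. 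Thus ``$\max$ of the sub-BBM grafted at $\sigma_k \le m_t + u$'' becomes ``$\wh W_{t,\sigma_k} + \wh h^{\sigma_k *}_{\sigma_k} \le u$'', the event inside $\cA_t(u;0,t)$; and conditioning the spine BM on its two endpoints $w-u$ and $-u$ matches the conditioning on $\wh W_{t,0}$ and $\wh W_{t,t}$ on the right-hand side. The conditioning $\wh h_t(X_t) = w$ on the left translates, after centering, to $\wh W_{t,t} + (\text{spine displacement}) = $ the value $-u$ plus $w$, hence $\wh W_{t,0} = w - u$; I would spell out this change of variables carefully. Independence of the grafted sub-BBMs (given the spine and the Poisson graft times) is what lets the intersection over $k$ be expressed as a single event in the DRW triple $(\wh W, \cN, H)$, namely $\cA_t$.

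For~\eqref{e:29.2}, the same decomposition applies, but now we only keep the part of the event concerning particles \emph{outside} the genealogical ball $\rmB_r(X_t)$, i.e.\ particles whose lineage splits off from the spine before time $t - r$ --- equivalently, sub-BBMs grafted at times $\sigma_k \in [0, t-r]$... wait, the statement uses $\cA_t(r,t)$, so the relevant graft times are those $\sigma_k \in [r,t]$; I would double-check the direction of the time parametrization against~\eqref{e:2.5}, but in either case the point is that $\rmB^\rmc_r(X_t)$ corresponds exactly to a sub-collection of the grafts indexed by an interval of $\sigma$-values, and restricting the intersection to those grafts yields $\cA_t(r,t)$. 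The conditioning now also fixes $\wh W_{t,r} = w - u$ (the spine's centered position at the graft-time threshold), matching the right-hand side.

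The main obstacle I expect is purely a matter of careful exactness in the centering algebra --- making sure that the drift subtracted in the definition of $\wh W_{t,\cdot}$ (via $\gamma_{t,s}$ and the linear part) exactly cancels the $m_t - m_{t-s}$ terms coming from each graft, with no residual $s/t$ or $\log^+$ discrepancy, and that the endpoint conditionings on the two sides correspond under the affine change of variables $W_s \leftrightarrow \wh W_{t,s}$. There is no probabilistic subtlety beyond the standard spine construction and the independence of decorations; the content is entirely in verifying that the two descriptions of the barrier-crossing event coincide identity-by-identity, which is why~\cite{CHL19} states it as a lemma rather than a theorem. Since the lemma is quoted verbatim from~\cite{CHL19}, in this manuscript I would simply cite that reference for the proof and, if desired, include the one-line centering computation $m_t - m_{t-s} = \sqrt 2 s - \gamma_{t,s}\big|_{\text{linear part absorbed}}$ as a reminder.
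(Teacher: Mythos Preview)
Your approach via the spine decomposition plus centering bookkeeping is correct and is exactly the argument in~\cite{CHL19}; the present paper does not supply its own proof but simply cites that reference, as you yourself anticipate at the end. To settle your hesitation about the time direction: the DRW index $s$ is the \emph{age} of the grafted sub-BBM, i.e.\ $s = t - (\text{actual graft time})$, so $\rmB_r^\rmc(X_t)$ --- particles whose lineage left the spine at or before actual time $t-r$ --- corresponds precisely to $\sigma_k \in [r,t]$ and hence to $\cA_t(r,t)$. The explicit identification is $\wh W_{t,s} = h_{t-s}(X_{t-s}) - (m_t - m_s) - u$, under which one checks directly that $\wh W_{t,0} = w-u$, $\wh W_{t,t} = -u$, and the event $\{\wh W_{t,\sigma_k} + \wh h^{\sigma_k *}_{\sigma_k} \le 0\}$ unwinds to exactly the barrier condition on the $k$-th graft.
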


\begin{lem}[Lemma~3.2 in~\cite{CHL19}]
\label{l:5.2}
For all $0 \leq r \leq t$,
\begin{equation}
\label{e:26.6}
\wt{\bbP}  \Big( \cC_{t,r} (X_t) 
 \in \cdot \,\Big|\, \wh{h}^*_t = \wh{h}_t(X_t) = 0 \Big)  
= \bbP \bigg( \int_0^r \! \cE_{s}^{s} \big(\cdot - \wh{W}_{t,s} \big)\cN(\rmd s) \in \,  \cdot 
	 \ \Big| \ \wh{W}_{t,0} \! = \! \wh{W}_{t,t} \! = \! 0 \,;\, \cA_t \, \bigg).
\end{equation}
\end{lem}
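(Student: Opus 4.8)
The plan is to derive \eqref{e:26.6} from the spinal picture by combining the ``point of view of the spine'' representation of sBBM with the decorated random-walk representation established for the Ballot events. First I would unpack the left-hand side: under $\wt\bbP$, conditionally on the trajectory of the spine $(h_s(X_s))_{s\le t}$, the non-spine subtrees branch off the spine at the atoms of a rate-$2$ Poisson process and each evolves as an independent regular BBM. Concretely, if $\sigma_1<\sigma_2<\cdots$ are the branching times of the spine in $[0,t]$, then at time $\sigma_k$ a fresh BBM, call it $h^{\sigma_k}$, is started from the point $h_{\sigma_k}(X_{\sigma_k})$ and run for time $t-\sigma_k$. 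The cluster $\cC_{t,r}(X_t)=\sum_{y\in \rmB_r(X_t)}\delta_{h_t(y)-h_t(X_t)}$ collects exactly the relative heights of all particles that split off from the spine \emph{within genealogical distance $r$}, i.e.\ from subtrees rooted at times $\sigma_k > t-r$ (equivalently, after shifting, at rescaled times $s=t-\sigma_k<r$). So, writing $\wh W_{t,s}$ for the centered spine increment $h_{t-s}(X_{t-s})-h_t(X_t)$ expressed in the right coordinates (this is where $\gamma_{t,s}$ enters, absorbing the deterministic $m_t$-correction), the contribution of the subtree at rescaled time $s$ to the cluster is precisely $\cE^{s}_{s}(\,\cdot-\wh W_{t,s})$, the centered extremal process of the $s$-th decoration BBM run for time $s$, translated by the spine's height at that point. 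This yields the integral $\int_0^r \cE^s_s(\cdot-\wh W_{t,s})\,\cN(\rmd s)$ as the \emph{unconditioned} law of the cluster read off from the DRW triplet $(\wh W,\cN,H)$.

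Next I would handle the conditioning. On the left we condition on $\wh h^*_t = \wh h_t(X_t)=0$, i.e.\ the spine ends at the centered height $0$ \emph{and} is the global maximum of $h_t$. The first part pins the endpoint of the backbone: in DRW coordinates this is the event $\wh W_{t,0}=\wh W_{t,t}=0$ (the time-reversal convention here sends the spine's terminal time to $s=0$). The second part — the spine being the global max — must be translated into the statement that every particle in every subtree, spine-decoration or not, lies below the spine's terminal height. For a subtree that branched off at rescaled time $s$, its maximal particle has centered height $\wh W_{t,s} + \wh h^{s*}_s$ relative to the common frame, so ``spine is the global maximum'' becomes exactly the DRW Ballot event $\cA_t = \{\max_{k:\sigma_k\in[0,t]} (\wh W_{t,\sigma_k}+\wh h^{\sigma_k*}_{\sigma_k})\le 0\}$ from \eqref{e:2.5}--\eqref{e:102.7}. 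This is precisely the content already encoded by Lemma~\ref{l:2.3} (taking $r=0$, $w=u=0$ there): the event of the spine being the global max, conditionally on its endpoint, has the same probability as $\cA_t$ under the pinned bridge measure. The remaining task is to check that this identification of events is compatible with reading off the cluster, i.e.\ that $(\cC_{t,r}(X_t), \{\wh h^*_t=\wh h_t(X_t)=0\})$ and $\big(\int_0^r\cE^s_s(\cdot-\wh W_{t,s})\cN(\rmd s), \{\wh W_{t,0}=\wh W_{t,t}=0\}\cap\cA_t\big)$ are \emph{jointly} equal in law, not just marginally; this follows because the map from the sBBM data to the pair (cluster, global-max event) factors through exactly the DRW data $(\wh W,\cN, (h^s)_{s})$ with no extra randomness, by the conditional-independence structure of the subtrees given the spine.

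The main obstacle, and the step that needs the most care, is bookkeeping of the two reparametrizations: the time-reversal $s\mapsto t-s$ that turns ``genealogical distance $\le r$ from $X_t$'' into ``rescaled time $\le r$'', and the deterministic shift $\gamma_{t,s}$ that reconciles the centering $m_t$ of $h_t$ with the centerings $m_s$ of the decoration processes $h^s_s$ (this is the origin of the somewhat unusual drift term in $\wh W_{t,s}=W_s-\gamma_{t,s}$). One must verify that, after these changes of variables, (i) the spine's centered trajectory really does have the law of $\wh W_{t,\cdot}$ under the stated pinning, (ii) the branching times $\sigma_k$ of the spine, which under $\wt\bbP$ form a rate-$2$ Poisson process on $[0,t]$, map to the rate-$2$ process $\cN$ on $[0,r]$ after reversal, and (iii) the subtree started at $\sigma_k$, centered and reversed, contributes exactly $\cE^{\sigma_k}_{\sigma_k}(\cdot-\wh W_{t,\sigma_k})$ and not some process with a mismatched centering. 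All three are essentially the same computations that underlie Lemma~\ref{l:2.3}, so once that lemma is granted, the present lemma follows by tracking the cluster observable alongside the Ballot event rather than just its indicator; no genuinely new estimate is required, only an honest accounting of the conditional structure. I would present the argument as: (1) spine decomposition of $\wt\bbP$ conditioned on $\{\wh h_t(X_t)=0\}$; (2) identification of $\{\wh h^*_t=0\}$ with $\cA_t$ via the decoration-level Ballot event, citing Lemma~\ref{l:2.3}; (3) reading off $\cC_{t,r}(X_t)$ from the near-tip subtrees as $\int_0^r\cE^s_s(\cdot-\wh W_{t,s})\cN(\rmd s)$; (4) assembling (1)--(3) into the joint identity \eqref{e:26.6}.
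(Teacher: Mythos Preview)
Your proposal is correct and follows essentially the same approach as the original proof in \cite{CHL19}: the spine decomposition gives independent BBMs branching off at rate-$2$ Poisson times, the time-reversal $s=t-\sigma$ together with the drift $\gamma_{t,s}$ converts the spine trajectory into $\wh W_{t,\cdot}$ pinned at both endpoints, and the global-maximum condition becomes the Ballot event $\cA_t$ exactly as in Lemma~\ref{l:2.3}. One small bookkeeping point worth making explicit in your write-up: the identity $\wh W_{t,s}=h_{t-s}(X_{t-s})-h_t(X_t)+m_s$ (not just the spine increment, but with the $m_s$ correction) is what makes the shifted extremal process $\cE^s_s(\cdot-\wh W_{t,s})$ line up exactly with the relative heights, since $\cE^s_s$ is already centered by $m_s$; your parenthetical about $\gamma_{t,s}$ absorbing the correction is right but would benefit from this one-line computation.
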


The following lemma will be useful when estimating increments of $\wh{W}$.
\begin{lem}[Lemma~3.3 in~\cite{CHL19}] 
\label{l:lisa}
Let $s,r, t\in \bbR$ be such that $0 \leq r \leq r+s \leq t$, then
\begin{equation}\label{e:aser1}
 - 1  \, \leq \log^+ (r+s) - \left(\tfrac{t-(r+s)}{t-r} \log^+r + \tfrac{s}{t-r} \log^+t\right)
\leq \, 1+ \log^+ \big(s \wedge (t-r-s) \big).
\end{equation}
\end{lem}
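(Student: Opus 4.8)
\textbf{Proof plan for Lemma~\ref{l:lisa}.} The plan is to reduce the claim to the concavity of $\log^+$ together with a direct estimate on the linear interpolation error. Write $a := r$, $b := t$, and set $\lambda := s/(b-a) \in [0,1]$, so that $r+s = (1-\lambda) a + \lambda b$ is a convex combination of $a$ and $b$, and the quantity to be controlled is
\begin{equation}
\label{e:plan_lisa}
\Delta := \log^+\big((1-\lambda)a + \lambda b\big) - \big((1-\lambda)\log^+ a + \lambda \log^+ b\big) \,.
\end{equation}
Since $x \mapsto \log^+ x = \log(x \vee 1)$ is concave and non-decreasing on $[0,\infty)$ (it is the maximum of two concave functions in a way that preserves concavity — more simply, it is concave because $\log$ is concave on $[1,\infty)$ and it is constant on $[0,1]$, and the two pieces match with a non-increasing slope), Jensen's inequality immediately gives $\Delta \geq 0$, which is more than the lower bound $\Delta \geq -1$ asked for. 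So the lower bound is essentially free; the only real content is the upper bound.

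For the upper bound I would exploit the symmetry of the problem in the two endpoints $a$ and $b$. First observe that $\Delta$ as a function of $\lambda$ is concave and vanishes at $\lambda = 0$ and $\lambda = 1$; hence it suffices to bound $\Delta$ when $\lambda \le 1/2$ (the case $\lambda \ge 1/2$ follows by the substitution $\lambda \mapsto 1-\lambda$, $a \leftrightarrow b$, which replaces $s$ by $t - r - s$). When $\lambda \le 1/2$ we have $(1-\lambda)a + \lambda b \le a + \lambda(b-a) = a + s$, and also $(1-\lambda)\log^+ a + \lambda\log^+ b \ge \log^+ a$ (as $\log^+$ is non-decreasing and $b \ge a$ forces $\log^+ b \ge \log^+ a$), whence
\begin{equation}
\label{e:plan_lisa2}
\Delta \;\le\; \log^+(a+s) - \log^+ a \;\le\; \log^+ s + 1 \,,
\end{equation}
where the last step uses the elementary inequality $\log^+(x+y) \le \log^+ x + \log^+ y + 1$ for $x, y \ge 0$ (check the three regimes $x,y$ small, one small one large, both large; the constant $1$ absorbs the mismatch when exactly one of them is $\le 1$). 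Combining this with the analogous bound $\Delta \le \log^+(t-r-s) + 1$ obtained in the case $\lambda \ge 1/2$, and noting that in either regime we may take the smaller of $s$ and $t-r-s$, yields $\Delta \le 1 + \log^+\!\big(s \wedge (t-r-s)\big)$, which is exactly the stated upper bound.

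There is no serious obstacle here; the lemma is genuinely elementary once it is phrased via convex combinations. The only points requiring a modicum of care are: (i) confirming concavity of $\log^+$ on all of $[0,\infty)$ including the kink at $1$; (ii) the symmetrization step, where one must check that $t - r - s \ge 0$ under the hypothesis $r + s \le t$ so that $\log^+(t-r-s)$ makes sense and the roles of $s$ and $t-r-s$ are genuinely interchangeable; and (iii) keeping track of the additive constant $1$ through the inequality $\log^+(x+y) \le \log^+ x + \log^+ y + 1$ so that no larger constant creeps in. None of these is more than a line of verification.
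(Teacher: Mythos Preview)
Your proposal contains two genuine errors.

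\textbf{The lower bound.} You claim that $\log^+ x = \log(x\vee 1)$ is concave on $[0,\infty)$; it is not. On $[0,1]$ the function is constant with slope $0$, while on $[1,\infty)$ it equals $\log x$ with right-derivative $1$ at $x=1$: the slope jumps \emph{up}, so concavity fails. (Concretely: $a=0$, $b=4$, $\lambda=1/4$ gives $(1-\lambda)a+\lambda b=1$ and $\Delta = \log^+ 1 - \tfrac14\log 4 = -\tfrac12\log 2 < 0$.) Thus $\Delta\ge0$ is false in general, and your Jensen argument collapses. The weaker bound $\Delta\ge -1$ does hold, but it needs a short direct argument: reduce to the case $a<1\le b$ (otherwise $\log^+$ coincides with $\log$ or with $0$ on the relevant interval and one gets $\Delta\ge 0$), use $(1-\lambda)a+\lambda b \ge \lambda b$, and verify $\log^+(\lambda b)\ge \lambda\log^+ b - 1$ for $\lambda\in[0,1]$.

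\textbf{The upper bound, case $\lambda\ge 1/2$.} Your bound $\Delta \le \log^+(a+s)-\log^+ a \le 1+\log^+ s$ is correct (and actually valid for all $\lambda$), but this only gives the $s$ half of the minimum. For the $t-r-s$ half you invoke the substitution $(\lambda,a,b)\mapsto(1-\lambda,b,a)$. After this swap, the step ``$(1-\lambda)\log^+ a+\lambda\log^+ b \ge \log^+ a$'' would have to read ``$\ge \log^+ b$'', which now requires $\log^+ a \ge \log^+ b$ --- false since $a=r\le t=b$. So the symmetry does not transfer the argument. A correct route: use $(1-\lambda)a+\lambda b\le b$ to obtain $\Delta\le(1-\lambda)(\log^+ t-\log^+ r)$, then combine $\log^+ t-\log^+ r\le 1+\log^+(t-r)$ (same elementary inequality you already used) with the fact that $\mu(1+\log^+ u)\le 1+\log^+(\mu u)$ for all $\mu\in[0,1]$ and $u\ge 0$, applied with $\mu=1-\lambda$, $u=t-r$.

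(For comparison: the paper does not supply its own proof of this lemma; it is quoted verbatim from~\cite{CHL19}.)
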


\subsubsection{Ballot estimates for DRW}
In this subsection we include estimates for Ballot probabilities involving the DRW from the previous sub-section. They are either derived from~\cite{CHL17_supplement} in which more general DRW processes are treated, or taken directly from~\cite{CHL19}, in which the results from~\cite{CHL17_supplement} were specialized to the DRW from Subsection~\ref{s:2.1.2}. The proofs of the statements here are deferred to the appendix.

The first lemma provides bounds and asymptotics for the DRW Ballot event.
\begin{lem}[Essentially Lemma~3.4 in~\cite{CHL19}]
\label{lem:15}
There exists $C, C' > 0$ such that for all $0 \leq r < R\leq t$ and $w, v \in \bbR$,
\begin{equation}
\label{e:52}
\bbP \big( \cA_t(r,R)
	\,\big|\, \wh{W}_{t,r} = v \,,\,\, \wh{W}_{t,R} = w \big)
	\leq C \frac{(v^- + 1)(w^- + 1)}{R-r} \,.
\end{equation}
Also, there exists non-increasing functions $g: \bbR \to (0,\infty)$ and $f^{(r)}: \bbR \to (0,\infty)$ for $r \geq 0$, such that for all such $r$
\begin{equation}
\label{e:53}
\bbP \big( \cA_t(r,R)
	\,\big|\, \wh{W}_{t,r} = v \,,\,\, \wh{W}_{t,R} = w \big)  
	\sim 2 \frac{f^{(r)}(v) g(w)}{R-r},
\end{equation}
as $R \to \infty$ uniformly in $t \geq R$ and $v,w$ satisfying $v,w < 1/\epsilon$ and $(v^-+1)(w^-+1) \leq R^{1-\epsilon}$ for any fixed $\epsilon > 0$. Moreover, 
\begin{equation}
\label{e:53.5}
\lim_{v \to \infty} \frac{f^{(r)}(-v)}{v} = \lim_{w \to \infty} \frac{g(-w)}{w} = 1  \,,
\end{equation}
for any $r \geq 0$. Finally there exists $f: \bbR \to (0, \infty)$ such that for all $v \in \bbR$, 
\begin{equation}
\label{e:54}
f^{(r)}(v) \overset{r \to \infty} \longrightarrow f(v) \,.
\end{equation}
\end{lem}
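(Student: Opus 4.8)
The plan is to reduce the DRW Ballot event to a pure Brownian Ballot event, by conditioning on the decoration process $H$ and on the point process $\cN$, and then invoking classical Ballot estimates for Brownian motion together with sharp control of the recentered maxima $\wh h^{\sigma_k*}_{\sigma_k}$. The event $\cA_t(r,R)$ in~\eqref{e:2.5} asks that $\wh W_{t,\sigma_k} \le -\wh h^{\sigma_k*}_{\sigma_k}$ at every atom $\sigma_k \in [r,R]$; since $\wh W_{t,s} = W_s - \gamma_{t,s}$ with $\gamma_{t,s}$ bounded by $O(\log^+(s \wedge (t-s)))$ (by Lemma~\ref{l:lisa}, or directly from~\eqref{e:20.5}) and since $\wh h^{\sigma*}_\sigma$ has, uniformly in $\sigma$, exponentially decaying upper tails and Gaussian-type lower tails around a bounded mean, the effective barrier seen by $W$ at time $\sigma_k$ is a bounded perturbation of $\gamma_{t,\sigma_k}$. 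So morally the event is ``$W$ stays below a logarithmically-growing curve on the random set $\{\sigma_k\}$'', which because $\cN$ has positive density is comparable to $W$ staying below that curve on the whole interval $[r,R]$.

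The key steps, in order, are: (i) Condition on $(\cN, H)$, so that $\cA_t(r,R)$ becomes a Ballot-type event for the Brownian bridge $W$ pinned at $\wh W_{t,r}=v$, $\wh W_{t,R}=w$ with a (conditionally deterministic) piecewise barrier; absorb $\gamma_{t,\cdot}$ into the endpoints and the drift, reducing to a standard bridge. (ii) For the upper bound~\eqref{e:52}, drop all constraints except at two well-separated atoms near $r$ and near $R$ (which exist with overwhelming probability since $\cN$ has intensity $2\,\rmd x$; handle the low-probability exceptional configurations of $\cN$ separately), and apply the two-point Ballot bound of Lemma~\ref{l:102.1}, giving the $(v^-+1)(w^-+1)/(R-r)$ order after integrating out the bounded fluctuations of the two relevant $\wh h^{\sigma*}_\sigma$'s and the $O(1)$ displacement from $\gamma_{t,\cdot}$. (iii) For the asymptotics~\eqref{e:53}, use the sharp Ballot asymptotics: condition the bridge near $r$ and near $R$ and use that, as $R-r\to\infty$, a Brownian path conditioned to stay below a slowly varying barrier decouples into an entrance law near $r$ and near $R$; this yields the factorized form $2 f^{(r)}(v) g(w)/(R-r)$, where $g(w)$ is the limiting (renormalized) probability of the ``right end'' surviving forever and $f^{(r)}(v)$ of the ``left end'' surviving from time $r$; the precise functions are defined as the appropriate limits, exactly as in Lemma~3.4 of~\cite{CHL19}. (iv) The boundary behavior~\eqref{e:53.5} follows since, as $v\to\infty$, the barrier is so far below the starting point that survival is essentially unconstrained near the left end and $f^{(r)}(-v)/v \to 1$ by the linear growth of the harmonic function $h(x)=x$ that governs Brownian motion conditioned to stay positive (Lemma~\ref{l:2.2}), and symmetrically for $g$. (v) Finally~\eqref{e:54} follows because increasing $r$ only removes finitely many constraints near the left end and the process mixes: $f^{(r)}(v)$ is monotone and bounded in $r$, hence convergent, with the limit $f(v)$ being the entrance density of the infinite-horizon conditioned process; alternatively cite the corresponding convergence from~\cite{CHL17_supplement,CHL19} directly.

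The main obstacle will be step (iii): making the factorization~\eqref{e:53} \emph{sharp} and \emph{uniform} in $t\ge R$ and in the allowed range of $v,w$ (with $(v^-+1)(w^-+1)\le R^{1-\epsilon}$), while the barrier is the random piecewise curve determined by $\cN$ and $H$ rather than a smooth deterministic one. One has to show that the randomness of $\cN$ near the interior of $[r,R]$ averages out (it contributes only a multiplicative constant, absorbed into the ``$2$''), and that the heavy-tailed-looking contributions from atypically large $\wh h^{\sigma*}_\sigma$ do not spoil the asymptotics — this requires a truncation argument using the exponential upper tail of $\wh h^{\sigma*}_\sigma$ combined with the union bound over $\cN\cap[r,R]$, exactly the kind of estimate carried out in~\cite{CHL17_supplement}. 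Since all of this is already done there (in greater generality) and specialized in~\cite{CHL19}, the cleanest route for the write-up is to verify that the DRW of Subsection~\ref{s:2.1.2} satisfies the hypotheses of the general Ballot theorem in~\cite{CHL17_supplement} — the decorations $\wh h^{\sigma*}_\sigma$ have the required uniform tail bounds, and $\gamma_{t,s}$ has the required $O(\log)$ growth by Lemma~\ref{l:lisa} — and then quote the conclusion, relegating the tail-truncation bookkeeping to the appendix as indicated.
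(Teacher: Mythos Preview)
Your proposal is correct and lands on essentially the same approach as the paper: verify that the DRW of Subsection~\ref{s:2.1.2} satisfies the hypotheses of the general Ballot theorem in~\cite{CHL17_supplement} (via the $O(\log)$ barrier bound from Lemma~\ref{l:lisa}) and then quote the conclusion, exactly as done in Lemma~3.4 of~\cite{CHL19}. The paper's proof is a two-line citation noting only that the sole modification needed over~\cite{CHL19} is to allow the right endpoint to be any $R\le t$ rather than $R=t$, which amounts to applying Lemma~\ref{l:lisa} with $R-r$ in place of $t-r$ when checking the barrier-curve condition; your heuristic outline of steps (i)--(v) is a faithful description of what the cited machinery is doing internally, but none of it needs to be re-derived here.
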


We shall also need the following version of the previous lemma,  where the ballot barrier is at different heights in both ``edges'' of the time interval.
\begin{lem}\label{lemma_mixed_ballot} 
Let $r: \bbR_+ \to \bbR$ be such that $r_s = o(s), r_s\to \infty$ as $s \to \infty$ and $r_s, s-r_s$ are both uniquely determined by $s$. Then, 
	\begin{equation}
		\bbP \big(\cA_t(w + z; r, s) \cap \cA_t(0, r)
		\,\big|\, \wh{W}_{t,0} = 0 \,,\,\, \wh{W}_{t,s} = w \big) \sim  \frac{2f^{(0)}(0) g(-z)}{s}, \label{eq:ballot_prob_0_to_s}
	\end{equation}
	as $s\to \infty$, uniformly for $t > s$ and $w, z$ in compact sets. Also,
	\begin{equation}
		\bbP \big(\cA_t(w + z; s, t/2) \cap \cA_t(t/2, t) 
				\,\big|\, \wh{W}_{t,s} = w \,,\,\, \wh{W}_{t,t} = 0 \big)
		\sim  \frac{2f(-z) g(0)}{t}, 
		\label{eq:ballot_prob_s_to_t}
	\end{equation}
	as $t \to \infty$ followed by $s\to \infty$, uniformly for $w, z$ in compact sets.
\end{lem}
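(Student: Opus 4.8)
The statement to prove is Lemma~\ref{lemma_mixed_ballot}, asserting two refined asymptotics for DRW Ballot probabilities where the barrier height jumps by a constant at an intermediate time. Both estimates should follow by combining the uniform asymptotics in Lemma~\ref{lem:15} (in particular~\eqref{e:53} and~\eqref{e:54}) with a Markov-type decomposition at the jump time, after carefully accounting for the bend $\gamma_{t,\cdot}$ of the barrier through Lemma~\ref{l:lisa}. The plan is to condition on the position $\wh{W}_{t,r}$ of the DRW at the intermediate time $r$ (resp.\ $s$), split the Ballot event into the ``before'' and ``after'' pieces, and glue their asymptotics.

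For~\eqref{eq:ballot_prob_0_to_s}: I would write, by the Markov property of $W$ at time $r = r_s$ and independence of the decorations on $[0,r]$ from those on $[r,s]$,
\begin{equation}
\bbP\big(\cA_t(w+z;r,s)\cap\cA_t(0,r)\,\big|\,\wh{W}_{t,0}=0,\ \wh{W}_{t,s}=w\big)
= \bbE\Big[\bbP\big(\cA_t(0,r)\,\big|\,\wh{W}_{t,0}=0,\wh{W}_{t,r}=V\big)\,\bbP\big(\cA_t(w+z;r,s)\,\big|\,\wh{W}_{t,r}=V,\wh{W}_{t,s}=w\big)\Big],
\end{equation}
where $V = \wh{W}_{t,r}$ has the bridge law of $W$ conditioned on its endpoints (shifted by the deterministic $\gamma$-terms). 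Since $r = r_s = o(s)$, the first factor converges to $f^{(0)}(0)\,g(V)/r$ scaled appropriately — actually, to keep $r$ bounded one should instead fix $r$ large but finite, apply~\eqref{e:53} with $R = r$ held fixed and $t \geq r$, then send $r \to \infty$ using~\eqref{e:54}; the simplest route is to note that~\eqref{e:53} already gives, uniformly, $\bbP(\cA_t(0,r)\mid \wh W_{t,0}=0, \wh W_{t,r}=V) \sim 2 f^{(0)}(0) g(V)/r$. The second factor, by~\eqref{e:53} applied on the interval $[r,s]$ with the barrier lowered by the constant $w+z$ (which merely shifts the endpoint values by constants), is asymptotic to $2 f^{(r)}(V - (w+z) + O(1))\, g(w - (w+z) + O(1))/(s-r) \sim 2 f^{(r)}(\cdots)\, g(-z)/s$, the $O(1)$ error coming from the bend $\gamma$ on $[r,s]$ controlled by Lemma~\ref{l:lisa}. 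Taking expectation over $V$ against the bridge measure, using that $g$ is bounded and non-increasing and $f^{(r)} \to f$, and that the bridge from $0$ to $w$ over $[0,s]$ at the time $r = o(s)$ concentrates near $0$ on the relevant scale, one extracts the constant $f^{(0)}(0)$ and is left with the factor $g(-z)/s$; collecting the numerical prefactors gives $2 f^{(0)}(0) g(-z)/s$ as claimed.

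For~\eqref{eq:ballot_prob_s_to_t}: this is the symmetric situation with the jump now at the intermediate time $s$ and the right endpoint at $t$ with $\wh W_{t,t} = 0$; I would decompose at time $t/2$, using~\eqref{e:53} on $[s,t/2]$ and on $[t/2,t]$, sending $t \to \infty$ first (so the second interval has diverging length and contributes $g(0)$ from the right endpoint, the ``$f$'' side at $t/2$ integrating against a bridge that decouples from $s$) and then $s \to \infty$ to replace $f^{(s)}$ by its limit $f$ via~\eqref{e:54}, yielding $2 f(-z) g(0)/t$. The time-reversal symmetry of the bridge $W$ makes the two computations formally parallel.

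\textbf{Main obstacle.} The delicate point is not the Markov splitting itself but controlling the error terms uniformly: the barrier $\gamma_{t,\cdot}$ is not linear, so after conditioning at the intermediate time the effective barrier on each sub-interval is the original barrier plus a bounded-but-nonzero perturbation (quantified by Lemma~\ref{l:lisa}), and one must check that the asymptotics~\eqref{e:53} tolerate a uniformly bounded additive perturbation of the barrier without changing the leading constants $f, g$ — this is true because $f^{(r)}, g$ are defined precisely to absorb such $O(1)$ boundary/barrier shifts, but making it rigorous requires invoking the uniformity clause in~\eqref{e:53} over the shifted endpoint values and a dominated-convergence argument for the expectation over the intermediate bridge position $V$, using the tail bound~\eqref{e:52} to dominate the integrand. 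A secondary nuisance is the bookkeeping of the numerical constant: tracking the factor of $2$ in~\eqref{e:53} through the product and the expectation, and verifying it collapses to a single factor of $2$ in the final answer, needs a small but careful computation that I would relegate to the appendix as indicated in the text.
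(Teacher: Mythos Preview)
Your overall strategy --- condition on the DRW at the intermediate time, split the Ballot event by the Markov property, and glue the asymptotics from Lemma~\ref{lem:15} --- matches the paper's. But there is a genuine gap in your execution, and it is precisely at the point you flag as a ``secondary nuisance'' (the constant), which in fact carries the whole computation.

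You write that under the bridge from $0$ to $w$ over $[0,s]$, the position $V=\wh W_{t,r}$ at time $r=r_s=o(s)$ ``concentrates near $0$ on the relevant scale'', and you use this to pull out $f^{(0)}(0)$ and $g(-z)$ directly. This is wrong: the bridge variance at time $r$ is $r(1-r/s)\sim r\to\infty$, so $V$ lives on the scale $\sqrt r$, which is exactly the scale at which the functions $f,g$ become linear via~\eqref{e:53.5}. The correct mechanism is that, after the change of variable $V=\sqrt r\,x$ with $x$ asymptotically standard Gaussian, the first piece contributes $\sim 2f^{(0)}(0)\,g(\sqrt r x)/r\sim 2f^{(0)}(0)\,x^-/\sqrt r$ and the second piece (after translating the barrier by $w+z$) contributes $\sim 2f(\sqrt r x-w-z)\,g(-z)/(s-r)\sim 2\,\sqrt r\,x^-\,g(-z)/s$. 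The product is $\sim 4f^{(0)}(0)g(-z)(x^-)^2/s$, and integrating $(x^-)^2$ against the standard Gaussian over $x<0$ gives $\tfrac12$, whence the single factor $2$ in the answer. The $y\ge 0$ range is handled separately by the crude bound~\eqref{e:52} and contributes $O(r^{-1}s^{-1})=o(s^{-1})$.

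So the constant is not bookkeeping: it comes from the Gaussian second moment of the rescaled intermediate position, and your ``concentration'' heuristic would miss it entirely (and would also fail to produce a finite, nonzero answer once you tried to make it precise, since $g(V)/r\to 0$ for $V$ bounded). The same scaling applies verbatim to~\eqref{eq:ballot_prob_s_to_t} with the conditioning at $t/2$ and the rescaling $y=\sqrt t\,x/2$; there $f^{(s)}(-z)\to f(-z)$ via~\eqref{e:54} only after the $t\to\infty$ limit has been taken.
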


The next lemma shows that under the Ballot event, the process $\wh{W}_{t,\cdot}$ is (entropically) repelled away from $0$.
\begin{lem}
\label{l:10.7.0}
For all $M > 0$, $2 \leq s \leq t/2$,
\begin{equation}
\bbP \Big( \max_{u\in[s,\, t/2]}
	\wh{W}_{t,u} > -M \,\Big|\, \wh{W}_{t,0} = 0 \,,\,\, \wh{W}_{t,t} = 0\,,\,\,\cA_t\Big)
	\leq C \frac{(M+1)^2}{\sqrt{s}} \,.
\end{equation}
\end{lem}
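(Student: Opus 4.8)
The plan is to bound the probability of the bad event by a union bound over dyadic-type scales and then apply the two-sided Ballot estimate~\eqref{e:52} together with a conditioning/decomposition of the trajectory at the single time where $\wh W_{t,\cdot}$ is forced to be high. First I would reduce to the event that the maximum exceeds $-M$ at one of the Poisson times $\sigma_k$: since the barrier in $\cA_t$ only constrains the decorated walk at the atoms of $\cN$, and the continuous process $\wh W_{t,\cdot}$ between consecutive atoms is just a Brownian bridge whose endpoints are controlled, standard Gaussian tail bounds let me pass from ``$\max_{u\in[s,t/2]}\wh W_{t,u}>-M$'' to ``$\wh W_{t,\sigma_k}>-M-O(1)$ for some $\sigma_k\in[s/2,t/2]$'', at the cost of a constant factor and an adjustment of $M$. (The contribution of an atom-free interval to the continuous max is handled by the reflection principle and the fact that the gaps of a rate-$2$ PPP have exponential tails; this is routine and I would not dwell on it.)

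Next I would condition on the first such atom $\sigma_k=:\tau\in[s,t/2]$ and on the value $\wh W_{t,\tau}=:v>-M$, and split the Ballot event $\cA_t=\cA_t(0,\tau)\cap\cA_t(\tau,t)$ at time $\tau$. Using the Markov property of $(\wh W,\cN,H)$ at $\tau$ (conditionally on the left half, the right half is an independent DRW with a shifted, and after time $t/2$ the usual, barrier), I get that the numerator is at most, up to constants,
\begin{equation}
\label{e:plan1}
\sum_{\tau}\int_{v>-M}\bbP\big(\cA_t(0,\tau)\,\big|\,\wh W_{t,0}=0,\wh W_{t,\tau}=v\big)\;\bbP\big(\cA_t(\tau,t)\,\big|\,\wh W_{t,\tau}=v,\wh W_{t,t}=0\big)\;\bbP(\wh W_{t,\tau}\in \rmd v)\,,
\end{equation}
where the sum/integral over $\tau$ is really an expectation over the PPP $\cN$. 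By~\eqref{e:52} the first factor is $\lesssim (v^-+1)\tau^{-1}\lesssim (M+1)\tau^{-1}$ (since $v>-M$ forces $v^-<M$), the second is $\lesssim (v^-+1)(t-\tau)^{-1}\lesssim (M+1)\,t^{-1}$ (using $t-\tau\geq t/2$), and the Gaussian density $\bbP(\wh W_{t,\tau}\in\rmd v)$ near $v\in(-M,\infty)$ contributes, after integrating and using the linear growth of the walk's typical displacement at time $\tau$, a factor of order $(M+1)/\sqrt{\tau}$ or so — more precisely the mass $\bbP(\wh W_{t,\tau}\in(-M,\infty))$ multiplied by the density, which is $\lesssim (M+1)^2/\sqrt\tau$ when integrated against the extra $(v^-+1)$ weight. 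Meanwhile the denominator $\bbP(\cA_t\,|\,\wh W_{t,0}=\wh W_{t,t}=0)$ is, by~\eqref{e:53} with $v=w=0$, of order $t^{-1}$. Collecting, each $\tau$ contributes $\lesssim (M+1)^2 \tau^{-3/2}$, and summing/integrating the PPP intensity $2\,\rmd\tau$ over $\tau\in[s,t/2]$ gives $\lesssim (M+1)^2\int_s^\infty \tau^{-3/2}\rmd\tau\asymp (M+1)^2 s^{-1/2}$, which is the claimed bound.

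The main obstacle I anticipate is making the ``max over the continuous interval versus max over the Poisson atoms'' reduction fully rigorous with the right dependence on $M$: one must control, uniformly over all inter-atom gaps in $[s,t/2]$, the probability that a Brownian bridge with both endpoints below roughly $-M$ nonetheless exceeds $-M$ somewhere, and combine this with the Ballot constraint at the endpoints without double-counting. A clean way is to enlarge $M$ to $2M$ in the target event, absorb the bridge excursions into that slack via a Borel--Cantelli / union bound over atoms using exponential gap tails, and only then run the scale decomposition above; since the whole estimate is robust to changing $M$ by a constant factor, this costs nothing. A secondary (purely bookkeeping) point is that~\eqref{e:52} and~\eqref{e:53} are stated for $\wh W_{t,\cdot}$ indexed by times $r,R$, not atoms, so one should phrase the split of $\cA_t$ at $\tau$ using that $\tau$ is itself an atom and hence a legitimate intermediate time; the Many-to-One / spine structure guarantees the conditional independence needed for~\eqref{e:plan1}.
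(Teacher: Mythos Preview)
The paper's own proof is a one-line citation to Lemma~2.7 of~\cite{CHL17_supplement} (specialized to the present DRW), combined with the lower bound $\bbP(\cA_t\mid\wh W_{t,0}=\wh W_{t,t}=0)\gtrsim t^{-1}$ from Lemma~\ref{lem:15}. You are effectively trying to reconstruct that cited lemma, and the high-level plan --- locate a time where $\wh W$ is forced near $-M$, split $\cA_t$ there, and apply~\eqref{e:52} on both pieces --- is indeed the right idea.

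However, there is a concrete error in your bookkeeping. You claim that integrating the Gaussian density of $\wh W_{t,\tau}$ over $v>-M$, possibly against the weight $(v^-+1)$, contributes a factor $\lesssim(M+1)^2/\sqrt\tau$. This is false: under the bridge $\bbP_{0,0}^{t,0}$ the variable $\wh W_{t,\tau}$ has mean $-\gamma_{t,\tau}=-O(\log\tau)$ and variance $\asymp\tau(1-\tau/t)$, so $\bbP(\wh W_{t,\tau}>0)\asymp\tfrac12$ for large $\tau$, and the $v>0$ portion of the integral (where $v^-=0$, so both Ballot factors from~\eqref{e:52} degenerate to $O(\tau^{-1})$ and $O(t^{-1})$ with no $(M+1)$ gain) already contributes $\asymp\tau^{-1}t^{-1}$ per atom, with no $1/\sqrt\tau$. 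Integrating this against the PPP intensity $2\,\rmd\tau$ over $[s,t/2]$ and dividing by $\bbP(\cA_t)\asymp t^{-1}$ yields a term of order $\log(t/s)$, which is not dominated by $C(M+1)^2/\sqrt s$. Relatedly, you conflate ``condition on the \emph{first} such atom'' (a single random time) with ``sum over all atoms with PPP intensity'' (a first-moment bound); these are not the same operation, and it is the latter that produces the spurious logarithm.

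One way to repair the argument is to exploit the constraint at the atom $\tau$ itself: on $\cA_t$ with $\tau\in\cN$ and $\wh W_{t,\tau}=v>0$ one must also have $\wh h^{\tau*}_\tau\le -v$, whose probability decays (super-)exponentially in $v$; inserting this factor does restore a $1/\sqrt\tau$ from the $v>0$ region. A cleaner route, closer to what the cited lemma actually does, is to drop the atom reduction and let $\tau$ be the first \emph{continuous} time in $[s,t/2]$ with $\wh W_{t,\tau}=-M$; then $v=-M$ is pinned, one integrates over the law of a single stopping time rather than summing over atoms, and the factor $s^{-1/2}$ emerges from combining the Ballot bound on $[0,s]$ with a reflection estimate for the hitting of $-M$ by the intermediate bridge.
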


The last lemma shows that one can replace the restriction on $\wh{W}_{t,u}$ for $u \in [s,t-s]$ under $\cA_t$ with the condition that $W_{u}$ (without the additional $\gamma_{t,s}$) stays negative within this interval, with the difference between the probabilities of the two events being of smaller order.
\begin{lem}
\label{l:2.13}
For $0 < r < R < t$, denote the event
\begin{equation} \wt{\cA}_{t}(r, R) := \{\cA_t(0,r) \cap \cA_t(R,t) \cap \{\max W_{[r,R]} \leq 0\} \}.
\end{equation}
There exists $C > 0$ such that for all $2 < s < t/2$ \,,
\begin{equation}
    \bbP \Big(\cA_{t} \, \Delta \,  \wt{\cA}_{t}(s, t-s)\,\Big|\, \wh{W}_{t,0} = 0 \,,\,\, \wh{W}_{t,t} = 0 \Big) \leq C s^{-1/4} t^{-1} \,.
\end{equation}
\end{lem}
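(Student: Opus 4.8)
The plan is to split $\cA_t\,\Delta\,\wt\cA_t(s,t-s)=\big(\cA_t\setminus\wt\cA_t(s,t-s)\big)\cup\big(\wt\cA_t(s,t-s)\setminus\cA_t\big)$ and bound the two halves separately. Throughout I argue under $\bbP\big(\,\cdot\mid\wh W_{t,0}=\wh W_{t,t}=0\big)$, so that $W$ is a standard Brownian bridge on $[0,t]$, and I may take $t$ large --- for $t$ bounded the constraint $2<s<t/2$ keeps $s^{-1/4}t^{-1}$ bounded below, so the claim is trivial once $C$ is large --- whence $\gamma_{t,u}>0$ for all $u\in[s,t-s]$. Since $\cA_t=\cA_t(0,s)\cap\cA_t(s,t-s)\cap\cA_t(t-s,t)$ and $\wt\cA_t(s,t-s)=\cA_t(0,s)\cap\{\max W_{[s,t-s]}\le 0\}\cap\cA_t(t-s,t)$ share the two ``edge'' events $\cA_t(0,s)$ and $\cA_t(t-s,t)$, the discrepancy is entirely due to replacing the DRW Ballot event $\cA_t(s,t-s)$ by the plain negativity event $\{\max W_{[s,t-s]}\le 0\}$ on the middle window.

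For the first half, $\cA_t\setminus\wt\cA_t(s,t-s)\subseteq\cA_t\cap\{\max W_{[s,t-s]}>0\}$, so $\bbP\big(\cA_t\setminus\wt\cA_t(s,t-s)\big)\le\bbP(\cA_t)\,\bbP\big(\max W_{[s,t-s]}>0\mid\cA_t\big)$, and $\bbP(\cA_t)\le C/t$ by Lemma~\ref{lem:15}. For the conditional probability, note $\{W_u>0\}=\{\wh W_{t,u}>-\gamma_{t,u}\}$ and recall that $\gamma_{t,u}$ is positive, at most $\tfrac{3}{2\sqrt 2}\log^+ t$ throughout, and of size $\tfrac{3}{2\sqrt 2}\log u$ near the left edge $u\approx s$ (and symmetrically near $u\approx t-s$). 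Decomposing $[s,t/2]$ into dyadic blocks $[2^js,2^{j+1}s]$, on each such block $\{W_u>0\text{ for some }u\}\subseteq\{\max_{u\in[2^js,\,t/2]}\wh W_{t,u}>-\gamma^{(j)}\}$ with $\gamma^{(j)}:=\max_{[2^js,2^{j+1}s]}\gamma_{t,\cdot}\lesssim\log(2^{j+1}s)\wedge\log^+ t$; applying the entropic-repulsion estimate of Lemma~\ref{l:10.7.0} at scale $2^js$ and summing over $j$ (and doing the same on $[t/2,t-s]$ by the analogous estimate) gives
\[
\bbP\big(\max W_{[s,t-s]}>0\mid\cA_t\big)\ \lesssim\ \frac{(\log s)^2+1}{\sqrt s}\ +\ \frac{(\log t)^{5/2}\log\log t}{\sqrt t}\ \lesssim\ s^{-1/4}
\]
on the whole range $2<s<t/2$. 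Hence $\bbP\big(\cA_t\setminus\wt\cA_t(s,t-s)\big)\lesssim s^{-1/4}t^{-1}$; this is what produces the exponent $1/4$.

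For the second half, on $\wt\cA_t(s,t-s)\setminus\cA_t$ we have $\max W_{[s,t-s]}\le 0$ while some atom $\sigma_k\in[s,t-s]$ violates the DRW barrier, i.e.\ $\wh h^{\sigma_k*}_{\sigma_k}>-\wh W_{t,\sigma_k}=\gamma_{t,\sigma_k}-W_{\sigma_k}\ge\gamma_{t,\sigma_k}\ge 0$. Conditioning on $(W,\cN)$, the decorations carried by the atoms in the disjoint windows $[0,s]$, $[s,t-s]$, $[t-s,t]$ are independent, so $\bbP\big(\cA_t(0,s)\cap\cA_t(t-s,t)\cap\cA_t(s,t-s)^{\rmc}\mid W,\cN\big)$ factorizes; bounding the last factor by $\sum_{\sigma_k\in[s,t-s]}\bbP\big(\wh h^{\sigma_k*}_{\sigma_k}>\gamma_{t,\sigma_k}-W_{\sigma_k}\big)$, using the standard uniform upper-tail bound $\bbP(\wh h^{u*}_u>y)\le C(1+y^+)e^{-\sqrt 2 y}$ ($u\ge 1$), and averaging over $\cN|_{[s,t-s]}$ turns the sum into $2\int_s^{t-s}(1+\gamma_{t,u}-W_u)e^{-\sqrt 2(\gamma_{t,u}-W_u)}\,\rmd u$. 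One is then left to bound the expectation over $W$ of $1_{\{\max W_{[s,t-s]}\le 0\}}$ times this integral times the two edge Ballot probabilities; conditioning further on $W_s$ and $W_{t-s}$, the latter are at most $C\big((W_s-\gamma_{t,s})^-+1\big)/s$ and $C\big((W_{t-s}-\gamma_{t,t-s})^-+1\big)/s$ by Lemma~\ref{lem:15}, while the $W_{[s,t-s]}$-integral is estimated by Gaussian and Brownian-bridge hitting bounds. Writing $e^{-\sqrt 2(\gamma_{t,u}-W_u)}=e^{-\sqrt 2\gamma_{t,u}}e^{\sqrt 2 W_u}$ with $W_u\le 0$, the factor $e^{-\sqrt 2\gamma_{t,u}}=u^{-3/2}t^{3u/(2t)}\asymp u^{-3/2}$ makes the integrand integrable at the seams $u\approx s$, $u\approx t-s$ (which dominate), whereas the suppression $e^{\sqrt 2 W_u}$, combined with the fact that the negativity and edge-Ballot constraints force $W_u\asymp-\sqrt t$ in the bulk of $[s,t-s]$, makes the bulk contribution super-polynomially small. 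Collecting powers of $s$ and $t$, the second half is $\lesssim s^{-1}t^{-1}$ up to logarithmic factors, hence in particular $\lesssim s^{-1/4}t^{-1}$; the regime $s\uparrow t/2$, where $[s,t-s]$ is short but $\gamma_{t,\cdot}\asymp\log t$ on it, is handled more crudely using the window length $t-2s$.

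Adding the two bounds yields the assertion. The main obstacle I anticipate is the $W_{[s,t-s]}$-integral in the second half: although its total is comfortably below the required threshold, establishing this uniformly in $t$ (and up to $s\approx t/2$) requires combining the DRW Ballot asymptotics of Lemma~\ref{lem:15}, the bridge constraint $\{\max W_{[s,t-s]}\le 0\}$, and the precise shape of the logarithmic barrier $\gamma_{t,\cdot}$ so as to control the near-seam singularities and certify that the bulk of $[s,t-s]$ contributes nothing.
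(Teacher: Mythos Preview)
The paper's own proof is a one-line citation to Lemma~2.8 of~\cite{CHL17_supplement}; your proposal is essentially a reconstruction of that argument, and the overall decomposition into $\cA_t\setminus\wt\cA_t$ and $\wt\cA_t\setminus\cA_t$, together with the tools you invoke, is the right one.

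Your first half is solid. The dyadic decomposition of $[s,t/2]$ together with Lemma~\ref{l:10.7.0} correctly yields $\sum_j(\gamma^{(j)}+1)^2/\sqrt{2^j s}\lesssim(\log s+1)^2/\sqrt s\lesssim s^{-1/4}$. Two small points: (i) Lemma~\ref{l:10.7.0} is stated only for $[s,t/2]$, and while the DRW setup is indeed essentially time-symmetric (swapping the roles of $f$ and $g$ in Lemma~\ref{lem:15}), you should say so explicitly rather than invoke ``the analogous estimate''; (ii) the extra term $(\log t)^{5/2}\log\log t/\sqrt t$ you write does not obviously arise from your argument---the dyadic sum already covers the whole range---and should either be justified or dropped.

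Your second half is only a sketch, and this is the genuine gap. You correctly identify the mechanism (some decoration in the middle window exceeds $\gamma_{t,\cdot}-W$), the factorization after conditioning on $(W,\cN)$, and the tail input $\bbP(\wh h^{u*}_u>y)\le C(1+y)e^{-\sqrt2 y}$. But the claim that the resulting $W$-integral is $\lesssim s^{-1}t^{-1}$ up to logs is asserted, not proved. Concretely, after conditioning on $W_s,W_{t-s}$ you must (a) control the joint density of $(W_s,W_{t-s})$ under the bridge $\bbP_{0,0}^{t,0}$, (b) bound $\bbE\big[\mathbbm 1_{\{\max W_{[s,t-s]}\le0\}}\int_s^{t-s}(1+\gamma_{t,u}-W_u)e^{-\sqrt2(\gamma_{t,u}-W_u)}\rmd u\big]$ for the intermediate bridge, and (c) integrate the product against the edge ballot factors $((W_s-\gamma_{t,s})^-+1)/s$ and $((W_{t-s}-\gamma_{t,t-s})^-+1)/s$. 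None of (a)--(c) is carried out; you yourself flag this as ``the main obstacle''. The computation is feasible (and is precisely what~\cite{CHL17_supplement} does), but as written your proposal stops short of it. If you want a self-contained proof, you must complete this integration, including the uniform-in-$t$ control near $s\uparrow t/2$ that you mention only in passing.
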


\ifonlystable
\else
\subsection{Strong representation of the cluster law}
The strong version of the weak representation of the cluster law from the previous subsection is given by,
\begin{thm}
\label{p:1} 
There exists a process
$(\wc{W}, \wc{\cN}, \wc{\cE})$
taking values in 
\begin{equation}
	C([0, \infty)) \times \bbM_p([0, \infty)) \times \bbM_p((-\infty,0])^{\bbR_+\times \bbR} \,,
\end{equation} 
such that for
\begin{equation}
	\wc{\cC}_r := \int_0^r \wc{\cE}_{s, \wc{W}_s} \, \wc{\cN}(\rmd s) \,,
\end{equation}
the limit 
\begin{equation}\label{eq:C_limit_r_to_infty}
	\wc{\cC} := \lim_{r \to \infty} \wc{\cC}_r
	\equiv \int_0^\infty \wc{\cE}_{s, \wc{W}_s} \, \wc{\cN}(\rmd s)
\end{equation}
holds pointwise (and therefore vaguely) almost surely and obeys 
\begin{equation}
	\wc{\cC} \sim \nu \,,	
\end{equation}
where $\nu$ is the cluster law. Moreover, the following holds:
\begin{enumerate}   
\item $(\wc{\cE}_{s,y})_{s,y}$ are independent and also independent of $(\wc{W}, \wc{\cN})$, and for all $s \geq 0$, $y \in \bbR$, 
    \begin{equation}
    \label{e:101.12}
	\wc{\cE}_{s,y} \overset{\rmd}= \cE_s(-y + \cdot) \,\big|\, \big\{\cE_s((-y, \infty)) = 0\big\}\,,
\end{equation}
where $\cE_s$ is as in~\eqref{e:O.2}. 

\item For $(Y_s)_{s \geq 0}$ a Bessel-3 process, $\wc{Y}_s := -Y_s - \frac{3}{2\sqrt{2}} \log^+s$, and any $y_0 \geq 0$,
\begin{equation}
\label{e:2.3}\
	\Big\|\bbP \big(\wc{W}_{[r, \infty)} \in \cdot) - \bbP \big(\wc{Y}_{[r, \infty)} \in \cdot\,\big|\, \wc{Y}_0 = y_0 \big) \Big\|_{\rm TV} \underset{r \to \infty} \longrightarrow 0 \,.
\end{equation}
\item There exists $C,c \in (0,\infty)$ and for all $\epsilon > 0$ also $C, c' \in (0,\infty)$ such that for all $s,u \geq 0$,
\begin{equation}
\label{e:101.14}
	\bbP\big(\inf \big\{|\sigma - s| :\: \sigma \in \wc{\cN}\big\} > u\big) \leq Ce^{-cu} 
\quad , \qquad
\bbP \big(\wc{\cN}([0, s]) > (2+\epsilon) s) \leq C'e^{-c's} \,.
\end{equation}
\end{enumerate}
\end{thm}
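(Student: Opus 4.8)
The plan is to build $(\wc W, \wc{\cN}, \wc\cE)$ by passing to the limit $t \to \infty$ (and then $r \to \infty$) in the weak representation of Subsection~\ref{s:2.1}, using the total-variation control available for Brownian motion conditioned to stay positive (Lemma~\ref{l:2.2}) to upgrade the weak limits to an almost-sure coupling on a single space. Concretely, Lemma~\ref{l:5.2} expresses the conditional cluster law $\wt\bbP(\cC_{t,r}(X_t) \in \cdot \mid \wh h_t^* = \wh h_t(X_t) = 0)$ as the law of $\int_0^r \cE_s^s(\cdot - \wh W_{t,s})\,\cN(\rmd s)$ under $\bbP(\,\cdot \mid \wh W_{t,0} = \wh W_{t,t} = 0;\,\cA_t)$. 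The key point is that the only $t$-dependence on the right-hand side is (a) the conditioned trajectory $(\wh W_{t,s})_{s}$, which is a Brownian bridge-with-drift conditioned to stay below the curve $\gamma_{t,\cdot}$, and (b) the restriction domain of integration. Lemma~\ref{l:2.2} (together with Lemma~\ref{l:lisa} to absorb the logarithmic curve $\gamma_{t,s}$ into the $d_t s$ drift term on bounded time windows, and Lemma~\ref{l:2.13}/Lemma~\ref{l:10.7.0} to handle the behavior of $\wh W_{t,\cdot}$ at times of order $t$) shows that, restricted to any fixed window $[0,r]$, the law of $(\wh W_{t,s})_{s \le r}$ under $\cA_t$ converges in total variation, as $t \to \infty$, to the law of a Bessel-3 path shifted by $-\frac{3}{2\sqrt2}\log^+ s$; call the limit $\wc W_{[0,r]}$. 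Since total-variation convergence of marginals on a consistent family of windows yields (by Kolmogorov extension plus a diagonal/coupling argument) a single process $\wc W = (\wc W_s)_{s \ge 0}$ together with couplings realizing all the approximations, and since the Poisson process $\cN$ (with intensity $2\,\rmd x$) and the i.i.d.\ BBM decorations $H$ are already $t$-independent, we obtain the triple: set $\wc\cN := \cN$, and $\wc\cE_{s,y}$ the point process $\cE_s^s(\cdot - y)$ conditioned on $\cE_s^s((-y,\infty)) = 0$ — the conditioning being the trace of ``$\wh W$ plus decoration maxima stay below $0$'' localized at the atom at time $s$. One then checks that under the limiting law, $\int_0^r \wc\cE_{s,\wc W_s}\wc\cN(\rmd s)$ is the $t\to\infty$ limit of the expression in Lemma~\ref{l:5.2}, hence has law $\nu_r$; letting $r \to \infty$ and invoking $\nu_r \Rightarrow \nu$ (Lemma~\ref{l:7.0}) identifies $\wc\cC$ with $\nu$, provided the indefinite integral converges.

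The almost-sure convergence in~\eqref{eq:C_limit_r_to_infty} is where the structural estimates are needed. Using Lemma~\ref{D-E}, with probability $\ge 1-\epsilon$ one has $Y_s \ge s^{1/2-\epsilon}$ for all $s \ge K$, so $\wc W_s = -Y_s - \frac{3}{2\sqrt2}\log^+ s \le -\tfrac12 s^{1/2-\epsilon}$ eventually; combined with the Gaussian tails of $\cE_s^s$-level sets (the first-moment bound $\bbE\,\cE_s([-v,\infty)) \lesssim e^{\sqrt2 v - v^2/2s + O(\log s)}$, applied at $v = -\wc W_s - |u|$ for fixed $u$), the contribution of the atom of $\wc\cN$ near time $s$ to $\wc\cC([u,0])$ decays like $e^{-c s^{1-2\epsilon}}$. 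Summing over the atoms of $\wc\cN$ — whose number in $[0,s]$ is at most $(2+\epsilon)s$ with overwhelming probability by the second bound in~\eqref{e:101.14} — a Borel–Cantelli argument gives pointwise convergence of $\wc\cC_r([u,0])$ as $r \to \infty$ for each fixed $u \le 0$, and since this holds simultaneously on a countable dense set of $u$ and the limit is a locally finite measure, vague (indeed pointwise) convergence follows.

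The remaining items are comparatively routine. Item 1 is essentially the construction: independence of $(\wc\cE_{s,y})_{s,y}$ among themselves and from $(\wc W, \wc\cN)$ is inherited from the independence of the decorations $H = (h^s)_s$ from $W$ and $\cN$ in the DRW, and the conditional law~\eqref{e:101.12} is exactly what the ballot event imposes on each decoration at its birth time — one must verify that the singular conditioning ``$\cE_s((-y,\infty)) = 0$'' is the correct $t\to\infty$ residue of $\cA_t$ localized at time $s$, which follows from the product structure of $\cA_t$ over the atoms of $\cN$ and Lemma~\ref{lem:15}. Item 2 is~\eqref{e:2.3}, which is precisely the total-variation Bessel-3 limit established above (the uniformity over $y_0$ in compacts coming from Lemma~\ref{l:Bessel_TV}, since the Bessel process forgets its starting point). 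Item 3: the first bound is the standard fact that the gaps of a rate-$2$ Poisson process have exponential tails — but one must confirm that $\wc\cN$ genuinely equals $\cN \sim \mathrm{PPP}(2\,\rmd x)$ and is not itself perturbed by the conditioning; since in the DRW the process $\cN$ is independent of $W$ and the conditioning event $\cA_t$ is defined in terms of $W$ and $H$ given $\cN$, conditioning on $\cA_t$ does reweight $\cN$, so one argues that in the $t\to\infty$ limit this reweighting is absorbed (the ballot constraint becomes a pointwise-in-$s$ constraint on the decorations) and $\wc\cN$ retains Poissonian gap and counting tails, possibly with adjusted constants — the second inequality in~\eqref{e:101.14} then follows from a Chernoff bound.

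The main obstacle I expect is controlling the behavior of the conditioned trajectory $\wh W_{t,\cdot}$ on the ``bulk'' time scale $s \asymp t$ and showing that it does not leak into the fixed-window limit — i.e.\ making rigorous the claim that the $t\to\infty$ limit of $(\wh W_{t,s})_{s \le r}$ under $\cA_t$, with the full curve $\gamma_{t,\cdot}$ and the conditioning on all decorations up to time $t$, is exactly the shifted Bessel-3 path and that the ``tail'' decorations at times in $[r,t]$ contribute nothing in the limit. This requires gluing Lemma~\ref{l:2.2}, Lemma~\ref{l:10.7.0}, Lemma~\ref{l:2.13} and the asymptotics~\eqref{e:53}–\eqref{e:54} of Lemma~\ref{lem:15} carefully, and tracking the $f^{(r)} \to f$ and $g$ normalizations so that the limiting measure is genuinely a probability measure equal to $\nu_r$, then $\nu$.
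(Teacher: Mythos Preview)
The paper's proof is relegated to an external file (\texttt{clusterlaw}, input in Appendix~\ref{s:A1}) that is not included in the provided source, but your strategy---starting from the weak DRW representation of Lemma~\ref{l:5.2}, taking $t\to\infty$ via the total-variation Bessel-3 limit of Lemma~\ref{l:2.2}, and controlling the ballot conditioning on the bulk through Lemmas~\ref{lem:15},~\ref{l:10.7.0},~\ref{l:2.13}---is exactly the route the paper announces in Remark~\ref{r:2.16}. Your outline of Items~1--3 and of the almost-sure convergence of~\eqref{eq:C_limit_r_to_infty} (via the envelope Lemma~\ref{D-E} and a first-moment/Borel--Cantelli argument) is correct, and you have correctly isolated the main technical obstacle: showing that the full-interval conditioning $\cA_t$ stabilizes on finite windows so that the limiting reweighting of $\cN$ and of the decorations factors cleanly into the pointwise constraint~\eqref{e:101.12}.
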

Again, the proof is deferred to Appendix~\ref{s:A1}.
\begin{rem}
\label{r:2.16}	
We remark that this strong representation of the cluster law is essentially equivalent to that in Theorem~2.3 in~\cite{ABBS2013}, in which the joint law of $(\wc{W}, \wc{\cN}, \wc{\cE})$ is explicitly specified. In particular, the law of $\wc{W}$ is expressed there as a measure change (involving the right tail function for the centered maximum) of a certain process which is a BM until some stopping time and then Bessel-3 from that point on.
In contrast, in our result, while we do not express the law of the underlying process explicitly, we provide key properties of its ingredients. We believe that for certain purposes, e.g. Theorem~\ref{2nd_result}, this more qualitative cluster representation would be more convenient.

The most important property in this theorem is the asymptotic equivalence in law, up to a deterministic logarithmic curve, between $\wc{W}$ and $-Y$, a negative Bessel-3 process. The underlying topology for this convergence is defined by the Total Variation distance between probability measures on the space of continuous trajectories which are unbounded in time. This is a very strong sense, which allows one to use infinite-time properties of Bessel-3 to derive similar properties for the cluster distribution.

Our derivation of Theorem~\ref{p:1} does not use~Theorem~2.3 in~\cite{ABBS2013}, but rather relies on the weak representation of the cluster law from~\cite{CHL19,CHL20}. An alternative approach for obtaining Bessel-3 statistics for the backbone process $\wc{W}$ was taken recently in~\cite{KimZeit}, in which the authors analyze the explicit law of this process, as given in~\cite{ABBS2013}. This was done as part of studying the extremal structure of BBM in dimension $d \geq 2$ (see also~\cite{KLZ23,BKLMZ24}). Their analysis indeed shows convergence to Bessel-3, but in a somehow weaker form: $(L^{-1}\wc{W}_{L^2s})_{s \in [0,N]} \Longrightarrow -Y_{[0,N]}$ as $L \to \infty$ for any $N < \infty$. 
\end{rem}
\fi

\subsection{Joint convergence of critical extreme level-sets and maximum}
Lastly, we shall also need the following proposition, which gives the joint limit in law of the centered maximum and the scaled size of the critical extreme (super) level sets. The latter are defined as the set of particles at time $t$ whose height is (at least) $m_t - \Theta(\sqrt{t})$. While such convergence is well-known, we could not find a proof for this in existing literature and thus included a short and standard proof in Appendix~\ref{s:A3}. We remark that a convergence result for the super level sets up-to $o(t/\log t)$ below $m_t$ were very recently treated in~\cite{Gabriel25} (albeit not jointly with the centered maximum).

\begin{prop}\label{prop:6.5} 
There exists $c_0 \in (0,\infty)$, such that uniformly in $v \in t^{1/2} [\delta, \delta^{-1}]$ for any $\delta > 0$, 
	\begin{equation}
	\label{e:6.47}
		\left(\frac{\cE_t\big([-v, \infty)\big)}{v e^{\sqrt{2}v - \frac{v^2}{2t}} }, \, \wh{h}^*_{t}\right) \underset{t \to\infty}\Longrightarrow \big(c_0 Z,\,  \wh{h}_\infty^* \big)
	\quad ; \qquad G + \tfrac{1}{\sqrt{2}} \log Z \,,
	\end{equation}
where $Z$ and $G$ are as in~\eqref{e:101.4}.
\end{prop}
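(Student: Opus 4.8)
\textbf{Proof proposal for Proposition~\ref{prop:6.5}.}

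The plan is to establish joint convergence by combining the known convergence of $\wh h^*_t$ from~\eqref{e:101.4} with a conditional convergence (given the maximum, or given $\cF_s$ for $s$ large but fixed) of the scaled level-set size. The natural strategy is the one used throughout the extreme-value theory of BBM: reduce to a single time-$s$ snapshot via the branching property, and then let $t\to\infty$ followed by $s\to\infty$. Concretely, fix $s$ and condition on $\cF_s$. Each particle $x\in L_s$ at height $h_s(x)$ spawns an independent BBM of length $t-s$, and a descendant $y$ contributes to $\cE_t([-v,\infty))$ precisely when its displacement exceeds $m_t - v - h_s(x)$. Writing $v = \alpha\sqrt t$, for the particles $x$ that matter one has $h_s(x) = O(1)$ and $m_t - h_s(x) \approx m_{t-s} + \sqrt 2 s - h_s(x)$, so each subtree contributes (after the appropriate rescaling) a quantity of the form $\cE_{t-s}([-v', \infty))$ with $v' = v - \sqrt 2 s + h_s(x) + (3/(2\sqrt2))\log^+\!\big(\tfrac{t}{t-s}\big)$, still of order $\sqrt t \asymp \sqrt{t-s}$.

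\medskip
The key input is a \emph{first- and second-moment estimate} for $\cE_t([-v,\infty))/\big(v e^{\sqrt 2 v - v^2/(2t)}\big)$ in the deep regime $v\asymp\sqrt t$: a truncated first-moment computation (via the Many-To-One Lemma~\ref{l:4.1}, i.e. the Gaussian tail plus the $m_t$ correction, restricting to trajectories that stay below the appropriate barrier) gives that the mean of this ratio converges to a finite constant $c_0'$ times the derivative-martingale weight $\big(\sqrt 2 t - h_s(x)\big)e^{\sqrt2(h_s(x)-\sqrt2 s)}$ summed over $L_s$; a matching truncated second-moment bound shows that, within each subtree and conditionally on the subtree not producing an atypically high particle, the contribution concentrates. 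Summing over $x\in L_s$, the conditional (on $\cF_s$) limit of the ratio is $c_0' W_s$, where $W_s := C_\diamond^{-1} Z_s / (\text{const})$ is (a multiple of) the derivative martingale truncated at time $s$ — more precisely one gets $c_0' \sum_{x\in L_s}(\sqrt2 t - h_s(x))e^{\sqrt2(h_s(x)-\sqrt 2 s)}$, and this is where the $\log$-correction inside $m_t$ and the $v^2/(2t)$ Gaussian curvature term must be tracked carefully so that the normalization $v e^{\sqrt 2 v - v^2/(2t)}$ is exactly the right one and no residual $t$- or $s$-dependence survives. One then lets $t\to\infty$ with $s$ fixed, obtaining convergence in law of the pair to $\big(c_0' \cdot (\text{time-}s\text{ derivative martingale}), \wh h^*_\infty(s)\big)$ where the second coordinate is the time-$s$ approximation of the limiting maximum; finally $s\to\infty$ and the derivative martingale converges a.s. to $Z$ (up to the constant $C_\diamond$), yielding $c_0 Z$, while $\wh h^*_\infty(s) \Rightarrow \wh h^*_\infty = G + \tfrac1{\sqrt2}\log Z$ with the \emph{same} $Z$, since both limits are measurable with respect to the derivative martingale's tail. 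Uniformity over $v\in\sqrt t[\delta,\delta^{-1}]$ follows because all the moment estimates are uniform in $\alpha = v/\sqrt t \in [\delta,\delta^{-1}]$.

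\medskip
The main obstacle I expect is controlling the \emph{atypically large contributions} in the deep level-set regime: unlike the $v=O(1)$ case, a single subtree can, with probability $\Theta(1/v)$, produce a level-set of size $\Theta(v\cdot v e^{\sqrt 2 v - v^2/(2t)})$ (an extra factor $v$), so the naive second moment of the ratio diverges. The resolution is the standard truncation — discard, in each subtree, the event that its descendants reach height $m_t - v - K$ for a large constant $K$ via a path that was ever a constant above the barrier $m_\cdot$ — and show (i) the discarded contribution is negligible in probability after summing over $L_s$ and over the scaling, and (ii) the truncated second moment is finite and matches the square of the truncated first moment as $t\to\infty$. This is morally the content of Lemma~\ref{1st_moment} / Lemma~\ref{lower_bound_2} referenced in the proof outline (the deep-level-set first/second moment estimates), so the real work is importing those estimates with uniformity in $v\asymp\sqrt t$ and verifying that the truncation error does not disturb the joint limit with the maximum. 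A secondary, more bookkeeping-type difficulty is ensuring the \emph{joint} limit: one must check that the pair $\big(\text{ratio},\, \wh h^*_t\big)$ converges jointly, not just marginally — this is handled by expressing both coordinates as (limits of) functionals of the same time-$s$ configuration $\cF_s$ and appealing to the $\cF_s$-conditional convergence of each, together with the a.s. convergence of the time-$s$ derivative martingale that drives both limits.
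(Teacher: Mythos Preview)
Your overall strategy---condition on $\cF_s$, use the branching property to decompose into independent subtrees, apply truncated first/second moment estimates to each subtree, identify the limit as (a multiple of) the derivative martingale $Z_s$, then let $t\to\infty$ followed by $s\to\infty$---is exactly what the paper does. Two points of difference worth noting:

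\emph{Truncation.} Your anticipated obstacle (the naive second moment blows up because of rare, atypically large subtrees) is resolved more simply than you propose. The paper does not use a path-based barrier truncation; it simply multiplies by the indicator $\mathbbm{1}\{\wh h_t^* \leq u\}$ and sends $u\to\infty$ at the very end (after $t$ and $s$). Under this restriction, a refinement of the second-moment bound from~\cite{CHL19} (the paper's Lemma~\ref{l:6.4}) gives, for each subtree rooted at $x\in L_s$,
\[
\bbE\big[(R_{t-s}^{(x)})^2\,\big|\,\cF_s\big] \;\lesssim\; \big(\sqrt 2 s - h_s(x)\big)^{3/4}\, e^{-\sqrt 2 (h_s(x)-\sqrt 2 s)} \,,
\]
and after multiplying by the weights $e^{2\sqrt 2(h_s(x)-\sqrt 2 s)}$ and summing, the conditional variance is $o(1)\,|Z_s|$ on the good event $\{\max_{x\in L_s}|h_s(x)|\leq m_s+\log\log s\}$, with the $o(1)$ coming from $(\log s)^{-1/4}$. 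So the ``extra factor $v$'' phenomenon you describe (which is really a feature of the \emph{cluster} law $\nu$, not of the raw BBM level sets at depth $\Theta(\sqrt t)$) does not bite here.

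\emph{Joint convergence.} Your sketch of the joint-limit argument is not quite right: $\wh h_t^*$ does not become $\cF_s$-measurable in any limit, and there is no ``time-$s$ approximation $\wh h_\infty^*(s)$'' that converges to $\wh h_\infty^*$. The paper instead invokes the already-known joint convergence $(\wh h_t^*, Z_t)\Rightarrow(\wh h_\infty^*, Z)$ from~\eqref{e:N7}, and shows separately that the normalized level-set count minus $\tfrac{1}{\sqrt\pi}Z_s$ tends to $0$ in probability (for each fixed $u$, then $u\to\infty$ using tightness of the maximum). Since the ratio is thus asymptotically a deterministic multiple of $Z$, joint convergence with $\wh h_t^*$ follows automatically.
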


\ifonlystable
\else
\section{Almost sure growth of extremal level sets}
\label{s:3}
\input{asconvergence}

\section{Cluster level set fluctuations}
\label{s:5}
\input{clusterfluct}
\fi

\ifonlystable
\else
\section{Extremal level set fluctuations}
In this section we shall prove Theorem~\ref{t:A1.5}. 
\label{s:6}
\fi
\ifonlystable
\section{Stable limit of compensated process}
\label{s:B3}
\else
\subsection{Stable limit of compensated process}
\label{s:B3}
\fi

Letting $x^\pm$ be as in the theorem, the target quantity we wish to study is
\begin{equation}
	\wt{\mathcal{R}}_u := e^{-\sqrt{2} u} \cE_\circ\left([-u, \infty); -\tfrac{1}{\sqrt{2}} \log u + [x^-_u, x^+_u]\right) \,.
\end{equation}
Denoting
\begin{equation}\label{def:Y_u_x}
	Y_u(x) := e^{-\sqrt{2} (u+x)} \cC^{(x)}([-w_u(x), 0])
	\quad , \qquad w_u(x) := u - \frac{1}{\sqrt{2}} \log u + x \,, 
\end{equation}
we may write $\wt{\mathcal{R}}_u$ as
\begin{equation}
	\wt{\mathcal{R}}_u = \int_{x_u^-}^{x^+_u} e^{-\sqrt{2} u} \cC^{(x)}([-w_u(x), 0]) \, \rmd \cE_\circ^*\big(x -\tfrac{1}{\sqrt{2}} \log u \big) 
	= \int_{x_u^-}^{x^+_u} e^{\sqrt{2} x} \, Y_u(x) \, \cE^*_u(dx) \,,
\end{equation}
where 
$\cE^*_u(\cdot) := \cE_\circ^*(\cdot - \tfrac{1}{\sqrt{2}} \log u)$ is a PPP on $\bbR$ with intensity $u \rme^{-\sqrt{2}x} \rmd x$.

Using the above representation, the Laplace Transform of 
$\wt{\mathcal{R}}_u$ is $\bbE \exp(-\lambda \wt{\mathcal{R}}_u) = \rme^{-\wt{\phi}_u(\lambda)}$, where
\begin{equation}
\begin{split}
\label{hat_phi_u}
	\wt{\phi}_u(\lambda) &:= \int_{x^-_u}^{x^+_u} u e^{\sqrt{2}x} dx \int_0^\infty \left(1-e^{-\lambda e^{\sqrt{2}x} y} \right) \bbP(Y_u(x)\in dy) \\
	&= \tfrac{1}{\sqrt{2}} \int_{t^-_u}^{t^+_u} t^{-2} dt \int_0^\infty \left(1-e^{-\lambda t y} \right) u \bbP(Y_u(x(t))\in dy) \,,
\end{split}
\end{equation}
where we performed the change of variable $x(t) = \tfrac{1}{\sqrt{2}} \log t$ and set
\begin{equation}
	t^{\pm}_u := e^{\sqrt{2} x^\pm_u} \,.
\end{equation} 

For the latter to converge to a finite limit, we must subtract a {\em deterministic} compensating term. To this end, we pick an arbitrary $\rho > 0$ and set
\begin{equation}
\label{e:A5.7}
	E_u(\rho) := \tfrac{1}{\sqrt{2}} \int_{t^-_u}^{t^+_u} t^{-2} dt \int_0^\infty ty \mathbbm{1}_{ty\leq \rho} \ u \bbP(Y_u(x(t))\in dy) \,.
\end{equation}
Then, the Laplace Transform of the \emph{compensated} variable 
\begin{equation}
\mathcal{R}_u := \wt{\mathcal{R}}_u - E_u(\rho) \,,
\end{equation}
is $\bbE \exp\left(-\lambda \mathcal{R}_u\right) = \exp(-\phi_u(\lambda))$,
where
\begin{equation}\label{eq:def_phi_u}
	\phi_u(\lambda) :=  \wt{\phi}_u(\lambda)-\lambda E_u(\rho) = \tfrac{1}{\sqrt{2}} \int_{t^-_u}^{t^+_u} t^{-2} dt \int_0^\infty \left(1-e^{-\lambda t y} - \lambda ty \mathbbm{1}_{ty\leq \rho} \right) u \bbP(Y_u(x(t))\in dy).
\end{equation}

The goal now is to show the limit of $\phi_u(\lambda)$ as $u\to \infty$ exists and has the appropriate form. For this purpose, we need to understand the limiting behavior of the (unbounded) measures $u \bbP(Y_u(x(t))\in dy)$ on $(0, \infty)$, in the regime $t \in [t^-_u, t^+_u]$. Equivalently, we may study the $w \to \infty$ limit of
\begin{equation}
\label{e:5.10}
w \bbP(X(w) \in \rmd x) \,,
\end{equation}
where 
\begin{equation}
	X(w) := \frac{\cC([-w, 0])}{w e^{\sqrt{2}w}} \,.
\end{equation}
Indeed, for any $y > 0$, as $u \to \infty$,
\begin{equation}\label{eq:rel_Y_u_X_w}
	Y_u(x) \overset{d}{=} X(w_u(x)) (1+o(1))
\end{equation}
and
\begin{equation}\label{eq:rel_Y_u_X_w_measure}
	u \bbP(Y_u(x(t)) > y) = w \bbP \Big(X(w) > y(1+o(1))\Big)\, \big(1+o(1)\big)\,,
\end{equation}
where $w = w_u(x(t))$ with the $o(1)$ terms uniform in $t \in [t^-_u, t^+_u]$ and independent of $y$. 

\ifonlystable
\subsection{The intensity of unusually large clusters}
\else
\subsubsection{The intensity of unusually large clusters}
\fi
\label{s:5.1.1}
We will show that the measures in~\eqref{e:5.10} indeed admit a weak-type limit, namely that
\begin{thm}\label{stable_key_lemma}
	There exists an (infinite) measure $\Gamma$ on $(0, \infty)$ such that for every continuity point $y$ of $\Gamma$, 
	\begin{equation}\label{convergence_conjecture}
		w \bbP(X(w) > y) \underset{w \to \infty} \longrightarrow \Gamma ((y, \infty)) 
	\end{equation}
Moreover, 
\begin{equation}\label{eq:limiting_measure_1st_moment}
		\int_0^\infty y\, \Gamma(dy) = C_\star,
\end{equation}
where $C_\star$ is as in~\eqref{e:1.10}. Finally, for all $y > 0$, 
\begin{equation}
\label{e:A5.15}
	\Gamma ((y, \infty)) \lesssim y^{-2} \wedge  (\log y^{-1})^{5/2} \,.
\end{equation}
\end{thm}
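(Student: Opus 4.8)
\textbf{Proof proposal for Theorem~\ref{stable_key_lemma}.}

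The plan is to analyze the random variable $X(w) = \cC([-w,0])/(w\rme^{\sqrt 2 w})$ via the strong representation of the cluster law in Theorem~\ref{p:1}, writing $\cC = \int_0^\infty \wc\cE_{s,\wc W_s}\,\wc\cN(\rmd s)$, so that
\begin{equation}
\cC([-w,0]) = \int_0^\infty \wc\cE_{s,\wc W_s}\big([-w,0]\big)\,\wc\cN(\rmd s)
= \int_0^\infty \cE_s\big([-(w+\wc W_s),\infty)\big)\,\big|\,\{\cE_s((-\wc W_s,\infty))=0\}\ \wc\cN(\rmd s)\,.
\end{equation}
Using the truncated first/second moment estimates for deep level sets (Lemma~\ref{1st_moment}, Lemma~\ref{lower_bound_2}), one gets with high probability $\cE_s([-v,\infty)) = \exp(\sqrt2 v - \tfrac{v^2}{2s} + O(\log s))$ uniformly for $v \le s^{1-o(1)}$; inserting $v = w + \wc W_s$ and recalling from~\eqref{e:2.3} that $\wc W_s = -Y_s - \tfrac{3}{2\sqrt2}\log^+s$ with $Y$ Bessel-3, the dominant contribution to $\log\cC([-w,0]) - \sqrt2 w$ comes from a fine-scale optimization of $\sqrt2 Y_s + \tfrac{w^2}{2s}$ over $s = \Theta(w^{4/3})$ — exactly the mechanism behind Theorem~\ref{2nd_result}. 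The point of the present theorem, however, is that we need the \emph{atypical} upward deviations of $X(w)$, at the scale $X(w) = \Theta(1)$ (rather than $\exp(-\Theta(w^{2/3}))$), and we need their probability to decay like $1/w$. These atypical events are driven by the backbone $\wc W$ (equivalently $Y$) making an unusually \emph{small} excursion — staying close to $0$ for a long time — which costs probability $\Theta(1/w)$ by the Ballot/repulsion estimates (Lemma~\ref{l:10.7.0}, Lemma~\ref{l:102.1}), combined with the PPP structure of $\cE_s$ contributing an $O(1)$ (as opposed to exponentially small) amount when the backbone cooperates.

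The key steps, in order: (i) establish a uniform-in-$v$ two-sided concentration for $\cE_s([-v,\infty))$ as above, with explicit control of the error exponent; (ii) condition on the backbone trajectory $\wc W$ and show that, given $\wc W$, the conditional law of $X(w)$ has a tail governed (up to $1+o(1)$) by the deterministic functional $G_w(\wc W) := \exp(-\sqrt2 w)\,w^{-1}\int_0^\infty \exp(\sqrt2(w+\wc W_s) - \tfrac{(w+\wc W_s)^2}{2s})\,\rmd s$, handling the conditioning event $\{\cE_s((-\wc W_s,\infty))=0\}$ by noting, via Lemma~\ref{D-E}, that $-\wc W_s = \Theta(\sqrt s)$ so the conditioning is asymptotically superfluous; (iii) using the TV-closeness in~\eqref{e:2.3}, replace $\wc W$ by $-Y - \tfrac{3}{2\sqrt2}\log^+$ and apply the Brownian/Bessel scaling $\hat s = w^{-4/3}s$, $Y^{(w)}_{\hat s} = w^{-2/3}Y_s$ to get $\log G_w = -w^{2/3}\inf_{\hat s}(\sqrt2 Y^{(w)}_{\hat s} + \tfrac{1}{2\hat s}) + O(\log w) = -w^{2/3}\zeta^{(w)} + O(\log w)$ in law, where $\zeta^{(w)} \overset{d}{=}\zeta$ from~\eqref{chi_def} for each $w$; (iv) from this, $w\bbP(X(w) > y) = w\bbP(\zeta^{(w)} < \tfrac{2}{3\sqrt2}\,w^{-2/3}\log(1/y) + o(w^{-2/3}\log w)\,)(1+o(1))$, so the limit $\Gamma((y,\infty))$ exists for continuity points $y$ precisely when $\lim_{w\to\infty} w\,\bbP(\zeta < a\,w^{-2/3})$ exists for $a = \tfrac{2}{3\sqrt2}\log(1/y)$, which reduces to the small-deviation asymptotics $\bbP(\zeta < \epsilon) \sim c\,\epsilon^{3/2}$ as $\epsilon\downarrow 0$ for some constant (this power matches the claimed $(\log y^{-1})^{5/2}$ bound after the substitution $\epsilon \asymp w^{-2/3}\log(1/y)$ and multiplication by $w$, since $w\cdot(w^{-2/3}\log(1/y))^{3/2} = (\log(1/y))^{3/2}$ — one gains the extra half power from a more careful treatment near the optimizer, or one simply proves the cruder upper bound $(\log y^{-1})^{5/2}$ which suffices for~\eqref{e:A5.15}); (v) the upper tail $\Gamma((y,\infty)) \lesssim y^{-2}$ for large $y$: this is just $w\bbP(X(w) > y) = w\bbP(\cC([-w,0]) > yw\rme^{\sqrt2 w}) \le y^{-2}w^{-1}\rme^{-2\sqrt2 w}\bbE(\cC([-w,0])^2)$ by Markov, together with the known second-moment bound $\bbE\cC([-w,0])^2 \lesssim w\rme^{2\sqrt2 w}$ (or $w^2\rme^{2\sqrt2 w}$, from the truncated second moment analysis in~\cite{CHL19, CHL20}) — any polynomial-in-$w$ correction is absorbed since we only claim $\lesssim y^{-2}$; and (vi) the first-moment identity~\eqref{eq:limiting_measure_1st_moment}: by~\eqref{e:29}, $\bbE X(w) = \bbE\cC([-w,0])/(w\rme^{\sqrt2 w}) \to C_\star/w \cdot (1+o(1))$ wait — more precisely $\bbE\cC([-w,0]) = C_\star\rme^{\sqrt2 w}(1+o(1))$ gives $w\bbE X(w) \to C_\star$, and one upgrades this to $\int_0^\infty y\,\Gamma(\rmd y) = C_\star$ by proving uniform integrability of $y$ under the measures $w\bbP(X(w)\in\rmd y)$ — the $y^{-2}$ tail at infinity handles the large-$y$ part, and the $(\log y^{-1})^{5/2}$ control near $0$ (even the crude version) handles the small-$y$ part — then passing to the limit.

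The main obstacle I expect is step (ii)–(iii): making rigorous that, conditionally on the backbone, $X(w)$ concentrates around the deterministic functional $G_w(\wc W)$ with the right precision (error exponent $O(\log w)$, not $o(w^{2/3})$ with a bad constant), uniformly enough in $\wc W$ to survive the TV-replacement~\eqref{e:2.3} and the scaling limit. This requires the deep-level-set concentration of step (i) to hold uniformly over the range of $v = w + \wc W_s$ that the optimizing scales $s = \Theta(w^{4/3})$ actually visit — where $|\wc W_s| = \Theta(w^{2/3})$ so $v = w(1+o(1))$ — and also requires ruling out that the integral $\int_0^\infty(\cdots)\wc\cN(\rmd s)$ is dominated by a short anomalous window of $s$ where $\cE_s$ itself deviates; this last point uses the exponential regularity of $\wc\cN$ from~\eqref{e:101.14} together with a chaining/union bound over dyadic scales of $s$. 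A secondary difficulty is the small-deviation asymptotics of $\zeta = \inf_{s>0}(\sqrt2 Y_s + \tfrac{1}{2s})$ needed for the \emph{existence} of $\Gamma$ (not merely the bound~\eqref{e:A5.15}): one analyzes $\{\zeta < \epsilon\}$ as the event that the Bessel-3 path $Y$ stays below the moving barrier $\tfrac{1}{\sqrt2}(\epsilon - \tfrac{1}{2s})$ for all $s$, which after the scaling $s \mapsto \epsilon^{-2}s$ becomes a fixed barrier problem whose probability one computes (or bounds) via the entropic-repulsion estimate for Bessel-3 near a linear-in-$1/s$ obstacle, i.e.\ ultimately via Lemma~\ref{l:102.1}; I would expect this to yield $\bbP(\zeta<\epsilon)\asymp\epsilon^{3/2}$ and hence a genuine weak limit $\Gamma$, but even if the sharp constant is elusive, the two displayed bounds in~\eqref{e:A5.15} together with~\eqref{eq:limiting_measure_1st_moment} are all that the downstream Laplace-transform argument in Section~\ref{s:B3} requires, so I would prioritize those.
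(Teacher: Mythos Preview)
Your proposal has a genuine gap in steps (ii)--(iv). The approximation $\log X(w) \approx -w^{2/3}\zeta + O(\log w)$, drawn from the Theorem~\ref{2nd_result} mechanism at scale $s = \Theta(w^{4/3})$, captures the \emph{typical} size of $X(w)$ but is far too coarse to resolve the atypical event $\{X(w) > y\}$ for fixed $y>0$. Indeed, under your approximation this event becomes $\{\zeta < w^{-2/3}(\log(1/y) + O(\log w))\}$, and the outcome is dictated entirely by the sign of the $O(\log w)$ error: if it is negative the threshold is eventually negative and the probability is $0$; if it is positive the threshold is $\asymp w^{-2/3}\log w$, giving $w\,\bbP(\zeta < cw^{-2/3}\log w)\asymp (\log w)^{3/2}\to\infty$. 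Either way no finite limit $\Gamma((y,\infty))$ emerges. The underlying reason is that $\{X(w)=\Theta(1)\}$ is \emph{not} driven by small $\zeta$ at scale $s=\Theta(w^{4/3})$; it is driven by the backbone being within $O(1)$ of zero at times $s=\Theta(w^{2})$ (this is the change of variable $s=s'v^2$ in the paper), where $v=w+\wc W_s\approx w=\Theta(\sqrt{s})$ is the \emph{critical} depth and the level sets $\cE_s([-v,\infty))$ do not concentrate --- your step~(i) fails there --- but instead have $O(1)$ random fluctuations governed by the derivative martingale (Proposition~\ref{prop:6.5}).

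The paper accordingly takes a completely different route: it uses the \emph{weak} representation (Section~\ref{s:2.1}), decomposes the ballot event $\cA_t$ according to the time $s\in[r,t/2]$ where the decorated random walk is highest, changes variables to $s=s'v^2$, and passes to the limit using the joint critical-level-set/maximum convergence of Proposition~\ref{prop:6.5}; the limiting object is the integral $G^R_{s',w,x}$ of Proposition~\ref{p:A11} involving i.i.d.\ copies of $(Z,\wh h^*_\infty)$ along a two-sided Brownian motion. Your steps (v) and (vi) --- the $y^{-2}$ bound via the second moment and the identity $\int y\,\Gamma(\rmd y)=C_\star$ via layer-cake plus uniform integrability --- are essentially correct and match the paper's argument (Lemma~\ref{l:A5.2} and the end of the proof of Theorem~\ref{stable_key_lemma}); the $(\log y^{-1})^{5/2}$ bound is obtained separately from the truncated first-moment estimate (Lemma~\ref{improved_estimates_X}), not from small-deviation asymptotics of $\zeta$.
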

The proof of this theorem is given in \ifonlystable
Section \ref{s:A4}
\else
Subsection \ref{s:A4}
\fi and constitutes the main effort of proving Theorem \ref{t:A1.5}. 

We shall also need the following two lemmas, which gives estimates and bounds on the moments and tails of the pre-limit $w \bbP(X(w) \in \rmd x)$. The first is essentially a rewriting of \cite[Proposition 1.5]{CHL19}.
\begin{lem}
\label{l:A5.2}
As $w\to \infty$, with $C_\star$ as in~\eqref{e:1.10},
\begin{equation}\label{estimate_0_X_0_1}
	w \bbE X(w) \sim C_\star,
\end{equation} 
and
\begin{equation}\label{estimate_0_X_0_2}
	w \bbE X(w)^2 \leq C (1+ w^{-1}).
\end{equation}
In particular, 
\begin{equation}\label{estimate_0_X_2}
	w \bbP \left(X(w) > y  \right) \lesssim y^{-1} \wedge y^{-2}.
\end{equation}
and for all $w\geq 1$,
\begin{equation}\label{estimate_0_X_1}
	w \bbE \left(X(w); X(w) > y  \right) \lesssim y^{-1}.
\end{equation}
\end{lem}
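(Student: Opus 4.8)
The plan is to translate all four displayed assertions into statements about $\cC([-w,0])$ and then invoke the first- and second-moment estimates for cluster level sets from \cite{CHL19}. Recall $X(w) = \cC([-w,0]) / (w e^{\sqrt2 w})$, so that $w\bbE X(w) = e^{-\sqrt2 w}\bbE\,\cC([-w,0])$ and $w\bbE X(w)^2 = w^{-1} e^{-2\sqrt2 w}\bbE\,\cC([-w,0])^2$. For \eqref{estimate_0_X_0_1} I would simply quote the first-moment asymptotics \eqref{e:29}, $\bbE\,\cC([-w,0]) = C_\star e^{\sqrt2 w}(1+o(1))$, which is exactly \cite[Proposition~1.5]{CHL19} (and is already cited as \eqref{e:29} in the introduction); dividing by $we^{\sqrt2 w}$ and multiplying by $w$ gives $w\bbE X(w)\to C_\star$. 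For \eqref{estimate_0_X_0_2} I would invoke the corresponding second-moment bound from \cite{CHL19}, namely $\bbE\,\cC([-w,0])^2 \lesssim (w + 1)e^{2\sqrt2 w}$ (a truncated second moment estimate of exactly the kind Cortines--Hartung--Louidor establish via the spine/DRW machinery summarized in Section~\ref{s:2.1}); then $w\bbE X(w)^2 = w^{-1}e^{-2\sqrt2 w}\bbE\,\cC([-w,0])^2 \lesssim w^{-1}(w+1) = 1 + w^{-1}$, which is the claim.

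Given \eqref{estimate_0_X_0_1} and \eqref{estimate_0_X_0_2}, the tail and truncated-mean bounds \eqref{estimate_0_X_2} and \eqref{estimate_0_X_1} are routine. For \eqref{estimate_0_X_2}: by Markov's inequality applied to the first moment, $w\bbP(X(w) > y) \le y^{-1} w\bbE X(w) \lesssim y^{-1}$ (uniformly in $w$, using the boundedness of $w\bbE X(w)$ from \eqref{estimate_0_X_0_1}); by Markov applied to the second moment, $w\bbP(X(w) > y) \le y^{-2} w\bbE X(w)^2 \lesssim y^{-2}(1 + w^{-1}) \lesssim y^{-2}$ for $w \ge 1$. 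Taking the minimum of the two bounds yields $w\bbP(X(w) > y) \lesssim y^{-1}\wedge y^{-2}$. For \eqref{estimate_0_X_1}: write
\begin{equation}
w\bbE\big(X(w); X(w) > y\big) \le y^{-1}\, w\bbE\big(X(w)^2; X(w) > y\big) \le y^{-1}\, w\bbE X(w)^2 \lesssim y^{-1}(1+w^{-1}) \lesssim y^{-1}
\end{equation}
for $w \ge 1$, using \eqref{estimate_0_X_0_2} in the penultimate step.

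The only genuine content is the import of the first- and second-moment estimates for $\cC([-w,0])$ from \cite{CHL19}; the passage from these to the four displayed inequalities is purely bookkeeping with Markov's inequality. The main (minor) obstacle I anticipate is making sure the constants in the cited moment bounds are uniform in $w$ over the relevant range $w \ge 1$ — in particular that the second-moment estimate carries the clean form $\lesssim (w+1)e^{2\sqrt2 w}$ rather than something with a worse $w$-dependence — but this is exactly what the truncated second-moment computation in \cite{CHL19} delivers, so no new argument is needed. I would phrase the proof as: ``Assertions \eqref{estimate_0_X_0_1} and \eqref{estimate_0_X_0_2} are restatements of the first- and (truncated) second-moment estimates in \cite[Proposition~1.5]{CHL19}; \eqref{estimate_0_X_2} and \eqref{estimate_0_X_1} then follow from Markov's inequality as above.''
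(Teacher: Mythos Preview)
Your proposal is correct and matches the paper's approach exactly: the paper itself does not give a proof but simply notes that the lemma ``is essentially a rewriting of \cite[Proposition~1.5]{CHL19}'', and what you have written is precisely that rewriting together with the routine Markov-inequality deductions. The only cosmetic point is that your $y^{-2}$ bound in \eqref{estimate_0_X_2} implicitly requires $w\ge 1$ (from the $1+w^{-1}$ factor), which is consistent with how the lemma is applied later in the paper (where $w\sim u\to\infty$).
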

The second is an improved version of~\cite[Proposition 1.1]{CHL20}.
\begin{lem}\label{improved_estimates_X} For all $w \geq 0$ and $0 < y < 0.9$,
	\begin{equation}\label{estimate_1_X}
		w \bbE \left(X(w); X(w) \leq y \right) \lesssim we^{-w/2} + y (\log y^{-1})^{5/2}
	\end{equation}
	and	
	\begin{equation}\label{estimate_2_X}
		w \bbP \left(X(w) > y  \right) \lesssim y^{-1} w e^{-w/2} +  (\log y^{-1})^{5/2}.
	\end{equation}
\end{lem}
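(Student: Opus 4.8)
The statement to prove is Lemma~\ref{improved_estimates_X}, giving refined upper bounds on $w\,\bbE(X(w);X(w)\le y)$ and $w\,\bbP(X(w)>y)$ for small $y$, improving \cite[Proposition~1.1]{CHL20}. Here is how I would proceed.

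\textbf{Overall strategy.} The plan is to use the weak representation of the cluster law from Subsection~\ref{s:2.1}: via Lemma~\ref{l:7.0} and Lemma~\ref{l:5.2}, the cluster $\cC$ is obtained (up to the limits in $r$ and $t$) as $\int_0^r \cE_s^s(\cdot-\wh W_{t,s})\,\cN(\rmd s)$ under the conditional measure $\bbP(\,\cdot\mid \wh W_{t,0}=\wh W_{t,t}=0;\cA_t)$. So $\cC([-w,0])$ is a sum over the atoms $\sigma_k$ of $\cN$ of the contributions $\cE_{\sigma_k}^{\sigma_k}([-w-\wh W_{t,\sigma_k},\,-\wh W_{t,\sigma_k}])$, each a super-level-set count of an independent BBM at time $\sigma_k$ at a level roughly $m_{\sigma_k}-(w-|\wh W_{t,\sigma_k}|)$ below zero. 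The key quantitative input is the behaviour of $\wh W_{t,\cdot}$ under the Ballot conditioning: by Lemma~\ref{l:10.7.0}, $\wh W_{t,s}$ is repelled to be roughly $-\Theta(\sqrt s)$, so that $|\wh W_{t,s}|$ is large with overwhelming probability, making the level $w-|\wh W_{t,s}|$ (and hence the typical contribution) substantially smaller than $e^{\sqrt2 w}$. To get an event of the required probability $1-O(w^{-1}(\log w)^{5/2})$ on which $X(w)\le y$, I would need a \emph{uniform-in-$s$} lower bound on $|\wh W_{t,s}|$ of the form $|\wh W_{t,s}|\gtrsim \sqrt s \wedge (\text{something})$ — this is exactly the content that Proposition~\ref{higher_leaders}-type estimates (mentioned in the proof outline) provide, and I would invoke/reprove such a statement quantitatively. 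Crucially, one tracks \emph{all} the logarithmic corrections carefully, since the improvement from $u^{-1}$ (in \cite{CHL20}) to $u^{-1}(\log u)^{5/2}$ comes from summing, over the geometric scales $s=2^j$, the small probabilities that $\wh W$ fails to be repelled at scale $s$, weighted by the resulting atypically large contribution — the $(\log)^{5/2}$ is the price of this union bound.

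\textbf{Step 1: Reduce to a ballot estimate.} Write $w\,\bbP(X(w)>y)=w\,\bbP(\cC([-w,0])>y w e^{\sqrt2 w})$. Using the representation and conditioning on the path $\wh W_{t,\cdot}$ and the point process $\cN$, split according to whether $\min_{s}\big(|\wh W_{t,s}|-c\sqrt{s}\,\big)$ (in an appropriate sense, along dyadic scales, with logarithmic slack) stays above some threshold. On the "good" event $G$, every contributing BBM $\cE_{\sigma_k}^{\sigma_k}$ sits at a level $\le m_{\sigma_k}-(w-|\wh W_{t,\sigma_k}|)$, and using the first-moment bound $\bbE\,\cE_s([-v,\infty))\lesssim v e^{\sqrt2 v-v^2/(2s)+O(\log s)}$ (Lemma~\ref{1st_moment}-type estimate, already available and referenced in the outline) one bounds $\bbE(\cC([-w,0])\mathbbm 1_G)$ by something of order $y\,w e^{\sqrt2 w}\cdot(\text{small})$; here the Gaussian factor $e^{-(w-|\wh W_s|)^2/(2s)}$ together with $|\wh W_s|\gtrsim\sqrt s$ produces, after optimizing over $s$ (the optimizer is at $s\asymp w^{4/3}$, as in~\eqref{e:101.30}), exponential-in-$w^{2/3}$ decay, which is far smaller than any power and easily absorbed into the $w e^{-w/2}$ term's role of dominating the "exceptional" contribution. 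Then Markov's inequality gives the first term on the right of \eqref{estimate_2_X}, with the $w e^{-w/2}$ absorbing the non-repelled and short-scale anomalies.

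\textbf{Step 2: Estimate the failure probability.} Bound $\bbP(G^\rmc)$, the probability that $\wh W_{t,\cdot}$ is \emph{not} sufficiently repelled at some dyadic scale $s=2^j\le w^{1+o(1)}$, plus the probability that the decorated maxima $\wh h^{\sigma_k *}_{\sigma_k}$ are anomalously large. By Lemma~\ref{l:10.7.0}, $\bbP(\max_{u\in[s,t/2]}\wh W_{t,u}>-M\mid \cA_t,\dots)\lesssim (M+1)^2/\sqrt s$; choosing $M=M_j$ growing slowly (like $(\log 2^j)^{?}$) and summing $\sum_j M_j^2 2^{-j/2}$ gives a convergent series, but the dominant balance against the required contribution size forces the $(\log)^{5/2}$. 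More precisely: the event $\{X(w)>y\}$ on $G^\rmc$ is controlled by combining $\bbP(G^\rmc$ at scale $s)$ with the conditional expectation of the contribution given that failure — a contribution which is then at most $\asymp M_s\sqrt s\,e^{\sqrt2 M_s\sqrt s}\cdots$; balancing the exponent against $\sqrt2 w$ forces $\sqrt s\asymp w$ up to logs, and the resulting probability weight $M_s^2/\sqrt s\asymp (\log w)^{?}/w$ is summed over $O(\log w)$ scales and re-optimized, yielding $(\log w)^{5/2}/w$, hence $(\log y^{-1})^{5/2}$ after accounting for the relation between $w$ and the relevant $y$-threshold in the truncated-moment version. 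Then \eqref{estimate_1_X} follows from \eqref{estimate_2_X} by the layer-cake formula, $\bbE(X;X\le y)=\int_0^y \bbP(X>s)\,\rmd s - y\,\bbP(X>y)+\dots$, integrating the bound $\bbP(X>s)\lesssim s^{-1}w e^{-w/2}\vee w^{-1}(\log s^{-1})^{5/2}$ against $\rmd s$ on $(0,y)$; the $s^{-1}$ piece integrates to $\log$, controlled by the $we^{-w/2}$ prefactor, and $\int_0^y(\log s^{-1})^{5/2}\rmd s\lesssim y(\log y^{-1})^{5/2}$.

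\textbf{Main obstacle.} The hard part is Step 2 — obtaining the \emph{uniform-over-scales} control of $|\wh W_{t,\cdot}|$ with the \emph{sharp} logarithmic power, i.e. showing that the probability that $\wh W_{t,s}$ dips above $-M_s$ for \emph{some} $s$ in a dyadic range, with $M_s$ chosen just large enough to make the resulting cluster contribution subcritical, is $O(w^{-1}(\log w)^{5/2})$ and no worse. This requires either a quantitative refinement of Lemma~\ref{l:10.7.0} summed carefully over scales, or a direct Ballot-barrier argument for the whole path; one must also handle the joint contribution of the decorations $\wh h^{\sigma_k*}_{\sigma_k}$ (which have a Gumbel-type right tail and are essentially i.i.d.\ over the atoms $\sigma_k$ of the well-controlled point process $\cN$) without losing logarithmic factors. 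Everything else — the first-moment input, the Gaussian optimization at scale $w^{4/3}$, and the passage from tail bounds to truncated-moment bounds — is by-now-standard truncated moment bookkeeping.
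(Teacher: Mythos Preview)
Your high-level strategy --- split into a ``good'' event $G$ on which the backbone $\wh W_{t,\cdot}$ is sufficiently repelled, bound the truncated first moment on $G$, and control $\bbP(G^\rmc)$ by summing the ballot repulsion estimate over dyadic scales --- is exactly right, and is in fact the content of the lemma the paper invokes. The paper's proof, however, is much shorter: it simply \emph{cites} this packaged result (Lemma~3.6 of~\cite{hartung2024growth}, there called $\cB_v(t)$ for the bad event), which already delivers the two key inputs
\[
\bbP\big(\cB_v(t)\,\big|\,\wh h_t^*=\wh h_t(X_t)=0\big)\lesssim v^{-1}(\log\delta^{-1})^{5/2},
\qquad
\bbE\big(\cC_{t,r_t}^*([-v,0]);\cB_v(t)^\rmc\,\big|\,\cdots\big)\lesssim e^{\sqrt2 v}\big(ve^{-v/2}+\delta^{10}\big),
\]
after choosing $M,\eta^{-1}\asymp\log\delta^{-1}$. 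Both \eqref{estimate_1_X} and \eqref{estimate_2_X} then follow in a few lines by union bound and Markov. So your Steps~1--2 amount to reproving that cited lemma; this is a legitimate route, but not what the paper does.

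There is one genuine gap in your write-up: your derivation of \eqref{estimate_1_X} from \eqref{estimate_2_X} via layer-cake does not work as stated. Integrating the bound $w\bbP(X(w)>s)\lesssim s^{-1}we^{-w/2}+(\log s^{-1})^{5/2}$ over $s\in(0,y)$, the $s^{-1}$ term diverges at $0$; cutting off at $s=e^{-w/2}$ and using $\bbP\le 1$ below gives $w^2e^{-w/2}$, not $we^{-w/2}$. The paper sidesteps this by proving \eqref{estimate_1_X} \emph{directly} from the two inputs above:
\[
\bbE\big(\cC;\,\cC\le \delta v e^{\sqrt2 v}\big)\le \bbE\big(\cC;\,\cB_v^\rmc\big)+\delta v e^{\sqrt2 v}\,\bbP(\cB_v),
\]
which immediately yields $we^{-w/2}+\delta(\log\delta^{-1})^{5/2}$. (Also, the $s\asymp w^{4/3}$ optimizer you mention in Step~1 is the mechanism behind the \emph{typical} cluster size in Theorem~\ref{2nd_result}, not the source of the truncated-moment bound here; the latter comes from integrating the first-moment estimate over~$s$ on the repelled event, not from a single optimal scale.)
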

The proof of this lemma is just an application of 
\ifonlystable
Lemma~3.6 of~\cite{hartung2024growth},
\else
Lemma~\ref{trunc_1st_moment},
\fi 
and the details are deferred to to
\ifonlystable
Section~\ref{s:A5}.
\else
Subsection~\ref{s:5.2.4}.
\fi

\ifonlystable
\subsection{Proof of Theorem~\ref{t:A1.5} and its corollaries} 
\else
\subsubsection{Proof of Theorem~\ref{t:A1.5} and its corollaries} 
\fi
Assuming the results of Subsection~\ref{s:5.1.1}, we can now conclude the proof of Theorem~\ref{t:A1.5} and the resulting corollaries. The proof constitutes three steps, phrased below as three lemmas. The first of the three, allows us to pass to the $u \to \infty$ limit in~\eqref{eq:def_phi_u}.
\begin{lem}\label{stable_main_convergence} 
As $u\to \infty$, the integral 
\begin{equation}
	\phi_u(\lambda) = \tfrac{1}{\sqrt{2}} \int_{t^-_u}^{t^+_u} t^{-2} dt \int_0^\infty \left(1-e^{-\lambda t y} - \lambda ty \mathbbm{1}_{ty\leq \rho} \right) u \bbP(Y_u(x(t))\in dy)
\end{equation}
	converges to
	\begin{equation}
	\label{e:A5.21}
	 \phi(\lambda) := \tfrac{1}{\sqrt{2}} \int_{0}^{\infty} t^{-2} dt \int_0^\infty \left(1-e^{-\lambda t y} - \lambda ty \mathbbm{1}_{ty\leq \rho} \right) \Gamma (dy).
	\end{equation}
\end{lem}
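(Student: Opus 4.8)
The goal is to justify passing to the limit $u \to \infty$ inside the double integral defining $\phi_u(\lambda)$. The natural strategy is a two-layer argument: first fix $t$ and pass to the limit in the inner integral $\int_0^\infty(1-e^{-\lambda ty} - \lambda ty\mathbbm{1}_{ty \le \rho})\, u\bbP(Y_u(x(t))\in dy)$, using Theorem~\ref{stable_key_lemma} together with the asymptotic identifications~\eqref{eq:rel_Y_u_X_w} and~\eqref{eq:rel_Y_u_X_w_measure}; then pass to the limit in the outer $t$-integral by dominated convergence, using the tail and moment bounds~\eqref{e:A5.15}, \eqref{estimate_0_X_2}, \eqref{estimate_0_X_1} (via~\eqref{eq:rel_Y_u_X_w_measure}) to produce a $t$-integrable majorant on $(0,\infty)$ that is uniform in $u$.

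\textbf{Step 1 (inner limit).} Fix $t>0$ and $\lambda>0$. Write $w = w_u(x(t)) = u - \tfrac{1}{\sqrt2}\log u + x(t) \to \infty$ as $u\to\infty$, and recall from~\eqref{eq:rel_Y_u_X_w_measure} that $u\bbP(Y_u(x(t))\in dy)$ agrees, up to $(1+o(1))$ perturbations of the mass and the argument, with $w\bbP(X(w)\in dy)$. The integrand $G_{t,\lambda}(y) := 1 - e^{-\lambda ty} - \lambda ty\mathbbm{1}_{ty\le\rho}$ is continuous, bounded, vanishes like $O(y^2)$ as $y\to 0^+$, and is $O(1)$ as $y\to\infty$; in particular $\int_0^\infty G_{t,\lambda}\,d\Gamma$ is finite because $\Gamma$ integrates $y^2$ near $0$ (from~\eqref{e:A5.15}, $\Gamma((y,\infty)) \lesssim (\log y^{-1})^{5/2}$, so $\int_0^1 y^2\,\Gamma(dy) < \infty$) and $\Gamma$ is finite away from $0$ with $\Gamma((y,\infty)) \lesssim y^{-2}$. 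The weak-type convergence~\eqref{convergence_conjecture} (convergence of tails at all continuity points of $\Gamma$) combined with a uniform-integrability argument near $0$ and near $\infty$ — supplied precisely by the $u$-uniform bounds $w\bbP(X(w)>y)\lesssim y^{-1}\wedge y^{-2}$ and $w\bbE(X(w);X(w)>y)\lesssim y^{-1}$ from Lemma~\ref{l:A5.2}, and by $w\bbP(X(w)>y)\lesssim (\log y^{-1})^{5/2}$ from Lemma~\ref{improved_estimates_X} near $0$ — yields
\begin{equation}
\int_0^\infty G_{t,\lambda}(y)\, u\bbP(Y_u(x(t))\in dy) \underset{u\to\infty}{\longrightarrow} \int_0^\infty G_{t,\lambda}(y)\,\Gamma(dy) \,.
\end{equation}
The small-$y$ part needs the $y^2$-decay of $G_{t,\lambda}$ against the $(\log y^{-1})^{5/2}$ growth of $\Gamma$; the large-$y$ part needs the $y^{-1}$ tail control so that the bounded tail of $G_{t,\lambda}$ is integrated against a vanishing mass. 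The $(1+o(1))$ distortions in~\eqref{eq:rel_Y_u_X_w_measure} are absorbed using continuity of $y\mapsto G_{t,\lambda}(y)$ and the uniform tail bounds.

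\textbf{Step 2 (outer limit via dominated convergence).} It remains to dominate $F_u(t) := t^{-2}\int_0^\infty G_{t,\lambda}(y)\, u\bbP(Y_u(x(t))\in dy)$ by a fixed $L^1((0,\infty),dt)$ function, uniformly in large $u$. Split each inner integral at $y = \rho/t$. For $y > \rho/t$: here $G_{t,\lambda}(y) = 1 - e^{-\lambda ty} \le 1 \wedge \lambda ty$, and using $u\bbP(Y_u(x(t))\in dy) \le w\bbP(X(w)\in dy)(1+o(1))$ with~\eqref{estimate_0_X_2} gives a contribution $\lesssim t^{-2}\,w\bbP(X(w)>\rho/t) \lesssim t^{-2}(t/\rho \wedge (t/\rho)^2)$, which is $\lesssim t^{-1}$ for large $t$ and $\lesssim 1$ for small $t$ — but not yet $L^1$ at $\infty$. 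Improve the large-$t$ regime by instead bounding $1-e^{-\lambda ty}\le \lambda t y$ and invoking~\eqref{estimate_0_X_1} (rewritten through~\eqref{eq:rel_Y_u_X_w_measure}) to get $\lesssim t^{-2}\cdot \lambda t\cdot w\bbE(X(w);X(w)>\rho/t) \lesssim \lambda t^{-1}\cdot (t/\rho) = \lambda\rho^{-1}$, which is still not integrable; the correct bound for large $t$ combines both: $1-e^{-\lambda ty} - \lambda ty\mathbbm{1}_{ty\le\rho}$ is bounded by $\lambda t y\,\mathbbm{1}_{ty>\rho}$ plus $C(\lambda t y)^2\mathbbm{1}_{ty\le\rho}$, and on the first piece one uses~\eqref{estimate_0_X_1} while the second piece goes into the $y\le\rho/t$ analysis below; after cancellation with the compensator the surviving term is $O(t^{-2})$ as $t\to\infty$, which is $L^1$ at infinity. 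For $y \le \rho/t$ (small-$y$ regime relevant when $t$ is large): $|G_{t,\lambda}(y)| = |1-e^{-\lambda ty}-\lambda ty| \lesssim (\lambda ty)^2$, so the contribution is $\lesssim \lambda^2 t^{-2}\cdot t^2\,w\bbE(X(w)^2; X(w)\le \rho/t) \lesssim \lambda^2\,w\bbE X(w)^2 \lesssim \lambda^2$ by~\eqref{estimate_0_X_2} — uniform in $t$ but not decaying, so this piece is controlled only for $t$ in compact subsets of $(0,\infty)$; its integrability at $0$ and at $\infty$ is recovered because for small $t$ one has $t^{-2}\int G_{t,\lambda}\lesssim t^{-2}\cdot(\text{const})$ is killed by the fact that $x^-_u\to -\infty$ slowly so $t^-_u\to 0$ and the $t^{-2}$ is integrable near the (shrinking) left endpoint only after noting $G_{t,\lambda}(y)\lesssim (\lambda t y)^2$ contributes $t^0$; more carefully one uses that the \emph{outer} measure $t^{-2}dt$ against a $t^2$-bounded integrand is $O(1)\,dt$ near $0$ — locally integrable — and the genuine decay $O(t^{-2})$ at $\infty$ comes from the large-$y$ analysis. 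Assembling: $F_u(t) \le \Psi(t)$ with $\Psi \in L^1((0,\infty))$, $\Psi(t) \asymp 1$ for $t$ small and $\Psi(t) \lesssim t^{-2}$ (or at worst $t^{-1-\delta}$) for $t$ large, uniformly in $u$ large. Dominated convergence then upgrades Step 1 to $\phi_u(\lambda)\to \phi(\lambda)$, and the same bounds show $\phi(\lambda)$ is finite.

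\textbf{Main obstacle.} The delicate point is the \emph{large-$t$} (equivalently large-$\lambda t$, i.e.\ the "small cluster / no compensation" regime shifted to the tail of the $t$-integral) behavior: a naive bound on $1-e^{-\lambda ty}-\lambda ty\mathbbm{1}_{ty\le\rho}$ produces a majorant decaying only like $t^{-1}$, which is not integrable at $\infty$. One must exploit the cancellation between $1-e^{-\lambda ty}$ and its linear compensator on $\{ty\le\rho\}$ (giving an $O((\lambda ty)^2)$ gain there) simultaneously with the sharp truncated-first-moment bound $w\bbE(X(w);X(w)>\rho/t)\lesssim t$ on $\{ty>\rho\}$ from~\eqref{estimate_0_X_1}; only the combination yields an $L^1$ majorant. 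This is also exactly where the choice of $\rho$ becomes immaterial — different $\rho$ shift $E_u(\rho)$ by an $O(1)$ deterministic constant, which is why $C_\circ$ in Theorem~\ref{t:A1.5} absorbs it. Verifying that the $o(1)$ errors in~\eqref{eq:rel_Y_u_X_w} and~\eqref{eq:rel_Y_u_X_w_measure} are genuinely uniform in $t\in[t^-_u,t^+_u]$ (and harmless after integration) is a secondary bookkeeping task, handled by tracking that $w_u(x(t)) = u(1+o(1))$ uniformly on that range since $x^\pm_u = o(u)$.
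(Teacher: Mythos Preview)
Your two-step architecture --- inner convergence for fixed $t$, then dominated convergence in $t$ --- is exactly the paper's approach, packaged there as Lemmas~\ref{lem:convergence_inner_integral} and~\ref{lem:bound_inner_integral}. Step~1 is essentially correct and matches the paper's treatment.

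The gap is in Step~2, precisely at the point you flag as the ``main obstacle'': the large-$t$ regime. Your attempted resolution does not work. Splitting $G_{t,\lambda}$ at $ty=\rho$ and using only the estimates from Lemma~\ref{l:A5.2}, the best you can extract on $\{ty>\rho\}$ is: bounding $1-e^{-\lambda ty}\le 1$ gives $t^{-2}\,w\,\bbP(X(w)>\rho/t)\lesssim t^{-2}\cdot(t/\rho)=1/(\rho t)$; bounding instead by $\lambda ty$ and invoking~\eqref{estimate_0_X_1} gives $\lambda t^{-1}\,w\,\bbE(X(w);X(w)>\rho/t)\lesssim \lambda/\rho$. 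Neither is in $L^1((1,\infty))$, and there is no further ``cancellation with the compensator'' to exploit --- the compensator lives entirely on $\{ty\le\rho\}$ and has already been fully consumed in producing the $(\lambda ty)^2$ gain there. Your assertion that ``the surviving term is $O(t^{-2})$'' is not supported by any of the computations you write down; with only Lemma~\ref{l:A5.2} there is no $\delta>0$ for which you get $t^{-1-\delta}$.

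What is missing is Lemma~\ref{improved_estimates_X}, used in Step~2 rather than Step~1. For large $t$ (so $\rho/t<0.9$), estimate~\eqref{estimate_2_X} gives
\[
w\,\bbP\big(X(w)>\rho/t\big)\ \lesssim\ (t/\rho)\,we^{-w/2} + \big(\log(t/\rho)\big)^{5/2},
\]
and since $w\sim u$ while $t\le t^+_u=e^{\sqrt{2}x^+_u}=e^{o(u)}$ by sublinearity of $x^+$, the first term is negligible; hence $\Phi^{>\rho}_u(t)\lesssim(\log t)^{5/2}$. Similarly, for $\Phi^{\leq\rho}_u(t)$ one bounds $(\lambda ty)^2\le \lambda^2\rho t\cdot y$ on $\{ty\le\rho\}$ and applies~\eqref{estimate_1_X} to $w\,\bbE(X(w);X(w)\le\rho/t)$ to obtain the same $(\log t)^{5/2}$ control. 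The correct dominating function for the inner integral is therefore $\min(t^2,|\log t|^{5/2})$, i.e.\ $t^{-2}\min(t^2,|\log t|^{5/2})$ for $F_u(t)$, which is $O(1)$ near $t=0$ and $O(t^{-2}(\log t)^{5/2})$ near $t=\infty$ --- genuinely in $L^1((0,\infty))$. You cite Lemma~\ref{improved_estimates_X} in Step~1 for uniform integrability near $y=0$, but that is not where it does the essential work; it is indispensable for the majorant in Step~2.
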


Next, in order to replace $E_u(\rho)$ by the normalized mean in~\eqref{e:A1.26}, we have
\begin{lem}
\label{l:5.5}
For each $\rho > 0$, there exists $C_\rho \in (-\infty, \infty)$ such that
\begin{equation}
E_u(\rho) - \rme^{-\sqrt{2}u} \bbE \cE_\circ\big([-u, \infty); -\tfrac{1}{\sqrt{2}} \log u + [x^-_u, 0]\big) \longrightarrow C_\rho \,.
\end{equation}
\end{lem}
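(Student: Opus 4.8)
The plan is to compare the truncated-mean compensator $E_u(\rho)$ directly with the true mean $\bbE \cE_\circ([-u,\infty); u_* + [x_u^-,0])$, and show their difference converges. First I would rewrite the true mean using the Campbell/Mecke formula for the PPP $\cE_\circ^*$ exactly as was done for $\wt{\phi}_u$: since the intensity of $\cE_u^*$ is $u \rme^{-\sqrt 2 x}\rmd x$ and clusters are i.i.d.\ with the law of $\cC^{(x)}$, one gets
\begin{equation}
\label{e:plan_mean}
\rme^{-\sqrt 2 u}\,\bbE\,\cE_\circ\big([-u,\infty); u_* + [x_u^-,0]\big)
= \tfrac{1}{\sqrt 2}\int_{t_u^-}^{1} t^{-2}\,\rmd t \int_0^\infty t y \; u\,\bbP\big(Y_u(x(t)) \in \rmd y\big) \,,
\end{equation}
using the change of variables $t = \rme^{\sqrt 2 x}$ (so $x \in [x_u^-,0]$ corresponds to $t \in [t_u^-, 1]$, where $t_u^- \to 0$). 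Subtracting \eqref{e:A5.7} from \eqref{e:plan_mean}, the difference $E_u(\rho) - \rme^{-\sqrt 2 u}\bbE(\cdots)$ splits into three pieces: (i) the integral over $t \in [1, t_u^+]$ of $\tfrac{1}{\sqrt 2} t^{-2}\cdot t y \1_{ty \le \rho}\, u\,\bbP(Y_u \in \rmd y)$, which appears in $E_u$ but not in the mean; (ii) minus the integral over $t \in [t_u^-, 1]$ of the complementary part $\tfrac{1}{\sqrt 2} t^{-2}\cdot ty \1_{ty > \rho}\, u\,\bbP(Y_u\in\rmd y)$; and (iii) nothing else, since on $[t_u^-,1]$ the truncated and full first-moment integrands combine to the full $ty$ integrand. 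So I must show that (i) converges to a finite limit and (ii) converges to a finite limit as $u \to \infty$.

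For piece (i), the relevant estimate is that $\tfrac{1}{\sqrt2} t^{-2}\int_0^\infty ty\1_{ty\le\rho}u\bbP(Y_u(x(t))\in\rmd y)\,\rmd t$ is integrable on $[1,\infty)$ uniformly in $u$. Using \eqref{eq:rel_Y_u_X_w_measure} to pass from $u\,\bbP(Y_u(x(t))\in\cdot)$ to $w\,\bbP(X(w)\in\cdot)$ with $w = w_u(x(t)) = u - \tfrac1{\sqrt2}\log u + x \to \infty$, and then \eqref{estimate_0_X_1} from Lemma~\ref{l:A5.2} (which gives $w\,\bbE(X(w); X(w) > a) \lesssim a^{-1}$) together with $w\,\bbE X(w) \lesssim 1$ from \eqref{estimate_0_X_0_1}, one bounds the inner integral $\int_0^\infty ty\1_{ty\le\rho}\,w\bbP(X(w)\in\rmd y)$ by $\rho \wedge (t\cdot C/t) = O(1)$ — in fact by $C(1 \wedge \rho t^{-1}\cdot t) $, more carefully $\int_0^{\rho/t} ty\, w\bbP(X(w)\in\rmd y) \le t\cdot w\bbE(X(w); X(w)\le \rho/t) \lesssim t\cdot(\rho/t) = \rho$ but also $\le t \cdot w\bbE X(w) \lesssim t$; combining, the integrand is $\lesssim t^{-2}\min(t,\rho) = \min(t^{-1}, \rho t^{-2})$, which is integrable on $[1,\infty)$. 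Dominated convergence (with the pointwise limit coming from \eqref{convergence_conjecture}) then yields convergence of (i) to $\tfrac1{\sqrt2}\int_1^\infty t^{-2}\rmd t\int_0^\infty ty\1_{ty\le\rho}\Gamma(\rmd y)$, which is finite by the same domination and is $\le \tfrac1{\sqrt2}\int_1^\infty\min(t^{-1},\rho t^{-2})\rmd t<\infty$ — wait, $\int_1^\infty t^{-1}$ diverges, so I must be more careful: actually for $t$ large the bound $t\cdot(\rho/t)=\rho$ is the wrong one; rather $\int_0^{\rho/t} ty\,w\bbP(X(w)\in\rmd y)\le t \cdot w\bbE(X(w);X(w)\le\rho/t)$ and by \eqref{estimate_1_X} of Lemma~\ref{improved_estimates_X} this is $\lesssim t(we^{-w/2} + (\rho/t)(\log(t/\rho))^{5/2}) \lesssim (\log t)^{5/2}$ for large $t$ (the $we^{-w/2}$ term is negligible), so the integrand is $\lesssim t^{-2}(\log t)^{5/2}$, which \emph{is} integrable on $[1,\infty)$. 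For piece (ii), on $t \in [t_u^-,1]$ we need $\tfrac1{\sqrt2} t^{-2}\int_0^\infty ty\1_{ty>\rho}\,u\bbP(Y_u(x(t))\in\rmd y)\,\rmd t$ integrable near $t = 0$ uniformly in $u$; here $\int_0^\infty ty\1_{ty>\rho}w\bbP(X(w)\in\rmd y) = t\cdot w\bbE(X(w);X(w)>\rho/t) \lesssim t\cdot (\rho/t) = \rho$ by \eqref{estimate_0_X_1}, so the integrand is $\lesssim \rho t^{-2}$ — not integrable near $0$! So I instead use the sharper $w\bbE(X(w); X(w) > a) \lesssim a^{-1}$ to get $t\cdot w\bbE(X(w);X(w)>\rho/t)\lesssim t\cdot t/\rho = t^2/\rho$, giving integrand $\lesssim t^{-2}\cdot t^2 = O(1)$, integrable on $[0,1]$. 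Again dominated convergence with pointwise limit \eqref{convergence_conjecture} and the tail bound \eqref{e:A5.15} gives convergence to $-\tfrac1{\sqrt2}\int_0^1 t^{-2}\rmd t\int_0^\infty ty\1_{ty>\rho}\Gamma(\rmd y)$, finite for the same reason.

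Putting the two pieces together, I set
\begin{equation}
C_\rho := \tfrac1{\sqrt2}\int_1^\infty t^{-2}\rmd t\int_0^\infty ty\1_{ty\le\rho}\,\Gamma(\rmd y) \;-\; \tfrac1{\sqrt2}\int_0^1 t^{-2}\rmd t\int_0^\infty ty\1_{ty>\rho}\,\Gamma(\rmd y) \,,
\end{equation}
which is a finite real number (possibly negative, hence the statement's $C_\rho \in (-\infty,\infty)$), and the lemma follows. The main obstacle will be making the domination arguments airtight near $t = 0$ and $t = \infty$ simultaneously and uniformly in $u$ — in particular being disciplined about which of the available estimates (the crude $y^{-1}\wedge y^{-2}$ tail from Lemma~\ref{l:A5.2} versus the $(\log y^{-1})^{5/2}$ refinement from Lemma~\ref{improved_estimates_X}) is strong enough in each regime — and in transferring the pre-limit bounds on $w\bbP(X(w)\in\cdot)$ through the asymptotic identification \eqref{eq:rel_Y_u_X_w_measure} with error terms that are genuinely uniform in both $t$ and $y$ over the relevant range, so that dominated convergence applies with a single $u$-independent majorant.
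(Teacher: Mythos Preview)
Your proposal is correct and follows essentially the same route as the paper: the same two-piece decomposition (the $t\in[1,t_u^+]$ part of $E_u(\rho)$ minus the $\mathbbm 1_{ty>\rho}$-part on $[t_u^-,1]$), the same bounds (\eqref{estimate_1_X} for large $t$ giving the $t^{-2}(\log t)^{5/2}$ majorant, and \eqref{estimate_0_X_1} for small $t$ giving the $O(1)$ majorant), and the same limiting constant $C_\rho$. The one place the paper is slightly more careful is the pointwise convergence of the \emph{inner} integrals: weak convergence \eqref{convergence_conjecture} at continuity points does not immediately give convergence of $\int y\,\mathbbm 1_{ty>\rho}\,u\bbP(Y_u\in\rmd y)$ against the unbounded function $y$, so a further truncation at $ty<M$ (with the tail controlled uniformly by \eqref{estimate_0_X_1}) is invoked, exactly in the spirit of Lemma~\ref{lem:convergence_inner_integral}; you flag this as an ``obstacle'' but should make it explicit.
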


Lastly, we shall also need to control the order of fluctuations of the contributions from clusters whose tip is sufficiently far from $-\tfrac{1}{\sqrt{2}} \log u$.
\begin{lem}
\label{l:A5.6}
For any $\epsilon > 0$,
\begin{equation}
e^{-\sqrt{2} u} \cE_\circ\Big([-u, \infty); \big(\big(-\tfrac{1}{\sqrt{2}}+\epsilon\big) \log u, \infty\big)\Big)
\underset{u \to \infty}{\overset{\text{a.s.}}\longrightarrow} 0 \,.
\end{equation}
and
\begin{equation}
e^{-\sqrt{2} u} \Big[\cE_\circ\Big([-u, \infty); \big((-\infty, -\tfrac{1}{\sqrt{2}}-\epsilon) \log u\big)\Big)  - \bbE \cE_\circ\Big([-u, \infty); \Big(\big(-\infty, -\tfrac{1}{\sqrt{2}}-\epsilon\big) \log u\Big)\Big)\Big]
\underset{u \to \infty}{\overset{\bbP}\longrightarrow} 0 \,.
\end{equation}
\end{lem}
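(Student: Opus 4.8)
The plan is to prove Lemma~\ref{l:A5.6} by controlling the two types of ``far'' contributions separately, using the moment estimates from Lemma~\ref{l:A5.2} and Lemma~\ref{improved_estimates_X} together with a Borel--Cantelli / Poissonian decomposition argument analogous to the one used for Theorem~\ref{1st_result}. Write $\cE_\circ^* \sim {\rm PPP}(\rme^{-\sqrt2 u}\rmd u)$ for the (unshifted) tip process and recall that $\cE_\circ(\;\cdot\;;B) = \sum_k \cC^k(\cdot - u^k) 1_B(u^k)$ with $(\cC^k)$ i.i.d.\ copies of $\cC\sim\nu$ independent of $\cE_\circ^*$. For a cluster whose tip sits at height $w\le 0$, its contribution to $[-u,\infty)$ is distributed as $\cC([-(u+w),0])$; in the notation of~\eqref{def:Y_u_x} and the line below~\eqref{eq:rel_Y_u_X_w_measure}, this equals $(u+w)\rme^{\sqrt2(u+w)}X(u+w)$ in law.

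\textbf{First contribution: tips above $(-\tfrac1{\sqrt2}+\epsilon)\log u$.} Here I would fix a mesh, say $u\in[n,n+1]$ along integers $n$, and bound $\cE_\circ([-u,\infty);((-\tfrac1{\sqrt2}+\epsilon)\log u,\infty))$ from above (monotonically in $u$) by the contribution at $u=n+1$ of clusters with tip in $((-\tfrac1{\sqrt2}+\epsilon)\log n,\infty)$. Taking a first moment: the expected number of tips of $\cE_\circ^*$ above level $a$ is $\tfrac1{\sqrt2}\rme^{-\sqrt2 a}$, and for $a = (-\tfrac1{\sqrt2}+\epsilon)\log n$ this is $\tfrac1{\sqrt2}n^{1-\sqrt2\epsilon}$. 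Each such cluster with tip at $w$ contributes in expectation $\bbE\,\cC([-(n+1+w),0]) = C_\star\rme^{\sqrt2(n+1+w)}(1+o(1))$ by~\eqref{e:29}. Integrating $\rme^{\sqrt2 w}$ against the intensity $\rme^{-\sqrt2 w}\rmd w$ over $w\in((-\tfrac1{\sqrt2}+\epsilon)\log n,\infty)$ produces only the length of the (effectively bounded at the top by $0$, plus the cluster tails above $0$ which are controlled by the right tail of $\cC$) integration region, giving $\bbE\,\cE_\circ([-n,\infty);((-\tfrac1{\sqrt2}+\epsilon)\log n,\infty)) \lesssim \rme^{\sqrt2 n}\,n^{1-\sqrt2\epsilon}\log n$. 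Since $1-\sqrt2\epsilon<1$, dividing by $\rme^{\sqrt2 n}$ and applying Markov's inequality with threshold $\rme^{-\sqrt2 n}\cdot$ the quantity gives a summable bound $\lesssim n^{-\sqrt2\epsilon}\log n$ — wait, this is \emph{not} summable, so one must refine: either restrict $\epsilon$ is arbitrary and re-run with $\epsilon/2$ in the exponent to gain a genuine polynomial decay $n^{-\sqrt2\epsilon/2}$ after pulling out $n^{-\sqrt2\epsilon/2}\log n\to 0$ deterministically and bounding the remaining $n^{1-\sqrt2\epsilon/2}$-mean quantity in probability along the subsequence, upgraded to a.s.\ by monotone interpolation and a stronger (second-moment or Lemma~\ref{improved_estimates_X}) tail bound ensuring Borel--Cantelli summability; concretely I expect to use~\eqref{estimate_2_X} to see that the atypically large clusters (size $\gtrsim (u+w)\rme^{\sqrt2(u+w)}$) occur with probability $\lesssim (u+w)^{-1}$, so their total contribution over the $\lesssim n^{1-\sqrt2\epsilon}$ relevant tips is, with overwhelming probability, controlled, while the typical contribution is $o(\rme^{\sqrt2(u+w)})=o(\rme^{\sqrt2 n})$ cluster-by-cluster by Theorem~\ref{2nd_result}, summed over a polynomial number of clusters. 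This mirrors Proposition~\ref{higher_leaders} and the proof of~\eqref{e:101.30a} almost verbatim, with $\delta\log v$ replaced by $(\tfrac1{\sqrt2}-\epsilon)\log u$.

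\textbf{Second contribution: tips below $(-\tfrac1{\sqrt2}-\epsilon)\log u$.} This is the more concentrated regime and convergence in probability suffices, so I would estimate the variance of $\cE_\circ([-u,\infty);((-\infty,(-\tfrac1{\sqrt2}-\epsilon)\log u)))$ after centering. By the Poissonian (Campbell) formula, the centered second moment equals $\int_{(-\infty,(-\tfrac1{\sqrt2}-\epsilon)\log u)} \bbE\big(\cC([-(u+w),0])^2\big)\,\rme^{-\sqrt2 w}\rmd w = \int (u+w)^2\rme^{2\sqrt2(u+w)}\bbE X(u+w)^2\,\rme^{-\sqrt2 w}\rmd w$. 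Using $\bbE X(v)^2\lesssim v^{-1}(1+v^{-1})$ from~\eqref{estimate_0_X_0_2}, the integrand is $\lesssim (u+w)\rme^{2\sqrt2(u+w)}\rme^{-\sqrt2 w} = (u+w)\rme^{2\sqrt2 u}\rme^{\sqrt2 w}$, whose integral over $w<(-\tfrac1{\sqrt2}-\epsilon)\log u$ is dominated by the top endpoint and yields $\lesssim \rme^{2\sqrt2 u}\cdot u^{-1-\sqrt2\epsilon}\log u$. Dividing by $\rme^{2\sqrt2 u}$ shows the variance of the normalized centered quantity is $\lesssim u^{-1-\sqrt2\epsilon}\log u\to 0$, so Chebyshev gives convergence to $0$ in probability, as claimed.

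\textbf{Main obstacle.} The first part is the delicate one: the naive first-moment bound on the ``high tips'' contribution is $n^{1-\sqrt2\epsilon}\log n$ in units of $\rme^{\sqrt2 n}$, which is \emph{not} $o(1)$, reflecting exactly the phenomenon flagged in the discussion around~\eqref{e:101.31}--\eqref{e:A1.37} that the mean of a cluster level set is dominated by an atypically large event of probability $\Theta(v^{-1})$. The resolution is the same as for Theorem~\ref{1st_result}: one must \emph{not} use the first moment, but instead split each cluster's contribution into its typical part (size $\rme^{\sqrt2(u+w)-\Theta((u+w)^{2/3})}$, via Theorem~\ref{2nd_result}, which is $\rme^{\sqrt2 n}\cdot n^{-\omega(1)}$ and sums to $o(\rme^{\sqrt2 n})$ over the $\lesssim n^{1-\sqrt2\epsilon}$ relevant clusters) and its atypical excess, the latter controlled in probability — and then along the integer mesh, a.s.\ — via the refined tail bound~\eqref{estimate_2_X} and Borel--Cantelli, gaining the extra polynomial power needed by taking $\epsilon$ arbitrary and working with $\epsilon/2$. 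Interpolating between integers uses monotonicity of $u\mapsto\cE_\circ([-u,\infty);\;\cdot\;)$ only in the level-set variable, so care is needed because the \emph{set} $((-\tfrac1{\sqrt2}+\epsilon)\log u,\infty)$ also moves with $u$; this is handled by the usual trick of sandwiching between the value at $n$ and at $n+1$ with the slightly enlarged tip-region $((-\tfrac1{\sqrt2}+\epsilon)\log(n+1),\infty)$, at the cost of harmless constants.
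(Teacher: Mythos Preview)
Your approach matches the paper's exactly: the first statement is Proposition~\ref{higher_leaders} with $\delta=\tfrac{1}{\sqrt2}-\epsilon$ (as you eventually identify after the unnecessary first-moment detour), and the second is Chebyshev applied to the variance bound coming from~\eqref{estimate_0_X_0_2}. One minor arithmetic slip: in part two the normalized variance is of order $u^{-\sqrt2\epsilon}$, not $u^{-1-\sqrt2\epsilon}\log u$ (you dropped the factor $u+w\approx u$ when evaluating the integral at the top endpoint), but this does not affect the conclusion.
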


Deferring momentarily the proof of the above lemmas (which will be given in Subsection \ref{s:5.1.3}), let us give
\begin{proof}[Proof of Theorem~\ref{t:A1.5}]
Thanks to Lemma~\ref{stable_main_convergence} and the derivation in the beginning of this section, we have
\begin{equation}
\cR_u \underset{u \to \infty} \Longrightarrow \wt{\cR}\,,
\end{equation}
where $\wt{\cR}$ is a random variable with Laplace Transform $\bbE \rme^{-\lambda \wt{\cR}} = \exp(-\phi(\lambda))$, with $\phi$ as in~\eqref{e:A5.21}. The latter can be written as
\begin{equation} \phi(\lambda) = \tfrac{1}{\sqrt{2}} \int_{0}^{\infty} \left(1-e^{-\lambda z} - \lambda z \mathbbm{1}_{z\leq \rho} \right) \wt{\Gamma} (dz)
\end{equation}
where $\wt{\Gamma}$ is the (improper) distribution of $z = ty$ when $t$ has distribution $t^{-2} dt$ and $y$ has distribution $\Gamma(dy)$. Using Fubini, it is easy to compute that for $a > 0$, 
\begin{align}\label{computing_Gamma}
	\wt{\Gamma}(z > a) = \int_0^\infty \Gamma(dy) \int_0^\infty \mathbbm{1}_{ty > a} \ t^{-2}   dt
	= \int_0^\infty  \frac{y}{a}\, \Gamma(dy)
	= \frac{C_\star}{a} \,,
\end{align}
with the last equality following by~\eqref{eq:limiting_measure_1st_moment}.
Thus, $\wt{\Gamma}(dz) = C_\star z^{-2} dz$, and so we have
\begin{equation} 
\phi(\lambda) =  \tfrac{C_\star}{\sqrt{2}} \int_{0}^{\infty} \frac{1-e^{-\lambda z} - \lambda z \mathbbm{1}_{z\leq \rho}}{z^2} \, dz 
= -\tfrac{C_\star}{\sqrt{2}}  \left(\lambda \log \lambda - (\log \rho - 1 + \gamma) \lambda \right) \,,
\end{equation}
where $\gamma$ is the Euler-Mascheroni constant. This may be obtained, for example, by differentiating under integral twice to get the explicit form of $\phi''(\lambda)$. Setting
\begin{equation}
	C_3 := \tfrac{C_\star}{\sqrt{2}}(\log \rho - \log \tfrac{C_\star}{\sqrt{2}} + \gamma) + C_\rho \,,
\end{equation}
where $C_\rho$ is as in Lemma~\ref{l:5.5}, the Laplace Transform of the  left hand side in~\eqref{e:A1.26} thus tends to the Laplace Transform of the right hand side thereof.

For the second part of the theorem, pick any $\epsilon > 0$ and set 
$x_u^{-'} = x_u^- \wedge (-\epsilon \log u)$ and 
$x_u^{+'} = x_u^+ \vee \epsilon \log u$.
Then write the difference on left hand side of~\eqref{e:A1.27} as
\begin{equation}
\begin{split}
\Big(\cE_\circ \big([-u, \infty); & -\tfrac{1}{\sqrt{2}}\log u +(-\infty, x_u^{-'}]\big) - 
	\bbE \cE_\circ \big([-u, \infty); -\tfrac{1}{\sqrt{2}}\log u +(-\infty, x_u^{-'}]\big)\Big) \\
& + \Big(\cE_\circ \big([-u, \infty); -\tfrac{1}{\sqrt{2}}\log u +[x_u^{-'},\, x_u^{+'}]\big)  
- \bbE \cE_\circ \big([-u, \infty); -\tfrac{1}{\sqrt{2}}\log u +[x_u^{-'},\, 0]\big)\Big) \\
& - \Big(\cE_\circ \big([-u, \infty); -\tfrac{1}{\sqrt{2}}\log u +[x_u^-,\, x_u^+]\big) 
- \bbE \cE_\circ \big([-u, \infty); -\tfrac{1}{\sqrt{2}}\log u +[x_u^-,\, 0]\big)\Big) \\
& + \cE_\circ \big([-u, \infty); -\tfrac{1}{\sqrt{2}}\log u + (x_n^{+'}, \infty)\big) \,.
\end{split}
\end{equation}
After normalization by $\rme^{-\sqrt{2} u}$ the terms in the first and last lines above tend to $0$ in probability as $u \to \infty$, thanks to Lemma~\ref{l:A5.6}. 
As for the difference of the middle two lines, thanks to Lemma~\ref{l:5.5}, we can replace the normalized means with the quantities $E'_u(\rho)$, $E_u(\rho)$, respectively, with the former defined as in~\eqref{e:A5.7} only with respect to $t_u^{\pm'} := \rme^{\sqrt{2}x_u^{\pm'}}$ in place of $t_u^\pm$. Then the Laplace Transform of the normalized difference is equal to $\rme^{-(\phi'_u(\lambda) - \phi_u(\lambda))}$, where $\phi'_u(\lambda)$ is again defined as in~\eqref{eq:def_phi_u} only w.r.t. $t_u^{\pm'}$. But then, by Lemma~\ref{stable_main_convergence} and the finiteness of the integral in~\eqref{e:A5.21}, the difference $\phi_u(\lambda) - \phi'_u(\lambda)$ tends to $0$ with $u$. This implies that also the normalized difference of the middle two lines converges in probability to $0$ as $u \to \infty$, which combined with the same convergence of the other terms shows the desired second part of Theorem~\ref{t:A1.5}.
\end{proof}

\begin{proof}[Proof of Corollary~\ref{c:A1.6}]
Immediate from the two statements of Theorem~\ref{t:A1.5}.
\end{proof}

\begin{proof}[Proof of Corollary~\ref{c:A1.7}]
Abbreviating $\tilde{u} := u + \frac{1}{\sqrt{2}} \log Z$, in view of the relation between $\cE$ and $\cE_\circ$, the left hand side in~\eqref{e:A1.30} is equal to
\begin{multline}
\label{e:A1.34}
\frac{Z}{\rme^{\sqrt{2} \tilde{u}}}
\bigg[
	\Big(\cE_\circ \big([-\tilde u,\, \infty)\big) - \bbE\, \cE_\circ \Big([-\tilde u,\, \infty)\;;\; \big[-\tilde u,\, -\tfrac{1}{\sqrt{2}} \log \tilde{u} \big)\Big) - C_\circ \rme^{\sqrt{2}\tilde u} \\
	- 
	\bbE \cE_\circ \Big([-\tilde u,\, \infty)\;;\; \big[-\tfrac{1}{\sqrt{2}} \log \tilde{u} ,\, -\tfrac{1}{\sqrt{2}} (\log u + \log Z)\big)\Big) \bigg] \,,
\end{multline}
where the last mean is to be subtracted instead of added, if the right point of the last interval is smaller than its left point.

Since $\log \wt{u} = \log u + o(1)$ the second mean above tends to $(\tfrac{C_\star}{\sqrt{2}} + o(1)) \rme^{\sqrt{2}\tilde{u}}\log Z$ by
\ifonlystable
Lemma~3.1 in~\cite{hartung2024growth},
\else
Lemma~\ref{l:3.1},
\fi
where both $o(1)$ tend to $0$ as $u \to \infty$. Using that $\wt{u} \to \infty$ as $u \to \infty$, Corollary~\ref{c:A1.6} then implies that the quantity in~\eqref{e:A1.34} tends weakly to
\begin{equation}
	\frac{C_\star}{\sqrt{2}} \big(Z\cR_1 - Z \log Z\big)  \,,
\end{equation}
which has precisely the desired law.
\end{proof}

\ifonlystable
\subsection{Proof of Lemmas~\ref{stable_main_convergence}-\ref{l:A5.6}}
\else
\subsubsection{Proof of Lemmas~\ref{stable_main_convergence}-\ref{l:A5.6}}
\fi
\label{s:5.1.3}

First, we give:
\begin{proof}[Proof of Lemma~\ref{l:A5.6}] The first statement is immediate from 
\ifonlystable
Proposition~3.2 in~\cite{hartung2024growth}
\else
Proposition \ref{higher_leaders}
\fi
(with $\delta = \tfrac{1}{\sqrt{2}}-\eps$). The second statement follows from a direct application of the Chebyshev Inequality using the variance estimate 
\ifonlystable
in Lemma~3.1 of~\cite{hartung2024growth}.
\else
\eqref{eq:cluster_2nd_moment_estimate} (with $v = u$ and $z = (-\tfrac{1}{\sqrt{2}}-\eps) \log u$).	
\fi
\end{proof}

Turning to the proof of Lemma~\ref{stable_main_convergence}, we first separate the inner integral in \eqref{eq:def_phi_u} into two parts:
\begin{align}
	\Phi^{\leq\rho}_u(t) &:= \int_0^\infty (1- e^{-\lambda ty} - \lambda ty ) \mathbbm{1}_{ty\leq \rho} \cdot u\bbP(Y_u(x(t)) \in dy) \label{eq:def_phi_u_<_rho} \\
	\Phi^{>\rho}_u(t) &:= \int_0^\infty (1- e^{-\lambda ty}) \mathbbm{1}_{ty > \rho} \cdot u\bbP(Y_u(x(t)) \in dy) \label{eq:def_phi_u_>_rho}
\end{align}
Then, Lemma~\ref{stable_main_convergence} follows immediately from the next two results and the Dominated Convergence Theorem.
\begin{lem}\label{lem:convergence_inner_integral} If $\tfrac{\rho}{t}$ is a continuity point of $\Gamma$, then
	\begin{align}\label{eq:convergence_inner_integral_>_rho}
		\Phi^{>\rho}_u(t) &\underset{u\to\infty}\longrightarrow \Phi^{>\rho}(t) := \int_0^\infty (1- e^{-\lambda ty}) \mathbbm{1}_{ty > \rho}\, \Gamma(dy)  \\
		\Phi^{\leq\rho}_u(t) &\underset{u\to\infty}\longrightarrow \Phi^{\leq\rho}(t) := \int_0^\infty (1- e^{-\lambda ty} - \lambda ty ) \mathbbm{1}_{ty\leq \rho}\, \Gamma(dy). \label{eq:convergence_inner_integral_leq_rho}
	\end{align}
\end{lem}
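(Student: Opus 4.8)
\textbf{Plan for the proof of Lemma~\ref{lem:convergence_inner_integral}.} The two limits are of the same nature, so I would treat them in parallel and isolate the common mechanism: the convergence of the measures $u\bbP(Y_u(x(t))\in \rmd y)$ to $\Gamma$ away from zero, as supplied by Theorem~\ref{stable_key_lemma}. Via the relation~\eqref{eq:rel_Y_u_X_w_measure}, studying $u\bbP(Y_u(x(t))\in\cdot)$ for $t\in[t_u^-,t_u^+]$ is the same as studying $w\bbP(X(w)\in\cdot)$ along $w=w_u(x(t))\to\infty$, up to multiplicative $(1+o(1))$ corrections that are uniform in $t$ and independent of the test level. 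Thus~\eqref{convergence_conjecture} gives $u\bbP(Y_u(x(t))>y)\to\Gamma((y,\infty))$ for every continuity point $y$ of $\Gamma$; in particular, this is the reason for the hypothesis that $\rho/t$ be a continuity point, since the truncation cutoff for $y$ is $\rho/t$.

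\textbf{The $>\rho$ integral.} Here the integrand $g(y):=(1-e^{-\lambda t y})\mathbbm 1_{ty>\rho}$ is bounded by $1$ and supported on $\{y>\rho/t\}$, a region bounded away from $0$. On such a region $\Gamma$ is a finite measure by the first bound in~\eqref{e:A5.15} (indeed $\Gamma((\rho/t,\infty))\lesssim (t/\rho)^2<\infty$), so $\Phi^{>\rho}(t)$ is well-defined and finite. I would write $\Phi^{>\rho}_u(t)=\int_0^\infty g(y)\, \mu_u(\rmd y)$ with $\mu_u(\rmd y):=u\bbP(Y_u(x(t))\in\rmd y)$, integrate by parts (or use the layer-cake representation in terms of tail functions), and conclude convergence from: (i) pointwise convergence of the tails $\mu_u((y,\infty))\to\Gamma((y,\infty))$ at continuity points; (ii) a uniform integrable majorant for the tails, furnished by Lemma~\ref{l:A5.2}'s bound $w\bbP(X(w)>y)\lesssim y^{-1}\wedge y^{-2}$ (hence $\mu_u((y,\infty))\lesssim y^{-2}$ uniformly in $u$, for $y$ bounded away from $0$). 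Since $\rho/t$ is a continuity point, the boundary term at $y=\rho/t$ is harmless. Dominated convergence then yields~\eqref{eq:convergence_inner_integral_>_rho}. One small care point: $w_u(x(t))$ is not literally $w$, but the $(1+o(1))$ discrepancy in~\eqref{eq:rel_Y_u_X_w_measure} is absorbed by replacing $y$ by $y(1+o(1))$ inside the tails, and since $\Gamma$ has at most countably many atoms one may take continuity points freely; the majorant is insensitive to this.

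\textbf{The $\le\rho$ integral.} Now the integrand $h(y):=(1-e^{-\lambda ty}-\lambda ty)\mathbbm 1_{ty\le\rho}$ is supported near $0$, where $\Gamma$ is infinite, so the previous boundedness argument fails and I must exploit cancellation: $|1-e^{-s}-s|\le s^2/2$, hence $|h(y)|\lesssim_{t,\lambda,\rho}\, y\cdot(ty)\,\mathbbm 1_{ty\le\rho}$, i.e.\ $|h(y)|\lesssim y$ for $y$ small (with a constant depending on $t,\lambda$). To integrate this against $\mu_u$ near $0$ I need a uniform first-moment-type bound, which is exactly what Lemma~\ref{l:A5.2} provides: $w\bbE(X(w);X(w)\le y)\lesssim y$ (from~\eqref{estimate_1_X} with $w\to\infty$, or directly from $w\bbE X(w)\to C_\star<\infty$), so $\int_0^{y_0} y\,\mu_u(\rmd y)$ is bounded uniformly in $u$ and can be made small by choosing $y_0$ small. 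On the complementary region $\{y_0<y\le\rho/t\}$, $\Gamma$ is finite and the $>\rho$-type argument (pointwise tail convergence plus the $y^{-2}$ majorant) applies verbatim. Combining, I split $\Phi^{\le\rho}_u(t)$ at an auxiliary level $y_0$: the near-zero part is uniformly small and converges (by the same moment bound applied to $\Gamma$, using~\eqref{eq:limiting_measure_1st_moment}) to the correspondingly small tail of $\Phi^{\le\rho}(t)$; the bounded-away part converges by dominated convergence. Letting $y_0\downarrow0$ after $u\to\infty$ gives~\eqref{eq:convergence_inner_integral_leq_rho}.

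\textbf{Expected main obstacle.} The delicate point is the $y\to0$ behaviour in the $\le\rho$ case: one must quantify the cancellation in $1-e^{-\lambda ty}-\lambda ty$ against the blow-up of $\Gamma$ near $0$ and, crucially, against the \emph{pre-limit} measures $\mu_u$ \emph{uniformly in $u$}. This is where the hypotheses genuinely bite — the finite first moment $\int_0^\infty y\,\Gamma(\rmd y)=C_\star$ from~\eqref{eq:limiting_measure_1st_moment} and its pre-limit counterpart $w\bbE X(w)\to C_\star$ (Lemma~\ref{l:A5.2}) are precisely what make the near-zero contribution negligible and stable. Everything else (the $>\rho$ convergence, the handedness of the $(1+o(1))$ corrections, the continuity-point bookkeeping for $\rho/t$) is routine once the uniform tail majorant $\mu_u((y,\infty))\lesssim y^{-2}$ and the uniform truncated-mean bound are in hand.
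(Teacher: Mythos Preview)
Your proposal is correct and follows essentially the same approach as the paper: for $\Phi^{>\rho}_u$, use the weak convergence of the finite measures $u\bbP(Y_u(x(t))\in\cdot)$ restricted to $(\rho/t,\infty)$ (which the paper states directly rather than via layer-cake, but the content is identical); for $\Phi^{\le\rho}_u$, split off a small neighborhood of zero (the paper uses $[0,\eps/t]$ with $\eps/t$ a continuity point, you use $[0,y_0]$), control the near-zero piece uniformly via the Taylor bound $|1-e^{-s}-s|\le s^2/2$ together with the first-moment estimate $w\bbE X(w)\sim C_\star$, and handle the remaining bounded-away-from-zero piece by weak convergence. The only cosmetic difference is that the paper extracts the smallness factor directly as $|\Phi^{\le\eps}_u(t)|\lesssim \eps t\cdot C_\star$, whereas you phrase it as $\int_0^{y_0}y\,\mu_u(\rmd y)$ bounded times a vanishing prefactor; these are the same computation.
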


\begin{lem}\label{lem:bound_inner_integral} There exists a finite constant $C_\rho$ such that
	\[
	\Phi^{>\rho}_u(t), \Phi^{\leq\rho}_u(t) \leq C_\rho\, \min(t^2, |\log t|^{5/2}).
	\]
	for $t\in [t^-_u, t^+_u]$, for all $u$ large enough. 
\end{lem}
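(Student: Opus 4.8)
The plan is to bound the two pieces $\Phi^{\leq\rho}_u(t)$ and $\Phi^{>\rho}_u(t)$ separately using the relation~\eqref{eq:rel_Y_u_X_w_measure}, which lets me replace $u\bbP(Y_u(x(t))\in \rmd y)$ by $w\bbP(X(w)\in \rmd y)(1+o(1))$ with $w=w_u(x(t))=u-\tfrac1{\sqrt2}\log u + x(t)$, and then apply the moment and tail estimates of Lemma~\ref{l:A5.2} and Lemma~\ref{improved_estimates_X}. I will treat the ``small $t$'' regime (say $t\le 1$, corresponding to $x(t)\le 0$, where the $|\log t|^{5/2}$ bound should be binding) and the ``large $t$'' regime (where the $t^2$ bound should be binding) via the same two elementary inequalities: $1-e^{-a}-a\mathbbm{1}_{a\le\rho}$ is, for $a\le\rho$, bounded by $\tfrac12 a^2$ (so it pulls out a second moment), and for $a>\rho$ it is bounded by $1$ (so it only sees a tail probability); symmetrically $1-e^{-a}\le a$ always, and $\le 1$ always.

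First I would handle $\Phi^{\leq\rho}_u(t)$. Using $|1-e^{-\lambda t y}-\lambda t y|\le \tfrac12\lambda^2 t^2 y^2$ on $\{ty\le\rho\}$, one gets
\begin{equation}
\Phi^{\leq\rho}_u(t)\ \le\ \tfrac12\lambda^2 t^2\int_0^\infty y^2\, u\bbP(Y_u(x(t))\in \rmd y)\ \le\ C\lambda^2 t^2\, w\,\bbE X(w)^2,
\end{equation}
and by~\eqref{estimate_0_X_0_2} the quantity $w\,\bbE X(w)^2$ is bounded by a universal constant for $w\ge 1$ (which holds for all $u$ large since $x(t)\ge x_u^-$ tends to $-\infty$ only sub-linearly). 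This already gives $\Phi^{\leq\rho}_u(t)\le C_\rho t^2$. For the complementary bound $\Phi^{\leq\rho}_u(t)\le C_\rho|\log t|^{5/2}$, relevant when $t$ is small, I would instead split the integral at some fixed threshold, say $ty\le 1$ versus $1<ty\le\rho$: on the first part use $|1-e^{-\lambda ty}-\lambda ty|\le \tfrac12\lambda^2 t^2 y^2\le \tfrac12\lambda^2 (ty)\,y\le C\lambda^2 ty$, giving a contribution $\le C t\cdot u\bbE\!\big(Y_u;\,Y_u\le 1/t\big)\le C t\cdot w\bbE X(w)\le C_\rho t\le C_\rho|\log t|^{5/2}$ (for $t\le 1$) via~\eqref{estimate_0_X_0_1}; on the second part (only nonempty when $t$ is not too small) bound the integrand by $1+\lambda\rho$ and the mass by $u\bbP(Y_u>1/t)\le C\, w\bbP(X(w)>c/t)$, then apply~\eqref{estimate_2_X}: $w\bbP(X(w)>c/t)\lesssim (t/c)\,we^{-w/2}+(\log(t/c)^{-1})^{5/2}\lesssim |\log t|^{5/2}$, using that $we^{-w/2}=o(1)$ for $w\to\infty$ and $|\log t|\asymp|\log(c/t)|$.

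Next, $\Phi^{>\rho}_u(t)$. Here I use $1-e^{-\lambda ty}\le 1\wedge(\lambda ty)$ on $\{ty>\rho\}$. The crude bound by $1$ gives $\Phi^{>\rho}_u(t)\le u\bbP(Y_u(x(t))>\rho/t)\le C\,w\bbP(X(w)>c\rho/t)$; applying~\eqref{estimate_0_X_2} gives $\lesssim (t/\rho)^2$ and applying~\eqref{estimate_2_X} gives $\lesssim (t/\rho) we^{-w/2}+(\log(t/(c\rho))^{-1})^{5/2}\lesssim |\log t|^{5/2}$ for $t$ bounded away from the relevant range's upper end. Combining both regimes and absorbing all $\rho$- and $\lambda$-dependence into a single constant $C_\rho$ yields the claimed $\Phi^{>\rho}_u(t),\Phi^{\leq\rho}_u(t)\le C_\rho\min(t^2,|\log t|^{5/2})$ uniformly for $t\in[t_u^-,t_u^+]$ and all $u$ large.

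The main obstacle I anticipate is bookkeeping the $(1+o(1))$ errors in~\eqref{eq:rel_Y_u_X_w} and~\eqref{eq:rel_Y_u_X_w_measure} so that they are genuinely uniform in $t$ over the full range $[t_u^-,t_u^+]$ — in particular near the extreme ends where $x(t)=\pm x_u^{\mp}$ grows, so that $w_u(x(t))$ drifts away from $u$; one must check the cited estimates degrade gracefully there (they do, since $x_u^\pm$ are sub-linear, so $w_u(x(t))\asymp u\to\infty$ throughout, keeping $we^{-w/2}\to 0$ and $\bbE X(w)^2$ bounded). A secondary, purely cosmetic point is making sure the two bounds $t^2$ and $|\log t|^{5/2}$ are each invoked only where they are $\lesssim$ the other, i.e.\ crossing over near $|t|\asymp 1$, which is automatic from the elementary inequalities above.
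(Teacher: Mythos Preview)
Your approach is essentially the same as the paper's, but you have the two regimes swapped, and this creates a real gap in the bound for $\Phi^{\leq\rho}_u(t)$. For the purposes of dominated convergence (the integrand is $t^{-2}\Phi_u(t)$), the $t^2$ bound is the one that matters for small $t$ and the $(\log t)^{5/2}$ bound is the one that matters for large $t$, not the other way around: as $t\to 0$ one has $\min(t^2,|\log t|^{5/2})=t^2$, while as $t\to\infty$ one has $\min(t^2,|\log t|^{5/2})=(\log t)^{5/2}$.

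Your second-moment argument gives $|\Phi^{\leq\rho}_u(t)|\le C t^2$ for all $t$, which covers the small-$t$ regime. The problem is large $t$: there you need $|\Phi^{\leq\rho}_u(t)|\lesssim(\log t)^{5/2}$, but your chain $Ct\cdot u\bbE(Y_u;\,Y_u\le 1/t)\le Ct\cdot w\bbE X(w)\le C_\rho t$ only produces $Ct$, which for large $t$ is \emph{not} $\lesssim(\log t)^{5/2}$. The mistake is dropping the truncation: you correctly write the truncated expectation $u\bbE(Y_u;\,Y_u\le 1/t)$ but then bound it by the full first moment via~\eqref{estimate_0_X_0_1}. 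Instead you must keep the truncation and invoke~\eqref{estimate_1_X}, which (for $\rho/t<0.9$) gives $w\bbE(X(w);\,X(w)\le \rho/t)\lesssim we^{-w/2}+(\rho/t)(\log(t/\rho))^{5/2}$; multiplying back by $t$ and using $t\le t_u^+=e^{o(u)}$ with $w\sim u$ kills the $t we^{-w/2}$ term and yields the desired $(\log t)^{5/2}$. The paper does this more directly, without splitting at $ty=1$: on $\{ty\le\rho\}$ one simply bounds $(ty)^2\le\rho\,(ty)$, reducing to the truncated first moment at threshold $\rho/t$, and applies~\eqref{estimate_1_X}. Your treatment of $\Phi^{>\rho}_u(t)$ is fine (modulo the same regime-labeling confusion and a sign slip in the logarithm), since you do invoke~\eqref{estimate_2_X}, which is valid precisely for large $t$.
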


\begin{proof}[Proof of Lemma \ref{lem:convergence_inner_integral}]  
By the asymptotic equivalence \eqref{eq:rel_Y_u_X_w_measure}, the convergence \eqref{convergence_conjecture} implies that at every continuity point $y$ of $\Gamma$,  
\begin{equation}\label{convergence_conjecture_applied}
	u \bbP(Y_u(x(t)) > y) \underset{u \to \infty}  \longrightarrow \Gamma ((y, \infty))
\end{equation}
Since $\Gamma$ has finite mass away from $0$,  \eqref{convergence_conjecture_applied} implies weak convergence of the (finite) measures restricted to $(y, \infty)$, whenever $y > 0$ is a continuity point of $\Gamma$. In particular, this means if $\tfrac{\rho}{t}$ is a continuity point of $\Gamma$, then 
	\[
	\Phi^{>\rho}_u(t) \to \Phi^{>\rho}(t)
	\]
	and, for $0 < \eps < \rho$ such that $\tfrac{\eps}{t} > 0$ is also a continuity point of $\Gamma$,
	\[
	\Phi^{\leq\rho}_u(t) - \Phi^{\leq\eps}_u(t) \to \Phi^{\leq\rho}(t) - \Phi^{\leq\eps}(t).
	\]
	This implies $\Phi^{\leq\rho}_u(t) \to \Phi^{\leq\rho}(t)$, if we can show that $\Phi^{\leq\eps}_u(t), \Phi^{\leq\eps}(t) \to 0$ as $\eps \to 0$ uniformly for $u$ large enough. Indeed, by Taylor expansion, the asymptotic equivalence \eqref{eq:rel_Y_u_X_w_measure}, and the 1st moment estimate \eqref{estimate_0_X_0_1}
	\begin{align}
		|\Phi^{\leq\eps}_u(t)| &\lesssim \int_0^{\tfrac{\eps}{t}} (ty)^2 \cdot u\bbP(Y_u(x(t)) \in dy) \\
		&\leq \eps t  \int y \cdot u\bbP(Y_u(x(t)) \in dy)  \\
		&= \eps t \cdot u \bbE Y_u(x(t)) \sim \eps t \cdot w \bbE X(w) \sim C_\star \eps t 
	\end{align}
	For the same reason, $|\Phi^{\leq\eps}(t)| \lesssim \eps t$. 
\end{proof}

\begin{proof}[Proof of Lemma \ref{lem:bound_inner_integral}] Applying Taylor expansion and the asymptotic equivalence \eqref{eq:rel_Y_u_X_w},
	\begin{align}
		t^{-2}|\Phi^{\leq\rho}_u(t)| &\lesssim \int_0^{\tfrac{\rho}{t}} y^2 \cdot u\bbP(Y_u(x(t)) \in dy) \\
		&= u\bbE \big[Y_u(x(t))^2 \mathbbm{1}_{Y_u(x(t)) \leq \tfrac{\rho}{t}} \big] \\
		&\sim  w\bbE \big[X(w)^2 \mathbbm{1}_{X(w) \leq  (1+o(1))\tfrac{\rho}{t}} \big] \leq C (1 + w^{-1})
	\end{align}
	where in the last step we simply dropped the truncation and apply the 2nd moment estimate \eqref{estimate_0_X_0_2}. If $\tfrac{\rho}{t} < 0.9$, then we can instead apply estimate \eqref{estimate_1_X} from Lemma \ref{improved_estimates_X} to get
	\begin{align}
		t^{-2}|\Phi^{\leq\rho}_u(t)| &\lesssim \tfrac{\rho}{t}\cdot  w\bbE \big[X(w) \mathbbm{1}_{X(w) \leq  (1+o(1))\tfrac{\rho}{t}} \big] \\
		&\lesssim \tfrac{\rho}{t}\cdot \big( we^{-w/2} + \tfrac{\rho}{t} \big(\log \tfrac{t}{\rho}\big)^{\frac{5}{2}}\big) \lesssim_\rho   t^{-2} (\log t)^{\frac{5}{2}}
	\end{align}
	where in the last step we used that $x^\pm_u$ are sub-linear in $u$ and $w\sim u$. 
	
	Similarly, by the asymptotic equivalence \eqref{eq:rel_Y_u_X_w_measure} and the tail estimate \eqref{estimate_0_X_2},
	\begin{align}
		\Phi^{>\rho}_u(t) &\leq u\bbP(Y_u(x(t)) > \tfrac{\rho}{t}) \\
		&\sim w \bbP(X(w) > (1+o(1)) \tfrac{\rho}{t})  \lesssim C\, \frac{t^2}{\rho^2}.
	\end{align}
	If $\tfrac{\rho}{t} < 0.9$, we can instead apply estimate \eqref{estimate_2_X} from Lemma \ref{improved_estimates_X} to get
	\begin{align}
		\Phi^{>\rho}_u(t) &\lesssim w \bbP(X(w) > (1+o(1)) \tfrac{\rho}{t}) \\
		&\lesssim \tfrac{t}{\rho}\cdot we^{-w/2} +  \big(\log \tfrac{t}{\rho}\big)^{5/2} \lesssim_\rho (\log t)^{5/2},
	\end{align}
	where in the last step we used the sub-linearity of $x^\pm$ as before.
\end{proof}

Using similar arguments, we next give
\begin{proof}[Proof of Lemma~\ref{l:5.5}] Recall the definition \eqref{e:A5.7} of $E_u(\rho)$ and unwind the calculations preceding it, it is easy to check that
\begin{align}
	& E_u(\rho) - \rme^{-\sqrt{2}u} \bbE \cE_\circ\big([-u, \infty); -\tfrac{1}{\sqrt{2}} \log u + [x^-_u, 0]\big) \\
	&= - \tfrac{1}{\sqrt{2}}\int_{t^-_u}^{1} t^{-1} dt \int_0^\infty y\mathbbm{1}_{ty > \rho} u \bbP(Y_u(x(t))\in dy)
+ \tfrac{1}{\sqrt{2}}
\int_{1}^{t^+_u} t^{-1} dt \int_0^\infty y \mathbbm{1}_{ty\leq \rho} \ u \bbP(Y_u(x(t))\in dy) \,.
\end{align}
We claim that as $u\to \infty$, the first term tends to
\begin{equation}\label{eq:l:5.5_integral_>_rho}
	-\tfrac{1}{\sqrt{2}}\int_{0}^{1} t^{-1} dt \int_0^\infty y\mathbbm{1}_{ty > \rho} \Gamma(\rmd y)  \,,
\end{equation}
and the second term tends to
\begin{equation}\label{eq:l:5.5_integral_<_rho}
	\tfrac{1}{\sqrt{2}} \int_{1}^{\infty} t^{-1} dt \int_0^\infty y\mathbbm{1}_{ty \leq \rho} \Gamma(\rmd y) \,,
\end{equation}
and both limits are finite thanks to~\eqref{e:A5.15}, which add up to the constant $C_\rho$ in the Lemma~\ref{l:5.5}. The proof strategy for these claims is the same as for Lemma~\ref{stable_main_convergence}: we just need to check that the inner integrals converge and satisfy the bounds
\begin{equation}\label{eq:l:5.5_integral_>_rho_bound}
	\int_0^\infty y\mathbbm{1}_{ty > \rho} u \bbP(Y_u(x(t))\in dy) \lesssim \frac{t}{\rho}
\end{equation}
and for $t\in [1, t^+_u]$,
\begin{equation}\label{eq:l:5.5_integral_<_rho_bound}
	\int_0^\infty y \mathbbm{1}_{ty\leq \rho} \ u \bbP(Y_u(x(t))\in dy) \lesssim_\rho t^{-1}(\log t)^{5/2},
\end{equation}
and the conclusions then follow from Dominated Convergence (using $w \sim u$ and $t^+_u = e^{o(u)}$). The bound \eqref{eq:l:5.5_integral_>_rho_bound} follows from estimate \eqref{estimate_0_X_1}, and the bound \eqref{eq:l:5.5_integral_<_rho_bound} follows from estimate \eqref{estimate_1_X} in the same way as in proof of Lemma \ref{lem:bound_inner_integral} for $\Phi^{\leq\rho}_u(t)$. The convergence of the inner integral in \eqref{eq:l:5.5_integral_>_rho} follows from the weak convergence \eqref{convergence_conjecture_applied} together with a truncation argument like in proof of Lemma \ref{lem:convergence_inner_integral} for $\Phi^{\leq\rho}_u(t)$, except here we truncate at $ty < M$ for $M \gg \rho$ and use the bound \eqref{eq:l:5.5_integral_>_rho_bound} to control the error uniformly. Finally, the convergence of the inner integral in \eqref{eq:l:5.5_integral_<_rho} can be shown in the same way as in proof of Lemma \ref{lem:convergence_inner_integral} for $\Phi^{\leq\rho}_u(t)$. 
\end{proof}

\ifonlystable
\section{The intensity of unusually large clusters}
\label{s:A4}
\else
\subsection{The intensity of unusually large clusters}
\label{s:A4}
\fi

We now give the proof of Theorem~\ref{stable_key_lemma}.
To this end, we shall use the weak representation of the cluster law from Subsection~\ref{s:2.1}. 
\ifonlystable
\subsection{Reduction to a DRW event}
\else
\subsubsection{Reduction to a DRW event}
\fi
Recalling the notation in Subsection~\ref{s:2.1}, for a Borel set $B$,  define
	\[
	\cC_{t, B}^*(\cdot ):= \int_B \cE_{s}^{s} \big(\cdot - \wh{W}_{t,s} \big)\cN(\rmd s).
	\]
and abbreviate $\cC_t^*(\cdot) := \cC_{t, [0, t/2]}^*(\cdot)$. We shall show that there exists a non-increasing function $H: (0,\infty) \to [0,\infty)$ such that for all $x > 0$ that is a continuity point of $H$, 
	\begin{equation}
		\label{eq:H_tv}
		H_{t, v}(x) := v \bbP_{0,0}^{t,0} \bigg( \frac{\cC^*_t([-v, 0])}{ve^{\sqrt{2}v}} > x  
		\ \Big|\, \cA_t \bigg) \longrightarrow H(x) 
	\end{equation}
as $t \to \infty$ followed by $v \to \infty$ and that 
\begin{equation}\label{eq:growth_bound_H}
H(x) \lesssim x^{-2} \wedge  (\log x^{-1})^{5/2}.
\end{equation} 
In view of Lemma~\ref{l:7.0} and Lemma~\ref{l:5.2}, this will imply the first two parts of Theorem~\ref{stable_key_lemma}.

For $r \in [0,t/2]$ we start by decomposing the event $\cA_t$ from~\eqref{e:2.5} according to where the DRW is highest in $[r,t/2]$. That is, we write 
\[
1_{\cA_t} = \int_{r}^{t/2} 1_{\cA_{t, r}(s)} \, \cN(ds)
\quad \text{a.s.} \,,
\]
where for $s \in [r, t/2]$,
\begin{equation}
 \cA_{t, r}(s) := \{\forall \sigma\in \cN_{[r, t/2]}: \wh{W}_{t,\sigma} + \wh{h}^{\sigma*}_{\sigma} \leq \wh{W}_{t,s} + \wh{h}^{s*}_{s} \} \cap \cA_t \,.
\end{equation}
Using Palm Calculus (see, e.g., \cite[Chapter 13]{daley2007introduction}) we can write the left side of \eqref{eq:H_tv} as 
\begin{equation}
H_{t,v}(x) = \frac{v}{\bbP_{0,0}^{t,0} (\cA_t)} \int_{r}^{t/2} \bbP_{0,0}^{t,0} \bigg(\frac{\cC^*_t([-v, 0])}{ve^{\sqrt{2}v}} > x  \
	; \, \cA_{t, r}(s) \Big| s \in \cN \bigg) \cdot 2ds 
\end{equation}

The probability in the denominator is $(C+o(1)) t^{-1}$ as $t\to \infty$ by 
Lemma~\ref{lem:15}. At the same time, by conditioning on $\wh{W}_{t,s}, \wh{h}^{s*}_{s}$, and $\cE_s^{s}$, the probability inside the integral is equal to
\begin{multline}
\int \frac{1}{\sqrt{2\pi s(1-s/t)}} e^{-\frac{(w+\gamma_{t,s})^2}{2 s(s/t)}} dw  
\times \int_{z\leq -w} \bbP\left(\wh{h}^*_{s} \in dz, \frac{\cE_s([-v, \infty) - w)}{ve^{\sqrt{2}(v+w) - \frac{v^2}{2s} }} \in dy\right) \\
	\quad \quad  \times \bbP_{0,0}^{t,0} \bigg(\frac{\cC^*_t([-v, 0])}{ve^{\sqrt{2}(v+w) - \frac{v^2}{2s} }} > \tilde x -y  \
	; \cA_t(w+z; r, t/2) \cap \cA_{t} \ \Big|\, \wh{W}_{t,s} = w\bigg) \,,
\end{multline}
where we henceforth abbreviate:
\begin{equation}
\label{e:A5.56}
\tilde x = x e^{-\sqrt{2}w + \frac{v^2}{2s}} \,.
\end{equation}

Introducing for brevity
\begin{equation}
	f_{t,s}(w) := \frac{1}{\sqrt{2\pi (1-s/t)}} e^{-\frac{(w+\gamma_{t,s})^2}{2 s(1-s/t)}} \,,
\end{equation}
\begin{equation}
	\mu_{v, s, w}(dy, dz) := \bbP\left(\wh{h}^*_{s} \in dz, \frac{\cE_s([-v, \infty) - w)}{ve^{\sqrt{2}(v+w) - \frac{v^2}{2s} }} \in dy\right) 
\end{equation}
and
\begin{equation}\label{eq:Gtrvsw_def}
	G_{t, v, r, s, w,x}(y, z) := \frac{1}{s^{-1}t^{-1}} \bbP_{0,0}^{t,0} \bigg(\frac{\cC^*_t([-v, 0])}{ve^{\sqrt{2}(v+w) - \frac{v^2}{2s} }} > \wt{x}- y  \
	; \cA_t(w+z; r, t/2) \cap \cA_{t} \ \Big|\, \wh{W}_{t,s} = w\bigg) \,,
\end{equation}

we thus get 
\begin{equation}
\label{e:A5.60}
\begin{split}
	H_{t, v}(x) & = \big(1+o(1)\big) \int_{r}^{t/2} v\cdot s^{-3/2} \rmd s \int \, f_{t,s}(w)
	 \,\rmd w \int_{z\leq -w} G_{t, v, r, s, w,x}(y, z) \, \mu_{v, s, w}(dy, dz) \\
	 &= \int_{r/v^2}^{t/2v^2} s'^{-3/2} \rmd s' \int \, f_{t,s}(w)
	 \,\rmd w \int_{z\leq -w} G_{t, v, r, s, w,x}(y, z) \, \mu_{v, s, w}(dy, dz) \,,
\end{split}
\end{equation}
as $t \to \infty$ uniformly in $v, r$, where we performed the change of variable 
\begin{equation}
\label{e:A5.61}
s = s' v^2 \,,
\end{equation}
to cancel the factor $v$. Above and henceforth we assume that $s$, $s'$ and $v$ are always related as in~\eqref{e:A5.61}, so that in particular
\begin{equation}
\label{e:A5.63}
	\tilde x = x \rme^{-\sqrt{2} w+\frac{1}{2s'}} \,.
\end{equation}
Also, from now on, we shall fix some function $r = r_v: \bbR_+ \to \bbR$ such that as $v \to \infty$,
\begin{equation}
\label{e:A5.62}
	r_v \to \infty, \qquad r_v = o(v^2) \,.
\end{equation}

Now, $f_{t,s}(w)$ is an explicit function and asymptotics for $\mu_{v,s,w}$ in the regime of interest $s=s'v^2$ are given by Proposition~\ref{prop:6.5}. It thus remains to control $G_{t, v, r_v, s, w,x}$. To this end, for $R > 0$, we first define a truncated version $G^R_{t, v, r, s, w,x}(y, z)$ by the formula in \eqref{eq:Gtrvsw_def}, only with $\cC_{t}^*(\cdot)$ replaced by
\begin{equation}
\cC_{t, [s-R, s+R]}^*(\cdot) := \int_{s-R}^{s+R} \cE_{s}^{s} \big(\cdot - \wh{W}_{t,s} \big)\cN(\rmd s) \,.
\end{equation}
That is, we only look at the contribution to the cluster from BBMs which branched off the spine at time $s + \Theta(1)$, where the DRW is highest. 

First, we need to verify that $G^R_{t, v, r_v, s, w,x}$ is a good approximation for
$G_{t, v, r_v, s, w,x}$. This is included in
\begin{lem}\label{lem:G^R_v_properties} 
Let $R > 0$, $x > 0$ and abbreviate $G_v \equiv G_{t, v, r_v, s, w,x}$ and $G^R_v \equiv G^R_{t, v, r_v, s, w,x}$. Then, for all $v,t > 0$ such that $r_v < t/2$ and $z\leq -w$,
	\begin{enumerate}[label=(\roman*)]
		\item\label{G^R_v_monotone} Monotonicity: $G^R_v(y, z),  G_v(y, z)$ are increasing in $y, z, R$.
		\item\label{G^R_v_bound} Uniform bound: if $r_v \leq s\leq t/2$, $0\leq G_v(y, z) \lesssim (w^-+1)^2$ 
		\item\label{G^R_v_approximation} Approximation: if $r_v \leq s \leq t/2-R$, then for any $\eps > 0$,
		\begin{equation}
			G^R_v(y, z) \leq G_v(y, z) \leq G^R_v(y+\eps, z) + \eps^{-1} C(s', w)  [v^{-1} + R^{-1/2}]
		\end{equation}
		where $C(s', w)$ is a constant only depending on $s', w$. 
	\end{enumerate}
\end{lem}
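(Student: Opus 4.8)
The plan is to prove the three parts of Lemma~\ref{lem:G^R_v_properties} in the order stated, since each subsequent part leans on the previous ones.

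\textbf{Monotonicity (part~\ref{G^R_v_monotone}).} The quantities $G_v(y,z)$ and $G^R_v(y,z)$ are, up to the deterministic factor $st$, conditional probabilities of an event of the form $\{\cC^*/(\text{det.}) > \wt x - y\} \cap \cA_t(w+z; r_v, t/2) \cap \cA_t$ given $\wh W_{t,s} = w$. Monotonicity in $y$ is immediate: increasing $y$ only lowers the threshold $\wt x - y$ in the first event. Monotonicity in $z$ follows from the definition~\eqref{e:2.5} of $\cA_t(u;r,R)$ as a Ballot event of the form $\{\max(\cdots) \le u\}$, which is increasing in the barrier height $u = w+z$; and $\cA_t(w+z;r_v,t/2)$ is the only place $z$ enters. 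Monotonicity in $R$ for $G^R_v$ follows because $\cC^*_{t,[s-R,s+R]}([-v,0])$ is a sum of nonnegative contributions over $\sigma \in \cN \cap [s-R, s+R]$, hence nondecreasing in $R$; and it coincides with $\cC^*_t$ once $[s-R,s+R] \supseteq [0,t/2]$, which also yields the trivial inequality $G^R_v \le G_v$.

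\textbf{Uniform bound (part~\ref{G^R_v_bound}).} Here I would simply drop the cluster-size event and bound
\begin{equation*}
s t \cdot \bbP_{0,0}^{t,0}\big(\cA_t(w+z;r_v,t/2) \cap \cA_t \,\big|\, \wh W_{t,s} = w\big) \le s t \cdot \bbP_{0,0}^{t,0}\big(\cA_t \,\big|\, \wh W_{t,s} = w\big).
\end{equation*}
By the Markov property at time $s$ one factorizes the Ballot event $\cA_t = \cA_t(0,s) \cap \cA_t(s,t)$ (up to the decoration at $\sigma$'s straddling $s$, which is handled as in~\cite{CHL19}) and applies the upper bound~\eqref{e:52} of Lemma~\ref{lem:15} on each sub-interval: conditionally on $\wh W_{t,s} = w$ and $\wh W_{t,0} = \wh W_{t,t} = 0$, this gives at most $C \frac{(w^-+1)\cdot 1}{s} \cdot C\frac{(w^-+1)\cdot 1}{t-s} \asymp \frac{(w^-+1)^2}{s t}$ (using $s \le t/2$ so $t - s \asymp t$), and the prefactor $st$ cancels, leaving $G_v \lesssim (w^-+1)^2$ as claimed; the same bound then holds a fortiori for $G^R_v$.

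\textbf{Approximation (part~\ref{G^R_v_approximation}).} This is the main obstacle. The difference $G_v(y,z) - G^R_v(y,z)$ is controlled by the event that the ``tail'' mass $\cC^*_{t,[0,t/2]\setminus[s-R,s+R]}([-v,0])$, normalized by $v e^{\sqrt2(v+w) - v^2/(2s)}$, exceeds $\eps$, intersected with the two Ballot events. I would split this tail mass into the contribution from $\sigma \in \cN$ near $0$ (i.e.\ $\sigma < s-R$) and near $t/2$ (i.e.\ $\sigma > s+R$). For each BBM $h^\sigma$ branching at such a $\sigma$, under $\cA_t$ its centered maximum $\wh h^{\sigma*}_\sigma$ is constrained to lie below $\wh W_{t,s} + \wh h^{s*}_s - \wh W_{t,\sigma}$, and the relevant level-set size $\cE^\sigma_\sigma([-v - \wh W_{t,\sigma}, \infty))$ has mean of order $v e^{\sqrt2(v + \wh W_{t,\sigma}) - (v+\wh W_{t,\sigma})^2/(2\sigma)}$ by Lemma~\ref{1st_moment}-type first-moment bounds (cf.\ Proposition~\ref{prop:6.5}). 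The key point is that moving $\sigma$ away from the argmax location $s$ costs an entropic/Gaussian factor: the Ballot constraint forces $\wh W_{t,\sigma}$ to be more negative than $\wh W_{t,s}$ by an amount reflecting the distance $|\sigma - s|$, so the expected contribution from the far region decays. Concretely, I would use Lemma~\ref{l:10.7.0} (entropic repulsion of $\wh W_{t,\cdot}$ away from $0$) together with the Ballot estimates of Lemma~\ref{lem:15} and Lemma~\ref{lemma_mixed_ballot} applied on the sub-intervals $[0,s-R]$ and $[s+R, t/2]$ to show that, conditionally on the event $\cA_{t,r_v}(s)$ (so $s$ is the argmax),
\begin{equation*}
\bbE\Big[\tfrac{\cC^*_{t,[0,t/2]\setminus[s-R,s+R]}([-v,0])}{v e^{\sqrt2(v+w) - v^2/(2s)}}\ \Big|\ \cdots\Big] \le C(s',w)\,[v^{-1} + R^{-1/2}],
\end{equation*}
where the $v^{-1}$ term comes from the contribution near time $0$ (on the scale $s = s'v^2$, the ``bulk'' of the random walk excursion sits at $\Theta(v^2)$, and $r_v = o(v^2)$ times at the left edge contribute only an $O(v^{-1})$ fraction after normalization), and the $R^{-1/2}$ term from the contribution at distance $\ge R$ from $s$ on either side, using that a Brownian-type excursion conditioned to its max decays like the reciprocal square-root of the gap. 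Markov's inequality then gives $G_v(y,z) - G^R_v(y,z) \le \eps^{-1} C(s',w)[v^{-1} + R^{-1/2}]$ after shifting $y \mapsto y + \eps$ in $G^R_v$ to absorb the overlap between the tail event and the main event, which yields the claimed chain of inequalities. I expect the bookkeeping of the constant $C(s',w)$ — tracking its polynomial dependence on $w^-$ and its behavior as $s' \to 0$ or $s' \to \infty$ — to be the delicate part, but for the purposes of this lemma we only need it finite for each fixed $(s',w)$.
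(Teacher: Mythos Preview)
Your proof of parts~\ref{G^R_v_monotone} and~\ref{G^R_v_bound} is correct and matches the paper's argument exactly: monotonicity is read off from the definitions, and the uniform bound is obtained by dropping the cluster event, factoring $\cA_t$ at time $s$, and applying the ballot upper bound~\eqref{e:52} on each piece.

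For part~\ref{G^R_v_approximation}, your reduction is also the same as the paper's: union bound plus Markov's inequality reduces everything to a first-moment estimate on the tail mass, which the paper states separately as Lemma~\ref{lemma_moment_estimate_for_cluster_outside_R}. Two points where you diverge from the paper are worth noting. First, the paper computes this moment against the simpler event $\cA_t$, not against $\cA_{t,r_v}(s)$ or $\cA_t(w+z;r_v,t/2)\cap\cA_t$; dropping the extra barrier by monotonicity costs nothing and keeps the computation clean. Second, and more substantively, the paper does \emph{not} go through entropic repulsion (Lemma~\ref{l:10.7.0}) to get the moment bound. Instead it writes the expectation via Palm calculus as $\int_0^{s-R}\wh j_{s,v}(\sigma)\,2\rmd\sigma$ (plus the analogous piece on $[s+R,t/2]$) and invokes the pointwise bound on $\wh j_{s,v}(\sigma)$ from~\cite[Lemma~5.5]{CHL19}: roughly $\wh j_{s,v}(\sigma)\lesssim s^{-1}(\sigma+1)^{-1}\sigma^{-1/2}(\rme^{-v^2/(16\sigma)}+\rme^{-v/2})$ for $\sigma\le s/2$ and $\wh j_{s,v}(\sigma)\lesssim s^{-1}(s-\sigma+1)^{-1}(s-\sigma)^{-1/2}$ for $\sigma\ge s/2$. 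Integrating the first over $[0,s/2]$ produces the $v^{-1}$ term (from the Gaussian cutoff $\rme^{-v^2/(16\sigma)}$, not from ``the left edge contributing an $O(v^{-1})$ fraction'' as you suggest), and integrating the second over $[s/2,s-R]$ produces the $R^{-1/2}$ term. Your entropic-repulsion route might be made to work, but it is not the argument the paper uses, and your heuristic for the origin of the $v^{-1}$ term does not match the actual mechanism.
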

The proof of this lemma, which is technical while not illuminating, is deferred to Subsection~\ref{s:G^R_v_properties}.

Next, we need that $G^R_{t, v, r_v, s, w}$ indeed admits asymptotics in the limits of the desired statement. This is the content of the next key proposition.
\begin{prop}
\label{p:A11}
For all $x > 0$, $w \in \bbR$, $s' > 0$, there exists a function $G^R_{s', w, x} : \bbR_+ \times \bbR \to \bbR_+$ such that for all $y \geq 0$, $z \in \bbR$, 
as $t\to \infty$ followed by $v \to \infty$,
\begin{equation}
	G^R_{t, v, r_v, s, w,x}(y,z) \longrightarrow G^R_{s', w, x}(y,z) \,,
\end{equation}
with $s$ on the left hand side determined according to~\eqref{e:A5.61}.
The function $G^R_{s', w, x}$ is jointly continuous on the half plane $\{(y, z): y\leq \tilde x\}$ and is a continuous function of $z$ only for $y > \tilde x$.
\end{prop}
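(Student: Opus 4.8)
Fix $s',R,w,x$; since $r_v=o(v^2)$ and $s=s'v^2$, once $v$ is large we have $r_v<s-R<s+R<t/2$, so the window $[s-R,s+R]$ lies strictly inside $[r_v,t/2]$. The plan is to express $G^R_{t,v,r_v,s,w,x}(y,z)$ as an integral by conditioning on the ``window data''
\[
\cD_R:=\Big((\wh W_{t,u})_{u\in[s-R,s+R]},\ \cN_{[s-R,s+R]},\ (h^\sigma)_{\sigma\in\cN_{[s-R,s+R]}}\Big),
\]
and then to take the limits $t\to\infty$ and $v\to\infty$ in succession. Conditionally on $\{\wh W_{t,s}=w\}$ and $\cD_R$, the count $\cC^*_{t,[s-R,s+R]}([-v,0])$ and the events $\{\wh W_{t,\sigma}+\wh h^{\sigma*}_{\sigma}\le w+z:\sigma\in\cN_{[s-R,s+R]}\}$ are $\cD_R$-measurable; while by the strong Markov property of $W$ and independence of the decorations, the remainder of $\cA_t(w+z;r_v,t/2)\cap\cA_t$ is a DRW Ballot event on $[0,s-R]\cup[s+R,t]$ that factors, given the edge values $\wh W_{t,s-R},\wh W_{t,s+R}$, into a left wing (barrier $0$ on $[0,r_v]$, then $w+z$ on $[r_v,s-R]$, run from $0$ to $\wh W_{t,s-R}$) and a right wing (barrier $w+z$ on $[s+R,t/2]$, then $0$ on $[t/2,t]$, run from $\wh W_{t,s+R}$ to $0$). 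Thus $G^R_{t,v,r_v,s,w,x}(y,z)$ is $st$ times the $\cD_R$-expectation of $\mathbbm{1}_{\{\text{window count}>\tilde{x}-y\}}\,\mathbbm{1}_{\{\text{window decorations}\le w+z\}}$ times the two wing probabilities.

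First I would let $t\to\infty$ with $v$ (hence $s=s'v^2$) fixed. The conditional law of $\cD_R$ given $\{\wh W_{t,s}=w\}$ converges weakly: $\cN_{[s-R,s+R]}$ and the decorations do not depend on $t$, and by Lemma~\ref{l:lisa} the correction $\gamma_{t,s+u}$ converges uniformly on the window to the smooth curve $\tfrac{3}{2\sqrt2}\log^+(s+u)$, so the window path tends to a pinned Brownian segment minus that curve. For the wings, Lemma~\ref{lemma_mixed_ballot} — via~\eqref{eq:ballot_prob_0_to_s} for the left wing and~\eqref{eq:ballot_prob_s_to_t} for the right — together with Lemma~\ref{lem:15}, gives $(s-R)\cdot(\text{left wing})\to 2f^{(0)}(0)\,g(\wh W_{t,s-R}-w-z)$ and $t\cdot(\text{right wing})\to 2f(\wh W_{t,s+R}-w-z)\,g(0)$ as $t\to\infty$, uniformly for the edge values and $w,z$ in compacts; since $st\cdot(s-R)^{-1}t^{-1}\to1$, the $st$ prefactor is absorbed. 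The uniform bound $G^R_{t,v,r_v,s,w,x}\le G_{t,v,r_v,s,w,x}\lesssim(w^-+1)^2$ from Lemma~\ref{lem:G^R_v_properties}, together with the exponential tail of $\cN_{[s-R,s+R]}$, provides the domination to pass the limit inside. One obtains a limit $\tilde G^R_{v,s',w,x}(y,z)$: an integral over a Brownian window, a rate-$2$ PPP on the window, and the time-$\sigma$ ($\sigma=s'v^2$) decorations, of the indicator of $\{\text{window count}>\tilde{x}-y\}\cap\{\text{window decorations}\le w+z\}$ weighted by $g$- and $f$-wing factors.

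Next I would let $v\to\infty$, so $\sigma=s'v^2\to\infty$ and $r_v\to\infty$. By Proposition~\ref{prop:6.5} (with $\delta$ small enough that $s'\in[\delta^2,\delta^{-2}]$), for each window point $\sigma_k$ the normalized level-set count of its decoration converges jointly with its centered maximum,
\[
\frac{\cE^{\sigma_k}_{\sigma_k}\big([-v,0]-\wh W_{\sigma_k}\big)}{ve^{\sqrt2(v+w)-v^2/2s}}\ \Longrightarrow\ c_0 Z^{(k)}e^{\sqrt2 B_{\sigma_k}},\qquad \wh h^{\sigma_k*}_{\sigma_k}\ \Longrightarrow\ G^{(k)}+\tfrac1{\sqrt2}\log Z^{(k)},
\]
with the $(Z^{(k)},G^{(k)})$ i.i.d.\ as in~\eqref{e:101.4} and $B$ the two-sided Brownian motion arising as the $v\to\infty$ limit of the recentered window path; here the prefactors $\sqrt2 v$ and $v^2/(2\sigma)$ cancel against the normalization up to $o(1)$ since $|\wh W_{\sigma_k}-w|=O(1)$ on the window and $\sigma_k=s+O(R)$ with $s=s'v^2$, and the part of $\cE^{\sigma_k}_{\sigma_k}$ above $0$ is $O(1)$, hence negligible. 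Moreover $\gamma_{s+u}-\gamma_s=\tfrac{3}{2\sqrt2}\log(1+u/s)\to0$ for $|u|\le R$, so $\wh W_{s+u}\to w+B_u$, and the ``then $s\to\infty$'' clauses of Lemma~\ref{lemma_mixed_ballot} make the wing factors converge. With the same domination, one reaches $G^R_{s',w,x}(y,z)$: a finite integral against the law of $(B,\cN_{[-R,R]},(Z^{(k)},G^{(k)})_k)$ of the indicator of
\[
\Big\{\textstyle\sum_{k:\,\sigma_k\in\cN_{[-R,R]}}c_0 Z^{(k)}e^{\sqrt2 B_{\sigma_k}}>\tilde{x}-y\Big\}\cap\bigcap_k\Big\{B_{\sigma_k}+G^{(k)}+\tfrac1{\sqrt2}\log Z^{(k)}\le z\Big\}
\]
times the wing factors $g(B_{-R}-z)\,f(B_{R}-z)$ (up to constants and Gaussian bridge densities).

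Finally, continuity is read off from this form by dominated convergence. In $z$: the only non-smooth ingredients are the indicators $\{B_{\sigma_k}+G^{(k)}+\tfrac1{\sqrt2}\log Z^{(k)}\le z\}$ and the monotone functions $g,f$ with $z$-shifted arguments; since each $G^{(k)}$ is atomless and $B_{\pm R}$ have densities, both pass continuously to the limit, so $z\mapsto G^R_{s',w,x}(y,z)$ is continuous for every $y$. For joint continuity on $\{y\le\tilde{x}\}$ one uses that, conditionally on $\{\cN_{[-R,R]}\ne\varnothing\}$ (and even after imposing the decoration constraints, which merely truncate the $Z^{(k)}$ from above), the window sum $\sum_k c_0 Z^{(k)}e^{\sqrt2 B_{\sigma_k}}$ is atomless on $(0,\infty)$; hence $y\mapsto\bbP(\text{sum}>\tilde{x}-y,\ \text{all decorations}\le w+z)$ is continuous for $y\le\tilde{x}$. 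For $y>\tilde{x}$ we have $\tilde{x}-y<0$, the first indicator is automatic, and $G^R_{s',w,x}$ does not depend on $y$ there — so it remains continuous in $z$ — while the value jumps upward across $y=\tilde{x}$ by the mass that $\{\cN_{[-R,R]}=\varnothing\}$ places at sum $=0$, which is exactly why joint continuity holds only on the closed half-plane $\{y\le\tilde{x}\}$. The main obstacle is the $t\to\infty$ step: one must combine the wing Ballot asymptotics of Lemmas~\ref{lem:15} and~\ref{lemma_mixed_ballot} with the Gaussian bridge densities into a limit that is uniform enough in the edge values (and in $z$) to integrate against the window law — the same bookkeeping as in~\cite{CHL19} for the weak cluster representation, now carried out at the fixed large scale $s=s'v^2$.
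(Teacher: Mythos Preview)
Your proposal is correct and follows essentially the same route as the paper: condition on the window around time $s$, factor into left/right wing ballot probabilities handled by Lemma~\ref{lemma_mixed_ballot}, and apply Proposition~\ref{prop:6.5} to the window decorations to pass to the $v\to\infty$ limit. The paper conditions only on the endpoint increments $\xi_\pm=\wh W_{t,s\pm R}-w$ (rather than your full window data $\cD_R$) and rewrites the window term directly as a two-sided Brownian bridge probability via a time shift, but this is a presentational difference and your limiting expression agrees with the paper's \eqref{eq:convergence_cJ_leq_R}. One small imprecision: your domination argument invokes the global bound $G^R\lesssim(w^-+1)^2$ from Lemma~\ref{lem:G^R_v_properties}\ref{G^R_v_bound}, which bounds the integral rather than the integrand; what actually licenses dominated convergence (and what the paper uses) is that the Gaussian densities of $\xi_\pm$ are uniformly dominated by a fixed Gaussian, the wing factors by the ballot upper bound~\eqref{e:52}, and the window probability by~$1$.
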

The proof of this proposition will be given in Subsection~\ref{s:proof_p:A11}.

\ifonlystable
\subsection{Proof of Theorem~\ref{stable_key_lemma}}
\else
\subsubsection{Proof of Theorem~\ref{stable_key_lemma}}
\fi
Assuming Lemma~\ref{lem:G^R_v_properties} and Proposition~\ref{p:A11}, we can pass to the limit in~\eqref{e:A5.60}. This is accomplished via the following two technical lemmas, providing both asymptotics and a dominating function. The proofs of these two results, which do not add any new conceptual ideas, are deferred to Subsection~\ref{s:integral_G_mu} and~\ref{s:HM_eta_approximation}, respectively.

\begin{lem}[Asymptotics]
\label{lem:integral_G_mu} 
Let
\begin{equation}\label{eq:H_tvr}
	H_{t, v, r, x}(s', w) := 1_{[r,t/2]}(s) f_{t,s}(w) \int_{z\leq -w} G_{t, v, r, s, w,x}(y, z) \, \mu_{v, s, w}(dy, dz) \,.
\end{equation}
and
\begin{equation}
H_x(s', w) := \int G_{s',w,x}(y, z) \mu(dy, dz) 
\quad ; \qquad G_{s',w,x}(y,z) := \lim_{R\to \infty} G^R_{s',w,x}(y,z) \,.
\end{equation}
Then $H_x(s', w)$, is well defined, non-increasing in $x$ and for all $x \geq 0$, $s' > 0$, $w \in \bbR$ and $\epsilon > 0$,
	\begin{equation}
		 \liminf_{v\to \infty} \liminf_{t \to \infty} H_{t, v, r_v,x}(s', w) \geq H_x(s', w) \,,
	\end{equation}
	and
	\begin{equation}
	\limsup_{v\to \infty} \limsup_{t \to \infty} H_{t, v, r_v,x}(s', w) \leq H_{x-\epsilon}(s', w) \,.
	\end{equation}
\end{lem}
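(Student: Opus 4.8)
The plan is to establish Lemma~\ref{lem:integral_G_mu} by combining the pointwise convergence of $G^R_{t,v,r_v,s,w,x}$ from Proposition~\ref{p:A11} with the uniform bound and approximation estimates from Lemma~\ref{lem:G^R_v_properties}, handling the truncation parameter $R$ carefully, and then feeding the limiting $G^R_{s',w,x}$ into the known asymptotics for $\mu_{v,s,w}$ coming from Proposition~\ref{prop:6.5}. I would begin by recalling that, under the scaling $s=s'v^2$ of~\eqref{e:A5.61}, Proposition~\ref{prop:6.5} gives weak convergence of $\mu_{v,s,w}$ to a limiting (sub)probability measure $\mu = \mu_{s',w}$ on $\{(y,z):z\leq -w\}$ (with $y$ replaced by $c_0 Z$-type quantities via the proposition, and $f_{t,s}(w)\to f_{s'}(w)$ an explicit Gaussian density). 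I would first check that $G_{s',w,x}(y,z) := \lim_{R\to\infty} G^R_{s',w,x}(y,z)$ is well defined: by part~\ref{G^R_v_monotone} of Lemma~\ref{lem:G^R_v_properties}, $G^R_v$ is increasing in $R$, and this monotonicity passes to the limit in $v,t$, so $G^R_{s',w,x}$ is increasing in $R$; by part~\ref{G^R_v_bound} it is bounded by $C(w^-+1)^2$ uniformly in $R$, hence the limit exists and inherits the same bound. Monotonicity in $x$ (non-increasing) is immediate since enlarging $x$ enlarges $\tilde x$ in~\eqref{e:A5.56} and shrinks the event in~\eqref{eq:Gtrvsw_def}; this passes to all limits. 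Joint measurability / the fact that $H_x(s',w)=\int G_{s',w,x}\,d\mu$ is well defined follows because $G_{s',w,x}$ is a monotone (hence Borel) bounded function and $\mu$ is a finite measure.

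For the $\liminf$ bound, I would fix $R>0$ and note that for $t$ large enough (so that $s\leq t/2-R$) part~\ref{G^R_v_approximation} gives $G_{t,v,r_v,s,w,x}(y,z)\geq G^R_{t,v,r_v,s,w,x}(y,z)$, whence
\[
H_{t,v,r_v,x}(s',w)\ \geq\ f_{t,s}(w)\int_{z\leq -w} G^R_{t,v,r_v,s,w,x}(y,z)\,\mu_{v,s,w}(dy,dz).
\]
Since $G^R_{s',w,x}$ is continuous on $\{y\leq \tilde x\}$ and continuous in $z$ for $y>\tilde x$ (Proposition~\ref{p:A11}), and the weak limit $\mu$ of $\mu_{v,s,w}$ charges no mass on the (at most) single line $\{y=\tilde x\}$ for all but countably many $x$ — and by the non-increasing monotonicity in $x$ it suffices to prove the lemma for such $x$, the general case following by a sandwich with $x\pm\epsilon$ — the portmanteau theorem for weak convergence (using that $G^R$ is bounded and continuous $\mu$-a.e.) together with $f_{t,s}(w)\to f_{s'}(w)$ gives
\[
\liminf_{t\to\infty} H_{t,v,r_v,x}(s',w)\ \geq\ \lim_{t\to\infty}\ \text{(above)}\ =\ f_{s'}(w)\int G^R_{s',w,x}(y,z)\,\mu(dy,dz).
\]
Taking $v\to\infty$ is harmless since the right-hand side no longer depends on $v$; then letting $R\to\infty$ and invoking monotone convergence ($G^R_{s',w,x}\uparrow G_{s',w,x}$, dominated by the integrable constant $C(w^-+1)^2$) yields $\liminf_v\liminf_t H_{t,v,r_v,x}(s',w)\geq f_{s'}(w)\int G_{s',w,x}\,d\mu = H_x(s',w)$ after absorbing $f_{s'}(w)$ into the definition — or, if the statement's $H_x$ already excludes the $f$ factor, this factor is carried along consistently; I would match whichever normalization~\eqref{eq:H_tvr} uses.

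For the $\limsup$ bound I would run the mirror argument using the upper estimate in part~\ref{G^R_v_approximation}: for $\epsilon>0$ and $s\leq t/2-R$,
\[
G_{t,v,r_v,s,w,x}(y,z)\ \leq\ G^R_{t,v,r_v,s,w,x}(y+\epsilon,z)\ +\ \epsilon^{-1}C(s',w)\,[v^{-1}+R^{-1/2}].
\]
Integrating against $\mu_{v,s,w}$, using that $\mu_{v,s,w}$ has total mass at most $1$ so the error term contributes at most $f_{t,s}(w)\epsilon^{-1}C(s',w)[v^{-1}+R^{-1/2}]$, and applying weak convergence to the main term (now evaluated at the continuity point $x-\epsilon$, shifting the argument of $G^R$ by $\epsilon$ is exactly a shift of $\tilde x$, i.e. equivalent to replacing $x$ by $xe^{-\sqrt2 w+1/(2s')}\cdot$something — more simply, $G^R_v(\cdot+\epsilon,\cdot)$ corresponds to the event $\{\cdot>\tilde x-y-\epsilon\}$, which is $G^R$ at a slightly smaller threshold), I get
\[
\limsup_{t\to\infty} H_{t,v,r_v,x}(s',w)\ \leq\ f_{s'}(w)\int G^R_{s',w,x-\epsilon'}(y,z)\,\mu(dy,dz)\ +\ \epsilon^{-1}C(s',w)[v^{-1}+R^{-1/2}],
\]
for the appropriate $\epsilon'=\epsilon'(\epsilon)$. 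Now send $v\to\infty$ (kills the $v^{-1}$), then $R\to\infty$ (kills the $R^{-1/2}$ and, by monotone convergence, lifts $G^R$ to $G$), obtaining $\limsup_v\limsup_t H_{t,v,r_v,x}(s',w)\leq H_{x-\epsilon'}(s',w)$, and finally relabel $\epsilon'$ as $\epsilon$ (or absorb the change of variable constant, noting $\epsilon'\to 0$ as $\epsilon\to 0$ and invoking the already-established right-continuity / monotonicity of $x\mapsto H_x$). The main obstacle I anticipate is the bookkeeping around the weak convergence $\mu_{v,s,w}\Rightarrow\mu$: one must verify that the boundary set $\{y=\tilde x\}$ where $G^R_{s',w,x}$ can fail to be continuous is $\mu$-null for the $x$ under consideration (true for all but countably many $x$, which suffices by the monotonicity reduction), and one must confirm that Proposition~\ref{prop:6.5} really does deliver convergence of $\mu_{v,s,w}$ uniformly enough in the relevant region $s=s'v^2$, $z\leq -w$ — in particular that no mass escapes to infinity in $y$, which is where the second-moment bound~\eqref{estimate_0_X_0_2} (equivalently the uniform-integrability built into the $c_0Z$ limit of Proposition~\ref{prop:6.5}) is needed. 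Everything else is a routine $\liminf$/$\limsup$ sandwich combined with the two black-box lemmas.
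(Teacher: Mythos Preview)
Your overall strategy matches the paper's: bound $G_v$ between $G^R_v$ and $G^R_v(\cdot+\epsilon,\cdot)$ via Lemma~\ref{lem:G^R_v_properties}\ref{G^R_v_approximation}, pass to the limit using Proposition~\ref{p:A11} and the weak convergence $\mu_v\Rightarrow\mu$ from Proposition~\ref{prop:6.5}, then send $R\to\infty$ monotonically. The well-definedness and monotonicity-in-$x$ checks are fine.

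There is, however, a genuine gap in the passage from $\int G^R_v\,d\mu_v$ to $\int G^R\,d\mu$. You invoke Portmanteau on the basis that $G^R_{s',w,x}$ is $\mu$-a.e.\ continuous, but the integrand in your display is $G^R_{t,v,r_v,s,w,x}$, not $G^R_{s',w,x}$: the function itself is moving with $t,v$ while the measure moves with $v$. Your line ``$\liminf_{t\to\infty}\ldots=\lim_{t\to\infty}(\text{above})=f_{s'}(w)\int G^R_{s',w,x}\,d\mu$'' collapses both limits into one and is not justified --- Proposition~\ref{p:A11} only asserts convergence of $G^R_v$ after \emph{both} $t\to\infty$ and $v\to\infty$, while $\mu_v$ is fixed under $t\to\infty$ and only moves with $v$. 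Portmanteau alone cannot handle a simultaneously varying integrand.

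The paper closes this gap with a Dini-type step you omit: since $G^R_v(y,z)$ is monotone in $(y,z)$ (Lemma~\ref{lem:G^R_v_properties}\ref{G^R_v_monotone}) and the limit $G^R_{s',w,x}$ is continuous on $\{y\le\tilde x\}$ and continuous in $z$ on $\{y>\tilde x\}$ (Proposition~\ref{p:A11}), the pointwise convergence $G^R_v\to G^R$ is automatically \emph{uniform on compacts}. Combined with tightness of $\{\mu_v\}$ and the uniform bound~\ref{G^R_v_bound}, this yields $\int|G^R_v-G^R|\,d\mu_v\to 0$, reducing the problem to $\int G^R\,d\mu_v$, to which Portmanteau now applies (the paper uses lower/upper semicontinuity of $G^R$ and its right-continuous modification rather than your countable-exceptional-$x$ argument). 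Two minor points: the prefactor $1_{[r,t/2]}(s)f_{t,s}(w)$ actually tends to $1$ (not to a nontrivial Gaussian $f_{s'}(w)$), so there is no extra density to carry; and the relation between the $\epsilon$-shift in $y$ and in $x$ is made precise in the paper as $G_{s',w,x}(y+2\tilde\epsilon,z)=G_{s',w,x-\epsilon}(y,z)$ with $\epsilon=2\tilde\epsilon\,e^{\sqrt{2}w-(2s')^{-1}}$, which cleans up your ``relabel $\epsilon'$'' step.
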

\begin{lem}[Dominating function]
\label{lem:HM_eta_approximation} 
For $M, \eta > 0$, let $H^{M, \eta}_{t, v}$, $G^{M, \eta}_{t, v, r, s, w,x}$
and $H^{M, \eta}_{t, v, r,x}$ be defined by formulas~\eqref{eq:H_tv}, \eqref{eq:Gtrvsw_def} and~\eqref{eq:H_tvr} respectively, along 
with $\cC_t^*(\cdot)$ replaced by
\[
\cC_{t}^{*M, \eta}(\cdot) := \int_{\eta v^2}^{\eta^{-1} v^2} \cE_{s}^{s} \big(\cdot - \wh{W}_{t,s} \big) 1_{\wh{W}_{t,s} \leq M} \, \cN(\rmd s).
\]
If $\eta^{-1} v^2 \leq t/2$, then
	\begin{equation}
		H^{M, \eta}_{t, v}(x) \leq H_{t, v}(x) \leq H^{M, \eta}_{t, v}(x-\eps) + \eps^{-1} [ve^{-\sqrt{2} v} + e^{-C'M} + \sqrt{\eta}]
	\end{equation}
Moreover, for $t \geq 100 \eta^{-1} v^2$, $r \in [v^{\alpha}, \eta v^2/10]$
with any $\alpha < 2$, and $v \geq M$,
	\begin{equation}\label{eq:HM_eta_dominating}
		H^{M, \eta}_{t, v, r,x}(s', w) \leq C(\eta) x^{-1} e^{-c|w|} [s'^{\frac{1}{2-\alpha}} 1_{s'\leq \eta/2} + 1_{s' \geq \eta/2}]
	\end{equation}
for some absolute constant $c > 0$ and a constant $C(\eta)$ that only depends on $\eta$. 	
\end{lem}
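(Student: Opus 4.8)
The plan is to prove the two displayed assertions separately; in both cases the route is Markov's inequality followed by a branch-time decomposition of the cluster contribution in the weak DRW representation of $\nu$ (Lemma~\ref{l:7.0}, Lemma~\ref{l:5.2}), combined with the ballot estimates and the deep-level first-moment estimates.

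\textbf{The truncation estimate.} The lower bound $H^{M,\eta}_{t,v}(x)\le H_{t,v}(x)$ is immediate, since $\cC^{*M,\eta}_t([-v,0])\le\cC^*_t([-v,0])$ pointwise (we only drop branch times outside $[\eta v^2,\eta^{-1}v^2]$ and those with $\wh{W}_{t,s}>M$), so the relevant events are nested and applying $\bbP(\cdot\mid\cA_t)$ and multiplying by $v$ preserves the inequality. For the upper bound I would write $\cC^*_t([-v,0])=\cC^{*M,\eta}_t([-v,0])+\Delta$ and split $\Delta=\Delta_{\mathrm{low}}+\Delta_{\mathrm{high}}+\Delta_M$ according to whether the branch time lies in $[0,\eta v^2)$, in $(\eta^{-1}v^2,t/2]$, or in $[\eta v^2,\eta^{-1}v^2]$ with $\wh{W}_{t,s}>M$; a union bound and Markov's inequality then reduce everything to $\bbE[\Delta\mid\cA_t]\lesssim v+e^{\sqrt2 v}(e^{-C'M}+\sqrt\eta)$. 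To get the latter I use the branch-time decomposition: conditionally on $(\wh{W},\cN)$ the decorations are independent and $\cA_t=\bigcap_{\sigma\in\cN}\{\wh{h}^{\sigma *}_\sigma\le-\wh{W}_{t,\sigma}\}$, so by Mecke's formula $\bbE[\cC^*_{t,B}([-v,0]);\cA_t]=\int_B 2\,\rmd s\,\bbE[\mathbbm{1}_{\cA_t}\,\psi_s(\wh{W}_{t,s})]$ with $\psi_s(w):=\bbE[\cE_s([-v-w,\infty));\,\wh{h}^*_s\le-w]$. Dividing by $\bbP(\cA_t)\asymp t^{-1}$ and using the entropic repulsion of $\wh{W}$ to order $-\sqrt s$ under $\cA_t$ (Lemma~\ref{l:10.7.0}) together with the deep-level first-moment bounds of~\cite{hartung2024growth,CHL19}: for $\Delta_{\mathrm{low}}$ the Gaussian factor $e^{-v^2/(2s)}$ inside $\psi_s$ and the substitution $s=s'v^2$ give $\lesssim v+\sqrt\eta\,e^{\sqrt2 v}$; for $\Delta_{\mathrm{high}}$ the repulsion forces $-\wh{W}_{t,s}\gtrsim\sqrt s\gg v$, so each subtree contributes on average $\lesssim e^{-\sqrt2(|\wh{W}_{t,s}|-v)}$ and summing over the window gives $\lesssim\sqrt\eta\,e^{\sqrt2 v}$; for $\Delta_M$ a branch time with $\wh{W}_{t,s}>M$ forces its decoration to attain the atypically low maximum $\wh{h}^{s *}_s\le-M$, and the truncated first moment of~\cite{hartung2024growth,CHL20} shows such subtrees contribute $\lesssim e^{-C'M}e^{\sqrt2 v}$ in total.

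\textbf{The dominating function.} Here I would bound each factor in $H^{M,\eta}_{t,v,r,x}(s',w)=\mathbbm{1}_{[r,t/2]}(s)\,f_{t,s}(w)\int_{z\le-w}G^{M,\eta}_{t,v,r,s,w,x}(y,z)\,\mu_{v,s,w}(\rmd y,\rmd z)$ in turn. The Gaussian density satisfies $f_{t,s}(w)\le\pi^{-1/2}e^{-(w+\gamma_{t,s})^2/(2s)}\lesssim1$. For $G^{M,\eta}$, Markov's inequality applied to $\cC^{*M,\eta}_t([-v,0])$ together with the same Mecke/first-moment computation, now carried out under $\{\wh{W}_{t,s}=w\}$ and the refined ballot event $\cA_t(w+z;r,t/2)\cap\cA_t$, gives (on $\{y\le\tilde x\}$, and essentially nothing off it, cf. Lemma~\ref{lem:G^R_v_properties}) the bound $G^{M,\eta}_{t,v,r,s,w,x}(y,z)\lesssim x^{-1}\,st\;\bbP(\cA_t(w+z;r,t/2)\cap\cA_t\mid\wh{W}_{t,s}=w)$. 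Splitting this ballot probability at the points $r$, $s$ and $t/2$ and chaining the single- and mixed-barrier asymptotics of Lemma~\ref{lem:15} and Lemma~\ref{lemma_mixed_ballot}, it is $\lesssim_\eta (w^-+1)^2/((s-r)\,t)$, so $G^{M,\eta}\lesssim_\eta x^{-1}(w^-+1)^2\cdot s/(s-r)$; for $s=s'v^2$ with $s'\le\eta/2$ and $r=r_v\asymp v^\alpha$, optimising the ballot decay over $[r,s]$ turns this into $\lesssim_\eta x^{-1}(w^-+1)^2\,s'^{1/(2-\alpha)}$, while for $s'\ge\eta/2$ it is simply $\lesssim_\eta x^{-1}(w^-+1)^2$. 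Finally I integrate against $\mu_{v,s,w}$ over $\{z\le-w\}$: for $w\ge0$ the total mass $\bbP(\wh{h}^{s *}_s\le-w)$ is super-exponentially small in $w$, while for $w<0$ the event forcing $\cC^{*M,\eta}_t$ past the threshold requires the decoration at $s$ to attain an anomalously high maximum of order $|w|$ (so that the induced ballot barrier $w+z$ is not too low), whose $\mu$-probability is $\lesssim|w|e^{-\sqrt2|w|}$; combined with the polynomial ballot factor this still gives $\int_{z\le-w}G^{M,\eta}\,\rmd\mu\lesssim e^{-c|w|}$ for any fixed $c<\sqrt2$. Since $v\ge M$, the $M$-truncation only affects constants, and collecting the above yields~\eqref{eq:HM_eta_dominating}.

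\textbf{Main obstacle.} The hardest ingredient is extracting the sharp exponent $s'^{1/(2-\alpha)}$: this requires a ballot estimate for the probability that the DRW attains its running maximum over $[r_v,t/2]$ at a time $s\asymp s'v^2$ close to the lower cut-off $r_v\asymp v^\alpha$, which is not covered verbatim by Lemma~\ref{lem:15} or Lemma~\ref{lemma_mixed_ballot} and has to be obtained by carefully splitting the interval and iterating the mixed-barrier asymptotics (this is where the precise power $1/(2-\alpha)$ comes from). A secondary, more routine difficulty is checking that the first-moment estimates for $\cC^{*M,\eta}_t$ used throughout hold uniformly in the deep regime $v\asymp s^{1/2}$, for which the truncated moments of~\cite{hartung2024growth,CHL20} are essential, the untruncated first moment of $\cE_s$ overcounting by a polynomial factor in $s$.
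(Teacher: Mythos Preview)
Your high-level route (Markov's inequality plus a branch-time decomposition and ballot estimates) is the same as the paper's, and your treatment of the truncation estimate is fine. But your argument for the dominating function~\eqref{eq:HM_eta_dominating} contains a genuine gap, centered on the exponent $s'^{1/(2-\alpha)}$ that you flag as the main obstacle.

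You claim $G^{M,\eta}\lesssim_\eta x^{-1}(w^-+1)^2\cdot s/(s-r)$ by bounding the cluster event by the ballot event and then ``optimising the ballot decay over $[r,s]$'' to extract $s'^{1/(2-\alpha)}$. This does not work: the factor $s/(s-r)$ carries no useful information (it blows up as $s\downarrow r$ and is $O(1)$ otherwise), and no ballot manipulation on the interval $[r,s]$ will produce that particular power of $s'$. In the paper the exponent arises from a completely different and much simpler mechanism. One first applies Markov's inequality \emph{inside} the Palm representation, i.e.\ for each branch time $\sigma\in[\eta v^2,\eta^{-1}v^2]$ separately, and then bounds the resulting first moments by an adaptation of~\cite[Lemma~5.5]{CHL19} (Lemma~\ref{lem:CHL_Lemma5.5_adapted} in the paper). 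The key output is a factor $(\sigma-s)^{-3/2}$ in the integrand; integrating over $\sigma\in[\eta v^2,\eta^{-1}v^2]$ when $s=s'v^2$ with $s'\le\eta/2$ gives a bound $\lesssim_\eta t^{-1}v^{-1}$. The exponent $s'^{1/(2-\alpha)}$ is then just the trivial algebraic inequality $v^{-1}\le s'^{1/(2-\alpha)}$, valid on the support $s'\ge r/v^2\ge v^{\alpha-2}$ of the indicator $\mathbbm 1_{[r,t/2]}(s)$.

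So the substantive work is not a refined ballot estimate but the per-branch-time first-moment bound (Lemmas~\ref{cor:CHL_Lemma5.5_adapted} and~\ref{lem:CHL_Lemma5.5_adapted}), which controls $\bbE[\cE_\sigma^\sigma([-v,\infty)-\wh W_{t,\sigma})\mathbbm 1_{\wh W_{t,\sigma}\le M};\,\cA_t(\cdots)]$ and carries the $(\sigma-s)^{-3/2}$ decay. Your cruder step of bounding $G^{M,\eta}$ directly by the ballot probability discards exactly this structure. For the $e^{-c|w|}$ decay, the paper likewise gets it not from the mechanism you describe but from the combination of the factor $e^{\sqrt 2(w+z)}$ in the first-moment bound, the polynomial ballot factors $(w^-+1)$, the factor $e^{-\frac32 w^+}$ in the ballot bound on $[s,t]$, and integrating against the exponential right tail of $\wh h^*_s$ over $z\le -w$.
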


We are now in a position to prove the theorem. 
\begin{proof}[Proof of Theorem~\ref{stable_key_lemma}]
By~\eqref{e:A5.60} and~\eqref{eq:H_tvr} we have
\begin{equation}
\label{e:A5.72}
H_{t, v}(x) = \big(1+o(1)\big) \int_0^\infty s'^{-3/2} ds'\int dw \cdot H_{t, v, r,x}(s', w) \,.
\end{equation}
Thanks to Lemma \ref{lem:integral_G_mu}, we then immediately conclude by Fatou that,
\begin{equation}\label{H_fatou_lower}
\liminf_{v\to \infty} \liminf_{t \to \infty} H_{t, v}(x) \geq C \int_0^\infty s'^{-3/2} ds'\int dw \cdot H_x(s', w) =: H(x) \,.
\end{equation}
$H(x)$ defined above is clearly non-negative and non-increasing in $x$. 

For the complementary upper bound, for $M, \eta > 0$, the truncated analog of~\eqref{e:A5.72} is
\begin{equation}
\label{e:A5.72a}
H^{M, \eta}_{t, v}(x) = \big(1+o(1)\big) \int_0^\infty s'^{-3/2} ds'\int dw \cdot H^{M, \eta}_{t, v, r}(s', w) \,.
\end{equation}
Since replacing $H^{M, \eta}_{t, v, r}(s', w)$ above by its upper bound in \eqref{eq:HM_eta_dominating} from Lemma~\ref{lem:HM_eta_approximation}, yields a convergent integral, we can (for example) choose $r_v = v^\alpha$ with $\alpha = 1$ and apply Reverse Fatou to obtain for any $\epsilon > 0$,
\begin{align}
\limsup_{v\to \infty} \limsup_{t \to \infty} H^{M, \eta}_{t, v}(x) &\leq C \int_0^\infty s'^{-3/2} ds'\int dw \cdot \limsup_{v\to \infty} \limsup_{t \to \infty}  H^{M, \eta}_{t, v, r_v,x}(s', w) \\
	&\leq C \int_0^\infty s'^{-3/2} ds'\int dw \cdot \limsup_{v\to \infty} \limsup_{t \to \infty}  H_{t, v, r_v,x}(s', w) \\
	&\leq C \int_0^\infty s'^{-3/2} ds'\int dw \cdot H_{x-\epsilon}(s', w) = H(x -\eps)
\end{align}
where we used the upper bound from Lemma \ref{lem:integral_G_mu}. 

Thanks to the first part of Lemma~\ref{lem:HM_eta_approximation} we have 
\begin{equation}\label{eq:HM_eta_approximation}
	\limsup_{v\to \infty} \limsup_{t \to \infty} H_{t, v}(x) \leq \sup_{M, \eta > 0} \limsup_{v\to \infty} \limsup_{t \to \infty} H^{M, \eta}_{t, v}(x-\eps)
\end{equation}
so that altogether,
\begin{equation}\label{H_fatou_upper}
	\limsup_{v\to \infty} \limsup_{t \to \infty} H_{t, v}(x) \leq H(x - 2\eps)
\end{equation}
Taking $\eps \to 0$ and combining \eqref{H_fatou_upper} with \eqref{H_fatou_lower} gives us that
\begin{equation}
	\limsup_{v\to \infty} \limsup_{t \to \infty} |H_{t, v}(x) - H(x)| \to 0
\end{equation}
for every continuity point $x$ of $H$. This shows the convergence \eqref{eq:H_tv} and consequently~\eqref{convergence_conjecture}.

For the second part of the theorem, we use the layer cake representation to write for any continuity point $\delta$ of $\Gamma$,
	\begin{align}
		w \bbE \left(X(w); X(w) > \delta \right) &= \int_\delta^\infty w \bbP(X(w) > y) dy + \delta \cdot w \bbP(X(w) > \delta) \\
		&\to \int_\delta^\infty \Gamma((y, \infty)) dy + \delta \cdot \Gamma((\delta, \infty)) = \int y \mathbbm{1}_{y > \delta} \Gamma(dy)
	\end{align}
	as $w\to \infty$, by Dominated Convergence, using the established convergence \eqref{convergence_conjecture} and the bound \eqref{estimate_0_X_2}. The bound \eqref{estimate_1_X} then allows us to take $\delta \to 0$, and~\eqref{eq:limiting_measure_1st_moment} follows from~\eqref{estimate_0_X_0_1}.
	
Lastly, the bound \eqref{eq:growth_bound_H} and consequently~\eqref{e:A5.15} 
are immediately inherited from the bounds \eqref{estimate_0_X_2} and \eqref{estimate_2_X}.
\end{proof}

\ifonlystable
\subsection{Proof of Proposition~\ref{p:A11}}
\else
\subsubsection{Proof of Proposition~\ref{p:A11}}
\fi
\label{s:proof_p:A11}

We begin by applying the Markov property of the DRW at time $s$ and condition at times $s-R, s+R$ to express $G^R_{t, v, r_v, s, w,x}(y, z)$ for $z\leq -w$ as
\begin{align}\label{eq:convergence_cJ_leq_R_rewrite}
	& \int \bbP_{0, 0}^{s, w}(\wh{W}_{t, s-R} \in w + d\xi_-) \times \bbP_{s, w}^{t, 0}(\wh{W}_{t, s+R} \in w + d\xi_+)  \\
	& \quad \times \frac{1}{s^{-1}}\bbP_{0, 0}^{s-R, w+\xi_-}\left(\cA_t(w+z;\, r, s-R) \cap  \cA_t(0, r)\right) \label{eq:convergence_cJ_leq_R_rewrite_2} \\
	& \quad \times \frac{1}{t^{-1}}\bbP_{s+R, w+\xi_+}^{t, 0}\left(\cA_t(w+z;\, s+R, t/2) \cap  \cA_t(t/2, t)\right) \label{eq:convergence_cJ_leq_R_rewrite_3} \\
	& \quad \times \bbP_{s-R,w+\xi_-}^{s+R, w+\xi_+} \left(\frac{\cC^*_{t, [s-R, s+R]}([-v, 0])}{ve^{\sqrt{2}(v+w) - \frac{v^2}{2s} }} > \tilde x -y  \
	; \cA_t(w+z; s-R, s+R) \Big|\, \wh{W}_{t,s} = w\right). \label{eq:convergence_cJ_leq_R_rewrite_4} 
\end{align}
By direct computation, the law of $\wh{W}_{t, s-R} -w$ under $\bbP_{0, 0}^{s, w}$ is Gaussian with mean $o(1)$ and variance $\sim R$ as $s\to \infty$, and the law of $\wh{W}_{t, s+R}-w$ under $\bbP_{s, w}^{t, 0}$ is Gaussian with mean $o(1)$ and variance $\sim R$ as $s, t-s \to \infty$.  Thus, as $t\to \infty$ followed by $v\to \infty$ (recall $s = s'v^2$)
\[
\bbP_{0, 0}^{s, w}(\wh{W}_{t, s-R} \in w + d\xi_-)/d\xi_- \times \bbP_{s, w}^{t, 0}(\wh{W}_{t, s+R} \in w + d\xi_+)/d\xi_+ \to \frac{1}{\sqrt{2\pi R}} e^{-\frac{|\xi_-|^2}{2R}}  \times \frac{1}{\sqrt{2\pi R}} e^{-\frac{|\xi_+|^2}{2R}} 
\]
and moreover these joint densities are uniformly bounded by some Gaussian function. Next, by the asymptotics in Lemma \ref{lemma_mixed_ballot}, the product of \eqref{eq:convergence_cJ_leq_R_rewrite_2} and \eqref{eq:convergence_cJ_leq_R_rewrite_3} is asymptotic to 
\[
C f(\xi_+ - z) g(\xi_- - z)
\] 
as $t\to \infty$ followed by $v\to \infty$. It remains to estimate the probability in \eqref{eq:convergence_cJ_leq_R_rewrite_4}. 
 Applying time shift and tilting this can be rewritten as
\begin{align}\label{eq:convergence_cJ_leq_R_rewrite_last_probability}
	\bbP_{-R,\xi_-}^{R, \xi_+} \Big(\int_{-R}^{R} \frac{\cE_{s+u}^{u} \big([-(v+w), \infty) -  (W_u - \eps_{u, s}) \big)}{ve^{\sqrt{2}(v+w) - \frac{v^2}{2s}} } \cN(\rmd u) > \tilde x -y; \max\limits_{\substack{u \in \cN \\ |u| \leq R}} \big(W_u - \eps_{u, s} + \wh{h}^{u*}_{s+u} \big) \leq z\Big)
\end{align}
for a two-sided Brownian motion $W$ starting at $W_0 = 0$ and $\eps_{u, s}$ is a deterministic drift uniformly $o(R/s)$ for $u\in [-R, R]$. We claim that as $v\to \infty$ this probability (notice it does not involve $t$) converges to
\begin{equation}\label{eq:convergence_cJ_leq_R_rewrite_last_probability_limit}
\bbP_{-R,\xi_-}^{R, \xi_+} \bigg(\int_{-R}^{R} c_0 e^{\sqrt{2} W_u} Z^{(u)} \cN(\rmd u) > \tilde x -y; \ \max\limits_{u \in \cN_{[-R, R]}} \, \big(\wh{h}_\infty^{*(u)} + W_u\big) \leq z \bigg). 
\end{equation}
where $(Z^{(u)}, \wh{h}_\infty^{*(u)})$ are i.i.d.~copies of $(Z, \wh{h}_\infty^{*})$. We can first show the convergence of the probabilities conditional on $\cN_{[-R, R]}$ and then integrate over that. Conditional on $\cN_{[-R, R]}$, it follows from Proposition \ref{prop:6.5} that the integral and maximum in \eqref{eq:convergence_cJ_leq_R_rewrite_last_probability} jointly converge in distribution to those in \eqref{eq:convergence_cJ_leq_R_rewrite_last_probability_limit} (by first conditioning on $(W_u: u\in \cN_{[-R, R]})$ and then integrating over that). The convergence of the probabilities (conditional on $\cN$) would now follow from Portmanteau theorem if, in the limit, the equality cases a.s.~do not happen (conditional on $\cN$). This is the case as long as $\cN$ has at least one point in $(-R, R)\setminus\{0\}$
, because $Z$ is a.s.~nonzero and the finite-dimensional laws of $(W_u: u\in (-R, R)\setminus\{0\})$ under $\bbP_{-R,\xi_-}^{R, \xi_+}$ are absolutely continuous w.r.t.~the Lebesgue measure. On the other hand, if $\cN$ has no point in $(-R, R)\setminus\{0\}$, it also a.s.~has no point in $[-R, R]$, in which case the probability (conditional on $\cN$) is always $1_{\tilde x - y < 0}$ and thus trivially converges. Altogether, by Dominated Convergence, we have that $G^R_{t, v, r_v, s, w,x}(y, z)$ converges to
	\begin{align}\label{eq:convergence_cJ_leq_R}
	G^R_{s',w,x}(y, z) &:= \int \bbP(W_R \in d\xi_+, W_{-R} \in d\xi_-)  \times C f(\xi_+  - z) g(\xi_- - z)  \\
	& \quad \times  \bbP_{-R, \xi_-}^{R, \xi_+} \bigg(\int_{-R}^{R} c_0 e^{\sqrt{2} W_u} Z^{(u)} \cN(\rmd u) > \tilde x -y; \ \max\limits_{u \in \cN_{[-R, R]}} W_u + \wh{h}_\infty^{*(u)} \leq z \bigg) 
\end{align}
as $t \to \infty$ followed by $v\to \infty$. The continuity properties claimed for $G^R_{s',w',x}$ then follow from Dominated Convergence using the fact that $f$ and $g$ are monotone and so have at most countably many discontinuity points, the finite-dimensional laws of $W$ (excluding times being conditioned) are absolutely continuous w.r.t.~the Lebesgue measure, and $Z$ is non-negative valued. This concludes the proof of Proposition \ref{p:A11}. 

\ifonlystable
\section{Proofs of technical lemmas}
\label{s:A5}
\else
\subsection{Proofs of technical lemmas}
\label{s:5.2.4}
\fi

Let us now turn to all the technical lemmas whose proofs we have deferred. 
We begin with:
\ifonlystable
\subsection{Proof of Lemma~\ref{improved_estimates_X}}
\else
\subsubsection{Proof of Lemma~\ref{improved_estimates_X}}
\fi
\begin{proof}[Proof of Lemma~\ref{improved_estimates_X}]
\ifonlystable
In Lemma~3.6 of~\cite{hartung2024growth}
\else
In Lemma \ref{trunc_1st_moment},
\fi
take $u = v$ and $M_u, \eta_u^{-1} = c_1 \log \delta^{-1}$ for large enough $c_1$ ($10 \max(17, 1/C')$ will do), so that
	\begin{equation}\label{atypical_event_est_applied}
		\wt{\bbP} \Big(\cB_{v}(t) \Big|\, \wh{h}^*_t = \wh{h}_t(X_t) = 0\Big) \lesssim v^{-1} (\log \delta^{-1})^{\frac{5}{2}},
	\end{equation}
	and
	\begin{equation}
		\label{1st_moment_est_applied}
		\wt{\bbE}\Big(\cC_{t,r_t}^*\big([-v,0]\big) ;\; \cB_{v}(t)^{\rmc} \, \Big| \,  \wh{h}^*_t =  \wh{h}_t(X_t) = 0\Big)  \lesssim  \rme^{\sqrt{2}v} \left(v \rme^{-v/2} +  \delta^{10} + v t^{-1/2}\right).
	\end{equation}
	To derive \eqref{estimate_1_X}, we abbreviate by $\cA_v(t)$ the event
	\[
	\cC_{t,r_t}^*\big([-v,0]\big) > \delta v e^{\sqrt{2} v}.
	\]
	Now by union bound
	\begin{multline}
	\wt{\bbE}\Big(\cC_{t,r_t}^*\big([-v,0]\big) ;\; \cA_v(t)^{\rmc} \, \Big| \,  \wh{h}^*_t =  \wh{h}_t(X_t) = 0\Big) 
		 \leq \wt{\bbE}\Big(\cC_{t,r_t}^*\big([-v,0]\big) ;\; \cB_v(t)^{\rmc} \, \Big| \,  \wh{h}^*_t =  \wh{h}_t(X_t) = 0\Big)\\
+ \wt{\bbE}\Big(\cC_{t,r_t}^*\big([-v,0]\big) ;\; \cB_v(t) \cap \cA_v(t)^{\rmc} \, \Big| \,  \wh{h}^*_t =  \wh{h}_t(X_t) = 0\Big) 
\end{multline}
The right hand side can be further bounded by a constant times
\begin{multline}
\rme^{\sqrt{2}v} \left(v \rme^{-v/2} +  \delta^{10} + v t^{-1/2}\right) + \delta v e^{\sqrt{2} v} \bbP\Big(\cB_{v}(t) \Big|\, \wh{h}^*_t = \wh{h}_t(X_t)\Big)  \\
		\lesssim \rme^{\sqrt{2}v} \left(v \rme^{-v/2} +  \delta (\log \delta^{-1})^{\frac{5}{2}} + v t^{-1/2}\right) 
\end{multline}
	After taking time $t\rightarrow \infty$ we get \eqref{estimate_1_X}. Derivation of \eqref{estimate_2_X} is similar. By Markov inequality,
	\begin{align}
		\wt{\bbP}\Big(\cA_v(t) \cap \cB_v(t)^{\rmc} \, \Big| \,  \wh{h}^*_t =  \wh{h}_t(X_t) = 0\Big) &\leq \delta^{-1} v^{-1} e^{-\sqrt{2} v}\wt{\bbE}\Big(\cC_{t,r_t}^*\big([-v,0]\big) ;\; \cB_v(t)^{\rmc} \, \Big| \,  \wh{h}^*_t =  \wh{h}_t(X_t) = 0\Big) \\
		&\lesssim \delta^{-1} \left( \rme^{-v/2} +  \delta^{10} v^{-1} +  t^{-1/2}\right)
	\end{align}
	and so by union bound
	\begin{align}
		\wt{\bbP}\Big(\cA_v(t) \Big| \,  \wh{h}^*_t =  \wh{h}_t(X_t) = 0\Big) &\leq \wt{\bbP}\Big(\cA_v(t) \cap \cB_v(t)^{\rmc} \, \Big| \,  \wh{h}^*_t =  \wh{h}_t(X_t) = 0\Big) + \wt{\bbP}\Big(\cB_v(t) \Big| \,  \wh{h}^*_t =  \wh{h}_t(X_t) = 0\Big) \\
		&\lesssim \delta^{-1} \left( \rme^{-v/2} +  \delta^{10} v^{-1} +  t^{-1/2}\right) + v^{-1} (\log \delta^{-1})^{\frac{5}{2}} \\
		&\lesssim \delta^{-1} \left( \rme^{-v/2} + t^{-1/2}\right) + v^{-1} (\log \delta^{-1})^{\frac{5}{2}}
	\end{align}
	After taking time $t\rightarrow \infty$ we get \eqref{estimate_2_X}. 
\end{proof}

\ifonlystable
\subsection{Proof of Lemma~\ref{lem:G^R_v_properties}}
\else
\subsubsection{Proof of Lemma~\ref{lem:G^R_v_properties}}
\fi
\label{s:G^R_v_properties}
Henceforth, to economize on notation, we shall use the abbreviations:
\begin{equation}
G_v \equiv G_{t, v, r_v, s, w,x} \quad, \qquad G^R_v \equiv G^R_{t, v, r_v, s, w, x}
\quad , \qquad G^R \equiv G^R_{s', w, x}
\quad , \qquad \mu_v \equiv \mu_{v, s, w} \,.
\end{equation}

\begin{proof}[Proof of Lemma~\ref{lem:G^R_v_properties}]
Part \ref{G^R_v_monotone} is obvious from the way $G_v^R, G_v$ are defined. Part \ref{G^R_v_bound} follows from the ballot upper bounds \eqref{e:52} in Lemma \ref{lem:15} applied to
	\[
	G_v \leq \frac{1}{s^{-1}t^{-1}} \bbP_{0,0}^{t,0} \bigg(\cA_{t} \ \Big|\, \wh{W}_{t,s} = w\bigg)  = \frac{1}{s^{-1}} \bbP_{0,0}^{s,w} (\cA_{t}(0, s))  \frac{1}{t^{-1}} \bbP_{s,w}^{t,0} (\cA_{t}(s, t)).
	\]
Lastly, part \ref{G^R_v_approximation} would follow from a straightforward application of Union Bound and Markov inequality, provided the moment bound in Lemma~\ref{lemma_moment_estimate_for_cluster_outside_R} below,
for the contribution to the cluster from BBMs which branched from the spine at a time outside ${[s-R, s+R]}$.
\end{proof}.
\begin{lem}
\label{lemma_moment_estimate_for_cluster_outside_R} 
For $s = s' v^2$, \begin{equation}\label{eq:small_cJ_>R}
		\frac{1}{s^{-1}t^{-1}} \bbE_{0,0}^{t,0} \bigg(\frac{\cC_{t}^*\setminus \cC_{t, [s-R, s+R]}^*([-v, 0])}{ve^{\sqrt{2}(v+w) - \frac{v^2}{2s} }}  \
		; \cA_{t} \ \Big|\, \wh{W}_{t,s} = w\bigg) \leq C(s', w) [v^{-1} + R^{-1/2}].
	\end{equation}
\end{lem}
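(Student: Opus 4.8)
\emph{Plan.} Since Lemma~\ref{lem:G^R_v_properties}\ref{G^R_v_approximation} invokes this bound only through Markov's inequality, it suffices to prove the one-sided estimate, so I am free to discard constraints whenever convenient; in particular I will only keep the global ballot event $\cA_t$. The plan is a (truncated) first-moment computation of the contribution to the cluster coming from the sub-dominant decorations. The first step would be to apply the Mecke formula to the rate-$2$ Poisson process $\cN$ and to use that the freshly added decoration $h^\sigma$ is, conditionally on the backbone $W$, independent of all the other decorations, hence of $\cA_t$. This rewrites the left-hand side of~\eqref{eq:small_cJ_>R} as
\begin{equation*}
\frac{st}{ve^{\sqrt 2(v+w)-v^2/(2s)}}\int_{[0,s-R)\cup(s+R,t/2]} 2\,\rmd\sigma\;
\bbE_{0,0}^{t,0}\!\Big[\psi_\sigma\big(\wh W_{t,\sigma}\big)\,1_{\cA_t}\;\Big|\;\wh W_{t,s}=w\Big],
\end{equation*}
where $\psi_\sigma(a):=\bbE\big[\cE_\sigma^\sigma([-v,0]-a)\,1_{\wh h^{\sigma*}_\sigma\le -a}\big]$ is the conditional first moment of the mass an age-$\sigma$ decoration places into $[-v,0]$ after being shifted by the backbone value $a=\wh W_{t,\sigma}$.

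\emph{Decoration weight.} Next I would drop the indicator $1_{\wh h^{\sigma*}_\sigma\le -a}$ and apply the Many-To-One Lemma~\ref{l:4.1} to $h^\sigma$, giving $\psi_\sigma(a)\le e^\sigma\,\bbP\big(N\in[m_\sigma-v-a,\,m_\sigma-a]\big)$ with $N$ centred Gaussian of variance $\sigma$. A direct Gaussian estimate — of the same flavour as the deep-level-set first-moment bounds recalled in Section~\ref{s:2} — then yields, for $a\le 0$,
\begin{equation*}
\psi_\sigma(a)\;\lesssim\;\sigma\,e^{\sqrt 2(v+a)-v^2/(2\sigma)}\,e^{-(a+\gamma_{t,\sigma})^2/(2\sigma)}\,.
\end{equation*}
Dividing by the normalisation $ve^{\sqrt 2(v+w)-v^2/(2s)}$ and recalling $s=s'v^2$ (so $v^2/(2s)=1/(2s')$), the normalised weight is $\asymp\tfrac{\sigma}{v}\exp\!\big(\sqrt2(a-w)+\tfrac{v^2(\sigma-s)}{2s\sigma}-(a+\gamma_{t,\sigma})^2/(2\sigma)\big)$, which is $\asymp s/v$ when $\sigma$ is close to $s$ and $a$ close to $w$, and is otherwise damped by Gaussian penalties in $\sigma-s$ and in $a$.

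\emph{Ballot factor and $\sigma$-integration.} It then remains to estimate $\bbE_{0,0}^{t,0}[\,\cdot\,1_{\cA_t}\mid\wh W_{t,s}=w]$. I would split $\cA_t$ at time $\sigma$ and pin $\wh W_{t,\sigma}$: by the ballot bound~\eqref{e:52} of Lemma~\ref{lem:15} the two restrictions contribute $\lesssim(|\wh W_{t,\sigma}|+1)^2/(\sigma(t-\sigma))$, which together with the $st$ prefactor is $\asymp(|\wh W_{t,\sigma}|+1)^2\,s/\sigma$ for $\sigma\le t/2$. Using the entropic-repulsion estimate Lemma~\ref{l:10.7.0} to control $\wh W_{t,\sigma}$ (it is $\le -M$ with probability $1-O(M^2/\sqrt{\sigma\wedge s})$ and conditionally diffusive), I would combine this with the weight from the previous step and integrate over $\sigma$, splitting the range into: (i) the near regime $R<|\sigma-s|\le\tfrac12 s$, where the Gaussian penalty in $\sigma-s$ turns the $\sigma$-integral of the normalised weight into $\sum_{k>R}k^{-3/2}\asymp R^{-1/2}$; (ii) the young regime $\sigma\le\tfrac12 s$, where the factor $e^{-v^2/(2\sigma)+v^2/(2s)}$ is exponentially small and contributes $O(v^{-1})$, the dominant piece coming from the boundary $\sigma\asymp s$; and (iii) the old regime $\tfrac32 s\le\sigma\le t/2$, where $|\wh W_{t,\sigma}|\asymp\sqrt\sigma$ and $e^{\sqrt2(\wh W_{t,\sigma}-w)}\lesssim e^{-c\sqrt\sigma}$, so the contribution is geometrically small. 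Summing the three regimes gives $C(s',w)[v^{-1}+R^{-1/2}]$.

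\emph{Main obstacle.} The hard part is the last step: one has to match, uniformly over the entire range $\sigma\in[0,t/2]$, the first-moment weight $\psi_\sigma$ of a sub-dominant decoration against the precise cost of routing the backbone through $\cA_t$ while it is pinned at $\wh W_{t,s}=w$. This is essentially a refinement of the analysis of $\bbE\,\cC([-v,0])$ from~\cite{CHL19,CHL20} (cf.\ Lemma~\ref{l:A5.2}), now performed with explicit tracking of the truncation parameter $R$ and of the dependence on $(s',w)$, and with the extra bookkeeping needed to separate the $R^{-1/2}$ and $v^{-1}$ decay rates; the two-sided ballot asymptotics (Lemma~\ref{lemma_mixed_ballot}) and the repulsion bound (Lemma~\ref{l:10.7.0}) are the principal technical inputs.
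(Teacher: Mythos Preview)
Your overall strategy---Mecke/Palm formula, first-moment bound on the decoration, ballot bound on the backbone, then $\sigma$-integration---is right and matches the paper's. But two steps do not close as written.

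\emph{Ballot splitting.} When you split $\cA_t$ only at $\sigma$ and quote a two-piece bound $(|\wh W_{t,\sigma}|+1)^2/(\sigma(t-\sigma))$, you are ignoring the pin $\wh W_{t,s}=w$. With that pin, the bridge on $[\sigma,t]$ (say $\sigma<s$) is itself pinned at $s$, so the correct ballot factorisation has \emph{three} pieces $[0,\sigma],\,[\sigma,s],\,[s,t]$. The middle piece contributes $\asymp(a^-+1)(w^-+1)/(1+s-\sigma)$, and the Gaussian density of $\wh W_{t,\sigma}$ given $\wh W_{t,s}=w$ (variance $\asymp s-\sigma$ near the pin) contributes $1/\sqrt{s-\sigma}$; together these produce the $(1+s-\sigma)^{-1}(s-\sigma)^{-1/2}$ whose integral over $|\sigma-s|>R$ yields $R^{-1/2}$. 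In your outline the $k^{-3/2}$ decay is asserted (``Gaussian penalty in $\sigma-s$'') but never produced: neither your $\psi_\sigma$ nor your two-piece ballot carries any penalty in $s-\sigma$. Entropic repulsion (Lemma~\ref{l:10.7.0}) does not help here, since near the pin at $s$ the backbone is forced to be close to $w$, not $-\sqrt{\sigma}$.

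\emph{Dropping the indicator.} Discarding $1_{\wh h^{\sigma*}_\sigma\le -a}$ lets $\psi_\sigma(a)$ grow like $e^{\sqrt2 a}$ as $a\uparrow$; integrated against the Gaussian density of $\wh W_{t,\sigma}-w$ of variance $\asymp s-\sigma$, the exponential tilt wins and the $a$-integral blows up like $e^{C(s-\sigma)}$ rather than decaying. The indicator (equivalently, the use of \cite[Lemma~4.2]{CHL19}) is what kills this growth.

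The paper sidesteps both issues by splitting at $s$ \emph{first}: the Markov property factors out $\bbP_{s,w}^{t,0}(\cA_t(s,t))\lesssim(w^-+1)/t$, reducing the problem to $s\,\bbE_{0,0}^{s,w}\big[\cC^*_{t,[0,s-R]}([-v,0]);\cA_t(0,s)\big]$ and its mirror on $[s+R,t/2]$. These are exactly the quantities controlled by \cite[Lemma~5.5]{CHL19}, which already packages the correct $(s(s-\sigma+1)\sqrt{s-\sigma})^{-1}$ decay near $\sigma=s$ and the $e^{-v^2/(16\sigma)}$ decay for small $\sigma$, so the final $\sigma$-integration is a two-line calculation giving $C(s',w)[v^{-1}+R^{-1/2}]$.
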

\begin{proof}
By applying Markov property of the DRW at time $s$ and the upper bounds from Lemma \ref{lemma_mixed_ballot}, the left side of \eqref{eq:small_cJ_>R} is at most $C(w^-+1)$ times
	\begin{align}\label{eq:small_cJ_>R_bound}
		\frac{1}{s^{-1}}\bbE_{0,0}^{s,w} \bigg(\frac{\cC_{t, [0, s-R]}^*([-v, 0])}{ve^{\sqrt{2}(v+w) - \frac{v^2}{2s} }}  \
		; \cA_{t}(0, s)\bigg) + \frac{1}{t^{-1}}\bbE_{s,w}^{t,0} \bigg(\frac{\cC_{t, [s+R, t/2]}^*([-v, 0])}{ve^{\sqrt{2}(v+w) - \frac{v^2}{2s} }}  \
		; \cA_{t}(s, t)\bigg).
	\end{align}
	By Palm calculus, the first expectation equals $\displaystyle\int_0^{s-R} \wh{j}_{s, v}(\sigma) \cdot 2\rmd \sigma$ with
	\begin{equation}
		\wh{j}_{s, v}(\sigma) := v^{-1} \rme^{-\sqrt{2} (v+w) +\frac{v^2}{2s}} \, \bbE_{0, 0}^{s, w}\left[\cE^{\sigma}_{\sigma}([-v, 0] - \wh{W}_{s, \sigma}) 1_{\wh{h}_{\sigma}^{\sigma*}\leq -\wh{W}_{s, \sigma}}1_{\cA_s}\right],
	\end{equation}
	where we replaced $\wh{W}_{t, \sigma}$ by $\wh{W}_{s, \sigma}$ for $\sigma\in [0, s]$ as they have the same law under the bridge conditioning $\bbP_{0,0}^{s, w}$. In the case $w = 0$, Lemma 5.5 in \cite{CHL19} tells us
	\begin{equation}
		\wh{j}_{s, v}(\sigma) \leq C e^{\frac{1}{2s'}} \, \frac{e^{-\frac{v^2}{16\sigma}} + e^{-v/2} }{s(\sigma+1)\sqrt{\sigma}} 
	\end{equation}
	for $0\leq \sigma\leq \frac{s}{2}$ (recall $s = s' v^2$). In this case, essentially the same proof there (in reversed time) also shows
	\begin{equation}
		\wh{j}_{s, v}(\sigma) \leq \frac{C e^{\frac{1}{2s'}}}{s(s-\sigma+1)\sqrt{s-\sigma}}
	\end{equation}
	for $\frac{s}{2} \leq \sigma\leq s$. By tilting $\wh{W}_{s, \sigma}$ (and then shifting $\wh{h}_{\sigma}^{\sigma*}$ to compensate), one can see these upper bounds also hold for any fixed $w$ but with a constant $C$ depending on $w$. Altogether, the first term in \eqref{eq:small_cJ_>R_bound} is at most
	\begin{align}
		\frac{1}{s^{-1}} \int_0^{s-R} \wh{j}_{s, v}(\sigma) \cdot 2\rmd \sigma
		&\leq C(w) e^{\frac{1}{2s'}} \, \left(\int_{0}^{\frac{s}{2}}  \frac{e^{-\frac{v^2}{16\sigma}} + e^{-v/2}}{(\sigma+1)\sqrt{\sigma}} \rmd \sigma + \int_{\frac{s}{2}}^{s-R} \frac{1}{(s-\sigma+1)\sqrt{s-\sigma}} \rmd \sigma \right) \\
		& \leq C(w) e^{\frac{1}{2s'}} \cdot C'[v^{-1} + e^{-v/2} + (R+1)^{-1/2}]
	\end{align}
	and analogous bound holds for the second term in \eqref{eq:small_cJ_>R_bound} by similar calculations. 
\end{proof}

\ifonlystable
\subsection{Proof of Lemma~\ref{lem:integral_G_mu}}
\else
\subsubsection{Proof of Lemma~\ref{lem:integral_G_mu}}
\fi
\label{s:integral_G_mu}

It is straightforward to check that, for fixed $s', w$ and $r_v = o(v^2)$, the pre-factors in front of the integral in \eqref{eq:H_tvr} converges to $1$ as $t \to \infty$ followed by $v \to \infty$. Moreover, it is immediate from Proposition \ref{prop:6.5} that the probability measures $\mu_{v} \underset{v \to \infty}\Longrightarrow  \mu$,  where $\mu$ is the law of the right hand side of~\eqref{e:6.47}.

By Part~\ref{G^R_v_monotone} of Lemma~\ref{lem:G^R_v_properties} and the continuity of the limit in Proposition~\ref{p:A11},  the function $G^R(y, z) \geq 0$ is jointly left continuous and increasing in $(y, z)$ and thus lower semicontinuous in $(y, z)$, so 
the Portmanteau theorem yields
\[
\liminf_{v\to \infty} \int_{z\leq -w} G^R(y, z) \mu_v(dy, dz) \geq\int_{z\leq -w} G^R(y, z) \mu(dy, dz)
\]
where we also used that $1_{z < -w}$ is lower semicontinuous and that the law of $\wh{h}^*_\infty$ has no atoms. For a matching upper bound, we can apply the Portmanteau theorem instead to the ``right-continuous modification'' $\lim_{\delta \to 0^+} G^R(y+\delta, z)$, which is well-defined as a monotone limit,  upper semicontinuous in $(y, z)$ since it is jointly right continuous and increasing in $(y, z)$, and moreover bounded from above by the uniform bound \ref{G^R_v_bound}, so 
\[
\limsup_{v\to \infty} \int_{z\leq -w} G^R(y, z) \mu_v(dy, dz) \leq\int_{z\leq -w} G^R(y+\tilde \eps, z) \mu(dy, dz)
\]
for any $\tilde \eps > 0$, where we also used that $1_{z \leq -w}$ is upper semicontinuous and the monotonicity bound $G^R(y, z) \leq \lim_{\delta \to 0^+} G^R(y+\delta, z) \leq G^R(y+\tilde \eps, z)$. \medskip 

Moreover, since the functions $G_v^R$ are increasing in $y, z$ and the limit $G^R$ is jointly continuous in the region $\{(y, z): y\leq \tilde x\}$, the convergence is necessarily uniform over bounded $(y, z)$ in this region. Similarly, for $y > \tilde x$, $G_v^R(y, z) \equiv G_v^R(z)$ is increasing in $z$ and the limit $G^R(y, z) \equiv G^R(z)$ is continuous in $z$, so the convergence is uniform over bounded $z$ in this region. Altogether, we have $G_v^R \to G^R$ uniformly over any compact set of $(y, z)$ as $t\to \infty$ followed by $v\to \infty$. Thus, by the tightness of $\mu_v$ and the uniform bound of $G_v^R, G^R$, we obtain
\[
\lim_{v\to \infty} \limsup_{t\to \infty} \int \big|G_v^R(y, z) - G^R(y, z)\big| \mu_v(dy, dz) \to 0.
\] 
Putting these together and using the approximation \ref{G^R_v_approximation} we get
\[
\liminf_{v\to \infty} \liminf_{t\to \infty} \int G_v(y, z) \mu_v(dy, dz) \geq\int G^R(y, z) \mu(dy, dz)
\]
and
\[
\limsup_{v\to \infty} \limsup_{t\to \infty} \int G_v(y, z) \mu_v(dy, dz) \leq \int G^R(y+2\tilde \eps, z) \mu(dy, dz) + \tilde \eps^{-1} C(w) R^{-1/2}.
\]
Finally, since we also have $G^R$ is increasing in $R$, defining
\begin{equation}
G(y,z) \equiv G_{s',w,x}(y,z) := \lim_{R\to \infty} G^R(y,z) \equiv \lim_{R\to \infty} G^R_{s',w,x}(y,z)
\end{equation}
 we have by monotone convergence
\[
\liminf_{v\to \infty} \liminf_{t\to \infty} \int G_v(y, z) \mu_v(dy, dz) \geq\int G(y, z) \mu(dy, dz)
\]
and by simply upper bounding
\[
\limsup_{v\to \infty} \limsup_{t\to \infty} \int G_v(y, z) \mu_v(dy, dz) \leq \int G(y+2\tilde \eps, z) \mu(dy, dz) \,.
\]
Setting
\begin{equation}
H_x(s', w) := \int G(y, z) \mu(dy, dz) \equiv \int G_{s',w,x}(y, z) \mu(dy, dz) 
\end{equation}
and observing that $G_{s',w,x}(y+2\tilde \eps, z) = G_{s',w,x-\eps}(y, z)$ for $\epsilon = 2\tilde \eps \rme^{\sqrt{2}w-(2s')^{-1}}$, in light of~\eqref{e:A5.63}, this concludes the proof of the lemma.

\ifonlystable
\subsection{Proof of Lemma~\ref{lem:HM_eta_approximation}}
\else
\subsubsection{Proof of Lemma~\ref{lem:HM_eta_approximation}}
\fi
\label{s:HM_eta_approximation}

Recall $H^{M, \eta}_{t, v, r}$ is defined by the formulas \eqref{eq:H_tvr} and \eqref{eq:Gtrvsw_def} with $\cC^{*M, \eta}_t$ instead of $\cC^{*}_t$, we have (after ``undoing''	 the conditioning on $\cE_s^{s}$),
\begin{align}\label{eq:H_M_eta_main}
H^{M, \eta}_{t, v, r}(\tilde x, s', w) \leq 1_{\left[\frac{r}{v^2}, \frac{t/2}{v^2}\right]}(s') \times 2 e^{-\frac{(w+\gamma_{t,s})^2}{2 s}} \times  \int_{z\leq -w} F^{M, \eta}_v(z) \, \bbP\big(\wh{h}^*_{s} \in dz\big)
\end{align}
with $s := s'v^2$, $\wt{x} := x e^{-\sqrt{2}w + \frac{v^2}{2s}}$, and
\[
F^{M, \eta}_v(z) := \frac{1}{s^{-1}t^{-1}} \bbP_{0,0}^{t,0} \bigg(\frac{\cC^{*M, \eta}_t([-v, 0])}{ve^{\sqrt{2}v }} > x 
; \cA_t(w+z; r, t/2) \cap \cA_{t} \ \Big|s \in \cN, \wh{W}_{t,s} = w, \wh{h}^*_{s} = z \bigg).
\]
We further decompose $\cC^{*M, \eta}_t([-v, 0]) = \cC_{< s}([-v, 0]) + \cC_{> s}([-v, 0]) + \cC_{= s}([-v, 0])$ with
\begin{align}
\cC_{< s}([-v, 0]) &= \int_{\eta v^2}^{(s'\wedge \eta^{-1}) v^2} \cE_{\sigma}^{\sigma} \big([-v, 0] - \wh{W}_{t,\sigma} \big) 1_{\wh{W}_{t,\sigma} \leq M} \, \cN(\rmd \sigma) \\
\cC_{> s}([-v, 0]) &= \int_{(s' \vee \eta) v^2}^{\eta^{-1} v^2} \cE_{\sigma}^{\sigma} \big([-v, 0] - \wh{W}_{t,\sigma} \big) 1_{\wh{W}_{t,\sigma} \leq M} \, \cN(\rmd \sigma) \\
\cC_{= s}([-v, 0]) &=  1_{s'\in [\eta , \eta^{-1}]} \, \cE_{s}^{s} \big([-v, 0] - w \big) \, 1_{w\leq M}.
\end{align}
Abbreviate 
\[\cB_{<s} := \cA_t(w+z; r, s) \cap \cA_{t}(0, r),  \ \ \ \cB_{>s} := \cA_t(w+z; s, t/2) \cap \cA_{t}(t/2, t).
\]
By Markov inequality, linearity of expectation, and the Markov property of the DRW, we have
\begin{align}
	F^{M, \eta}_v(z) \times x v e^{\sqrt{2}v} & \leq \frac{1}{s^{-1}} \bbE_{0,0}^{s,w} \bigg(\cC_{< s}([-v, 0])  \
	; \cB_{<s}\bigg) \times \frac{\bbP_{s, w}^{t, 0} (\cB_{>s})}{t^{-1}}  \\
	& \ \ \ + \frac{1}{t^{-1}} \bbE_{s,w}^{t, 0} \bigg(\cC_{> s}([-v, 0])  \
	; \cB_{>s} \bigg) \times \frac{\bbP_{0,0}^{s, w} (\cB_{<s})}{s^{-1}} \\
	& \ \ \ + \bbE[\cE_{s}^{s} \big([-v, 0] - w \big)|\wh{h}^*_{s} = z] \times \frac{\bbP_{0,0}^{s, w} (\cB_{<s})}{s^{-1}} \times \frac{\bbP_{s, w}^{t, 0} (\cB_{>s})}{t^{-1}} 
\end{align}
By ballot bounds (after using monotonicity to replace $w+z$ by $0$),
\[
\bbP_{0,0}^{s, w} (\cB_{<s}) \lesssim  s^{-1} (w^- + 1), \ \ \ \bbP_{s, w}^{t, 0} (\cB_{>s})  \lesssim t^{-1} (w^- + 1 \wedge e^{-\frac{3}{2}w^+ + \frac{w^2}{t}})
\]
for $s\leq t/2$.  Using Palm calculus, 
\begin{align}
	\bbE_{0,0}^{s,w} \bigg(\cC_{< s}([-v, 0])  \
	; \cB_{<s}\bigg) &= \int_{\eta v^2}^{(s'\wedge \eta^{-1}) v^2} \bbE_{0,0}^{s,w} [\cE_{\sigma}^{\sigma} \big([-v, 0] - \wh{W}_{t,\sigma} \big) 1_{\wh{W}_{t,\sigma} \leq M}; \cB_{<s}] \, 2 \rmd \sigma \label{eq:moment_C<s_B<s} \\
	\bbE_{s,w}^{t, 0} \bigg(\cC_{> s}([-v, 0])  \
	; \cB_{>s} \bigg) &= \int_{(s' \vee \eta) v^2}^{\eta^{-1} v^2} \bbE_{s,w}^{t, 0} [\cE_{\sigma}^{\sigma} \big([-v, 0] - \wh{W}_{t,\sigma} \big) 1_{\wh{W}_{t,\sigma} \leq M}; \cB_{>s}] \, 2 \rmd \sigma \label{eq:moment_C>s_B>s}
\end{align}

The expectations in these integrals are estimated using the following:

\begin{lem}\label{cor:CHL_Lemma5.5_adapted} Assume $-v\leq w+z\leq 0$. For $s\geq \sigma \geq \eta v^2 \geq 10r$,
	\begin{align}\label{eq:CHL_Lemma5.5_adapted_0_to_r_to_s}
		& \bbE_{0, 0}^{s, w} \big[\cE_\sigma^\sigma([-v, \infty) - \wh{W}_{t, \sigma}) 1_{\wh{W}_{t, \sigma} \leq M} ; \cA_t(w+z; r, s) \cap \cA_t(0, s)\big] \\
		& \lesssim_{M, \eta} \, (v+1) e^{\sqrt{2}(v+w+z)} (z^+ + 1)  \frac{1}{\sigma (1+s-\sigma) } \frac{1}{\sqrt{\sigma \wedge (s-\sigma)}} e^{\frac{r}{s}\frac{w^2}{2s} }.
	\end{align}
	For $s\leq \sigma \leq \eta^{-1} v^2 \leq t/10$,
	\begin{align}\label{eq:CHL_Lemma5.5_adapted_s_to_t/2_to_t}
		& \bbE_{s, w} ^{t, 0}\big[\cE_\sigma^\sigma([-v, \infty) - \wh{W}_{t, \sigma}) 1_{\wh{W}_{t, \sigma} \leq M} ; \cA_t(w+z; s, t/2) \cap \cA_t(s, t)\big] \\
		& \lesssim_{M, \eta} \, (v+1) e^{\sqrt{2}(v+w+z)} (z^+ + 1)  \frac{1}{ (1+\sigma-s) t } \frac{1}{\sqrt{\sigma-s}} e^{100 \frac{s}{t}\frac{w^2}{2s} }.
	\end{align}
\end{lem}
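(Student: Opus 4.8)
The plan is to rerun the computation underlying the bound on $\wh{j}_{s,v}$ in the proof of Lemma~\ref{lemma_moment_estimate_for_cluster_outside_R} --- whose core inputs are Lemma~5.5 of \cite{CHL19} (equivalently the deep--level--set first moment estimate behind \cite{hartung2024growth}) and the Ballot estimates of this section --- while keeping track of the additional dependence on $M$, $z$, $w$. Only~\eqref{eq:CHL_Lemma5.5_adapted_0_to_r_to_s} needs genuine work: \eqref{eq:CHL_Lemma5.5_adapted_s_to_t/2_to_t} follows from it by reversing time on $[0,t]$, so that the bridge $\bbP_{s,w}^{t,0}$ becomes one started at time $0$ and the pair of intervals $([0,\sigma],[\sigma,s])$ is replaced by $([\sigma,t/2],[s,\sigma])$; the only structural difference is that the ``far'' bridge now lives on the long interval $[s,t]$ with left endpoint $w$, whence the correction $e^{100\frac{s}{t}\frac{w^2}{2s}}$ (with $100$ a crude bound for the $t/(t-\sigma)$--type ratios on $\sigma\le t/10$) in place of $e^{\frac{r}{s}\frac{w^2}{2s}}$.

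\emph{Decoupling the decoration.} Since $\cE^\sigma$ is a functional of $h^\sigma$, which is independent of the backbone $\wh W_{t,\cdot}$ and of $(h^{\sigma'})_{\sigma'\ne\sigma}$, conditioning on the latter factors $\cA_t(w+z;r,s)\cap\cA_t(0,s)$ into an event $\cA'$ involving only the backbone and the other decorations together with the single constraint $\wh h^{\sigma*}_\sigma\le w+z-\wh W_{t,\sigma}$ coming from the atom $\sigma$ (which lies in $[r,s]$ because $\sigma\ge\eta v^2\ge 10r$). Hence the left--hand side of~\eqref{eq:CHL_Lemma5.5_adapted_0_to_r_to_s} equals
\begin{equation*}
\bbE_{0,0}^{s,w}\Big[\,\mathbbm 1_{\cA'}\,\mathbbm 1_{\wh W_{t,\sigma}\le M}\,\Psi_\sigma(\wh W_{t,\sigma})\,\Big],\qquad \Psi_\sigma(a):=\bbE\big[\cE_\sigma([-v,\infty)-a)\,;\,\wh h^*_\sigma\le w+z-a\big].
\end{equation*}
On $\{\wh h^*_\sigma\le w+z-a\}$ the measure $\cE_\sigma([-v,\infty)-a)$ coincides with $\cE_\sigma([-v,w+z]-a)$, a window of width $v+w+z\in[0,v]$ whose bottom sits at height $m_\sigma-v-a$; dropping the max constraint and invoking the deep--level--set first moment estimate (the inner part of Lemma~5.5 in \cite{CHL19}, in the form valid for heights of order $\sqrt\sigma$) gives, uniformly for $\sigma\ge\eta v^2$ and $a\le M$, a bound $\Psi_\sigma(a)\lesssim_M(v+1)\,e^{\sqrt2(v+w+z)}\,e^{\sqrt2 a-\frac{(v+a)^2}{2\sigma}}$, the last factor being the Gaussian weight of the window bottom (at most $1$, and of order $1$ in $\eta$ as long as $|a|\lesssim v$).

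\emph{Bridge and Ballot integration.} Bounding $\Psi_\sigma$ as above and applying the Markov property of the DRW at $\sigma$, it suffices to estimate $\int_{-\infty}^{M}e^{\sqrt2 a-\frac{(v+a)^2}{2\sigma}}\,\bbP_{0,0}^{s,w}\big(\wh W_{t,\sigma}\in\rmd a\,;\,\cA'\big)$. The last probability factors, again by the Markov property, into the explicit Gaussian bridge density of $\wh W_{t,\sigma}$ times a Ballot probability on $[0,\sigma]$ (barrier $0$ on $[0,r]$, $w+z$ on $[r,\sigma]$, endpoints $0$ and $a$) times one on $[\sigma,s]$ (barrier $w+z$, endpoints $a$ and $w$), both controlled by Lemma~\ref{lem:15} and, for the mixed--height pieces, by Lemma~\ref{lemma_mixed_ballot}; the extra decay $\tfrac1{\sqrt{\sigma\wedge(s-\sigma)}}$ is harvested from the entropic repulsion of Lemma~\ref{l:10.7.0}, which forces $\wh W_{t,\sigma}$ to be of order $-\sqrt{\sigma\wedge(s-\sigma)}$. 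Carrying out the $a$--integral --- the Gaussian bridge density damping the $e^{\sqrt2 a}$ growth, and Lemma~\ref{l:lisa} handling the deterministic drift $\gamma_{t,\cdot}$ over the piece $[0,r]$ where the barrier still sits at $0$ --- produces the factor $\tfrac1{\sigma(1+s-\sigma)}\tfrac1{\sqrt{\sigma\wedge(s-\sigma)}}\,e^{\frac r s\frac{w^2}{2s}}$, together with the $(z^++1)$ coming from the Ballot ``edge'' value $w+z$ (the function $g$ in Lemma~\ref{lemma_mixed_ballot}). Finally, to exhibit the explicit prefactor $e^{\sqrt2 w}$ while keeping the constant $M$--dependent but $w$--free, one Girsanov--tilts the bridge from $0$ to $w$ back to a bridge from $0$ to $0$, compensating by a linear shift of the barrier (equivalently of $\wh h^{\sigma*}_\sigma$), exactly as in the tilting step of the proof of Lemma~\ref{lemma_moment_estimate_for_cluster_outside_R}. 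Assembling these steps gives~\eqref{eq:CHL_Lemma5.5_adapted_0_to_r_to_s}.

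\emph{Main obstacle.} The delicate part is the last $a$--integration: one must simultaneously gain the extra $\tfrac1{\sqrt{\sigma\wedge(s-\sigma)}}$ from repulsion, absorb the $e^{\sqrt2 a}$ growth of $\Psi_\sigma(a)$ against the Gaussian bridge density, and keep the constant free of $w$ (only the explicit $e^{\sqrt2 w+\frac r s\frac{w^2}{2s}}$ being allowed), all \emph{uniformly} over the full range $\sigma\in[\eta v^2,s]$, in particular through the boundary regimes $\sigma\approx\eta v^2$ and $\sigma\approx s$ where one Ballot factor degenerates and the Gaussian corrections stop simplifying. This is precisely the $(M,z,w)$--generalisation of the bookkeeping done for $w=z=0$ in \cite{CHL19} and recalled around~\eqref{eq:small_cJ_>R}, which is why I would organise the write-up as ``reduce to the $w=z=0$ case of \cite{CHL19}, then tilt and shift''.
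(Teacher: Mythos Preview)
Your overall plan—decouple the decoration, apply a first-moment bound, factor via Markov and Ballot estimates, integrate—has the right shape, but the paper organizes it differently and several of your identifications are off.

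The paper conditions at time $r$ (not $\sigma$), writing the left side of~\eqref{eq:CHL_Lemma5.5_adapted_0_to_r_to_s} as $\int \bbP_{0,0}^{s,w}(\wh W_{t,r}\in dy)\,\bbP_{0,0}^{r,y}(\cA_t(0,r))\,\bbE_{r,y}^{s,w}[\cE_\sigma^\sigma(\cdots)\,1_{\wh W_{t,\sigma}\le M};\cA_t(w+z;r,s)]$. The last factor now carries a \emph{single} barrier $w+z$ on all of $[r,s]$, so after translating space by $-(w+z)$ it is exactly an instance of Lemma~\ref{lem:CHL_Lemma5.5_adapted} with $\wt v=v+w+z$ and $\wt M=M-(w+z)$. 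That is where $e^{\sqrt2(v+w+z)}=e^{\sqrt2\wt v}$ and the factor $\tfrac1{\sqrt{\sigma\wedge(s-\sigma)}}$ come from—the latter simply from bounding the Gaussian density of $\wh W_{t,\sigma}$ inside that lemma, not from Lemma~\ref{l:10.7.0}. The remaining elementary $y$-integral over a Gaussian with mean $\tfrac{r}{s} w$ and variance $\sim r$ is what produces $e^{\frac{r}{s}\frac{w^2}{2s}}$; this is a Gaussian-mean effect, not a $\gamma$-drift correction via Lemma~\ref{l:lisa}.

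Your route (condition at $\sigma$) forces a mixed-barrier Ballot event on $[0,\sigma]$, which you do not actually control. More seriously, your bound $\Psi_\sigma(a)\lesssim_M(v+1)\,e^{\sqrt2(v+w+z)}e^{\sqrt2 a}$ is wrong: Lemma~4.2 of~\cite{CHL19} gives $e^{\sqrt2(v+a)}$, since the exponent is governed by the window \emph{bottom} $-v-a$, not its width $v+w+z$. With the correct bound, the missing $e^{\sqrt2(w+z)}$ must be recovered from the $a$-integration via the $e^{-\frac{3}{2}(a-(w+z))^+}$ Ballot decay on $[\sigma,s]$, which you never invoke; without it your route yields only $e^{\sqrt2 v}$, too weak for the downstream use in Lemma~\ref{lem:HM_eta_approximation}. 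Finally, \eqref{eq:CHL_Lemma5.5_adapted_s_to_t/2_to_t} is handled analogously (condition at $t/2$ and apply Lemma~\ref{lem:CHL_Lemma5.5_adapted} on $[s,t/2]$), not by a formal time-reversal of the first part.
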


Using this we can conclude

\begin{proof}[Proof of Lemma~\ref{lem:HM_eta_approximation}] Applying the estimate \eqref{eq:CHL_Lemma5.5_adapted_0_to_r_to_s} in equation \eqref{eq:moment_C<s_B<s}, we have
\begin{align}
	\bbE_{0,0}^{s,w} \bigg(\cC_{< s}([-v, 0])  \
	; \cB_{<s}\bigg) &\lesssim_{M, \eta} (v+1) e^{\sqrt{2}(v+w+z)} (z^+ + 1)   e^{\frac{r}{s}\frac{w^2}{2s} } \\
	& \qquad \quad \times \int_{\eta v^2}^{(s'\wedge \eta^{-1}) v^2} \frac{1}{\sigma (1+s-\sigma) } \frac{1}{\sqrt{\sigma \wedge (s-\sigma)}} d\sigma.
\end{align}
By separating into cases $s'\geq 2\eta^{-1}$, $\eta \leq s' \leq 2\eta^{-1}$, and $s' \leq \eta$, we can check that the last integral is $\lesssim_\eta s^{-1} 1_{s'\geq \eta}$. Similarly, applying the estimate \eqref{eq:CHL_Lemma5.5_adapted_s_to_t/2_to_t} in equation \eqref{eq:moment_C>s_B>s}, we have
\begin{align}
	\bbE_{s,w}^{t,0} \bigg(\cC_{> s}([-v, 0])  \
	; \cB_{>s}\bigg) &\lesssim_{M, \eta} (v+1) e^{\sqrt{2}(v+w+z)} (z^+ + 1)   e^{100 \frac{s}{t}\frac{w^2}{2s} } \\
	& \qquad \quad \times \int_{(s' \vee \eta) v^2}^{\eta^{-1} v^2} \frac{1}{ (1+\sigma-s) t } \frac{1}{\sqrt{\sigma-s}} d\sigma.
\end{align}
By separating into cases $s'\geq \eta/2$ and $s' \leq \eta/2$, we can check that the last integral is $\lesssim_\eta t^{-1} (1_{s'\geq \eta/2} + v^{-1} 1_{s'\leq \eta/2})$. Combining the bounds derived so far, we have
\begin{align}
	F^{M, \eta}_v(z)  &\lesssim x^{-1} e^{\sqrt{2}(w+z)}  (z^+ + 1) (w^- + 1) (1_{s'\geq \eta/2} + v^{-1} 1_{s'\leq \eta/2})\cdot e^{\max(\frac{r}{s}, 100\frac{s}{t}) \frac{w^2}{2s}}\\
	& \ \ \ + x^{-1} v^{-1} e^{-\sqrt{2} v}  1_{s'\geq \eta} \, \bbE[\cE_{s}^{s} \big([-v, 0] - w \big)|\wh{h}^*_{s} = z] (w^- + 1) (w^- + e^{-\frac{3}{2}w^+}) \cdot e^{\frac{w^2}{t}}.
\end{align}
Plugging this into \eqref{eq:H_M_eta_main}, we obtain
\begin{align}
	H^{M, \eta}_{t, v, r}(\tilde x, s', w) &\lesssim x^{-1}  (1_{s'\geq \eta/2} + v^{-1} 1_{\frac{r}{v^2} \leq s'\leq \eta/2}) \times  \int_{z\leq -w} e^{\sqrt{2}(w+z)}  (z^+ + 1) (w^- + 1) \, \bbP\big(\wh{h}^*_{s} \in dz\big) \\
	& \ \ \ + x^{-1} v^{-1} e^{-\sqrt{2} v} 1_{s'\geq \eta} \, \bbE[\cE_{s}^{s} \big([-v, 0] - w \big); \wh{h}^*_{s} \leq -w] ((w^-)^2 + e^{-\frac{3}{2}w^+}).
\end{align}
where the trailing exponential factors have been absorbed by $e^{-\frac{(w+\gamma_{t,s})^2}{2 s}}$. Now, because $r \geq v^{\alpha}$, we can upper bound $v^{-1} $ by $s'^{\frac{1}{2-\alpha}}$ for $\frac{r}{v^2} \leq s'\leq \eta/2$ in the first line. Using the uniform exponential tail of $\wh{h}^*_{s}$, it is easy to check that the integral over $z$ in the first line is at most $C e^{-c |w|}$ for some $c > 0$. By the moment bound of \cite[Lemma 4.2]{CHL19}, the expectation in the second line is at most $C ve^{\sqrt{2}v} e^{\sqrt{2} w^+ - (\sqrt{2}-\frac{1}{2})w^-} (w^- + 1)$. Overall this gives the bound claimed. 
\end{proof}

It remains to verify Lemma \ref{cor:CHL_Lemma5.5_adapted}. For this purpose we need the following version of \cite[Lemma 5.5]{CHL19}: 

\begin{lem}\label{lem:CHL_Lemma5.5_adapted} For $\wt{v}, \wt{M} \geq 0$ and $s_1 \leq \sigma \leq s_2$,
	\begin{align}
		& \bbE_{s_1, x_1}^{s_2, x_2} \big[\cE_\sigma^\sigma([-\wt{v}, \infty) - \wh{W}_{t, \sigma}) 1_{\wh{W}_{t, \sigma} \leq \wt{M}} ; \cA_t(s_1, s_2)\big] \\
		& \lesssim \, (\wt{v}+1) e^{\sqrt{2}\wt{v}} \frac{(x_1^-+1)(x_2^-+1)}{(1+\sigma-s_1)(1+s_2-\sigma)} \frac{1}{\sqrt{(\sigma-s_1)\wedge (s_2-\sigma)}} e^{\frac{(x_1^-+\wt{M})^2}{2(\sigma-s_1)} \wedge \frac{(x_2^-+\wt{M})^2}{2(s_2-\sigma)}}
	\end{align}
\end{lem}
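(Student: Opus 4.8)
The plan is to adapt the proof of Lemma~5.5 in~\cite{CHL19}, the present statement differing from it only in that the backbone is a Brownian \emph{bridge} with arbitrary endpoints $x_1,x_2$ rather than one pinned at $0$, and in the extra truncation $1_{\wh W_{t,\sigma}\le\tilde M}$. I would first condition on the Poisson atoms $\cN$ and on the value $\wh W_{t,\sigma}=w_0$, and decompose the Ballot event using the Markov property of the DRW at time $\sigma$. Writing $\cA_t(s_1,s_2)$ with the Palm atom $\sigma$ included (as it is in every application of the lemma, $h^\sigma$ arising as a Palm point), it factorizes, conditionally on $\wh W_{t,\sigma}$, into a DRW Ballot event for the bridge on $[s_1,\sigma]$ from $x_1$ to $w_0$, a DRW Ballot event for the bridge on $[\sigma,s_2]$ from $w_0$ to $x_2$, and the tip constraint $\wh h^{\sigma*}_{\sigma}\le -w_0$ on $h^\sigma$, which is independent of the backbone given $\wh W_{t,\sigma}$.

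For the decoration factor, Many-to-One (Lemma~\ref{l:4.1}) applied to $h^\sigma$, or directly the BBM level-set first-moment estimate of~\cite[Lemma~4.2]{CHL19}, gives
\[
	\bbE\big[\cE^\sigma_\sigma\big([-\tilde v,\infty)-w_0\big)\,;\,\wh h^{\sigma*}_\sigma\le -w_0\big]\ \lesssim\ (\tilde v+1)\rme^{\sqrt2\tilde v}
\]
uniformly in $\sigma$ and in $w_0\le\tilde M$: it is precisely the tip constraint that removes the otherwise linear-in-$\sigma$ growth of the unrestricted first moment, and the shift by $w_0$ is harmless since $w_0\le\tilde M$ (the $\tilde M$-dependence is deferred to the Gaussian correction below). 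For the two backbone factors, the DRW Ballot bound~\eqref{e:52} of Lemma~\ref{lem:15} (together with Lemma~\ref{lemma_mixed_ballot} to reinstate the shifted barriers) bounds the $[s_1,\sigma]$-piece by $C\,\tfrac{(x_1^-+1)(w_0^-+1)}{1+\sigma-s_1}$ and the $[\sigma,s_2]$-piece by $C\,\tfrac{(w_0^-+1)(x_2^-+1)}{1+s_2-\sigma}$.

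It then remains to integrate out $w_0$ against the law of $\wh W_{t,\sigma}$ under the bridge, restricted to $\{w_0\le\tilde M\}$. That law is Gaussian with variance $\asymp\tfrac{(\sigma-s_1)(s_2-\sigma)}{s_2-s_1}$, hence density $\lesssim\big((\sigma-s_1)\wedge(s_2-\sigma)\big)^{-1/2}$, and with mean the linear interpolation of $x_1,x_2$ up to the $O(\log)$ correction $\gamma_{t,\cdot}$. Carrying out the Gaussian integral $\int_{-\infty}^{\tilde M}(w_0^-+1)^2\,(\text{density})\,\rmd w_0$ and paying, on whichever of $[s_1,\sigma]$, $[\sigma,s_2]$ is shorter, the cost of displacing the bridge-mean into the constrained region — this is the source of the factor $\exp\!\big(\tfrac{(x_1^-+\tilde M)^2}{2(\sigma-s_1)}\wedge\tfrac{(x_2^-+\tilde M)^2}{2(s_2-\sigma)}\big)$ — and collecting all constants yields the claimed bound. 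The main obstacle is exactly this last step: one has to track how far the Ballot constraint and the endpoints force $\wh W_{t,\sigma}$ away from the truncation level $\tilde M$, and justify performing the Girsanov-type change of measure only on the shorter sub-interval, so that the price is the stated \emph{minimum} of the two exponentials rather than their product. Everything else is the routine first-moment-and-Ballot bookkeeping of~\cite{CHL19}.
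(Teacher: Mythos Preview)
Your decomposition --- condition on $\wh W_{t,\sigma}=y$, factor into decoration $\times$ two Ballot probabilities $\times$ bridge density, then integrate --- is exactly the paper's. The gap is in how you handle the $y$-integral.

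You bound the decoration factor \emph{uniformly} by $(\tilde v+1)\rme^{\sqrt2\tilde v}$, dropping its $y$-dependence. But \cite[Lemma~4.2]{CHL19} actually gives
\[
\bbE\big[\cE_\sigma([-\tilde v,\infty)-y);\,\wh h^{*}_\sigma\le -y\big]\ \lesssim\ (\tilde v+1)(y^-+1)\,\rme^{\sqrt2(\tilde v+y)},
\]
and the paper \emph{keeps} the factor $\rme^{\sqrt2 y}(y^-+1)$. After also bounding the bridge density by its sup $\big((\sigma-s_1)\wedge(s_2-\sigma)\big)^{-1/2}$ and each Ballot factor by $\frac{(x_i^-+1)(y^-+1)}{1+\cdots}$, the $y$-integral over $(-\infty,0]$ becomes $\int_{-\infty}^0 \rme^{\sqrt2 y}(y^-+1)^3\,\rmd y<\infty$, a universal constant. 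Your route instead leaves $\int_{-\infty}^{\tilde M}(w_0^-+1)^2 p(w_0)\,\rmd w_0$, which under the bridge law is of order $\mu^2+V$ (with $V\asymp(\sigma-s_1)\wedge(s_2-\sigma)$), not $O(1)$; if you first replace $p$ by its sup, the integral diverges outright. In the test case $x_1=x_2=0$, $\sigma-s_1=s_2-\sigma=T$, the target is $T^{-5/2}$ but your bound is $T^{-1}$.

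Separately, the factor $\exp\big(\tfrac{(x_1^-+\tilde M)^2}{2(\sigma-s_1)}\wedge\tfrac{(x_2^-+\tilde M)^2}{2(s_2-\sigma)}\big)$ does \emph{not} arise from ``displacing the bridge mean'' in the Gaussian integral (that would give a decay, not a growth). It enters through a refined Ballot bound for the range $0\le y\le\tilde M$: there $(y^-+1)=1$ and the paper replaces it by $\rme^{-\frac32 y^+}\cdot \rme^{(x_i^-+\tilde M)^2/(2\,\cdot)}$, the first factor beating the $\rme^{\sqrt2 y}$ from the decoration and the second being the price of allowing the endpoint up to height $\tilde M$. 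You need this two-regime Ballot bound (the case split $y\le 0$ versus $0\le y\le\tilde M$), not a Girsanov argument on the shorter sub-interval.
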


\begin{proof}[Proof of Lemma \ref{lem:CHL_Lemma5.5_adapted}] Conditioning on the time $\sigma$, the left side equals
	\begin{equation}
		\int_{y\leq \wt{M}} \bbP_{s_1, x_1}^{s_2, x_2}(\wh{W}_{t, \sigma} \in dy) \times \bbE[\cE_\sigma([-\wt{v}, \infty) - y); \wh{h}^*_\sigma \leq -y] \times \bbP_{s_1, x_1}^{\sigma, y}(\cA_t(s_1, \sigma)) \times \bbP_{\sigma, y}^{s_2, x_2}(\cA_t(\sigma, s_2)) 
	\end{equation}
	Up to a multiplicative constant, the Gaussian density is bounded by $\frac{1}{\sqrt{(\sigma-s_1)\wedge (s_2-\sigma)}}$, the expectation is bounded by $e^{\sqrt{2}(\wt{v}+y)} (\wt{v}+1)(y^-+1) (e^{-\frac{(\wt{v}+y)^2}{4\sigma}} + e^{-\frac{\wt{v}+y}{2}})$ by \cite[Lemma 4.2]{CHL19}, the third probability is bounded by 
	\begin{equation}
		\frac{x_1^-+1}{1+\sigma-s_1} \times \begin{cases} y^- + 1 & y\leq 0\\
			e^{-\frac{3}{2} y^+} \times e^{\frac{(\wt{M} + x_1^-)^2}{2(\sigma-s_1)}} & 0\leq y \leq \wt{M}
		\end{cases}
	\end{equation}
	from ballot bounds, and the fourth probability is bounded by the same expression with $x_2^-$ and $s_2-\sigma$ in place of $x_1^-$ and $\sigma-s_1$. Integrating over $y \leq \wt{M}$ then yields the overall bound.
\end{proof}

\begin{proof}[Proof of Lemma \ref{cor:CHL_Lemma5.5_adapted}] We only present the details for verifying \eqref{eq:CHL_Lemma5.5_adapted_0_to_r_to_s} since the computations are largely similar for \eqref{eq:CHL_Lemma5.5_adapted_s_to_t/2_to_t}. Conditioning on the time $r$, the left side of \eqref{eq:CHL_Lemma5.5_adapted_0_to_r_to_s} equals
	\begin{equation}
		\int \bbP_{0,0}^{s, w}(\wh{W}_{t,r} \in dy) \times \bbP_{0, 0}^{r, y}(\cA_t(0, r)) \times \bbE_{r, y}^{s, w} \big[\cE_\sigma^\sigma([-v, \infty) - \wh{W}_{t, \sigma}) 1_{\wh{W}_{t, \sigma} \leq \wt{M}} ; \cA_t(w+z; r, s)\big] 
	\end{equation}
	The density of $y$ is Gaussian with mean $\frac{r}{s} w$ and variance $r(1-\frac{r}{s})$. By ballot upper bound, the first probability in the integrand is at most $C (y^-+1)/r$. Applying Lemma \ref{lem:CHL_Lemma5.5_adapted} with $\wt{v} = v+w+z \in [0, v]$ and $\wt{M} = M -(w+z) \in [M, M+v]$, the last expectation is bounded by
	\begin{align}
		& \bbE_{r, y-(w+z)}^{s, -z} \big[\cE_\sigma^\sigma([-\wt{v}, \infty) - \wh{W}_{t, \sigma}) 1_{\wh{W}_{t, \sigma} \leq \wt{M}} ; \cA_t(r, s)\big] \\
		& \lesssim \, (v+1) e^{\sqrt{2}(v+w+z)} \frac{(z^+ +1)(y^-+1)}{(1+\sigma-r)(1+s-\sigma)} \frac{1}{\sqrt{(\sigma-r)\wedge (s-\sigma)}} e^{\frac{(y^-+M+v)^2}{2(\sigma-r)}}.
	\end{align}
	Multiplying everything together and comparing with the right side \eqref{eq:CHL_Lemma5.5_adapted_0_to_r_to_s} (and using $\sigma \geq 10r$), we just need to show that
	\begin{equation}\label{eq:CHL_Lemma5.5_adapted_0_to_r_to_s_final}
		\int \frac{dy }{\sqrt{2\pi r}} e^{-\frac{(y-\frac{r}{s}w)^2}{2r}} \frac{(y^- + 1)^2}{r} e^{\frac{(y^-+M+v)^2}{2(\sigma-r)}}   \lesssim_{M, \eta} e^{\frac{r}{s}\frac{w^2}{2s} }
	\end{equation}
	Applying the elementary inequalities $(A-B)^2 \geq A^2/2 - B^2$ and $(A+B)^2 \leq 2A^2+2B^2$ to the two exponents, this integral is at most
	\begin{equation}
		\int \frac{dy }{\sqrt{2\pi r}} e^{-\frac{y^2}{4r}(1-\frac{4r}{\sigma-r})} \frac{(y^- + 1)^2}{r}  \times e^{\frac{(\frac{r}{s} w)^2}{2r} }e^{\frac{(M+v)^2}{\sigma-r} }   
	\end{equation}
	The bound \eqref{eq:CHL_Lemma5.5_adapted_0_to_r_to_s_final} now follows from the condition $\sigma \geq \eta v^2 \geq 10r$. 
\end{proof}

\appendix
\section{Appendix: Proofs for preliminary statements}
\ifonlystable
\else
\subsection{Strong representation for the cluster law}
\label{s:A1}
\input{clusterlaw}
\fi

\subsection{Joint convergence of critical extreme level sets and maximum}
\label{s:A3}
In order to prove Proposition~\ref{prop:6.5}, we shall need the following moment estimates for the level sets of the extremal process at finite time $t > 0$. Similar estimates, holding (at least) up to levels $\Theta(t^{1/2+\epsilon})$ below $0$, were given in~\cite{CHL19}, and we only need slight modifications of which. For the first moment we have
\begin{lem}[Lemma~4.2 in~\cite{CHL19}]
\label{l:18}
There exists $g: \bbR \to \bbR$ such that $g(u) \sim u$ as $u \to \infty$ such that as $t \to \infty$,
\begin{equation}
\label{e:2002}
\bbE \Big(\cE_t\big([v, \infty)\big) ;\; \wh{h}^*_t \leq u \Big)
= \frac{1}{\sqrt{\pi}}
\rme^{-\sqrt{2} v - \frac{v^2}{2t}} (u-v) g(u) \big(1+o(1))
\end{equation}
where $o(1) \to 0$ as $t \to \infty$ uniformly in $u,v$ satisfying $u > -1/\epsilon$, $v < -t^{\epsilon}$ and $(u^++1)(u-v) \leq t^{1-\epsilon}$
 for any fixed $\epsilon > 0$.
\end{lem}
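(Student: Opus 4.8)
The plan is to reduce the computation to the standard first-moment (Many-to-One) estimate for BBM above a high level, incorporating the extra truncation $\{\wh h_t^\ast \le u\}$ via the spine representation. First I would apply Lemma~\ref{l:4.1} (Many-to-One) to write
\begin{equation}
\bbE\Big(\cE_t\big([v,\infty)\big);\ \wh h_t^\ast \le u\Big)
= \bbE\sum_{x\in L_t} \1\{h_t(x)\ge m_t+v\}\,\1\{\wh h_t^\ast \le u\}
= \rme^{t}\,\wt\bbE\Big(\1\{h_t(X_t)\ge m_t+v\}\,\1\{\wh h_t^\ast \le u\}\Big).
\end{equation}
Under $\wt\bbP$ the spine height $h_t(X_t)$ is a Brownian motion, so conditioning on its terminal value $h_t(X_t)=m_t+v+a$ (with $a$ ranging over $[0,\infty)$, after centering) turns the Gaussian weight $\rme^{t}$ times the density of $h_t(X_t)$ into the familiar $\tfrac{1}{\sqrt{2\pi t}}\rme^{t-(m_t+v+a)^2/(2t)}$, which with $m_t=\sqrt2 t-\tfrac{3}{2\sqrt2}\log^+t$ expands to $\tfrac{1}{\sqrt{\pi}}\,t^{\text{(power)}}\rme^{-\sqrt2(v+a)-(v+a)^2/(2t)}\cdot(1+o(1))$ uniformly in the stated range of $u,v$ (here the hypothesis $v<-t^\epsilon$ and $(u^++1)(u-v)\le t^{1-\epsilon}$ guarantees the quadratic corrections are controlled).

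Next I would handle the event $\{\wh h_t^\ast\le u\}$ conditionally on the spine trajectory. By Lemma~\ref{l:2.3} (specifically~\eqref{e:29.1}),
$\wt\bbP(\wh h_t^\ast\le u\mid \wh h_t(X_t)=w)=\bbP(\cA_t\mid \wh W_{t,0}=w-u,\ \wh W_{t,t}=-u)$, i.e.\ a DRW Ballot probability with the two endpoint heights $w-u$ and $-u$. Here $w-u=(v+a)-u+o(1)\to-\infty$ since $v\to-\infty$, so $(w-u)^-\sim -(w-u)=u-v-a$, while $-u$ is $O(1)$ (or at worst $-u\le 1/\epsilon$). Then the sharp asymptotics~\eqref{e:53} of Lemma~\ref{lem:15} give that this Ballot probability is $\sim 2 f^{(0)}(w-u)\,g(-u)/t$, and by~\eqref{e:53.5} one has $f^{(0)}(w-u)\sim u-v-a$ as that argument $\to-\infty$; setting $g(u):=2g(-u)$ (abuse of notation matching the statement, with $g(u)\sim u$) absorbs the $-u$ endpoint. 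The factor $1/t$ from the Ballot estimate cancels against the $\sqrt t$'s already bookkept so as to produce the net polynomial power in $m_t$; one checks the powers of $t$ combine to give exactly the $t^{-0}$ prefactor displayed (the $\tfrac{3}{2\sqrt2}\log^+t$ in $m_t$ is precisely what makes this work).

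Finally I would integrate over $a\in[0,\infty)$: the integrand is $\asymp \rme^{-\sqrt2(v+a)}(u-v-a)$, but the constraint $h_t(X_t)\le m_t+u$ (forced by $\wh h_t^\ast\le u$) caps $a$ at $u-v+o(1)$, and the $f^{(0)}$ factor contributes the linear-in-$(u-v-a)$ weight, so
$\int_0^{u-v}\rme^{-\sqrt2 a}(u-v-a)\,\rmd a=\tfrac{1}{2}(u-v)-\tfrac{1}{2\sqrt2}(1-\rme^{-\sqrt2(u-v)})\cdot(\text{const})=(u-v)\cdot(\tfrac12+o(1))$ since $u-v\to\infty$. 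Collecting the $\rme^{-\sqrt2 v-v^2/(2t)}$ prefactor, the $(u-v)$ from this integral, and the $g(u)$ from the right endpoint yields~\eqref{e:2002}. The main obstacle is the uniformity: one must verify that all the $o(1)$ errors — in the Gaussian expansion of $m_t$, in the Ballot asymptotics of Lemma~\ref{lem:15} (which require $(v^-+1)(w^-+1)\le R^{1-\epsilon}$ type bounds, here translating to $(u^++1)(u-v)\le t^{1-\epsilon}$), and in the dominated-convergence step for the $a$-integral — are uniform over the full stated parameter window; this is where the hypotheses on $u$ and $v$ are spent, and it is essentially the content of Lemma~4.2 in~\cite{CHL19} with the truncation level $u$ allowed to stay bounded rather than being sent to $+\infty$.
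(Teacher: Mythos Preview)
Your proposal is correct and reproduces exactly the argument of \cite[Lemma~4.2]{CHL19}, which is all the paper does here: it simply cites that lemma and remarks that the same proof extends verbatim to the enlarged range of $u$ (no longer bounded by $1/\epsilon$) because the uniformity in Lemma~\ref{lem:15} (the second part of \cite[Lemma~3.4]{CHL19}) covers the regime $(u^++1)(u-v)\le t^{1-\epsilon}$. One small bookkeeping correction: the right identification is $g_{\text{Lemma~\ref{l:18}}}(u)=g_{\text{Lemma~\ref{lem:15}}}(-u)$ without the extra factor of $2$---the $2$ from the Ballot asymptotic combines with the $1/\sqrt{2}$ from $\int_0^\infty \rme^{-\sqrt2 a}(u-v-a)\,\rmd a\sim (u-v)/\sqrt2$ and the Gaussian prefactor $t/\sqrt{2\pi}$ to produce the displayed $1/\sqrt\pi$.
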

\begin{proof}
This is essentially the second part of~\cite[Lemma~4.2]{CHL19}. There, $u$ is restricted to being at most $1/\epsilon$, but the proof also works verbatim for all $u$ as in the current statement, thanks to the uniformity of the asymptotics in the second part of~\cite[Lemma~3.4]{CHL19}.
\end{proof}

Turning to the second moment, here we shall need a slightly different version from that given in~\cite{CHL19}. It is sharper for $u-v = O(\sqrt{t})$, as the linear term in $u^+$ is now sublinear.
\begin{lem}
\label{l:6.4}
There exists $C > 0$ such that for all $v \leq u \wedge 0$, $u \geq 0$,
\begin{equation}
\bbE \Big(\cE_t\big([v, \infty)\big)^2 ;\; \wh{h}^*_t \leq u \Big) 
\leq \rme^{-2\sqrt{2} v} (u-v+1)^2 \rme^{\sqrt{2}u} (u^++1)^{3/4}
\end{equation}
\end{lem}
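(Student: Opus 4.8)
The plan is to reduce the second-moment bound to the first-moment estimate of Lemma~\ref{l:18} by the standard ``branch at the first split'' (or here, ``branch at the spine split'' / tree recursion) argument for binary BBM. First I would write $\cE_t([v,\infty))^2 = \cE_t([v,\infty)) + \sum_{x \neq y \in L_t} 1_{\{h_t(x) \geq v\}} 1_{\{h_t(y) \geq v\}}$, so that the diagonal term is controlled directly by the first-moment bound (which on the event $\wh h^*_t \le u$ costs $\rme^{-\sqrt2 v - v^2/(2t)}(u-v)g(u)$, comfortably within the claimed bound since $v\le 0$ forces $\rme^{-\sqrt 2 v}\le \rme^{-2\sqrt2 v}$ up to... actually one must be slightly careful: with $v\le 0$ we have $\rme^{-\sqrt2 v}\le \rme^{-2\sqrt2 v}$, and $\rme^{-v^2/2t}\le 1$, so the diagonal is bounded by $\rme^{-2\sqrt2 v}(u-v)g(u) \lesssim \rme^{-2\sqrt 2 v}(u-v+1)^2 \rme^{\sqrt2 u}(u^++1)^{3/4}$ for $u\ge 0$). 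For the off-diagonal pair term, I would decompose over the most recent common ancestor: for a pair $x\neq y$, let $s = |x\wedge y| \in [0,t]$ be the splitting time and $w = h_s(x\wedge y)$ the height at that time; conditionally on $\cF_s$ and on the pair, $\hat h_t(x) - w$ and $\hat h_t(y)-w$ are heights at time $t-s$ of two independent BBMs emanating from the two children, and the whole-process maximum $\hat h^*_t \le u$ entails in particular that each of these two sub-BBM maxima is $\le u - w + m_t - m_s$. Using the Many-to-Two / Many-to-One lemma (Lemma~\ref{l:4.1}) to turn the sum over ordered pairs into an expectation with a single spine split, the pair term becomes an integral over $s \in [0,t]$ and $w\in\bbR$ of (the Gaussian density of $W_s$ at level $w - $ appropriate centering) times the product of two first-moment-type quantities, each of the form $\bbE(\cE_{t-s}([v', \infty)); \hat h^*_{t-s} \le u')$ with $v' = v - w$ and $u' = u - w + (m_t - m_s)$.

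The key step is then to plug in Lemma~\ref{l:18} for each of the two factors and to check that the resulting double integral over $(s,w)$ is bounded by the claimed expression. Each factor contributes roughly $\rme^{-\sqrt2 v' - v'^2/(2(t-s))}(u'-v')(u'^+ + 1)$; the product gives an exponential factor $\rme^{-2\sqrt2 v + 2\sqrt2 w}$ times Gaussian-in-$v'$ terms, and combined with the Gaussian weight $\rme^{-w^2/(2s)}$ (from the backbone at time $s$) and the exponential tilt $\rme^{\sqrt2 w - w^2/(2s)}$ coming from the change of measure in the Many-to-One lemma, one completes the square in $w$. The net effect of the $w$-integral should be $\rme^{\sqrt2 u}$ (from the $\rme^{\sqrt2 u'}$-type factors, or equivalently a Laplace-type evaluation), together with polynomial factors in $u-v$ and a power of $(u^+ + 1)$; the exponent $3/4$ in $(u^++1)^{3/4}$ is engineered precisely so that the product of the two linear-in-$u'$ factors $\sim (u'^+)^2$, after the Gaussian integration over $w$ which has standard deviation $\Theta(\sqrt s)$ and hence spreads $u'$ over a range, yields only a sublinear power of $u^+$ in the regime $u - v = O(\sqrt t)$ of interest — this is exactly the improvement over the $(u^+)^{O(1)}$ bound in~\cite{CHL19}. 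I would handle the $s$-integral by splitting into the bulk $s \in [\epsilon t, (1-\epsilon)t]$ and the two edges $s$ near $0$ and $s$ near $t$; in the edges one of the two factors is essentially the whole cluster contribution and the other is $O(1)$, and one uses Lemma~\ref{lem:15}-type ballot bounds or crude first-moment bounds, while in the bulk the Gaussian calculations above apply cleanly.

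The main obstacle I anticipate is \emph{bookkeeping the centering functions $m_t, m_s, m_{t-s}$ across the recursion and controlling the resulting logarithmic and polynomial errors uniformly in $s\in[0,t]$}, rather than any conceptual difficulty. Specifically, the condition $\hat h^*_t \le u$ does not factorize exactly into conditions on the two sub-BBMs — it factorizes into $\max$ over the two children of $(\hat h^{(i)*}_{t-s} + w - (m_t - m_{s}) )\le u$ (with $\hat h^{(i)*}$ the centered max of sub-BBM $i$), and the curve $m_t - m_s - m_{t-s} = \frac{3}{2\sqrt2}(\log^+ t - \log^+ s - \log^+(t-s))$ is $\Theta(\log(s\wedge(t-s)))$ by Lemma~\ref{l:lisa}, contributing $(s\wedge(t-s))^{O(1)}$ factors that must be absorbed into the $s$-integral. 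A secondary technical point is that Lemma~\ref{l:18} has restrictions ($v' < -(t-s)^{\epsilon}$, $(u'^++1)(u'-v') \le (t-s)^{1-\epsilon}$) which fail in part of the $(s,w)$-domain; there one must fall back on the cruder but universal bound $\bbE(\cE_{t-s}([v',\infty)); \hat h^*_{t-s}\le u') \le C\rme^{-\sqrt2 v' - v'^2/(2(t-s))}(u'-v'+1)(u'^+ +1)$ valid for all ranges (this is the first part of~\cite[Lemma~4.2]{CHL19}), at the cost of a slightly worse but still affordable power of $u^+$. Once all these pieces are in place, a somewhat lengthy but routine Gaussian integration delivers the stated bound.
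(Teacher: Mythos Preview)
Your overall strategy --- the two-spine/MRCA decomposition, with each sub-BBM contributing a first-moment factor of Lemma~\ref{l:18} type --- is the same route the paper takes (its variable $r$ is your $t-s$). But there is a real gap. When you write that ``$\wh h^*_t\le u$ entails in particular that each of these two sub-BBM maxima is $\le\ldots$'' and then keep only the Gaussian density of the backbone position $w$, you are discarding the constraint $\{\wh h^*_t\le u\}$ on the particles that branched off the spine \emph{before} the split time $s$. In the paper this constraint is the event $\{\wh h^*_t(\rmB_r(X_t)^\rmc)\le u\}$ in the first factor of~\eqref{e:502}; via Lemma~\ref{l:2.3} it is a DRW ballot event, and the ballot upper bound~\eqref{e:52} makes it contribute an extra $(r\wedge(t-r))^{-1}$ on top of the $(r\wedge(t-r))^{-1/2}$ from the bare density. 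Without it the $s$-integral over the bulk is too large by a factor $\sqrt t$: already for $u=0,\,v=-1$ your integrand after the $w$-integration is of order $s^{-1/2}$, and $\int^t s^{-1/2}\,\rmd s\asymp\sqrt t$ instead of the required $O(1)$. You invoke ballot bounds only for the edges $s\approx 0,\,t$; they are needed throughout.

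Separately, your heuristic for the exponent $3/4$ is not how it arises. Once the ballot factor is in place, the bound from~\cite{CHL19} on the two-spine integrand is $\lesssim (u^++1)\,(r\wedge(t-r))^{-3/2}\times[\ldots]$; the paper then uses the elementary inequality $(u^++1)(r\wedge(t-r))^{-3/2}\le C(u^++1)^{3/4}(r\wedge(t-r))^{-5/4}$ on the range $r\wedge(t-r)>u/4$ (with $5/4>1$ keeping the $r$-integral convergent), and treats the complementary range $r\wedge(t-r)\le u/4$ by a separate sharper estimate that reinstates the Gaussian quadratics dropped in~\cite{CHL19}. So the $3/4$ is a bookkeeping tradeoff in the $r$-integral, not a consequence of Gaussian spreading in $w$.
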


\begin{proof}
The proof is a slight modification of the proof of Lemmas~4.3 and~4.4 in~\cite{CHL19}, and we shall thus just highlight the needed changes. In the proof of \cite[Lemma~4.3]{CHL19}, for $r$ such that $r \wedge (t-r) \leq u/4$ we shall use a different bound on the integral
\begin{equation}
\label{e:502}
\int_z \wt{\bbP} \Big( \wh{h}_{t-r}(X_{t-r}) - m_{t,r} \in \rmd z, \,
		   		   \wh{h}^*_t \big(\rmB_{r}(X_t)^\rmc \big) \leq u \Big)   
\times \wt{\bbP}\big( \wh{h}_r(X_r) \geq v-z \,, \wh{h}_r^* \leq u-z \big)^2 \,.
\end{equation} 
For the first term in the integrand, we reintroduce the quadratic exponent in $z$ into the density of $\wh{h}_{t-r}(X_{t-r}) - m_{t,r}$ in the bound for~\cite[(4.18)]{CHL19}, while keeping the same ballot upper bound used there for~\cite[(4.17)]{CHL19}, to get the combined bound
\begin{equation}
C\big(1+(r \wedge (t-r))\big)^{-3/2} \rme^{-(t-r)}
(u^+ + 1)\big((u-z)^+ + 1\big)
\rme^{- (\sqrt{2}-\eps_{t,r})z -  \frac{z^2}{2(t-r)}} \,,
\end{equation}
where $\eps_{t,r} = o(1)$ as $t-r\to \infty$. For the second term in the integrand, we can replace every occurrence of $t$ by $r$ in the bounds~\cite[(4.19)-(4.20)]{CHL19} to arrive at the combined bound
\begin{equation}
\label{e:677}
C \rme^{-2r} (u-v+1)^2 \rme^{-2\sqrt{2} v} ((u-z)^++1)^2 \rme^{2\sqrt{2}z} \rme^{-\frac{3}{2}(u-z)^-} 
\big(\rme^{-\frac{(v-z)^2}{4r}} + \rme^{-\frac{(v-z)^-}{2}}\big)
  \,.
\end{equation}
Together, the integral in~\eqref{e:502} is at most
\begin{multline}
C\big(1+(r \wedge (t-r))^{-3/2}\big) \rme^{-t-r}  (u-v+1)^2 \rme^{-2 \sqrt{2} v}  \\
\times
\int (u^+ +1) \big((u-z)^++1\big)^3 \rme^{(\sqrt{2}+\eps_{t,r})z} \times \rme^{- \frac{z^2}{2(t-r)}} \times 
\big(\rme^{-\frac{z^2}{4r}} + \rme^{-\frac{z^+}{2}-\frac{3}{2}(u-z)^-}\big)
 \rmd z \,.
\end{multline}
Now, let us first assume $t-r \geq C$ for some constant $C$ so that $\eps_{t,r} \leq 0.1$. If $t-r \leq u/4$, we drop the last factor in the integrand. Optimizing the quadratic exponent, the resulting integral is then bounded by $C (u+1)^4 \rme^{1.1^2 (t-r)} \leq C' \rme^{u}$. On the other hand, if $r \leq u/4$, we drop the middle term and similarly obtain the bound $C (u+1)^4 (\rme^{1.1^2 \times 2r} + \rme^{(\sqrt{2} +0.1 - 1/2)u}) \leq C' \rme^u$. Together with \cite[Lemma~4.3]{CHL19} applied to $r \wedge (t-r) > u/4$, this gives an upper bound of
\begin{multline}
\wt{\bbP}^{(2)} \Big( \min \big\{ \wh{h}_t(X_t(1)) , \wh{h}_t(X_t(2)) \big\} \geq v \,,\,\,
		   \wh{h}_t^* \leq u \,\big|\, \rmd(X_t(1), X_t(2)) = r \Big) \\ 
\leq C \frac{\rme^{-t-r}}{1+\big(r \wedge (t-r) \big)^{5/4}} \rme^{\sqrt{2}u}  (u+1)^{3/4}
	\rme^{-2\sqrt{2} v} (u-v+1)^2 
\end{multline}
for all $r \in [0,t-C]$. Here, the factor $(u+1)^{3/4}$ only really appears in the intermediate range $r \wedge (t-r) > u/4$, where the linear growth in $u^+ + 1$ has been reduced at the cost of a slower decay in denominator. Lastly, to deal with $r \in [t-C, t]$, we can always bound the ballot probability~\cite[(4.17)]{CHL19} by $1$, while replacing the $(t-r)$ factor in~\cite[(4.18)]{CHL19} by $(t-r)^{-1/2}$ and otherwise proceed just as in~\cite[Lemma~4.3]{CHL19}. The net effect is the last bound above needs to be multiplied by $(t-r)^{-1/2}$ for $r \in [t-C, t]$. Altogether, using this estimate in the proof of~\cite[Lemma~4.4]{CHL19} instead of the original bound then yields the desired claim.
\end{proof}

\begin{proof}[Proof of Proposition~\ref{prop:6.5}]
For $s \geq 0$, $u \geq v$, set
\begin{equation}
	R_t(v,u) := \frac{\cE_t\big([v, \infty)\big)}{(u-v+1) e^{-\sqrt{2}v - \frac{v^2}{2t}}} \mathbbm 1_{\{\wh{h}_t^* \leq u\}} \,.
\end{equation}
Thanks to~\eqref{e:N7}, and, in particular, the tightness of the centered maximum, it will be sufficient to show that
\begin{equation}
\label{e:7.5}
\Big(R_t(-v,u) - \frac{1}{\sqrt{\pi}} Z_s 
\Big)\mathbbm 1_{\{\max_{\bbL_s} |h_s(x)| \leq m_s + \log \log s\}}
\,{\overset{\bbP}{\longrightarrow}}\, 0 \,,
\end{equation}
as $t \to \infty$, then $s \to \infty$ and finally $u \to \infty$,  uniformly in $v$ as in the statement of the proposition. Henceforth all the asymptotic statements will be taken under these limits.

We now decompose the atoms in $\cE_t$ according to their ancestor at time $s$, and write $R_t^{(x)}(v,u)$ for the analog of $R_t(v,u)$ only with respect to the BBM rooted at $x$. Then the difference between the left hand side of~\eqref{e:7.5} and
\begin{equation}
\label{e:7.6}
\begin{split}
\sum_{x \in \bbL_s}\Big(R^{(x)}_{t-s} \big(& -v-h_s(x)+m_t - m_{t-s}\,,\,\,
	u-h_s(x)+m_t - m_{t-s}\big)
	\rme^{\sqrt{2}(h_s(x) - \sqrt{2}s)} \big(1+o(1)\big) \\
&	- \frac{1}{\sqrt{\pi}}\big(\sqrt{2}s - h_s(x)\big) \rme^{\sqrt{2} (h_s(x) -\sqrt{2}s)}
\Big)
\mathbbm 1_{\{\max_{\bbL_s} |h_s(x)| \leq m_s + \log \log s\}}
\end{split}
\end{equation}
is not zero only on $\{\wh{h}_t^* > u\}$ and thus tends to $0$ in probability. Above, we have used that $m_t - m_{t-s} = \sqrt{2} s + o(1)$.

Now, by Lemma~\ref{l:18}, on 
\begin{equation}
\label{e:7.14}
\big\{\max_{\bbL_s} |h_s(x)| \leq m_s + \log \log s\big\} \,,
\end{equation}
we have,
\begin{equation}
\bbE\Big(R^{(x)}_{t-s} \big(-v-h_s(x)+m_t - m_{t-s}\,,\,\,
	u-h_s(x)+m_t - m_{t-s}\big)\,\Big|\, \cF_s\Big)
= \frac{1+o(1)}{\sqrt{\pi}} \big(\sqrt{2}s - h_s(x)\big) \,,
\end{equation}
as $m_t - m_{t-s} = \sqrt{2} s + o(1)$ and so $- h_s(x) + m_t - m_{t-s}  = \Omega(\log s)$ on \eqref{e:7.14}. This makes the conditional mean of the sum in~\eqref{e:7.6} under $\bbP(-|\cF_s)$ equal to $o(1) |Z_s|$.
At the same time, by Lemma~\ref{l:6.4}, still on~\eqref{e:7.14}, we have
\begin{multline}
\bbE \Big(\big(R^{(x)}_{t-s} \big( -v-h_s(x)+m_t - m_{t-s}\,,\,\,
	u-h_s(x)+m_t - m_{t-s}\big)\big)^2\,\Big|\, \cF_s\Big) \\
= \big(\sqrt{2}s - h_s(x)\big) \rme^{-\sqrt{2} (h_s(x)-\sqrt{2}s)} o(1) \,.
\end{multline}
Then by independence of the $R^{(x)}_{t-s}$ among $x\in \bbL_s$, the variance of the sum in~\eqref{e:7.6} under $\bbP(-|\cF_s)$ is $o(1) |Z_s|$ as well. By Chebyshev's inequality and tightness of $Z_s$, this implies that for any fixed $\eps > 0$, the $\bbP(-|\cF_s)$-probability of (the absolute value of) the left hand side of~\eqref{e:7.5} being more than $\eps$ converges to zero in the limits taken. Then Bounded Convergence makes this hold for $\bbP$-probabilities as well.
\end{proof}

\ifonlystable
\subsection{Ballot Estimates}
\else
\subsection{Bessel-3 and Ballot Estimates}
\fi
\label{s:A2}
\ifonlystable
\else
\begin{proof}[Proof of Lemma~\ref{l:Bessel_TV}]
 Let us first prove the case $s = 0$. Let $Y^{(x)}$ and $Y^{(y)}$ be independent Bessel processes started at $x$ and $y$, and
\begin{equation}
T_{x, y} := \inf_{t\geq 0} \left\{t: Y^{(x)}_t = Y^{(y)}_t\right\}.
\end{equation}
By using strong Markov property and a straightforward coupling, it is easy to see the stochastic domination $T_{x, y} \leq_s T_{0, R}$. Now, by coupling $Y^{(x)}$ and $Y^{(y)}$ after time $T_{x, y}$, we see that
\begin{equation}
d_{\rm TV}\left(Y^{(x)}_{[r, \infty)}, Y^{(y)}_{[r, \infty)}\right) \leq \bbP(T_{x, y} > r) \leq \bbP(T_{0, R} > r) \to 0 \,,
\end{equation}
as $r \to \infty$ because $T_{0, R} < \infty$ almost surely. (To see the latter, observe that by scale invariance and the strong Markov property of $Y$, the probability $p := \bbP(T_{0, R} = \infty)$ does not depend on $R$ and satisfies $p \leq qp$, where $q = \bbP(T_{0,R} > 1) < 1$.)

Turning to the case $s > 0$, by the Markov property again, 
\begin{align}
d_{\rm TV}\left(Y^{(x)}_{[r, \infty)}, Y^{(y)}_{[s+r, \infty)}\right) &\leq \bbP(Y^{(y)}_s > R + R') + \max_{y' \leq R + R'} d_{\rm TV}\left(Y^{(x)}_{[r, \infty)}, Y^{(y')}_{[r, \infty)}\right) \\ 
& \leq \bbP(Y^{(R)}_s > R + R') + \bbP(T_{0, R+R'} > r) \to 0 \,,
\end{align}
as $r \to \infty$ followed by $R' \to \infty$.  
\end{proof}

\begin{proof}[Proof of Lemma~\ref{l:2.2}]
For $w \in C([0, r])$ and $b\in \bbR$, we denote $w^{(b)}_s = w_s + b s$ for brevity.  
 Using Bayes' rule, the Markov property, and the Cameron-Martin formula, we have
	\begin{align}\label{eq:BM_conditioned_1}
		\bbP_{0, x}^{t, y} \left(W^{(d_t)}_{[0,r]} \in \rmd w \, \Big| \min_{s \in [0,t]} W_s \geq 0 \right) &= \bbP_{0, x} \left(W^{(d_t)}_{[0,r]} \in \rmd w \right) \cdot 1_{\{\min w^{(-d_t)}_{[0,r]}\geq 0\}} \cdot g^{(1)}_{t, r}(w^{(-d_t)}) \\
  &= \bbP_{0, x} \left(W_{[0,r]} \in \rmd w\right) \cdot 1_{\{\min w^{(-d_t)}_{[0,r]}\geq 0\}} \cdot g^{(1)}_{t,r}(w^{(-d_t)}) \cdot g^{(2)}_{t,r} (w)
	\end{align}	
with
\begin{equation}\label{eq:R-N-for-stay-positive}
g^{(1)}_{t,r}(w) := \frac{p_{t-r}(w_r, y)}{p_t(x, y)} \cdot  \frac{\bbP^{t, y}_{r, w_r}(\min W_{[r,t]} \geq 0)}{\bbP^{t, y}_{0, x}(\min W_{[0, t]} \geq 0)} \,,
\end{equation}
where $p_t(x, y)$ is the transition kernel for Brownian motion and
\begin{equation}
g^{(2)}_{t,r}(w) = \exp \big(d_t w_r - \frac{1}{2} d_t^2 r\big) \,,
\end{equation}
which converges to $1$ as $t \to \infty$ for all fixed $w$ and bounded by an exponential function in $w_r$ because $d_t \to 0$ as $t\to \infty$. 

By Lemma~\ref{l:102.1}, we have that as $t\to \infty$ for fixed $x,y,z$,
\begin{equation}
\frac{\bbP^{t, y}_{r, z}(\min W_{[r,t]} \geq 0)}{\bbP^{t, y}_{0, x}(\min W_{[0, t]} \geq 0)} = \frac{z}{x} (1 + o(1))
\end{equation}
uniformly for $z$ in any compact set of $[0, \infty)$ and with the right hand side an upper bound up to a constant factor. Moreover, by direct computation,
\begin{align}
\frac{p_{t-r}(z, y)}{p_t(x, y)} &= \frac{1}{\sqrt{1 - \frac{r}{t}}} \exp\left(-\frac{(z-y)^2}{2(t-r)} + \frac{(y-x)^2}{2t}\right) =  1 + o(1) \,,
\end{align}
uniformly for $z$ in any compact set and with the quantity on the right hand side bounded by a constant for all $t \geq 1$. 
Combining these estimates and the fact that $d_t \to 0$ as $t\to \infty$, we have
\begin{equation}\label{eq:BM_Bessel_convergence_final}
1_{\{\min w^{(-d_t)}_{[0,r]}\geq 0\}} \cdot g^{(1)}_{t,r}(w^{(-d_t)}) \cdot g^{(2)}_{t,r} (w) \to 1_{\{\min w_{[0,r]}\geq 0\}} \frac{w_r}{x}
\end{equation}
as $t\to \infty$ whenever $\min w_{[0,r]}\neq 0$ (which happens $\bbP_{0,x}$ almost surely) and bounded by an exponential function in $w_r$ (which is $\bbP_{0,x}$-integrable). It follows by Dominated Convergence theorem that the convergence in~\eqref{eq:BM_Bessel_convergence_final} is also $L^1(\bbP_{0,x})$. This proves the result since the right hand side is also the density (w.r.t.~BM started at $x$) of Bessel-3, as stated in~\eqref{e:202.3}.
\end{proof}
\fi

\begin{proof}[Proof of Lemma~\ref{lem:15}]
 This is nearly verbatim from Lemma~3.4 in~\cite{CHL19}, except that we allow the endpoint to be any $R\leq t$. The exact same proof there goes through (i.e.~applying Lemma \ref{l:lisa} with $R-r$ in place of $t-r$ to check the barrier curve condition of~\cite{CHL17_supplement} is satisfied), and we omit the repetitive details.
\end{proof}

Finally, we give,
\begin{proof}[Proof of Lemma \ref{lemma_mixed_ballot}] We first show the estimate \eqref{eq:ballot_prob_0_to_s}. Conditioning on the time $r = r_s$ and using Markov property and the Law of Total Probability, the probability on the left equals
	\begin{equation} \label{eq:ballot_prob_0_to_s_integral}
		\int  \bbP_{0,0}^{r, y} \left(\cA_t(0, r) \right) \bbP_{r,y}^{s,w} \left(\cA_t(w+z; r, s) \right) \bbP_{0, 0}^{s, w}\left(\wh{W}_{t,r} \in \rmd y\right) 
	\end{equation}
	By the ballot upper bound \eqref{e:52} in Lemma \ref{lem:15}, the integral over $y\geq 0$ is only $O(r_s^{-1} s^{-1})$. From now on we restrict to $y < 0$ in the integral above. By direct computation, the law of $\wh{W}_{t,r}$ under $\bbP_{0, 0}^{s, w}$ is Gaussian with mean $O(\log r)$ and variance $r(1-r/s)$. Applying the change of variable $y = \sqrt{r} x$, the density of the rescaled variable $x$ is Gaussian with mean $o(1)$ and variance $\sim 1$ as $s\to \infty$ (recall that $r_s = o(s)$). By the ballot estimates \eqref{e:52} and \eqref{e:53} in Lemma \ref{lem:15}, the first probability in \eqref{eq:ballot_prob_0_to_s_integral} is
	\[
	\bbP_{0,0}^{r, \sqrt{r} x} \left(\cA_t(0, r) \right) \sim_r \frac{2f^{(0)}(0) g(\sqrt{r} x)}{r} \sim_r 2f^{(0)}(0) \, \frac{x^-}{\sqrt{r}}
	\]
	and is bounded by $C (x^- + 1)/\sqrt{r}$. For the second probability in \eqref{eq:ballot_prob_0_to_s_integral}, \eqref{e:52} gives an upper bound $C(w, z) \sqrt{r} (x^- + 1)/(s-r)$ for some constant $C(w, z)$ depending on $w, z$. To get an asymptotic for this probability, we follow the original proof of Lemma \ref{lem:15} in \cite[Lemma 3.4]{CHL19}. By translating in time and space, we can rewrite this probability as
\begin{align}
		& \bbP_{r, \sqrt{r}x-w-z}^{s, -z} \left(\cA_t(0; r, s) \right) \\
        = & \bbP \Big (
		\max_{\sigma_k \in [0,s-r]} \big(W_{\sigma_k} - \gamma^{(r)}_{s-r, \sigma_k} - Y^{(r)}_{\sigma_k}\big) \leq 0 \, \Big| W_0 = \sqrt{r} x-w-z, W_{s-r} = -z \Big) \label{eq:ballot_with_variable_starting_point}
	\end{align}
	where $\gamma^{(r)}_{s-r, \sigma} := \log^+(r+\sigma) - \Big(\frac{s-r-\sigma}{s-r} \log^+ r + \frac{\sigma}{s-r} \log^+ s \Big)$ and $Y^{(r)}_{\sigma} := -\wh{h}^{(r+\sigma)*}_{r + \sigma}$. Keeping in mind that $r = r_s$ and $s-r = s-r_s$ are uniquely determined by $s$, a slight adaptation of results of \cite{CHL17_supplement} (allowing decorations $Y$ to also depend on the parameter $t$ there like in the curve $\gamma_{t, \cdot}$) then gives the asymptotic 
	\[
	\frac{2f(\sqrt{r} x-w-z)g(-z)}{s-r} \sim_s 2 g(-z) \frac{\sqrt{r} x^- }{s}
	\]
	for the probability \eqref{eq:ballot_with_variable_starting_point} as $s\to \infty$. Altogether, we have by Dominated Convergence the integral \eqref{eq:ballot_prob_0_to_s_integral} is asymptotic to
	\begin{equation}
		\frac{2f^{(0)}(0) \cdot 2 g(-z)}{s} \int_{x < 0}  (x^-)^2 \frac{1}{\sqrt{2\pi}} e^{-x^2/2} dx = \frac{2f^{(0)}(0) g(-z)}{s}
	\end{equation}
	as $s\to \infty$. 
	
	The computation of the estimate \eqref{eq:ballot_prob_s_to_t} is largely similar. Conditioning on the time $t/2$, this probability equals
	\begin{equation} \label{eq:ballot_prob_s_to_t_integral}
		\int  \bbP_{s,w}^{t/2,y} \left(\cA_t(w+z; s, t/2) \right) \bbP_{t/2,y}^{t,0} \left(\cA_t(t/2, t) \right) \bbP_{s, w}^{t, 0}\left(\wh{W}_{t,t/2} \in \rmd y\right)
	\end{equation}
	Like before, we may restrict the integral to $y < 0$. This time, we apply the change of variable $y = \sqrt{t} x/2$, so that the density of the rescaled variable $x$ is Gaussian with mean $o(1)$ and variance $\sim 1$ as $t \to \infty$ for fixed $s$. For fixed $s$, applying the ballot estimates in Lemma \ref{lem:15} (after translating in space), the first probability in \eqref{eq:ballot_prob_s_to_t_integral} is
	\[
	\bbP_{s,-z}^{t/2, \sqrt{t} x/2 -w-z} \left(\cA_t(0; s, t/2) \right) \sim_t \frac{2f^{(s)}(-z) g(\sqrt{t} x/2 -w-z)}{t/2} \sim_{t} 2f^{(s)}(-z) \frac{x^-}{\sqrt{t}}
	\]
	and is bounded by $C(w, z) (x^- + 1)/\sqrt{t}$ for some constant $C(w, z)$ depending on $w, z$. For the second probability in \eqref{eq:ballot_prob_s_to_t_integral}, \eqref{e:52} gives an upper bound $C (x^- + 1)/\sqrt{t}$ and a similar calculation as in \eqref{eq:ballot_with_variable_starting_point} gives the asymptotic
	\[
	\bbP_{t/2,\sqrt{t}x/2 }^{t,0} \left(\cA_t(t/2, t) \right)  \sim_t \frac{2 f(\sqrt{t} x/2) g(0)}{t/2} \sim_t 2 g(0) \frac{x^-}{\sqrt{t}}.
	\]
	Altogether, we have by Dominated Convergence that the integral \eqref{eq:ballot_prob_s_to_t_integral} is asymptotic to
	\begin{equation}
		\frac{2f^{(s)}(-z) \cdot 2 g(0)}{t} \int_{x < 0}  (x^-)^2 \frac{1}{\sqrt{2\pi}} e^{-x^2/2} dx = \frac{2f^{(s)}(-z) g(0)}{t}
	\end{equation}
	as $t\to \infty$ for fixed $s$. Finally, taking $s\to \infty$ and apply the asymptotic \eqref{e:54} we establish \eqref{eq:ballot_prob_s_to_t}. 
\end{proof}

\begin{proof}[Proof of Lemma~\ref{l:10.7.0}]
Lemma~2.7 in~\cite{CHL17_supplement}, specialized to $W, (-\wh{h}^{s*}_s)_{s \geq 0}, \cN, \gamma$ as $W, Y, \cN, \gamma$ immediately gives
\begin{equation}
\bbP \Big( \max_{s\in[s,\, t/2]} \wh{W}_{t,u} > -M,\, \cA_t \,\Big|\, \wh{W}_{t,0} = 0 \,,\,\, \wh{W}_{t,t} = 0\Big)
	\leq C \frac{(M+1)^2}{t\sqrt{s}} \,,
\end{equation}
after noting that $\cQ_t(0,t)$ identifies with $\cA_t$ under the above specialization. Together with the lower bound,
\begin{equation}
\bbP \big( \cA_t \,\big|\, \wh{W}_{t,0} = 0 \,,\,\, \wh{W}_{t,t} = 0\big)
	\geq c \frac{1}{t} \,,
\end{equation}
which is implied by Lemma~\ref{lem:15}, this gives the desired statement.
\end{proof}

\begin{proof}[Proof of Lemma~\ref{l:2.13}]
The proof of this exact statement appears in the beginning of the proof of Lemma~2.8 in~\cite{CHL17_supplement}, specialized to $W, (-\wh{h}^{s*}_s)_{s \geq 0}, \cN, \gamma$ as $W, Y, \cN, \gamma$.
\end{proof}

\section*{Acknowledgments}
\ifonlystable
We are grateful to L. Mytnik and L. Ryzhik for useful discussions.
\else
We thank L.-P. Arguin and L. Mytnik  for raising the question about almost sure convergence. We are also grateful to L. Mytnik, L. Ryzhik and O. Zeitouni for useful discussions about Theorem~\ref{1st_result} and Theorem~\ref{t:A1.5} and to S. Munier for discussions concerning Theorem~\ref{2nd_result}. 
\fi
The research of L.H.~was supported by the Deutsche Forschungsgemeinschaft (DFG, German Research Foundation)  through Project-ID 233630050 - TRR 146, through Project-ID 443891315  within SPP 2265, and Project-ID 446173099. The research of O.L.~and T.W.~was supported by the ISF grant no.~2870/21, and by the BSF award 2018330.

\ifusebibtex
	\bibliographystyle{abbrv}
	\bibliography{BBM3Problems}
\else

\fi
\end{document}